\theoremstyle{plain}
\newtheorem{theorem}{Theorem}[section]
\newtheorem{lemma}[theorem]{Lemma}
\newtheorem{prop}[theorem]{Proposition}
\newtheorem{cor}[theorem]{Corollary}
\newtheorem{setting}[theorem]{Setting}
\newtheorem{definition}[theorem]{Definition}
\newcommand{\E}{\mathbb{E}}
\renewcommand{\P}{\mathbb{P}}
\newcommand{\Q}{\mathbb{Q}}
\newcommand{\R}{\mathbb{R}}
\newcommand{\N}{\mathbb{N}}
\newcommand{\smallsum}{\textstyle\sum}
\newcommand{\textint}{\textstyle\int}
\newcommand{\Exp}[1]{ \E \! \left[ #1 \right]}
\newcommand{\EXPP}[1]{ \E \big[ #1 \big]}
\newcommand{\EXPPP}[1]{ \E \Big[ #1 \Big]}
\newcommand{\EXPPPP}[1]{ \E \bigg[ #1 \bigg]}
\newcommand{\norm}[1]{ \left\| #1 \right\| }
\newcommand{\Norm}[1]{ \| #1 \| }
\newcommand{\Normm}[1]{ \big\| #1 \big\| }
\newcommand{\qandqShort}{\quad\text{and}\quad}
\newcommand{\qandq}{\qquad\text{and}\qquad}
\newcommand{\andq}{\text{and}\qquad}
\newcommand{\Image}{\operatorname{Im}}
\newcommand{\id}{\operatorname{id}}
\newcommand{\diag}{\operatorname{diag}}
\newcommand{\vertiii}[1]{{\left\vert\kern-0.25ex\left\vert\kern-0.25ex\left\vert {#1} 
		\right\vert\kern-0.25ex\right\vert\kern-0.25ex\right\vert}}
	\newcommand{\vertiiibig}[1]{{\big\vert\kern-0.25ex\big\vert\kern-0.25ex\big\vert {#1} 
			\big\vert\kern-0.25ex\big\vert\kern-0.25ex\big\vert}}
			\newcommand{\vertiiiStandard}[1]{{\vert\kern-0.25ex\vert\kern-0.25ex\vert {#1} 
				\vert\kern-0.25ex\vert\kern-0.25ex\vert}}
\newcommand{\HSNorm}[1]{\vertiii#1} 
\newcommand{\HSNormStandard}[1]{\vertiiiStandard#1} 
\newcommand{\HSNormIndex}[3]{{\vertiii#1}_{\operatorname{HS}(\R^{#3},\R^{#2})}} 
\newcommand{\vast}{\bBigg@{3.5}}
\newcommand{\Vast}{\bBigg@{4}}
\begin{document}

\title{A proof that artificial neural networks overcome the curse of dimensionality in the numerical approximation of Black-Scholes partial differential equations}



\author{ Philipp Grohs$^{1,2}$,
Fabian Hornung$^{3,4}$,
Arnulf Jentzen$^{5,6,7}$,
\\ and
Philippe von Wurstemberger$^{8,9}$
\bigskip
\\
\small{$^1$Faculty of Mathematics and Research Platform Data Science,}\\
\small{University of Vienna, Austria, e-mail:  philipp.grohs@univie.ac.at}
\smallskip
\\
\small{$^2$Johann Radon Institute of Computational and Applied Mathematics,}\\
\small{Austrian Academy of Sciences, Austria, email: philipp.grohs@oeaw.ac.at}
\smallskip
\\
\small{$^3$Institute for Analysis, Karlsruhe Institute of Technology, Germany}
\smallskip
\\
\small{$^4$Seminar for Applied Mathematics, ETH Zurich,}\\
\small{Switzerland, e-mail: fabianhornung89@gmail.com}
\smallskip
\\
\small{$^5$School of Data Science and Shenzhen Research Institute of Big Data,}\\
\small{The Chinese University of Hong Kong, Shenzhen,}\\
\small{China, e-mail: ajentzen@cuhk.edu.cn}
\smallskip
\\
\small{$^6$Faculty of Mathematics and Computer Science, University of M\"unster,}\\
\small{Germany, e-mail: ajentzen@uni-muenster.de}
\smallskip
\\
\small{$^7$Seminar for Applied Mathematics, ETH Zurich,}\\
\small{Switzerland, e-mail: arnulf.jentzen@sam.math.ethz.ch}
\smallskip
\\
\small{$^8$Seminar for Applied Mathematics, ETH Zurich,}\\
\small{Switzerland, e-mail: philippe.vonwurstemberger@math.ethz.ch}
\smallskip
\\
\small{$^9$School of Data Science, The Chinese University of Hong Kong,}\\
\small{Shenzhen, China, e-mail: philippevw@cuhk.edu.cn}
} 


\vspace{-20cm}
\maketitle

\begin{abstract}
Artificial neural networks (ANNs) have very successfully been used in numerical simulations for a series of computational problems 
ranging from  image classification/image recognition, speech recognition, time series analysis, game intelligence, and computational advertising to numerical approximations of partial differential equations (PDEs).
 Such numerical simulations suggest that ANNs have the capacity to very efficiently approximate high-dimensional functions and, especially, 
 indicate that ANNs seem to admit the fundamental power to overcome the curse of dimensionality when approximating the high-dimensional functions appearing in the above named computational problems. 
 There are a series of rigorous mathematical approximation results for ANNs in the scientific literature. 
Some of them 
prove convergence without convergence rates and some 
even rigorously establish convergence rates but there are only a few special cases where mathematical results can rigorously explain the empirical success of ANNs when approximating high-dimensional functions. 
The key contribution of this article is to disclose that ANNs can efficiently approximate high-dimensional functions in the case of numerical approximations of Black-Scholes PDEs.  
 More precisely, this work reveals that the number of required parameters of an ANN to approximate the solution of the Black-Scholes PDE grows at most polynomially in both the reciprocal of the prescribed approximation accuracy $\varepsilon > 0$ and the PDE dimension $d \in \N$. We thereby prove, for the first time, that ANNs do indeed overcome the curse of dimensionality in the numerical approximation of Black-Scholes PDEs.
\end{abstract}

\tableofcontents

\section{Introduction}
\label{sect:intro}

Artificial neural networks (ANNs) (cf., e.g., Goodfellow et al.\ \cite{goodfellow2016deep}, McCulloch \& Pitts \cite{mcculloch1943logical}, Priddy \& Keller \cite{priddy2005artificial}, Schmidhuber \cite{schmidhuber2015deep}) have very successfully been used in numerical simulations for a series of computational problems 
ranging from  image classification/image recognition (cf., e.g., Huang et al.~\cite{huang2017densely}, Krizhevsky et al.~\cite{krizhevsky2012imagenet}, Simonyan \& Zisserman \cite{simonyan2014very}), speech recognition (cf., e.g., Dahl et al.~\cite{dahl2012context}, Hinton et al.~\cite{hinton2012deep}, Graves et al.~\cite{graves2013speech}, Wu et al.~\cite{wu2016stimulated}), time series analysis (cf., e.g., Goodfellow et al.~\cite{goodfellow2016deep}, LeCun et al.~\cite{lecun2015deep}), game intelligence (cf., e.g., Silver et al.~\cite{silver2016mastering,silver2017mastering}), and computational advertising to numerical approximations of partial differential equations (PDEs) 
(cf., e.g., \cite{Kolmogorov,beck2017machine,becker2018deep, weinan2017deep, weinan2018deep, fujii2017asymptotic, han2017solving, henry2017deep, khoo2017solving,lagaris1998artificial, mishra2018machine, nabian2018deep, raissi2018forward, sirignano2017dgm}).
Such numerical simulations suggest that ANNs have the capacity to very efficiently approximate high-dimensional functions. 
In particular, they indicate that ANNs seem to admit the fundamental power to resolve the curse of dimensionality 
(cf., e.g., Bellman \cite{bellman2013dynamic})
in the sense that the number of parameters of an ANN to approximate the high-dimensional functions appearing in the above named computational problems grows at most polynomially in both the reciprocal of  the prescibed accuracy $\varepsilon>0$ and the dimension $d \in \N$.
There are a series of rigorous mathematical approximation results for ANNs in the scientific literature 
(cf., e.g., \cite{bach2017breaking,Barron1993, Barron1994,blum1991approximation,bolcskei2017optimal,Burger2001235,CandesDiss,chen1995approximation,ChuXM1994networksforlocApprox,Cybenko1989,DeVore1997approxfeedforward,Eldan2016PowerofDepth,ellacott1994aspects,Funahashi1989183,hartman1990layered,Hornik1991251,hornik1993some,Hornik1989universalApprox,hornik1990universal,leshno1993multilayer,Mhaskar1995151,Mhaskar1996NNapprox,Mhaskar2016DeepVSShallow,NguyenThien1999687,park1991universal,perekrestenko2018universal,petersen2018topological,petersen2017optimal,pinkus1999approximation,Schmitt1999lowercomplbounds,ShaCC2015provableAppDNN,sirignano2017dgm,yarotsky2017error,yarotsky2018universal} and the references mentioned therein).
Some of them prove convergence without convergence rates and some even rigorously establish convergence rates but there are only a few special cases where mathematical results can rigorously explain the empirical success of ANNs when approximating high-dimensional functions. 

The key contribution of this article is to disclose that ANNs can efficiently approximate high-dimensional functions in the case of numerical approximations of Black-Scholes PDEs. 
More accurately, Theorem~\ref{cont_NN_approx} in Section~\ref{approximationSection} below, which is the main result of this paper, reveals that the number of required parameters of an ANN to approximate the solution of a Kolmogorov PDE with affine linear drift and affine linear diffusion functions grows at most polynomially in both the reciprocal of the prescribed approximation accuracy $\varepsilon > 0$ and the PDE dimension $d \in \N$.
In particular, we thereby prove, for the first time, that ANNs do indeed resolve the curse of dimensionality in the numerical approximation of Black-Scholes PDEs.
To illustrate the main result of this article (Theorem~\ref{cont_NN_approx} in Subsection \ref{subsectionContinuousANN} below), we now present in the following theorem a special case of Theorem~\ref{cont_NN_approx}.

\begin{theorem}
\label{thm:intro}
Let 
	$T, \mathfrak{c}, p \in (0,\infty)$,  
	$\alpha, \beta \in \R$,
	$\mathbf{a} \in C(\R, \R)$,
let $\mathbf{A}_d  \in C(\R^d, \R^d)$, $d \in \N$, satisfy for all 
	$d \in \N$,
	$x = (x_1,x_2, \ldots, x_d) \in \R^d$ 
that
$
	\mathbf{A}_d(x)
	=
	(\mathbf{a}(x_1), \mathbf{a}(x_2), \ldots, \mathbf{a}(x_d))
$,
let $\varphi_d \in C(\R^{d}, \R)$, $d \in \N$,
let
\begin{equation}
\begin{split}
\label{thm:intro:NeuralNetworks}
	\mathcal{N}
&=
	\cup_{\mathcal{L} \in \{2,3, \ldots \}}
	\cup_{ (l_0,l_1,\ldots, l_\mathcal{L}) \in ((\N^{\mathcal{L}}) \times \{ 1 \} ) }
	\left(
		\times_{k = 1}^\mathcal{L} (\R^{l_k \times l_{k-1}} \times \R^{l_k})
	\right),
\end{split}
\end{equation}
let 
$
	\mathcal{P}\colon \mathcal{N} \to \N
$ 
and 
$
	\mathcal{R} 
\colon 
	\mathcal{N} 
\to 
	\cup_{d = 1}^\infty C(\R^d, \R)
$ 
be the functions which satisfy
for all 
	$ \mathcal{L} \in \{2, 3, \ldots \}$, 
	$ (l_0,l_1,\ldots, l_\mathcal{L}) \in ((\N^{\mathcal{L}}) \times \{ 1 \}) $, 
	$
		\Phi 
	=
		((W_1, B_1), \ldots,\allowbreak (W_\mathcal{L}, B_\mathcal{L}))
		\allowbreak\in  
		( \times_{k = 1}^\mathcal{L} \allowbreak(\R^{l_k \times l_{k-1}} \times \R^{l_k}))
	$,
	$x_0 \in \R^{l_0}, x_1 \in \R^{l_1}, \ldots, x_{\mathcal{L}-1} \in \R^{l_{\mathcal{L}-1}}$ with $\forall \, k \in \N \cap (0,\mathcal{L}) \colon x_k = \mathbf{A}_{l_k}(W_k x_{k-1} + B_k)$
that
\begin{equation}
	\mathcal{R}(\Phi) \in C(\R^{l_0}, \R),
	\qquad
	( \mathcal{R}(\Phi) ) (x_0) = W_\mathcal{L} x_{\mathcal{L}-1} + B_\mathcal{L},
\end{equation}
and
$
	\mathcal{P}(\Phi)
	=
	\sum_{k = 1}^\mathcal{L} l_k(l_{k-1} + 1) 
$,
for every $d\in\N$ let $\left\| \cdot \right\|_{\R^d} \colon \R^d \to [0,\infty)$ be the $d$-dimensional Euclidean norm,
and let $(\phi_{d, \delta})_{d \in \N, \, \delta \in (0,1]} \subseteq \mathcal{N}$ satisfy for all 
	$d \in \N$, 
	$\delta \in (0,1]$, 
	$x \in \R^{d}$ 
that
	$\mathcal{P}(\phi_{d, \delta}) \leq \mathfrak{c} \, d^{\mathfrak{c}} \delta^{-{\mathfrak{c}}} $,
	$\mathcal{R}(\phi_{d, \delta}) \in C(\R^{d}, \R)$,
	$	\left| 
	( \mathcal{R}(\phi_{d, \delta}) )  (x)
	\right| 
	\leq 
	\mathfrak{c} \, d^{\mathfrak{c}} (1+\| x \|_{\R^d}^{\mathfrak{c}})
	$, and
\begin{equation}
	\label{cont_NN_approxSimple:ass1}
	\left| 
	\varphi_d(x) - ( \mathcal{R}(\phi_{d, \delta}) )(x)
	\right|
	\leq 
	\mathfrak{c} \, d^{\mathfrak{c}} \, \delta \, (1+ \| x \|_{\R^d}^{\mathfrak{c}}).
\end{equation}
Then  
\begin{enumerate}[(i)]
\item \label{cont_NN_approxSimple:item1}
there exist unique continuous functions $u_d\colon [0,T]\allowbreak \times \R^{d} \to \R$, $d \in \N$, which satisfy for all 
	$d \in \N$, 
	$x \in \R^{d}$ 
that $u_d(T,x) = \varphi_d(x)$,
which satisfy for all 
	$d \in \N$ 
that
$
	\inf_{q \in (0,\infty)} \allowbreak
	\sup_{(t, x) \in [0, T] \times \R^d} \allowbreak
	\frac{ | u_d(t, x) | }{ 1 + \norm{x}_{\R^d}^q }
	\allowbreak<
	\infty
$,
and which satisfy for all 
	$d \in \N$ 
that $u_d$ 
is a viscosity solution of
\begin{equation}
\label{intro_thm:BS}
\big(\tfrac{\partial u_d}{\partial t}\big)(t,x) 
	+
	\left[
  	\sum_{i = 1}^d 
	    \tfrac{ \beta^2 |x_i|^2}{2} 
  	    \big( \tfrac{\partial^2 u_d}{\partial x_i^2 } \big)(t,x)
    \right]
    +
	\left[
    \sum_{i = 1}^d 
	    \alpha
	     x_i
	    \big( \tfrac{\partial u_d}{\partial x_i } \big)(t,x)
	\right]
=
	0
\end{equation}
for $(t,x) = (t, x_1, x_2, \ldots, x_d) \in (0,T) \times \R^{d}$
and
		
\item \label{cont_NN_approxSimple:item2}
there exist $\mathfrak{C} \in (0,\infty)$, $(\psi_{d, \varepsilon})_{d \in \N, \,\varepsilon \in (0,1]} \subseteq \mathcal{N}$ such that
for all $d \in \N$, $\varepsilon \in (0,1]$ it holds that
$
\mathcal{P}(\psi_{d, \varepsilon}) 
\leq
\mathfrak{C}  \, d^{\mathfrak{C}} \, \varepsilon^{-\mathfrak{C}}
$,
$
\mathcal{R}(\psi_{d, \varepsilon}) \in C(\R^{d}, \R)
$,
and
\begin{equation}
\label{intro_thm:concl} 
\left[
\int_{[0,1]^d}  
\left|
u_d(0,x) - ( \mathcal{R}(\psi_{d, \varepsilon}) ) (x)
\right|^p \,
dx
\right]^{\nicefrac{1}{p}} 
\leq
\varepsilon.
\end{equation}
\end{enumerate}
\end{theorem}

Theorem~\ref{thm:intro} is an immediate consequence of 
	Corollary~\ref{cont_NN_approxSimple} in Subsection \ref{subsectionContinuousANN} below 
	and the auxiliary results in Lemma~\ref{BS_properties} and Corollary~\ref{BS_endvalue} in Section~\ref{blackScholesSubsection} below.
Corollary~\ref{cont_NN_approxSimple}, in turn, is a consequence of Theorem~\ref{cont_NN_approx}, the main result of this article.
We now provide some explanations and interpretations for the objects appearing in Theorem~\ref{thm:intro} above.
The set $\mathcal{N}$ in \eqref{thm:intro:NeuralNetworks} corresponds to the set of all fully-connected artificial neural networks. 
We thus describe for every $ \mathcal{L} \in \{2, 3, \ldots \}$, $ (l_0,l_1,\ldots, l_{\mathcal{L}}) \in \N^{\mathcal{L}} \times \{1\}$
a neural network 
$\Phi = ((W_1, B_1), \ldots,\allowbreak (W_\mathcal{L}, B_\mathcal{L})) \in ( \times_{k = 1}^\mathcal{L} \allowbreak(\R^{l_k \times l_{k-1}} \times \R^{l_k}))  \subseteq \mathcal{N}$ 
(with $\mathcal{L} + 1$ layers, $\mathcal{L} - 1$ hidden layers, $l_0$ neurons in the input layer, $l_1$ neurons in the first hidden layer, $l_2$ neurons in the second hidden layer, ..., $l_{ \mathcal{L} - 1 }$ neurons in the $(\mathcal{L} - 1)$-th hidden layer, and $l_{ \mathcal{L} } = 1$ neuron in the output layer)
as a tuple consisting of its weight matrices $W_k \in \R^{l_k \times l_{k-1}}$, $k \in \{1,2, \ldots, \mathcal{L} \}$, and its bias vectors $B_k \in \R^{l_k}$, $k \in \{1,2, \ldots, \mathcal{L} \}$. 
For every artificial neural network $\Phi \in \mathcal{N}$ in Theorem~\ref{thm:intro} the number $ \mathcal{P}( \Phi )$ consequently corresponds to the number of parameters used to describe the artificial neural network $\Phi$.
Moreover, for every artificial neural network $ \Phi \in \mathcal{N} $ in Theorem~\ref{thm:intro} the function $\mathcal{R}( \Phi )$ is the realization of the artificial neural network $\Phi$ which is defined using the activation function $\mathbf{a} \in C(\R, \R)$.
The equation in display \eqref{intro_thm:BS} in Theorem~\ref{thm:intro} is usually referred to as Black-Scholes equations in the financial literature. The functions $\varphi_d$, $d \in \N$, in Theorem~\ref{thm:intro} are usually called the payoff functions in the Black-Scholes model and are the terminal values of the PDE in \eqref{intro_thm:BS}.
Roughly speaking, the conclusion in item~\eqref{cont_NN_approxSimple:item2} of Theorem~\ref{thm:intro} states that the solutions $u_d \in  C([0,T] \times \R^{d} , \R)$, $d \in \N$, of the PDE in \eqref{intro_thm:BS} can be approximated on the unit cube by ANNs without the curse of dimensionality. 
This can be seen as a partial theoretical justification of the performance of deep learning algorithms for the approximation of solutions of Black-Scholes PDEs, such as, e.g., the algorithm proposed in  Beck et al.\ \cite{Kolmogorov} (see, e.g., Section 4.4 in Beck et al.\ \cite{Kolmogorov} for numerical results for the Black-Scholes PDE).


Numerical approximation algorithms for high-dimensional PDEs have been exceedingly studied in the scientific literature (cf., e.g., \cite{ChassagneuxRungeKutta,CohenSchwab,CohenDeVore,weinan2017linear,geiss2014decoupling, Gobet2014,GobetStratified, GobetRegression,GrahamTalay, hutzenthaler2018overcoming,PardouxPeng,PetersdorffSchwab,warin2018monte,warin2018nesting} and the references mentioned therein). In particular, it is well-known that Monte Carlo methods (cf., e.g., \cite{GrahamTalay} and the references mentioned therein) like the Monte Carlo Euler method (cf., e.g., \cite[Section 7.5 in Chapter 7]{GrahamTalay}) and the multilevel Monte Carlo Euler method (cf., e.g., Creutzig et al.\ \cite{Creutzig}, Giles \cite{Giles}, Heinrich \cite{HeinrichOriginal}, \cite{HeinrichOverview}, and the references mentioned therein) do overcome the curse of dimensionality at one particular fixed point in space.
A key contribution of Theorem~\ref{thm:intro} above is to provide an approximation result not only at a single space point but on the entire $d$-dimensional unit cube $[0,1]^d$ (cf.\ display \eqref{intro_thm:concl} above and, e.g., Beck et al.~\cite{Kolmogorov}). 

Our proofs of Theorem~\ref{thm:intro} and Theorem~\ref{cont_NN_approx}, respectively, construct suitable ANN approximations using probabilistic arguments. 
More formally, our proofs of Theorem~\ref{thm:intro} and Theorem~\ref{cont_NN_approx}, respectively, employ -- besides other arguments -- 
\begin{enumerate}[(i)]
\item
the Feynman-Kac formula for viscosity solutions of Kolmogorov PDEs 
(cf.\ Proposition~\ref{viscosity_existence} in Subsection~\ref{SectionExistenceOfViscosity} below, 
\eqref{quantitative:eq01} in the proof of Proposition~\ref{quantitative} in Subsection~\ref{SectionQuantitativeError} below, and, 
e.g., Hairer et al.~\cite[Corollary 4.17]{HairerHutzenthalerJentzen15}),

\item
Monte-Carlo approximations for the expected value in the Feynman-Kac formula 
(cf.\ Corollary~\ref{mc_Lp_error2}  in Subsection~\ref{SectionMonteCarlo} below and 
\eqref{quantitative:eq07} in the proof of Proposition~\ref{quantitative} in Subsection~\ref{SectionQuantitativeError} below),

\item
the fact that the solution of an affine linear stochastic differential equation (SDE) depends affine linearly on the initial value (cf.\ Proposition~\ref{affine_solutions_of_SDEs} in Subsection~\ref{SectionAffineSDE} below and \eqref{quantitative:setting2} in the proof of Proposition~\ref{quantitative} in Subsection~\ref{SectionQuantitativeError} below),  and

\item \label{intro:item4}
an argument assuring the existence of a suitable elementary event on an artificial probability space constructed in the proof (cf.\ Proposition~\ref{construction_realization} in Subsection~\ref{SectionRealization} below and \eqref{quantitative:ChoiceOfRealization} in the proof of Proposition~\ref{quantitative} in Subsection~\ref{SectionQuantitativeError} below).

\end{enumerate}
The last argument, item~\eqref{intro:item4} above, is the crucial ingredient which builds the bridge between the probabilistic arguments of the proof and the deterministic conclusion.

There are several directions for further research arising from this work. 
After a first preprint of this work has come out, a few research articles exploring such directions and extending the results of this paper have appeared.
For example, \cite{grohs2019space, grohs2019deep, hutzenthaler2019proof, jentzen2018proof,Reisinger19rectified} establish results similar to Theorem~\ref{cont_NN_approx} for different kinds of differential equations.
More precisely,
	Jentzen et al.\ \cite{jentzen2018proof} considers PDEs with nonlinear but Lipschitz continuous drift coefficients, 
	Hutzenthaler et al.\ \cite{hutzenthaler2019proof} considers nonlinear PDEs, 
	Reisinger at al.\ \cite{Reisinger19rectified} considers value functions of zero-sum games,	
	Grohs et al.\ \cite{grohs2019deep}  considers PDEs which can be approximated by a suitable Monte Carlo method, and
	Grohs et al.\  \cite{grohs2019space} considers ordinary differential equations and measures the error on the entire space-time domain.
Another direction of further research is to consider different ways to measure the error between the exact solution and the ANN approximation. 
In particular, Elbr\"achter et al.\ \cite{Elbrachter19dnn} measure the error with the supremum norm and furthermore are able to improve the speed of convergence by using additional structure on a special subclass of Black-Scholes PDEs.
Moreover, Kutyniok et al.\ \cite{Kutyniok19theoretical} prove that certain parametric maps can be approximated by ANNs without the curse of dimensionality.

The remainder of this article is organized as follows. 
In Section 2 we supply several auxiliary results on Monte Carlo approximations (Subsection~\ref{SectionMonteCarlo}), affine functions (Subsection~\ref{SectionAffine}), SDEs (Subsections~\ref{SectionMomentEstimates}--\ref{SectionAffineSDE}), and viscosity solutions for PDEs (Subsection~\ref{SectionExistenceOfViscosity}). 
These auxiliary results are then used in Section \ref{approximationSection} to establish that ANNs can approximate the solutions of Kolmogorov PDEs with affine linear drift and diffusion functions without suffering under the curse of dimensionality.
In particular, we prove in Theorem~\ref{cont_NN_approx} in Section \ref{approximationSection} the main approximation result of this article. 
In Section~\ref{blackScholesSubsection} we illustrate the application of Theorem~\ref{cont_NN_approx} in the case of the Black-Scholes PDE with different payoff functions such as, 
	basket call options in Subsection~\ref{SectionCall}, 
	basket put options in Subsection~\ref{SectionPut}, 
	call on max options in Subsection~\ref{SectionCallOnMax}, and 
	call on min options in Subsection~\ref{SectionCallOnMin}.

\section{Probabilistic and analytic preliminaries}

In this section we provide several basic and in parts well-known auxiliary results on Monte Carlo approximations (Subsection~\ref{SectionMonteCarlo}), affine functions (Subsection~\ref{SectionAffine}), SDEs (Subsections~\ref{SectionMomentEstimates}--\ref{SectionAffineSDE}), and viscosity solutions for PDEs (Subsection~\ref{SectionExistenceOfViscosity}).


\subsection{Monte Carlo approximations}\label{SectionMonteCarlo}


In this subsection we employ Kahane-Khintchine-type estimates from the literature (cf., e.g., Hyt\"onen et al.\ \cite[Theorem 6.2.4 in Subsection 6.2b]{AnaInBS2}) 
to present the known $L^p$-Monte Carlo estimate in Corollary~\ref{mc_Lp_error2} below. 
Corollary~\ref{mc_Lp_error2} is an immediate consequence of Proposition~\ref{mc_Lp_error} and  Lemma~\ref{Hytoenen_estimate} below. We include Proposition~\ref{mc_Lp_error} and  Lemma~\ref{Hytoenen_estimate} for completeness but refer to the literature for their proofs.

\begin{lemma}
\label{L2_monte_carlo}
Let $n \in \N$,
let $ ( \Omega, \mathcal{F}, \P ) $ be a probability space, 
and 
let 
$ 
	X_i  \colon \Omega \to \R 
$, $ i \in \{1, 2, \dots, n\} $,
be i.i.d.\ random variables with 
$
	\E\big[ | X_1 | \big] < \infty
$.
Then 
\begin{equation}
\left(\EXPP{  \vert
	\Exp{X_1}
	-
	\tfrac{1}{n}
	\big(
	\smallsum_{ i = 1 }^{ n } X_{ i }
	\big)
	\vert^2  }\right)^{\!1/2}
= n^{-1/2}
\left(
\E\big[ 
\vert
X_1 
-
\E[ X_1 ]
\vert^2 \,
\big]
\right)^{\! 1 / 2 }.
\end{equation}
\end{lemma}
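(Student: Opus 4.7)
The plan is to reduce to the centered case and then expand the square, using pairwise independence to kill the cross terms. This is a textbook computation, but one has to be slightly careful because the hypothesis only gives $\E[|X_1|] < \infty$, not $\E[|X_1|^2] < \infty$, so the identity should be read as an equality in $[0,\infty]$.

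First, I would set $Y_i = X_i - \E[X_1]$ for $i \in \{1,2,\ldots,n\}$, observe that $Y_1, Y_2, \ldots, Y_n$ are i.i.d.\ with $\E[Y_i] = 0$ (the mean is well defined since $\E[|X_1|] < \infty$), and note the algebraic identity
\begin{equation}
\E[X_1] - \tfrac{1}{n}\smallsum_{i=1}^n X_i = -\tfrac{1}{n}\smallsum_{i=1}^n Y_i.
\end{equation}
Squaring and taking expectations gives
\begin{equation}
\EXPP{\big|\E[X_1] - \tfrac{1}{n}\smallsum_{i=1}^n X_i\big|^2} = \tfrac{1}{n^2}\smallsum_{i=1}^n\smallsum_{j=1}^n \E[Y_i Y_j].
\end{equation}

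Next, I would distinguish the finite-variance and the infinite-variance case. If $\E[Y_1^2] < \infty$, then for $i \neq j$ the independence of $Y_i$ and $Y_j$ together with $\E[Y_i] = \E[Y_j] = 0$ yields $\E[Y_i Y_j] = \E[Y_i]\E[Y_j] = 0$, while for $i = j$ one has $\E[Y_i^2] = \E[Y_1^2]$; hence the double sum equals $n\,\E[Y_1^2]$ and division by $n^2$ followed by taking square roots gives the claimed equality. If instead $\E[Y_1^2] = \infty$, then the right-hand side of the asserted identity is $+\infty$, and the left-hand side is also $+\infty$ since $\E[(\tfrac{1}{n}\sum_i Y_i)^2] \geq \tfrac{1}{n^2}\E[Y_1^2]$ (for instance by conditioning on $(Y_i)_{i \geq 2}$ and using that $\E[(Y_1 + c)^2] \geq \E[Y_1^2]$ whenever $\E[Y_1] = 0$, or by a truncation argument).

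The only mildly subtle point is handling the possibility of infinite variance so that the identity is interpreted correctly in $[0,\infty]$; apart from that, the argument is a direct expansion of the square combined with pairwise independence and the fact that the $Y_i$ are centered. I do not anticipate any real obstacle.
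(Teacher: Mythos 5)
Your proof is correct and follows essentially the same route as the paper's: rewrite the error as the centered sample mean, expand the square, and use independence of the summands together with $\E[Y_i]=0$ to kill the off-diagonal terms, leaving $n\,\E[Y_1^2]/n^2$. The only addition is your explicit treatment of the infinite-variance case, which the paper's expansion of $\E\big[\big|\sum_i(\E[X_i]-X_i)\big|^2\big]$ into a double sum handles implicitly rather than remarking on it; your conditioning argument (or, more directly, noting that $\sum_{i\neq j}Y_iY_j$ is integrable with mean zero so subtracting it from $|\sum_i Y_i|^2$ shows both sides are $+\infty$ together) closes that small gap cleanly.
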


\begin{proof}[Proof of Lemma~\ref{L2_monte_carlo}]
Note that 
the hypothesis that  
$ 
X_i\colon \Omega\to \R$, $i \in \{1, 2, \dots, n\}
$,
are i.i.d.\ random variables
assures that
\begin{equation}
\begin{split}
	&\Exp{  \left|
		\Exp{X_1}
		-
		\tfrac{1}{n}
		\big(
			\smallsum_{ i = 1 }^{ n } X_{ i }
		\big)
	\right|^2  } \\
&=
	\tfrac{1}{n^2} 
	\left[
		\smallsum_{ i,j= 1 }^{ n }
		\Exp{  ( \Exp{X_i} - X_{ i } ) ( \Exp{X_j} - X_{ j } )  }	 
	\right] \\
&=
	\tfrac{1}{n^2} 
	\big( 
		n \,\EXPP{  | \Exp{X_1} - X_{ 1 } |^2 }
	\big)
=
	n^{-1} 
		\EXPP{ | X_1 - \Exp{X_1} \!| ^2}.
\end{split}
\end{equation}
The proof of Lemma~\ref{L2_monte_carlo} is thus completed.
\end{proof}

\begin{definition}\label{def:Kahane}
	Let $p,q\in (0,\infty).$ Then we denote by $\mathfrak{K}_{p,q}\in [0,\infty]$ the extended real number  given by
	\begin{align}\label{eq:Kahane}
	&\mathfrak{K}_{p,q}=\nonumber\\&\sup\!\left\{c\in [0,\infty)\colon \!\left[\begin{aligned}
	&\exists\, \text{$\R$-Banach space $(E,\norm{\cdot}_E)$}\colon
	\\&\exists\, \text{probability space $(\Omega,\mathcal{F},\P)$}\colon
	\\&\exists\, \text{$\P$-Rademacher family $r_j\colon \Omega\to \{-1,1\}, j\in\N$}\colon
	\\&\exists\, k\in\N\colon \exists\, x_1,x_2,\dots,x_k\in E\backslash\{0\}\colon
	\\& \left(\mathbb{E}\big[\Vert\smallsum_{j=1}^k r_j x_j\Vert_{E}^p\big]\right)^{\!1/p}=c \left(\mathbb{E}\big[\Vert\smallsum_{j=1}^k r_j x_j\Vert_{E}^q\big]\right)^{\!1/q}
	\end{aligned}\! \right]\!
	\right\}
	\end{align}
	and we call $\mathfrak{K}_{p,q}$ the
	$(p,q)$-Kahane-Khintchine constant.
\end{definition}


The next result, Proposition~\ref{mc_Lp_error} below, is, e.g., proved as Corollary 5.12 in Cox et al.\ \cite{CoxHutzenthalerJentzenNervenWelti17}. 

\begin{prop}
\label{mc_Lp_error}
Let $p \in [2,\infty)$, $d, n \in \N$, 
let $\norm{\cdot} \colon \R^d \to [0,\infty)$ be the $d$-dimensional Euclidean norm,  
let 
$
  \mathfrak{K}_{p,2} \in (0,\infty)
$
be the $(p, 2)$-Kahane-Khintchine constant (cf. Definition \ref{def:Kahane}), 
let $ ( \Omega, \mathcal{F}, \P ) $ be a probability space, 
and 
let $ X_i  \colon \Omega \to \R^d $,
$ i \in \{1, 2, \dots, n\} $,
be i.i.d.\ random variables with 
$
	\E\big[ \| X_1 \| \big] < \infty
$.
Then 
\begin{equation}
\left(\EXPP{  \Vert
	\Exp{X_1}
	-
	\tfrac{1}{n}
	\big(
	\smallsum_{ i = 1 }^{ n } X_{ i }
	\big)
	\Vert^p  }\right)^{\!1/p}
  \leq
  \frac{ 2 \, \mathfrak{K}_{p,2} }{
  	\sqrt{n}
  } 
    \left(
    \E\big[ 
      \|
        X_1 
        -
        \E[ X_1 ]
      \|^p \,
    \big]
    \right)^{\! 1 / p }
  .
\end{equation}
\end{prop}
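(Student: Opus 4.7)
The plan is to prove the bound by a standard symmetrization argument combined with the Kahane--Khintchine inequality encoded in Definition~\ref{def:Kahane}. Throughout, set $Y_i = X_i - \E[X_1]$ for $i \in \{1,\ldots,n\}$, so that the $Y_i$ are i.i.d.\ centered random vectors in $\R^d$ and the quantity to be bounded is $(\E[\|\tfrac{1}{n}\sum_{i=1}^n Y_i\|^p])^{1/p}$. I would enlarge the probability space (without loss of generality) so that it carries an independent copy $(Y_i')_{i=1}^n$ of $(Y_i)_{i=1}^n$ and a $\P$-Rademacher family $(r_i)_{i=1}^n$ independent of everything else.

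First I would symmetrize: since $\E[Y_i'] = 0$, one has $\sum_{i=1}^n Y_i = \E[\sum_{i=1}^n (Y_i - Y_i')\,|\,(Y_i)_{i=1}^n]$, and Jensen's inequality for the convex function $\|\cdot\|^p$ (together with the tower property) gives
\begin{equation}
\E\!\left[\Normm{\smallsum_{i=1}^n Y_i}^p\right] \leq \E\!\left[\Normm{\smallsum_{i=1}^n (Y_i - Y_i')}^p\right].
\end{equation}
Next, since $(Y_i - Y_i')_{i=1}^n$ is symmetric in distribution, $(Y_i - Y_i')_{i=1}^n \stackrel{d}{=} (r_i(Y_i - Y_i'))_{i=1}^n$, so the right-hand side equals $\E[\|\sum_{i=1}^n r_i(Y_i - Y_i')\|^p]$.

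I would then condition on $(Y_i - Y_i')_{i=1}^n$ and apply the definition of $\mathfrak{K}_{p,2}$ to the (at most $n$) nonzero vectors among $Y_1-Y_1',\ldots,Y_n - Y_n'$ (the zero vectors contribute trivially), obtaining
\begin{equation}
\EXPPP{\Normm{\smallsum_{i=1}^n r_i(Y_i - Y_i')}^p \,\big|\, (Y_i - Y_i')_{i=1}^n} \leq \mathfrak{K}_{p,2}^p \left(\EXPP{\Normm{\smallsum_{i=1}^n r_i (Y_i - Y_i')}^2 \,\big|\, (Y_i - Y_i')_{i=1}^n}\right)^{\!p/2}.
\end{equation}
Because $\|\cdot\|$ is Euclidean and the $r_i$ are independent mean-zero $\{-1,1\}$-valued, expanding the inner product collapses the conditional $L^2$ expression to $\sum_{i=1}^n \|Y_i - Y_i'\|^2$. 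Taking the outer expectation and using Minkowski's inequality in $L^{p/2}(\Omega; \R)$ (valid since $p/2 \geq 1$) yields
\begin{equation}
\EXPP{\big(\smallsum_{i=1}^n \|Y_i - Y_i'\|^2\big)^{p/2}} \leq n^{p/2}\,\EXPP{\|Y_1 - Y_1'\|^p}.
\end{equation}

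Finally I would apply the triangle inequality in $L^p(\Omega;\R)$ to get $(\E[\|Y_1-Y_1'\|^p])^{1/p} \leq 2(\E[\|Y_1\|^p])^{1/p}$, chain the estimates together, and divide by $n$ to reach the claimed bound $\tfrac{2\mathfrak{K}_{p,2}}{\sqrt{n}}(\E[\|X_1 - \E[X_1]\|^p])^{1/p}$. There is no hard step in this argument; the only minor points requiring care are (a) justifying the enlargement of the probability space to carry the independent copy and the Rademacher family, and (b) applying Definition~\ref{def:Kahane} only to the nonzero vectors $Y_i - Y_i'$, observing that the zero ones may be dropped without affecting either side.
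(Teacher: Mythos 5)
Your proof is correct, and it follows the classical symmetrization-plus-Kahane--Khintchine argument: replace $X_i - \E[X_1]$ by the difference with an independent copy (via Jensen and the tower property), randomize the signs, apply the definition of $\mathfrak{K}_{p,2}$ conditionally, collapse the conditional $L^2$-norm to $\sum_i\|Y_i-Y_i'\|^2$ by orthogonality, bound that sum's $p/2$-moment via Minkowski in $L^{p/2}$, and finish with the triangle inequality and the factor $2$. All the constants track correctly to give $2\,\mathfrak{K}_{p,2}\,n^{-1/2}$, and the two ``minor points'' you flag -- enlarging the probability space and excluding zero vectors when invoking Definition~\ref{def:Kahane} (which requires $x_j\in E\setminus\{0\}$) -- are precisely the points that need stating, and you handle both.

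For comparison: the paper does not prove Proposition~\ref{mc_Lp_error} itself; it simply cites Cox et al.\ [Corollary~5.12] and only proves the downstream Corollary~\ref{mc_Lp_error2} by combining Proposition~\ref{mc_Lp_error} with the bound $\mathfrak{K}_{p,2}\le\sqrt{p-1}$ from Lemma~\ref{lem:kahane-const}. Your proof therefore supplies the argument that the paper outsources, and it is the standard one. The only thing I would tighten in a final write-up is to state explicitly that the claimed inequality is vacuous when $\E[\|X_1\|^p]=\infty$ (the hypothesis only assumes a finite first moment), so that the whole chain of estimates may be carried out under the standing assumption that the right-hand side is finite.
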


%
%

The next result, Lemma~\ref{Hytoenen_estimate} below, is, e.g., proved as  Theorem 6.2.4 in Subsection 6.2b in Hyt\"onen et al.\ \cite{AnaInBS2}.

\begin{lemma}
\label{Hytoenen_estimate}
Let $ p, q \in [1,\infty)$ satisfy that $q < p$ and let $\mathfrak{K}_{p,q}$ be the $(p,q)$-Kahane-Khintchine constant (cf. Definition \ref{def:Kahane}). 
Then 
\begin{equation}
	\mathfrak{K}_{p,q} \leq \sqrt{ \frac{p-1}{q-1}}.
\end{equation}
\end{lemma}

%

\begin{cor}
	\label{lem:kahane-const}
	Let $ p \in (2,\infty)$ and let $\mathfrak{K}_{p,2}$ be the $(p,2)$-Kahane-Khintchine constant (cf. Definition \ref{def:Kahane}). 
	Then
	\begin{equation}
		\mathfrak{K}_{p,2} \leq \sqrt{p-1}.
	\end{equation}
\end{cor}

\begin{proof}[Proof of Corollary~\ref{lem:kahane-const}]
Note that Lemma~\ref{Hytoenen_estimate} assures that
\begin{equation}
	\mathfrak{K}_{p,2}
\leq 
	\sqrt{ \frac{p-1}{2-1}}
=
	\sqrt{p-1}.
\end{equation}
The proof of Corollary~\ref{lem:kahane-const} is thus completed.
\end{proof}

\begin{cor}
\label{mc_Lp_error2}
Let $p \in [2,\infty)$, $d, n \in \N$, 
let $\norm{\cdot} \colon \R^d \to [0,\infty)$ be the $d$-dimensional Euclidean norm,  
let $ ( \Omega, \mathcal{F}, \P ) $ be a probability space, 
and 
let $ X_i  \colon \Omega \to \R^d $,
$ i \in \{1, 2, \dots, n\} $,
be i.i.d.\ random variables with 
$
	\E\big[ \| X_1 \| \big] < \infty
$.
Then
\begin{equation}
\left(\EXPP{  \Vert
	\Exp{X_1}
	-
	\tfrac{1}{n}
	\big(
	\smallsum_{ i = 1 }^{ n } X_{ i }
	\big)
	\Vert^p  }\right)^{\!1/p}
  \leq
   2 \left[\frac{p-1}{n}\right]^{\!1/2}
       \left(
   \E\big[ 
   \|
   X_1 
   -
   \E[ X_1 ]
   \|^p \,
   \big]
   \right)^{\! 1 / p }.
\end{equation}
\end{cor}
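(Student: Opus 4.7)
The plan is to combine the two preceding results verbatim, as the text itself announces: Corollary \ref{mc_Lp_error2} follows immediately from Proposition \ref{mc_Lp_error} together with Lemma \ref{lem:kahane-const}. No additional probabilistic work is required.

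First, I would invoke Proposition \ref{mc_Lp_error} with the given $p \in [2,\infty)$, dimension $d$, sample size $n$, and i.i.d.\ random variables $X_1, \dots, X_n \colon \Omega \to \R^d$ satisfying $\E[\Norm{X_1}] < \infty$. This yields the estimate
\begin{equation}
\left(\EXPP{\Vert \Exp{X_1} - \tfrac{1}{n}\big(\smallsum_{i=1}^n X_i\big)\Vert^p}\right)^{\!1/p}
\leq
\frac{2\,\mathfrak{K}_{p,2}}{\sqrt{n}}
\left(\E\big[\Norm{X_1 - \E[X_1]}^p\big]\right)^{\!1/p},
\end{equation}
where $\mathfrak{K}_{p,2} \in (0,\infty)$ is the $(p,2)$-Kahane-Khintchine constant.

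Next, I would apply Lemma \ref{lem:kahane-const} to bound $\mathfrak{K}_{p,2} \leq \sqrt{p-1}$ for all $p \in [2,\infty)$. Substituting this into the inequality above gives
\begin{equation}
\frac{2\,\mathfrak{K}_{p,2}}{\sqrt{n}} \leq \frac{2\sqrt{p-1}}{\sqrt{n}} = 2\left[\frac{p-1}{n}\right]^{\!1/2},
\end{equation}
and chaining this with the previous bound immediately yields the conclusion of Corollary \ref{mc_Lp_error2}.

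There is no genuine obstacle here, since Proposition \ref{mc_Lp_error} already contains the full Monte Carlo analysis (decoupling via Rademacher random variables and the $L^p$-contraction through $\mathfrak{K}_{p,2}$) and Lemma \ref{lem:kahane-const} provides the explicit constant $\sqrt{p-1}$ coming from Hytönen et al.\ \cite[Theorem 6.2.4]{AnaInBS2}. The only thing worth emphasizing is that the hypothesis $p \in [2,\infty)$ is exactly what is needed for Lemma \ref{lem:kahane-const} to apply, and that the finiteness of $\E[\Norm{X_1}]$ is the input assumption for Proposition \ref{mc_Lp_error}; both hypotheses are present in the corollary. The proof is therefore a one-line composition of the two cited results.
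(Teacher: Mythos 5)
Your proof is correct and follows the paper's own proof essentially verbatim: apply Proposition \ref{mc_Lp_error} to obtain the bound with $\mathfrak{K}_{p,2}$, then use Lemma \ref{lem:kahane-const} to replace $\mathfrak{K}_{p,2}$ by $\sqrt{p-1}$ and rewrite $2\sqrt{p-1}/\sqrt{n}$ as $2[(p-1)/n]^{1/2}$. There is nothing further to add.
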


\begin{proof}[Proof of Corollary~\ref{mc_Lp_error2}]
Throughout this proof assume without loss of generality that $p > 2$ (cf.\ Lemma~\ref{L2_monte_carlo}).
Note that Proposition	~\ref{mc_Lp_error} and Corollary~\ref{lem:kahane-const} demonstrate that
\begin{equation}
\begin{split}
\left(\EXPP{  \Vert
	\Exp{X_1}
	-
	\tfrac{1}{n}
	\big(
	\smallsum_{ i = 1 }^{ n } X_{ i }
	\big)
	\Vert^p  }\right)^{\!1/p}
  &\leq
  \frac{ 2 \, \mathfrak{K}_{p,2} }{
 	\sqrt{n}
 } 
  \left( \E\big[ 
\|
X_1 
-
\E[ X_1 ]
\|^p \,
\big]
\right)^{\!1 / p }
 \\
&\leq
   \frac{ 2 \sqrt{p-1} }{
 	\sqrt{n}
 } 
  \left( \E\big[ 
\|
X_1 
-
\E[ X_1 ]
\|^p \,
\big]
\right)^{\!1 / p }
    \\&=     2 \left[\frac{p-1}{n}\right]^{\!1/2}
  \left( \E\big[ 
\|
X_1 
-
\E[ X_1 ]
\|^p \,
\big]
\right)^{\! 1 / p }.
\end{split}
\end{equation}
The proof of Corollary~\ref{mc_Lp_error2} is thus completed.
\end{proof}

\subsection{Properties of affine functions}\label{SectionAffine}

This subsection recalls in Lemmas~\ref{affine_property}--\ref{affine_representation} and Corollaries~\ref{affine_characterization}--\ref{linear_growth_affine_HilbertSchmidt} a few well-known properties for affine functions. For the sake of completeness we include in this subsection also proofs for Lemmas~\ref{affine_property}--\ref{affine_representation} and Corollaries~\ref{affine_characterization}--\ref{linear_growth_affine_HilbertSchmidt}.

\begin{lemma}
	\label{affine_property}
	Let $d, m \in \N$, $A \in \R^{m \times d}$, $b \in \R^m$ and 
	let $\varphi \colon \R^d \to \R^m$ be the function which satisfies 
	for all $x \in \R^d$ that
	\begin{equation}
	\label{affine_property:ass1}
	\varphi(x) = Ax+b.
	\end{equation}
	Then
	it holds 
	for all $x,y \in \R^d$, $\lambda \in \R$ that
	\begin{equation}
	\varphi(\lambda x + y) + \lambda \varphi(0) 
	= 
	\lambda\varphi(x) + \varphi(y).
	\end{equation}
\end{lemma}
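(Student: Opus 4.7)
The plan is a direct one-line computation: expand both sides of the claimed identity using the defining formula $\varphi(x) = Ax + b$ and compare. First I would compute
\begin{equation}
\varphi(\lambda x + y) + \lambda\varphi(0)
= A(\lambda x + y) + b + \lambda b
= \lambda Ax + Ay + b + \lambda b
\end{equation}
using only the linearity of the map $\R^d \ni z \mapsto Az \in \R^m$ and the hypothesis \eqref{affine_property:ass1} applied at the points $\lambda x + y$ and $0$. In parallel I would compute
\begin{equation}
\lambda\varphi(x) + \varphi(y)
= \lambda(Ax + b) + (Ay + b)
= \lambda Ax + \lambda b + Ay + b,
\end{equation}
again invoking \eqref{affine_property:ass1} at $x$ and $y$ together with distributivity of scalar multiplication over addition in $\R^m$. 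A termwise comparison shows the two right-hand sides agree, which yields the asserted equality.

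There is really no obstacle here; the statement is recorded mainly to fix the algebraic identity that characterizes affine maps in the form that is used in the subsequent results (in particular, as a hypothesis check against the affinity conditions imposed on $\mu_d$ and $\sigma_d$ in Theorem~\ref{thm:intro}). Accordingly I would keep the write-up to essentially the two displays above, followed by a one-sentence conclusion invoking the quantification over $x, y \in \R^d$ and $\lambda \in \R$.
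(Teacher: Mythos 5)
Your proposal is correct and uses essentially the same approach as the paper: both expand each side via $\varphi(x) = Ax + b$ and the linearity of $A$, then match terms. The only cosmetic difference is that the paper writes the computation as a single chain of equalities starting from the left-hand side, while you expand both sides separately and compare.
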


\begin{proof}[Proof of Lemma~\ref{affine_property}]
	Observe that \eqref{affine_property:ass1} assures that
	for all $x,y \in \R^d$, $\lambda \in \R$ it holds that
	\begin{equation}
	\begin{split}
	\varphi(\lambda x + y) + \lambda \varphi(0) 
	&=
	A (\lambda x + y) + b + \lambda ( A \cdot 0 + b ) \\
	&=
	\lambda (A x + b) + A y + b
	=
	\lambda\varphi(x) + \varphi(y).
	\end{split}
	\end{equation}
	The proof of Lemma~\ref{affine_property} is thus completed.
\end{proof}

\begin{lemma}
	\label{affine_representation}
	Let $d, m \in \N$, 
	$e_1, e_2, \ldots, e_d \in \R^d$ satisfy 
	$
	e_1 = (1, 0, \ldots, 0)$,
	$e_2 = (0, 1, 0, \ldots, 0),
	\ldots$,
	$e_d = (0, \ldots, 0, 1)
	$,
	let $\varphi = (\varphi_1, \varphi_2, \ldots, \varphi_m) \colon \R^d \to \R^m$ be a function which satisfies 
	for all $x,y \in \R^d$, $\lambda \in \R$ that
	\begin{equation}
	\label{affine_representation:ass1}
	\varphi(\lambda x + y) + \lambda \varphi(0) 
	= 
	\lambda\varphi(x) + \varphi(y),
	\end{equation}
	and let $A \in \R^{m \times d}$, $b \in \R^m$ satisfy
	$b = \varphi(0)$
	and
	\begin{equation}
	\label{affine_representation:ass2}
	\begin{split}
	A
	&=
	\begin{pmatrix}
	\varphi_1(e_1) - \varphi_1(0) 		&\varphi_1(e_2) - \varphi_1(0)  	&\ldots 	&\varphi_1(e_d) - \varphi_1(0)	\\
	\varphi_2(e_1) - \varphi_2(0) 		&\varphi_2(e_2) - \varphi_2(0)  	&\ldots 	&\varphi_2(e_d) - \varphi_2(0)	\\
	\vdots 												&\vdots  											&\ddots 	&\vdots 										\\
	\varphi_m(e_1) - \varphi_m(0) 		&\varphi_m(e_2) - \varphi_m(0)  	&\ldots 	&\varphi_m(e_d) - \varphi_m(0)
	\end{pmatrix} \\
	&=
	\bigg(
	\varphi(e_1) - \varphi(0) 
	\Big| \,
	\varphi(e_2) - \varphi(0) 
	\Big|
	\cdots
	\Big| \,
	\varphi(e_d) - \varphi(0)
	\bigg).
	\end{split}
	\end{equation}
	Then
	it holds
	for all $x \in \R^d$ that
	\begin{equation}
	\varphi(x) = Ax+b.
	\end{equation}
\end{lemma}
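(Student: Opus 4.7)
The plan is to define the translated function $\psi \colon \R^d \to \R^m$ by $\psi(x) := \varphi(x) - \varphi(0)$, show that $\psi$ is $\R$-linear, and then verify that $\psi(x) = Ax$ by expanding $x$ in the standard basis $e_1, \ldots, e_d$. The desired identity $\varphi(x) = Ax + b$ with $b = \varphi(0)$ then follows immediately.

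To establish linearity of $\psi$, I would extract the two usual pieces from the functional equation \eqref{affine_representation:ass1}. First, specializing $\lambda = 1$ in \eqref{affine_representation:ass1} yields $\varphi(x+y) + \varphi(0) = \varphi(x) + \varphi(y)$ for all $x,y\in\R^d$, which is equivalent to additivity of $\psi$, i.e.
\begin{equation}
\psi(x+y) = \psi(x) + \psi(y).
\end{equation}
Second, specializing $y = 0$ in \eqref{affine_representation:ass1} gives $\varphi(\lambda x) + \lambda\varphi(0) = \lambda\varphi(x) + \varphi(0)$ for all $x\in\R^d$, $\lambda\in\R$, which is equivalent to homogeneity of $\psi$, i.e. $\psi(\lambda x) = \lambda \psi(x)$. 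Hence $\psi$ is $\R$-linear.

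Once $\psi$ is known to be linear, I would write any $x = (x_1,\ldots,x_d)\in\R^d$ as $x = \sum_{i=1}^d x_i e_i$ and use linearity to deduce
\begin{equation}
\psi(x) = \smallsum_{i=1}^d x_i\, \psi(e_i) = \smallsum_{i=1}^d x_i \bigl(\varphi(e_i) - \varphi(0)\bigr).
\end{equation}
The right-hand side is exactly $Ax$ by the column-wise definition of $A$ in \eqref{affine_representation:ass2}. Combining with $b = \varphi(0)$ yields $\varphi(x) = \psi(x) + \varphi(0) = Ax + b$, as required.

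There is no real obstacle here: the only points that need mild care are the special choices of $(\lambda, y)$ in \eqref{affine_representation:ass1} used to peel off additivity and homogeneity of $\psi$, and the bookkeeping that the $i$-th column of the matrix $A$ displayed in \eqref{affine_representation:ass2} indeed equals $\varphi(e_i) - \varphi(0) \in \R^m$ so that $\sum_{i=1}^d x_i\bigl(\varphi(e_i) - \varphi(0)\bigr)$ coincides with $Ax$.
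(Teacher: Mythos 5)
Your proof is correct. You take essentially the same route as the paper, but with slightly cleaner bookkeeping: where the paper directly rewrites \eqref{affine_representation:ass1} as $\varphi(\lambda x + y) = \lambda(\varphi(x)-\varphi(0)) + \varphi(y)$ and then unwinds $\varphi\bigl(\sum_i x_i e_i\bigr)$ by an explicit coordinate-by-coordinate induction, you first package the translated map $\psi = \varphi - \varphi(0)$, extract additivity (via $\lambda=1$) and homogeneity (via $y=0$) separately to conclude $\psi$ is linear, and then invoke the standard fact that a linear map on $\R^d$ is the matrix of its values on $e_1,\ldots,e_d$. The two arguments have identical content — both boil down to the same finite expansion over the standard basis — but your decomposition into ``linearity of $\psi$'' plus linear algebra is conceptually tidier, while the paper's direct induction avoids introducing an auxiliary map and appealing (even implicitly) to the linear-map--matrix correspondence.
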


\begin{proof}[Proof of Lemma~\ref{affine_representation}]
	First, note that \eqref{affine_representation:ass1} implies that
	for all $x,y \in \R^d$, $\lambda \in \R$ it holds that
	\begin{equation}
	\varphi(\lambda x + y) 
	= 
	\lambda(\varphi(x) - \varphi(0)) + \varphi(y).
	\end{equation}
	This, induction, and \eqref{affine_representation:ass2} assure that 
	for all $x = (x_1,x_2, \ldots, x_d) \in \R^d$ it holds that
	\begin{equation}
	\begin{split}
	\varphi(x)
	&=	
	\varphi
	\!\left(
	\smallsum\limits_{i=1}^d x_i e_i
	\right) 
	=
	\varphi
	\!\left(
	x_1 e_1 
	+
	\smallsum\limits_{i=2}^d x_i e_i
	\right) \\
	&=
	x_1 (\varphi(e_1) - \varphi(0)) 
	+ 
	\varphi
	\!\left(
	\smallsum\limits_{i=2}^d x_i e_i
	\right) \\
	&=
	\left[
	\smallsum\limits_{i=1}^{\min\{1,d\}} x_i (\varphi(e_i) - \varphi(0))	
	\right]
	+ 
	\varphi \!\left(\smallsum\limits_{i=\min\{1,d\}+1}^d x_i e_i\right) \\
	&=
	\left[
	\sum_{i=1}^{\min\{2,d\}} x_i (\varphi(e_i) - \varphi(0))	
	\right]
	+ 
	\varphi \!\left(\smallsum\limits_{i=\min\{2,d\}+1}^d x_i e_i\right) \\
	&= \dots \\
	&=
	\left[
	\smallsum\limits_{i=1}^{\min\{d,d\}} x_i (\varphi(e_i) - \varphi(0))	
	\right]
	+ 
	\varphi \!\left(\smallsum\limits_{i=\min\{d,d\}+1}^d x_i e_i\right) \\
	&=
	\left[
	\smallsum\limits_{i=1}^{d} x_i (\varphi(e_i) - \varphi(0))	
	\right]
	+ 
	\varphi (0) \\
	&=
	A x + b.
	\end{split}
	\end{equation}
	The proof of Lemma~\ref{affine_representation} is thus completed.
\end{proof}

\begin{cor}
	\label{affine_characterization}
	Let $d, m \in \N$ and let $\varphi \colon \R^d \to \R^m$ be a function. 
	Then the following two statements are equivalent:
	\begin{enumerate}[(i)]
		\item \label{affine_characterization:item1} 
		There exist $A \in \R^{m \times d}$, $b \in \R^m$ such that 
		for all $x \in \R^d$ it holds that
		\begin{equation}
		\varphi(x) = Ax+b.
		\end{equation}
		
		\item \label{affine_characterization:item2}  
		It holds 
		for all $x,y \in \R^d$, $\lambda \in \R$ that
		\begin{equation}
		\varphi(\lambda x + y) + \lambda \varphi(0) 
		=
		\lambda\varphi(x) + \varphi(y).
		\end{equation}
	\end{enumerate}
\end{cor}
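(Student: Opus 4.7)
The plan is to prove the equivalence by establishing the two implications separately, and both directions are already essentially contained in the two preceding lemmas, so the proof will be a short assembly.

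First I would show that \eqref{affine_characterization:item1} implies \eqref{affine_characterization:item2}. Assuming that there exist $A \in \R^{m \times d}$ and $b \in \R^m$ such that $\varphi(x) = Ax + b$ for all $x \in \R^d$, I would apply Lemma~\ref{affine_property} directly (with $d$, $m$, $A$, $b$, $\varphi$ in the notation of Lemma~\ref{affine_property}) to conclude that $\varphi(\lambda x + y) + \lambda \varphi(0) = \lambda \varphi(x) + \varphi(y)$ for all $x, y \in \R^d$ and $\lambda \in \R$, which is precisely \eqref{affine_characterization:item2}.

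Next I would show the converse implication that \eqref{affine_characterization:item2} implies \eqref{affine_characterization:item1}. Assuming the functional equation in \eqref{affine_characterization:item2}, I would introduce the standard basis vectors $e_1, e_2, \ldots, e_d \in \R^d$, set $b := \varphi(0)$, and define the matrix $A \in \R^{m \times d}$ whose columns are $\varphi(e_j) - \varphi(0)$ for $j \in \{1, 2, \ldots, d\}$, in accordance with \eqref{affine_representation:ass2}. Lemma~\ref{affine_representation} (applied with this choice of $A$ and $b$) then yields $\varphi(x) = Ax + b$ for every $x \in \R^d$, which establishes \eqref{affine_characterization:item1}.

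There is essentially no obstacle here, since the real work has been done in Lemmas~\ref{affine_property} and \ref{affine_representation}. The only care needed is to verify that the hypotheses of those lemmas match the form stated in \eqref{affine_characterization:item1} and \eqref{affine_characterization:item2}, which is immediate by inspection. Combining the two implications completes the proof of Corollary~\ref{affine_characterization}.
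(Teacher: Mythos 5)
Your proposal is correct and follows exactly the same route as the paper: Lemma~\ref{affine_property} gives the implication \eqref{affine_characterization:item1}~$\Rightarrow$~\eqref{affine_characterization:item2}, and Lemma~\ref{affine_representation} gives the converse. The extra detail you supply about constructing $A$ and $b$ from $\varphi(e_j)-\varphi(0)$ and $\varphi(0)$ is a faithful unpacking of what Lemma~\ref{affine_representation} already provides.
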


\begin{proof}[Proof of Corollary~\ref{affine_characterization}]
	Note that Lemma~\ref{affine_property} establishes that 
	(\eqref{affine_characterization:item1} $\Rightarrow$ \eqref{affine_characterization:item2}).
	In addition, observe that Lemma~\ref{affine_representation} demonstrates that
	(\eqref{affine_characterization:item2} $\Rightarrow$ \eqref{affine_characterization:item1}).
	The proof of Corollary \ref{affine_characterization} is thus completed.
\end{proof}

\begin{cor}
	\label{linear_growth_affine}
	Let $d, m \in \N$, 
	let $\varphi \colon \R^d \to \R^m$ be a function which satisfies
	for all $x,y \in \R^d$, $\lambda \in \R$ that
	\begin{equation}
	\varphi(\lambda x + y) + \lambda \varphi(0) 
	=
	\lambda\varphi(x) + \varphi(y),
	\end{equation}
	and
	for every $k \in \N$ 
	let $\left\| \cdot \right\|_{\R^k} \colon \R^k \to [0,\infty)$ be the $k$-dimensional Euclidean norm.
	Then 
	there exists $c \in [0,\infty)$ such that
	for all $x, y \in \R^d$ it holds that
	\begin{equation}
	\label{linear_growth_affine:concl1}
	\norm{\varphi(x)}_{\R^m} 
	\leq 
	c( 1 + \norm{x}_{\R^d})
	\quad \text{and} \quad
	\norm{\varphi(x) - \varphi(y)}_{\R^m} 
	\leq	
	c \norm{x - y}_{\R^d}.
	\end{equation}
\end{cor}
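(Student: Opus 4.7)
The plan is to reduce the statement to the representation $\varphi(x) = Ax + b$ and then carry out two straightforward matrix-norm estimates. First I would invoke Corollary~\ref{affine_characterization} (equivalence of the affine functional equation with the existence of a matrix representation) to obtain a matrix $A \in \R^{m \times d}$ and a vector $b \in \R^m$ such that $\varphi(x) = Ax + b$ for every $x \in \R^d$. At this point the problem is purely linear algebra on Euclidean spaces.

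For the Lipschitz bound, I would write $\varphi(x) - \varphi(y) = A(x - y)$ and use the standard fact that $\norm{Av}_{\R^m} \le \vertiii{A}_{\operatorname{op}} \norm{v}_{\R^d}$ for every $v \in \R^d$, where $\vertiii{A}_{\operatorname{op}} \in [0,\infty)$ denotes the operator norm of $A$ (equivalently, one can dominate by the Hilbert–Schmidt norm of $A$, which is finite because $A$ has finitely many entries). For the linear growth bound, I would split
\begin{equation}
\norm{\varphi(x)}_{\R^m} = \norm{Ax + b}_{\R^m} \le \vertiii{A}_{\operatorname{op}} \norm{x}_{\R^d} + \norm{b}_{\R^m}
\end{equation}
and then take the constant
\begin{equation}
c = \max\!\big\{ \vertiii{A}_{\operatorname{op}},\, \norm{b}_{\R^m} \big\} \in [0,\infty),
\end{equation}
so that simultaneously $\norm{\varphi(x)}_{\R^m} \le c(1 + \norm{x}_{\R^d})$ and $\norm{\varphi(x) - \varphi(y)}_{\R^m} \le c\norm{x-y}_{\R^d}$ for all $x,y \in \R^d$, which is \eqref{linear_growth_affine:concl1}.

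There is essentially no obstacle here; the only thing one has to be mildly careful about is to justify the operator-norm estimate cleanly (for instance via the Cauchy–Schwarz inequality applied row by row, giving $\norm{Ax}_{\R^m}^2 \le \big(\smallsum_{i=1}^{m}\smallsum_{j=1}^{d} A_{i,j}^2\big) \norm{x}_{\R^d}^2$), which yields a finite bound because $A$ has finitely many real entries. The conversion from the matrix representation to the stated estimates is otherwise immediate, and picking $c$ as above completes the proof.
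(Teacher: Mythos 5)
Your proof is correct and follows essentially the same route as the paper: invoke Corollary~\ref{affine_characterization} to get $\varphi(x)=Ax+b$, take $c=\max\{\sup_{v\neq 0}\norm{Av}_{\R^m}/\norm{v}_{\R^d},\norm{b}_{\R^m}\}$, and conclude by the triangle inequality and the operator-norm bound. The paper's constant is exactly your $\max\{\vertiii{A}_{\operatorname{op}},\norm{b}_{\R^m}\}$, so the two arguments coincide.
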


\begin{proof}[Proof of Corollary~\ref{linear_growth_affine}]
	Throughout this proof 
	let $A \in \R^{m \times d}$, $b \in \R^m$ satisfy
	for all $x \in \R^d$ that
	\begin{equation}
	\label{linear_growth_affine:setting1}
	\varphi(x) = Ax+b
	\end{equation}
	(cf.\ Corollary~\ref{affine_characterization})
	and let $c \in [0,\infty)$ be given by
	\begin{equation}
	\label{linear_growth_affine:setting2}
	c
	=
	\max \left\{
	\left[\sup\nolimits_{v \in \R^d \backslash \{ 0 \}} \tfrac{ \norm{Av}_{\R^m}  }{ \norm{v}_{\R^d}}\right], \norm{b}_{\R^m}
	\right\}.
	\end{equation}
	Note that \eqref{linear_growth_affine:setting1} and \eqref{linear_growth_affine:setting2} assure that 
	for all $x \in \R^d$ it holds that
	\begin{equation}
	\label{linear_growth_affine:eq1}
	\begin{split}
	\norm{\varphi(x)}_{\R^m}
	&= 
	\norm{Ax+b}_{\R^m} 
	\leq
	\norm{Ax}_{\R^m}  + \norm{b}_{\R^m} \\
	&\leq
	\left[ 
	\sup\nolimits_{v \in \R^d \backslash \{ 0 \}} \tfrac{ \norm{Av}_{\R^m}  }{ \norm{v}_{\R^d}}
	\right]  
	\norm{x}_{\R^d} 
	+ 
	\norm{b}_{\R^m} 
	\leq	
	c ( \norm{x}_{\R^d}  + 1).
	\end{split}
	\end{equation}
	Furthermore, observe that \eqref{linear_growth_affine:setting1} and \eqref{linear_growth_affine:setting2} imply that 
	for all $x, y \in \R^d$ it holds that
	\begin{equation}
	\begin{split}
	\norm{\varphi(x) - \varphi(y)}_{\R^m}
	&= 
	\norm{(Ax+b) - (Ay+b)}_{\R^m} 
	=
	\norm{A(x - y)}_{\R^m} \\
	&\leq
	\left[ 
	\sup\nolimits_{v \in \R^d \backslash \{ 0 \}} \tfrac{ \norm{Av}_{\R^m}  }{ \norm{v}_{\R^d}}
	\right]  
	\norm{x-y}_{\R^d} \\
	&\leq	
	c \norm{x-y}_{\R^d}.
	\end{split}
	\end{equation}
	Combining this and \eqref{linear_growth_affine:eq1} establishes \eqref{linear_growth_affine:concl1}.
	The proof of Corollary~\ref{linear_growth_affine} is thus completed.
\end{proof}

\begin{cor}
	\label{linear_growth_affine_HilbertSchmidt}
	Let $d, k,m \in \N$, 
	let $\sigma \colon \R^d \to \R^{k\times m}$ be a function which satisfies
	for all $x,y \in \R^d$, $\lambda \in \R$ that
	\begin{equation}\label{affineConditionHilbertSchmidt}
	\sigma(\lambda x + y) + \lambda \sigma(0) 
	=
	\lambda\sigma(x) + \sigma(y),
	\end{equation}
	let $\left\| \cdot \right\| \colon \R^d \to [0,\infty)$ be the $d$-dimensional Euclidean norm, and let $\HSNorm{\cdot}\colon$ $\R^{k\times m} \to [0,\infty)$ be the Hilbert-Schmidt norm on $\R^{k\times m}$.
	Then 
	there exists $c \in [0,\infty)$ such that
	for all $x, y \in \R^d$ it holds that
	\begin{equation}
	\label{linear_growth_affine_HilbertSchmidt:concl1}
	\HSNorm{{\sigma(x)}}
	\leq 
	c( 1 + \norm{x})
	\qandq
	\HSNorm{{\sigma(x)-\sigma(y)}}
	\leq	
	c \norm{x - y}.
	\end{equation}
\end{cor}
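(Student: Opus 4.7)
The plan is to reduce the Hilbert--Schmidt version of the statement to the Euclidean version already established in Corollary~\ref{linear_growth_affine}. The key observation is that the Hilbert--Schmidt norm on $\R^{k\times m}$ coincides with the Euclidean norm on $\R^{km}$ under the canonical identification of matrices with their vectorizations, so the claim is essentially a rewriting of Corollary~\ref{linear_growth_affine} in matrix notation.

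More concretely, I would fix a bijective linear isomorphism $\operatorname{vec}\colon \R^{k\times m}\to \R^{km}$ (for instance, the map that stacks the columns of a matrix) and note that it is an isometry from $(\R^{k\times m},\HSNorm{\cdot})$ onto $(\R^{km},\|\cdot\|_{\R^{km}})$, where $\|\cdot\|_{\R^{km}}$ denotes the $(km)$-dimensional Euclidean norm. Then I would define $\tilde{\sigma}\colon \R^d \to \R^{km}$ by $\tilde{\sigma}(x) = \operatorname{vec}(\sigma(x))$ for $x\in \R^d$. Since $\operatorname{vec}$ is linear, the affine identity \eqref{affineConditionHilbertSchmidt} for $\sigma$ transfers verbatim to $\tilde{\sigma}$: for all $x,y\in \R^d$ and $\lambda\in\R$,
\begin{equation}
\tilde{\sigma}(\lambda x+y)+\lambda\tilde{\sigma}(0)
=\operatorname{vec}\!\big(\sigma(\lambda x+y)+\lambda\sigma(0)\big)
=\operatorname{vec}\!\big(\lambda\sigma(x)+\sigma(y)\big)
=\lambda\tilde{\sigma}(x)+\tilde{\sigma}(y).
\end{equation}

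Corollary~\ref{linear_growth_affine} applied to $\tilde{\sigma}$ (with $m$ there replaced by $km$) then yields a constant $c\in[0,\infty)$ such that for all $x,y\in \R^d$,
\begin{equation}
\|\tilde{\sigma}(x)\|_{\R^{km}}\le c(1+\|x\|)
\qquad\text{and}\qquad
\|\tilde{\sigma}(x)-\tilde{\sigma}(y)\|_{\R^{km}}\le c\|x-y\|.
\end{equation}
Since $\operatorname{vec}$ is an isometry, $\|\tilde{\sigma}(x)\|_{\R^{km}}=\HSNorm{\sigma(x)}$ and $\|\tilde{\sigma}(x)-\tilde{\sigma}(y)\|_{\R^{km}}=\HSNorm{\sigma(x)-\sigma(y)}$, so \eqref{linear_growth_affine_HilbertSchmidt:concl1} follows immediately.

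There is no genuine obstacle here; the only thing to check carefully is that the vectorization map is indeed an isometry between the Hilbert--Schmidt norm and the Euclidean norm, which is immediate from the definition $\HSNorm{A}^2=\sum_{i,j}|A_{i,j}|^2$. The rest is a one-line transfer through a linear isometry.
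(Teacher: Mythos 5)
Your proposal is correct and follows essentially the same route as the paper: the paper's proof also reduces to Corollary~\ref{linear_growth_affine} by defining a map $\varphi\colon\R^d\to\R^{mk}$ that stacks the columns $\sigma(x)e_j$ (i.e.\ precisely the vectorization of $\sigma(x)$), verifies the affine identity for $\varphi$, and uses that the Hilbert--Schmidt norm of a matrix equals the Euclidean norm of its vectorization.
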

\begin{proof}[Proof of Corollary~\ref{linear_growth_affine_HilbertSchmidt}]
	Throughout this proof for every $\mathfrak{d}\in\N$ let $\left\| \cdot \right\|_{\R^\mathfrak{d}} \colon \R^\mathfrak{d} \allowbreak\to [0,\infty)$ be the $\mathfrak{d}$-dimensional Euclidean norm, let $e_1, e_2, $$\ldots, e_m \in \R^m$ satisfy
	$
	e_1 = (1, 0, \ldots, 0)$,
	$e_2 = (0, 1, 0, \ldots, 0),
	\ldots$,
	$e_m = (0, \ldots, 0, 1)
	$,
	and let $\varphi \colon \R^d \to \R^{(m k)}$ be the function which satisfies
	for all $x\in \R^d$ that 
	\begin{equation}\label{defPhiMatrix}
	\varphi(x)=\begin{pmatrix} (\sigma(x))e_1 \\(\sigma(x))e_2\\\ldots \\(\sigma(x))e_m\end{pmatrix}.
	\end{equation}
	Note that \eqref{affineConditionHilbertSchmidt} and \eqref{defPhiMatrix} ensure that for all $x,y \in \R^d$, $\lambda \in \R$ it holds that
	\begin{equation}
	\begin{split}
	&\varphi(\lambda x + y) + \lambda \varphi(0) 
	= \begin{pmatrix} (\sigma(\lambda x + y))e_1 \\(\sigma(\lambda x + y))e_2\\\ldots \\(\sigma(\lambda x + y))e_m\end{pmatrix}+\lambda \begin{pmatrix} (\sigma(0))e_1 \\(\sigma(0))e_2\\\ldots \\(\sigma(0))e_m\end{pmatrix}
	\\&= \begin{pmatrix} (\sigma(\lambda x + y))e_1+\lambda (\sigma(0))e_1 \\(\sigma(\lambda x + y))e_2+\lambda (\sigma(0))e_2\\\ldots \\(\sigma(\lambda x + y))e_m+\lambda (\sigma(0))e_m\end{pmatrix}
	= \begin{pmatrix} \left[\sigma(\lambda x + y)+\lambda \sigma(0)\right]e_1 \\\left[\sigma(\lambda x + y)+\lambda \sigma(0)\right]e_2\\\ldots \\\left[\sigma(\lambda x + y)+\lambda \sigma(0)\right]e_m\end{pmatrix}	
	\\&= \begin{pmatrix} \left[\lambda \sigma(x) + \sigma(y)\right]e_1 \\\left[\lambda \sigma(x) + \sigma(y)\right]e_2\\\ldots \\\left[\lambda \sigma(x) + \sigma(y)\right]e_m\end{pmatrix}	
	=\lambda \begin{pmatrix} (\sigma(x))e_1 \\(\sigma(x))e_2\\\ldots \\(\sigma(x))e_m\end{pmatrix}+\begin{pmatrix} (\sigma(y))e_1 \\(\sigma(y))e_2\\\ldots \\(\sigma(y))e_m\end{pmatrix}	
	=\lambda\varphi(x) + \varphi(y).
	\end{split}
	\end{equation}
	This and Corollary~\ref{linear_growth_affine}
	(with $d=d$, $m=mk$, $\varphi=\varphi$ 
	in the notation of Corollary~\ref{linear_growth_affine})
	imply that there exists $c\in [0,\infty)$ such that for all $x, y \in \R^d$ it holds that
	\begin{equation}
	\label{linear_growth_affineMatrix:concl1}
	\norm{\varphi(x)}_{\R^{(m k)}} 
	\leq 
	c( 1 + \norm{x}_{\R^d})
	\qandqShort
	\norm{\varphi(x) - \varphi(y)}_{\R^{(m k)}} 
	\leq	
	c \norm{x - y}_{\R^d}.
	\end{equation} 
	Furthermore, note that for all $x,y \in \R^d$ it holds that 
	\begin{equation}
	\HSNorm{{\sigma(x)}}^2=\sum_{j=1}^m \norm{[\sigma(x)]e_j}_{\R^k}^2=\norm{\varphi(x)}_{\R^{(m k)}}^2
	\end{equation}
	and 
	\begin{equation}
	\HSNorm{{\sigma(x)-\sigma(y)}}^2=\sum_{j=1}^m \norm{[\sigma(x)-\sigma(y)]e_j}_{\R^k}^2=\norm{\varphi(x)-\varphi(y)}_{\R^{(m k)}}^2.
	\end{equation}	
	Combining this with \eqref{linear_growth_affineMatrix:concl1} ensures that for all $x, y \in \R^d$ it holds that
	\begin{equation}
	\HSNorm{{\sigma(x)}}=\norm{\varphi(x)}_{\R^{(m k)}} 
	\leq 
	c( 1 + \norm{x}_{\R^d})
	\end{equation}
	and
	\begin{equation}
	\HSNorm{{\sigma(x)-\sigma(y)}}=\norm{\varphi(x) - \varphi(y)}_{\R^{(m k)}} 
	\leq	
	c \norm{x - y}_{\R^d}.
	\end{equation} 
	The proof of Corollary~\ref{linear_growth_affine_HilbertSchmidt} is thus completed.
\end{proof}

\subsection[A priori estimates for stochastic differential equations (SDEs)]{A priori estimates for solutions of stochastic differential equations}\label{SectionMomentEstimates}

In this subsection we establish in Proposition~\ref{moments_of_solution_of_SDE} below an elementary a priori estimate for solutions of SDEs with at most linearly growing coefficient functions (see \eqref{moments_of_solution_of_SDE:ass1} in Proposition~\ref{moments_of_solution_of_SDE} below for details). 
Our proof of Proposition~\ref{moments_of_solution_of_SDE} employs 
	the Gronwall integral inequality (see Lemma~\ref{gronwall} below), 
	a special case of Minkowksi's integral inequality (see Lemma~\ref{Minkowski} below), and 
	the Burkholder-Davis-Gundy type inequality in Da Prato \& Zabczyk \cite[Lemma 7.7]{DaPratoZabczyk92} (see Lemma~\ref{lem-daPratoZabzcyk} below). 
The Gronwall inequality in Lemma~\ref{gronwall} is a well-known result in the scientific literature and its proof is therefore omitted (cf., e.g., Lemma 2.6 in Andersson et al.\ \cite{Andersson16existence} and Lemma 7.1.1 in Henry \cite{Henry81geometric}).



\begin{lemma}
\label{gronwall}
Let $\alpha,\beta, T\in [0,\infty)$ and 
let $f \colon [0,T] \to \R$ be a $\mathcal{B}([0, T]) / \mathcal{B}(\R)$-measurable function which satisfies for all $t \in [0,T]$ that $\int_0^T |f(s)| \, ds  < \infty$ and 
\begin{equation}
\label{gronwall:ass1}
	f(t) \leq  \alpha + \beta\int_0^t  f(s) \, ds.
\end{equation}
Then
it holds 
for all $t \in [0,T]$ that
\begin{equation}
	f(t) \leq \alpha e^{\beta t}.
\end{equation}
 \end{lemma}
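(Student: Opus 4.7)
\textbf{Proof plan for Lemma \ref{gronwall}.} My plan is to convert the integral inequality into a differential inequality for the auxiliary function $u \colon [0,T] \to \R$ defined by $u(t) = \alpha + \beta \int_0^t f(s)\,ds$, which is absolutely continuous thanks to the integrability hypothesis $\int_0^T |f(s)|\,ds < \infty$. The degenerate case $\beta = 0$ will be handled separately and is immediate, since then the assumption already reads $f(t) \leq \alpha = \alpha e^{\beta t}$.

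For $\beta > 0$, the Lebesgue differentiation theorem yields $u'(t) = \beta f(t)$ for almost every $t \in [0,T]$. Combined with the hypothesis $f(t) \leq u(t)$, this upgrades to the a.e.\ differential inequality $u'(t) \leq \beta u(t)$. I then multiply by the integrating factor $e^{-\beta t}$ to obtain $\frac{d}{dt}\bigl(e^{-\beta t} u(t)\bigr) \leq 0$ almost everywhere. Since $e^{-\beta t} u(t)$ is itself absolutely continuous (as the product of a smooth and an absolutely continuous function), integrating from $0$ to $t$ and invoking the fundamental theorem of calculus for Lebesgue integrals yields $e^{-\beta t} u(t) \leq u(0) = \alpha$, i.e.\ $u(t) \leq \alpha e^{\beta t}$. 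The conclusion $f(t) \leq \alpha e^{\beta t}$ then follows from $f(t) \leq u(t)$.

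The only subtle point is passing from the almost-everywhere differential inequality to its integrated form, which is justified by absolute continuity of $e^{-\beta t} u(t)$. A more elementary alternative that sidesteps any differentiation would be to iterate the hypothesis directly: substituting $f(s) \leq \alpha + \beta \int_0^s f(r)\,dr$ back into $\int_0^t f(s)\,ds$ and applying Fubini inductively yields, for every $n \in \N$, the estimate $\int_0^t f(s)\,ds \leq \sum_{k=1}^n \alpha \beta^{k-1} t^k/k! + R_n(t)$ with a remainder $R_n(t)$ that vanishes as $n \to \infty$, using boundedness of $s \mapsto \int_0^s f(r)\,dr$ on $[0,T]$. Summing the resulting series to $\alpha(e^{\beta t}-1)/\beta$ and adding back $\alpha$ recovers the same bound $\alpha e^{\beta t}$. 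I would present the integrating-factor argument, as it is shorter and mirrors the proof structure used elsewhere in the paper.
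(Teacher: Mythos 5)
Your proof is correct and takes essentially the same approach as the paper: introduce $u(t) = \alpha + \beta\int_0^t f(s)\,ds$, observe $f \leq u$, use absolute continuity of $u$ with $u'(t) = \beta f(t)$ a.e., and apply the integrating factor $e^{-\beta t}$. The paper phrases the final step via the integration-by-parts formula for absolutely continuous functions (writing $u(t)e^{-\beta t} = \alpha + \int_0^t \beta e^{-\beta s}[f(s)-u(s)]\,ds \leq \alpha$) rather than as "nonpositive a.e.\ derivative of $e^{-\beta t}u(t)$," but this is the same computation.
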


The next result, Lemma~\ref{Minkowski} below, follows, e.g., from Garling \cite[Corollary 5.4.2]{Garling} or Jentzen \& Kloeden \cite[Corollary A.1 in Appendix A]{JentzenKloeden11}.

\begin{lemma}[Moments of pathwise integrals]
\label{Minkowski}
Let $T \in (0,\infty)$, $p \in [1,\infty)$, 
let $(\Omega, \mathcal{F},\P)$ be a probability space, 
and let $X \colon [0, T] \times \Omega \to [0,\infty)$ be a $(\mathcal{B}([0, T])\otimes\mathcal{F}) / \mathcal{B}([0,\infty))$-measurable function.
Then 
it holds 
for all $t \in [0, T]$ that
\begin{equation}
	\left(
		\Exp{ 
			\big|
				\textint_0^t  X_s  \, ds
			\big|^p 
		}
	\right)^{ \! \! \nicefrac{1}{p}}
\leq
	\int_0^t
	\left(
		\EXPP{
				 | X_s |^p
		}
	\right)^{\! \nicefrac{1}{p}}
	ds.
\end{equation}
\end{lemma}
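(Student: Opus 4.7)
The statement is Minkowski's integral inequality for nonnegative measurable functions, and, as the excerpt itself notes, it is a direct consequence of standard references such as Garling \cite[Corollary 5.4.2]{Garling} or Jentzen \& Kloeden \cite[Corollary A.1]{JentzenKloeden11}. My plan is thus either to invoke one of these references verbatim or, for a self-contained argument, to proceed via duality of $L^p$ spaces together with Tonelli's theorem, which I sketch next.

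First I would dispose of the trivial case $p = 1$: Tonelli's theorem applied to the nonnegative $(\mathcal{B}([0,T])\otimes\mathcal{F})/\mathcal{B}([0,\infty))$-measurable function $X$ gives $\E[\int_0^t X_s \, ds] = \int_0^t \E[X_s] \, ds$ for all $t \in [0,T]$, which is exactly the claim with $p = 1$. Also, before proceeding, I would record that $s \mapsto X_s(\omega)$ is measurable for each $\omega$, so the pathwise integral $\int_0^t X_s \, ds \in [0,\infty]$ is a well-defined $\mathcal{F}/\mathcal{B}([0,\infty])$-measurable random variable (again by Tonelli), and that $s \mapsto (\E[|X_s|^p])^{1/p}$ is $\mathcal{B}([0,T])/\mathcal{B}([0,\infty])$-measurable (by Tonelli applied to $X^p$), so both sides of the inequality make sense as elements of $[0,\infty]$.

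For $p \in (1,\infty)$ I would set $q = p/(p-1)$ and use the duality characterization
\begin{equation*}
\left(\E\big[Z^p\big]\right)^{1/p} = \sup\!\left\{ \E[YZ] : Y \colon \Omega \to [0,\infty) \text{ measurable}, \ \E[Y^q] \leq 1 \right\}
\end{equation*}
for any nonnegative measurable $Z$, applied to $Z = \int_0^t X_s\,ds$. For any admissible $Y$, Tonelli's theorem (applied to the nonnegative measurable function $(s,\omega) \mapsto Y(\omega) X_s(\omega)$) and Hölder's inequality yield
\begin{equation*}
\E\big[Y\textint_0^t X_s\,ds\big] = \int_0^t \E[YX_s]\,ds \leq \int_0^t \left(\E[X_s^p]\right)^{1/p}\left(\E[Y^q]\right)^{1/q} ds \leq \int_0^t \left(\E[X_s^p]\right)^{1/p} ds.
\end{equation*}
Taking the supremum over all such $Y$ on the left-hand side would complete the proof.

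The only subtlety, and arguably the main obstacle, is the measurability bookkeeping — in particular making sure that the pathwise integral is a bona fide random variable and that the duality formula applies in $[0,\infty]$ without any integrability assumption on $Z$; this is why both sides are allowed to equal $+\infty$, and the inequality is trivial in that degenerate case. Everything else reduces to Tonelli and Hölder, so the overall argument is short.
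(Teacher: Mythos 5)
Your proposal is correct, and your primary suggestion — invoking Garling \cite[Corollary 5.4.2]{Garling} or Jentzen \& Kloeden \cite[Corollary A.1]{JentzenKloeden11} — is exactly what the paper does: it states that Lemma~\ref{Minkowski} follows from these references and gives no further argument. Your self-contained duality proof (Tonelli for nonnegative product-measurable functions, Hölder, then the $L^p$--$L^q$ duality characterization of $\|\cdot\|_p$ on $[0,\infty]$-valued random variables over a probability space, valid even when the norm is infinite via truncation) is also correct and, notably, works for arbitrary jointly measurable $X$ without any sample-path regularity, which is the appropriate level of generality for the hypotheses as stated.
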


The next result, Lemma~\ref{lem-daPratoZabzcyk} below, is, e.g., proved as Lemma 7.7 in Da Prato \& Zabczyk \cite{DaPratoZabczyk92}.

\begin{lemma}\label{lem-daPratoZabzcyk}
	Let $d,m \in \N$, $p \in [2,\infty)$, $T \in (0,\infty)$, 
	let $\norm{\cdot} \colon \R^d \to [0,\infty)$ be the $d$-dimensional Euclidean norm, 
	let $\HSNorm{\cdot}\colon \R^{d \times m} \to [0,\infty)$ be the Hilbert-Schmidt norm on $\R^{d \times m}$,
	let $(\Omega,\allowbreak \mathcal{F},\allowbreak \P,\allowbreak(\mathbbm{F}_t)_{t \in [0,T]})$ be a filtered probability space which fulfils the usual conditions, 
	let $W\colon [0,T]\times \Omega \to \R^m$ be a standard $(\mathbbm{F}_t)_{t \in [0,T]}$-Brownian motion, and let 
	$X\colon [0,T]\times \Omega\to \R^{d\times m}$ be an 
	$(\mathbbm{F}_t)_{t \in [0,T]}$-predictable
	stochastic process which satisfies 
	$\P\big(\textint_0^T \HSNorm{{X_s}}^2\,ds<\infty\big)=1.$
Then 
it holds
for all $t \in [0, T]$, $s\in[0,t]$ that
		\begin{equation}
	\left(\Exp{ 
		\norm{ 
			\int_s^t X_r \, dW_r 
		}^p 
	}
	\right)^{\!\nicefrac{1}{p}}
	\leq
	\left[\frac{p(p-1)}{2}\right]^{\nicefrac{1}{2}}
	\left[
	\int_s^t
	\big(
	\Exp{ 
		\HSNorm{ 
			{X_r}
		}^p 
	}
	\big)^{\!\nicefrac{2}{p}} 
	\, dr
	\right]^{\!\nicefrac{1}{2}}.
	\end{equation}
\end{lemma}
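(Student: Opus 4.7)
The plan is to apply Itô's formula to $f(x) = \|x\|^p$ composed with the vector-valued local martingale $M_t := \int_s^t X_r\,dW_r$, and then close the resulting energy inequality using Hölder's inequality together with the submartingale property of $(\|M_t\|^p)_{t \in [s,T]}$. The argument breaks into three stages: localization so that all stochastic integrals are true $L^p$-martingales, derivation of an integral inequality for $\phi(t) := \Exp{\|M_t\|^p}$, and a monotonicity-plus-Hölder trick to solve it.

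For the localization I would set $\tau_n := \inf\{t \in [s,T] : \int_s^t \HSNorm{{X_r}}^2\,dr \geq n\} \wedge T$ (with $\inf \emptyset := T$) and work with $X^{(n)}_r := X_r\,\mathbbm{1}_{[s,\tau_n]}(r)$, so that the corresponding stochastic integral $M^{(n)}$ is a true $L^p$-martingale with finite $p$-th moment; at the end of the proof I let $n \to \infty$, using Fatou on the left and monotone convergence on the right. For the Itô step, observe that $f(x) = \|x\|^p$ is $C^2$ on all of $\R^d$ for every $p \geq 2$ (the apparent singularities of the Hessian at the origin cancel when $p \geq 2$), and a direct computation gives
\begin{equation}
\operatorname{Trace}\!\bigl((\operatorname{Hess} f)(x)\, y y^{\ast}\bigr) = p\|x\|^{p-2}\HSNorm{{y}}^2 + p(p-2)\|x\|^{p-4}\|y^{\ast}x\|^2 \leq p(p-1)\|x\|^{p-2}\HSNorm{{y}}^2,
\end{equation}
where the last inequality uses Cauchy--Schwarz in the form $\|y^{\ast} x\|^2 \leq \HSNorm{{y}}^2 \|x\|^2$. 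Applying Itô's formula to $f(M^{(n)})$ and taking expectations (the martingale part vanishes after localization) yields
\begin{equation}
\phi(t) \leq \frac{p(p-1)}{2} \int_s^t \Exp{\|M^{(n)}_r\|^{p-2}\HSNorm{{X^{(n)}_r}}^2}\,dr.
\end{equation}

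To close the loop I would apply Hölder's inequality with conjugate exponents $\frac{p}{p-2}$ and $\frac{p}{2}$ to the integrand, which produces the factor $\phi(r)^{(p-2)/p}$. Since $x \mapsto \|x\|^p$ is convex, $(\|M^{(n)}_t\|^p)_t$ is a submartingale by Jensen, hence $\phi$ is non-decreasing and $\phi(r)^{(p-2)/p} \leq \phi(t)^{(p-2)/p}$ for all $r \in [s,t]$. Pulling this factor out of the integral and dividing through by $\phi(t)^{(p-2)/p}$ (the case $\phi(t) = 0$ being trivial) gives
\begin{equation}
\phi(t)^{2/p} \leq \frac{p(p-1)}{2} \int_s^t \bigl(\Exp{\HSNorm{{X^{(n)}_r}}^p}\bigr)^{2/p}\,dr,
\end{equation}
which is the claim after a square root and removal of localization. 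The case $p = 2$ is immediate, since the energy step above reduces to Itô's isometry. The principal obstacle is technical bookkeeping---justifying that the stochastic integral piece in Itô's formula is a true martingale (which is what forces the localization), verifying the Hessian estimate above, and checking that both sides of the localized inequality converge appropriately as $n \to \infty$; the Hölder-plus-monotonicity closure itself is the algebraic heart of the argument and is surprisingly clean once set up.
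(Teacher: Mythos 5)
The paper does not actually prove this lemma itself --- it states it and points the reader to Lemma~7.7 of Da Prato \& Zabczyk \cite{DaPratoZabczyk92} --- and your proposal reproduces the core of that argument: It\^o's formula applied to $\|\cdot\|^p$, the Hessian trace estimate $\operatorname{Trace}((\operatorname{Hess}\|\cdot\|^p)(x)\,yy^{\ast}) \leq p(p-1)\|x\|^{p-2}\HSNorm{{y}}^2$, H\"older with exponents $p/(p-2)$ and $p/2$, and the submartingale monotonicity closure. The Hessian estimate, the $C^2$-regularity of $\|\cdot\|^p$ for $p\geq 2$, the treatment of $\phi(t)=0$, and the degenerate $p=2$ case are all handled correctly, and the closure trick is stated cleanly. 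So the approach is sound and essentially the one the paper relies on.

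One technical point deserves a line more than you give it. Your localization $\tau_n$ caps $\int_s^{\cdot}\HSNorm{{X_r}}^2\,dr$ at $n$, which makes $M^{(n)}$ a bounded-quadratic-variation local martingale, but that alone does not make it obvious that the stochastic-integral term
\[
\int_s^t p\,\|M^{(n)}_r\|^{p-2}\,(M^{(n)}_r)^{\ast}X^{(n)}_r\,dW_r
\]
is a true martingale: the integrand involves $\|M^{(n)}_r\|^{p-1}$, and showing that its $L^2(dr\otimes d\P)$-norm is finite from the bound on $\langle M^{(n)}\rangle$ alone already requires a BDG-type estimate of order $2(p-1)$, which is stronger than (or at least comparable to) what is being proved. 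The clean fix is a second layer of localization, $\sigma_m := \inf\{t\in[s,T] : \|M^{(n)}_t\|\geq m\}\wedge T$, which makes the integrand bounded so the stochastic-integral term has zero expectation outright; one then sends $m\to\infty$ by monotone convergence before releasing $n$. With that amendment the proof is complete.
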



\begin{prop}
\label{moments_of_solution_of_SDE}
Let $d,m \in \N$, $p \in [2,\infty)$, $T, \mathfrak{m}_1, \mathfrak{m}_2, \mathfrak{s}_1, \mathfrak{s}_2 \in [0, \infty)$,  $\xi \in \R^d$,
let $\norm{\cdot} \colon \R^d \to [0,\infty)$ be the $d$-dimensional Euclidean norm, 
let $\HSNorm{\cdot} \colon \R^{d \times m} \to [0,\infty)$ be the Hilbert-Schmidt norm on $\R^{d \times m}$,
let $(\Omega,\allowbreak \mathcal{F},\allowbreak \P,\allowbreak(\mathbbm{F}_t)_{t \in [0,T]})$ be a filtered probability space which fulfils the usual conditions, 
let $W\colon [0,T]\times \Omega \to \R^m$ be a standard $(\mathbbm{F}_t)_{t \in [0,T]}$-Brownian motion, 
let $\mu \colon \R^d \to \R^d$ be  $\mathcal{B}(\R^d) / \mathcal{B}(\R^d)$-mea\-surable, let $\sigma \colon \R^d \to \R^{d \times m}$ be $\mathcal{B}(\R^d) / \mathcal{B}(\R^{d\times m})$-mea\-surable, assume 
for all $x \in \R^d$ that
\begin{equation}
\label{moments_of_solution_of_SDE:ass1}
	\norm{\mu(x)} 
\leq 
	\mathfrak{m}_1+ \mathfrak{m}_2\norm{x}
\qandq
	\HSNorm{{\sigma(x)}}
\leq 
	\mathfrak{s}_1 + \mathfrak{s}_2 \norm{x},
\end{equation}
and 
let $X  \colon [0,T]\times \Omega \to \R^d$ be an $(\mathbbm{F}_t)_{t \in [0,T]}$-adapted stochastic process with continuous sample paths which satisfies that 
for all $t \in [0,T]$  it holds $\P$-a.s.\  that
\begin{equation}
\label{moments_of_solution_of_SDE:ass2}
	X_t
= 
	\xi + \int_0^t \mu(X_s)\, ds + \int_0^t \sigma(X_s) \, dW_s.
\end{equation}
Then 
it holds
for all $t \in [0, T]$ that
\begin{equation}
\label{moments_of_solution_of_SDE:concl1}
\begin{split}
	&\left(
		\EXPP{ \Norm{X_t}^p  }
	\right)^{\!\nicefrac{1}{p}} \\
&\leq 
	\sqrt{2} 
	\Big(
		\Norm{\xi} + \mathfrak{m}_1 T + \mathfrak{s}_1  \sqrt{\tfrac{p(p - 1)T}{2}} 
	\Big)
	\exp{\!
	\bigg( \!
		\left[
			\mathfrak{m}_2 \sqrt{T} + \mathfrak{s}_2 \sqrt{\tfrac{p(p - 1)}{2}}
		\right]^2
		t
	\bigg)} \\
&\leq
	\sqrt{2} 
	\Big(
		\Norm{\xi} + \mathfrak{m}_1 T + \mathfrak{s}_1  p\sqrt{T} 
	\Big)
	\exp{\!
	\bigg( \!
		\left[
			\mathfrak{m}_2 \sqrt{T} + \mathfrak{s}_2 p
		\right]^2
		t
	\bigg)}.
\end{split}
\end{equation}
\end{prop}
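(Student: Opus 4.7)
The plan is to combine an a priori localisation that makes all $L^p$-moments finite with an implicit integral inequality derived from Minkowski's integral inequality (Lemma~\ref{Minkowski}) for the drift, the Burkholder-Davis-Gundy type estimate of Lemma~\ref{lem-daPratoZabzcyk} for the diffusion, Cauchy-Schwarz to homogenise the integrands, and the Gronwall inequality (Lemma~\ref{gronwall}) to close the loop.

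First, I would introduce the stopping times $\tau_n = \inf\{t \in [0,T] \colon \|X_t\| \geq n\}$ for $n \in \N$, with the convention $\inf \emptyset = T$. Since $X$ has continuous sample paths, $\tau_n \nearrow T$ $\P$-a.s.\ as $n \to \infty$, and the stopped process $Y^{(n)}_t := X_{t \wedge \tau_n}$ satisfies $\|Y^{(n)}_t\| \leq \max\{\|\xi\|, n\}$ as well as the truncated integral equation
\begin{equation*}
Y^{(n)}_t = \xi + \int_0^t \mathbbm{1}_{[0,\tau_n]}(s)\,\mu(X_s)\,ds + \int_0^t \mathbbm{1}_{[0,\tau_n]}(s)\,\sigma(X_s)\,dW_s.
\end{equation*}
Because on $\{s \leq \tau_n\}$ one has $X_s = Y^{(n)}_s$, the linear growth hypothesis \eqref{moments_of_solution_of_SDE:ass1} gives $\mathbbm{1}_{[0,\tau_n]}(s) \|\mu(X_s)\| \leq \mathfrak{m}_1 + \mathfrak{m}_2 \|Y^{(n)}_s\|$ and the analogous bound on $\HSNorm{{\sigma(X_s)}}$ in terms of $\|Y^{(n)}_s\|$. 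Setting $f_n(t) = (\Exp{\|Y^{(n)}_t\|^p})^{1/p}$, which is finite and bounded by $\max\{\|\xi\|, n\}$, the triangle inequality in $L^p(\Omega;\R^d)$ decomposes $f_n(t)$ into $\|\xi\|$, the drift contribution, and the diffusion contribution.

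Applying Lemma~\ref{Minkowski} to the drift and Lemma~\ref{lem-daPratoZabzcyk} to the stochastic integral (followed by Minkowski in $L^p(\Omega)$ to bound $(\Exp{\HSNorm{{\sigma(X_s)}}^p})^{1/p}$ by $\mathfrak{s}_1 + \mathfrak{s}_2 f_n(s)$ and then Minkowski in $L^2(ds)$ to split the resulting square root) yields
\begin{equation*}
f_n(t) \leq \|\xi\| + \mathfrak{m}_1 t + \mathfrak{m}_2 \int_0^t f_n(s)\,ds + \sqrt{\tfrac{p(p-1)}{2}} \left( \mathfrak{s}_1 \sqrt{t} + \mathfrak{s}_2 \Big( \textstyle\int_0^t f_n(s)^2\,ds \Big)^{1/2} \right).
\end{equation*}
Estimating $\int_0^t f_n(s)\,ds$ by Cauchy-Schwarz against $\sqrt{t}\,(\int_0^t f_n(s)^2\,ds)^{1/2}$ and replacing every isolated $\sqrt{t}$ by $\sqrt{T}$ produces the implicit inequality $f_n(t) \leq A + B\,(\int_0^t f_n(s)^2\,ds)^{1/2}$ with
\begin{equation*}
A = \|\xi\| + \mathfrak{m}_1 T + \mathfrak{s}_1 \sqrt{\tfrac{p(p-1)T}{2}}, \qquad B = \mathfrak{m}_2 \sqrt{T} + \mathfrak{s}_2 \sqrt{\tfrac{p(p-1)}{2}}.
\end{equation*}
Squaring via $(u+v)^2 \leq 2u^2 + 2v^2$ yields $f_n(t)^2 \leq 2A^2 + 2B^2 \int_0^t f_n(s)^2\,ds$, and since $t \mapsto f_n(t)^2$ is bounded and measurable Lemma~\ref{gronwall} provides $f_n(t)^2 \leq 2A^2 \exp(2 B^2 t)$, that is, $f_n(t) \leq \sqrt{2}\,A\,\exp(B^2 t)$, a bound independent of $n$. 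Fatou's lemma, applied using $\|Y^{(n)}_t\| \to \|X_t\|$ $\P$-a.s., transfers this estimate to $(\Exp{\|X_t\|^p})^{1/p}$ and gives the first inequality in \eqref{moments_of_solution_of_SDE:concl1}; the second one follows from the elementary bound $\sqrt{p(p-1)/2} \leq p$.

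The main technical obstacle is bookkeeping in the localisation step: one must carefully verify that the truncated integral equation holds almost surely, that $Y^{(n)}_s$ may legitimately replace $X_s$ inside the indicator-weighted integrands (so that the right-hand side is controlled in terms of $f_n$ and not of a potentially infinite $(\Exp{\|X_s\|^p})^{1/p}$), and that $f_n$ is measurable with values in $[0,\infty)$ before invoking Gronwall. Once these points are settled, the three analytic ingredients combine cleanly, and the $\sqrt{2}$ in \eqref{moments_of_solution_of_SDE:concl1} is a direct trace of the squaring step used to turn the $\sqrt{\int f_n^2}$ dependence into a linear Gronwall input.
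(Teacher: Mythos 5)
Your proposal is correct and follows essentially the same argument as the paper: localisation by the same stopping times, decomposition by the triangle inequality into initial value, drift, and diffusion, Lemma~\ref{Minkowski} plus Cauchy-Schwarz for the drift, Lemma~\ref{lem-daPratoZabzcyk} for the diffusion, squaring via $(u+v)^2 \leq 2(u^2+v^2)$, Gronwall (Lemma~\ref{gronwall}), and Fatou to pass to the limit. The bookkeeping you flag (predictability of the indicator-weighted integrand, the $L^2(ds)$-integrability hypothesis of Lemma~\ref{lem-daPratoZabzcyk}, measurability of $t\mapsto f_n(t)^2$) is exactly the content of the verification steps the paper carries out in detail.
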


\begin{proof}[Proof of Proposition~\ref{moments_of_solution_of_SDE}]
Throughout this proof assume without loss of generality that $T>0$ and
let $\tau_n\colon \Omega \to [0,T]$, $n\in\N$, be the functions which satisfy for every $n\in\N$ that 
\begin{equation}\label{moments_of_solution_of_SDE:defStoppingTimes}
	\tau_n=\inf(\{t\in[0,T]\colon \norm{X_t}> n\}\cup \{T\}).
\end{equation}
Note that the hypothesis that $X  \colon [0,T]\times \Omega \to \R^d$ is an 
$(\mathbbm{F}_t)_{t \in [0,T]}$-adapted stochastic process with continuous sample paths ensures that for all $t\in(0,T]$, $n\in\N$ it holds that 
\begin{equation}
\begin{split}
\{\tau_n<t\}&=\{\exists\, s\in [0,t)\colon\, \norm{X_s}> n\}\\
&=\{\exists\, s\in [0,t)\cap \Q \colon\, \norm{X_s}> n\}\\
&=\left(\cup_{s\in [0,t)\cap \Q} \{\norm{X_s}> n\}\right)\in \mathbbm{F}_t.
\end{split}
\end{equation}
This demonstrates that for all  $t\in [0,T)$,  $r\in (t,T]$, $n\in\N$ it holds that 
\begin{equation}
\{\tau_n\le t\}=\left(\cap_{k\in\N} \{\tau_n< t+\tfrac{1}{k}\}\right)=\left(\cap_{k\in\N,\, t+\nicefrac{1}{k}\le r} \{\tau_n< t+\tfrac{1}{k}\}\right)\in \mathbbm{F}_r.
\end{equation}
The hypothesis that $(\Omega,\allowbreak \mathcal{F},\allowbreak \P,\allowbreak(\mathbbm{F}_t)_{t \in [0,T]})$ fulfils the usual conditions
hence ensures that for all  $t\in [0,T)$, $n\in\N$ it holds that $\{\tau_n\le t\}\in \mathbbm{F}_t^+=\mathbbm{F}_t.$ Therefore, we obtain that for all $n\in\N$ it holds that $\tau_n$ is an $(\mathbbm{F}_t)_{t \in [0,T]}$-stopping time.
%
Moreover, observe that \eqref{moments_of_solution_of_SDE:ass2} and the triangle inequality assure that 
for all $t \in [0, T]$, $n\in\N$ it holds that
\begin{equation}
\label{moments_of_solution_of_SDE:eq1}
\begin{split}
	\left(
		\EXPP{ \Norm{X_{\min\{t,\tau_n\}}}^p  }
	\right)^{\!\nicefrac{1}{p}}
&\leq
	\norm{\xi}
	+ 
	\left(
		\Exp{ 
			\norm{
				\int_0^{\min\{t,\tau_n\}} \mu(X_s)\, ds 
			}^p \,
		}
	\right)^{ \! \! \nicefrac{1}{p}}\\&\quad
	+ 
	\left(
		\Exp{ 
			\norm{
				\int_0^{\min\{t,\tau_n\}} \sigma(X_s) \, dW_s 
			}^p \,
		}
	\right)^{ \! \! \nicefrac{1}{p}}.
\end{split}
\end{equation}
Next note that Lemma~\ref{Minkowski}, \eqref{moments_of_solution_of_SDE:ass1}, and the triangle inequality demonstrate that
for all $t \in [0, T]$, $n\in\N$ it holds that
\begin{equation}
\begin{split}
	\left(
		\Exp{ 
			\norm{
				\int_0^{\min\{t,\tau_n\}} \mu(X_s)\, ds 
			}^p \,
		}
	\right)^{ \! \! \nicefrac{1}{p}}
&\leq
		\left(
		\Exp{ 
			\left|
				\int_0^{\min\{t,\tau_n\}} \norm{\mu(X_s)}\, ds 
			\right|^p \,
		}
	\right)^{ \! \! \nicefrac{1}{p}} \\
&\leq
	\int_0^t
		\left(
			\EXPP{ 
				 \Norm{\mu(X_s)}^p \mathbbm{1}_{\{s\le \tau_n\}}
			}
		\right)^{ \!  \nicefrac{1}{p}}
	ds \\
	&\leq
	\int_0^t
	\left(
	\EXPP{ 
		\Norm{\mu(X_{\min\{s,\tau_n\}})}^p 
	}
	\right)^{ \!  \nicefrac{1}{p}}
	ds \\
	&\leq
	\int_0^t
	\left(
	\EXPP{
		\big(  \mathfrak{m}_1 + \mathfrak{m}_2 \Norm{X_{\min\{s,\tau_n\}}}  \big)^p 
	}
	\right)^{ \!  \nicefrac{1}{p}}
	ds \\
&\leq
		 \int_0^t
			\left[\mathfrak{m}_1 + 
			\mathfrak{m}_2 
			\left(
				\EXPP{
					 \Norm{X_{\min\{s,\tau_n\}}}^p 
				}
			\right)^{ \!  \nicefrac{1}{p}}\right]
		ds\\
&=
	 \mathfrak{m}_1 t
	 +
	 \mathfrak{m}_2
	 \left(
	 	\int_0^t
			\left(
				\EXPP{ 
					 \Norm{X_{\min\{s,\tau_n\}}}^p 
				}
			\right)^{ \!  \nicefrac{1}{p}}
		ds 
	\right).
\end{split}
\end{equation}
The Cauchy-Schwarz inequality hence proves that 
for all $t \in [0, T]$, $n\in\N$ it holds that
\begin{equation}
\label{moments_of_solution_of_SDE:eq2}
\begin{split}
	&\left(
		\Exp{ 
			\norm{
				\int_0^{\min\{t,\tau_n\}} \mu(X_s)\, ds 
			}^p \,
		}
	\right)^{ \! \! \nicefrac{1}{p}}\\
&\leq
	\mathfrak{m}_1 T
	+
	\mathfrak{m}_2
	\left[
		\int_0^t
		 	1^2 \,
		 ds
	\right]^{\nicefrac{1}{2}}
	\left[
		\int_0^t
		 	\left(
				\EXPP{\Norm{X_{\min\{s,\tau_n\}}}^p }
			\right)^{ \!  \nicefrac{2}{p}}
		ds
	\right]^{\nicefrac{1}{2}} \\
&\leq
	\mathfrak{m}_1 T
	+
	\mathfrak{m}_2
	\sqrt{T}
	\left[
		\int_0^t
			\left(
				\EXPP{\Norm{X_{\min\{s,\tau_n\}}}^p }
			\right)^{ \!  \nicefrac{2}{p}}
		ds
	\right]^{\nicefrac{1}{2}}.
\end{split}
\end{equation}
Moreover, note that  the hypothesis that $X  \colon [0,T]\times \Omega \to \R^d$ is an 
$(\mathbbm{F}_t)_{t \in [0,T]}$-adapted stochastic process with continuous sample paths
shows that $X\colon [0,T]\allowbreak\times \Omega \to \R^d$ is an 
$(\mathbbm{F}_t)_{t \in [0,T]}$-predictable
stochastic process.
 The fact that for every $n\in\N$ it holds that $([0,T]\times \Omega \ni (t,\omega)\mapsto \mathbbm{1}_{\{t\le \tau_n(\omega)\}} \in \{0,1\})$
is an $(\mathbbm{F}_t)_{t \in [0,T]}$-predictable
stochastic process (cf., e.g., Kallenberg \cite[Lemma 22.1]{Kallenberg1997}) and the hypothesis that $\sigma \colon \R^d \to \R^{d \times m}$ is a $\mathcal{B}(\R^d) / \mathcal{B}(\R^{d\times m})$-measurable function 
%
  hence ensure that for every $n\in\N$ it holds that 
  \begin{equation}\label{moments_of_solution_of_SDE:indicatorProcess}
  	([0,T]\times \Omega \ni (t,\omega)\mapsto \sigma(X_t(\omega))\mathbbm{1}_{\{t\le \tau_n(\omega)\}} \in \R^{d\times m})
  \end{equation}
is an $(\mathbbm{F}_t)_{t \in [0,T]}$-predictable
stochastic process. 
Combining this, 
 \eqref{moments_of_solution_of_SDE:ass1}, and \eqref{moments_of_solution_of_SDE:defStoppingTimes}
 with the hypothesis that 
 $X  \colon [0,T]\times \Omega \to \R^d$ has continuous sample paths demonstrates that for all $n\in\N\cap (\Norm{\xi},\infty)$ it holds  that 
\begin{equation}
\begin{split}
\int_0^T \HSNorm{{\sigma(X_s)\mathbbm{1}_{\{s\le \tau_n\}}}}^2 \,ds
&\le T \left[\sup_{s\in[0,\tau_n]}\HSNorm{{\sigma(X_s)}}^2\right]\\
&\le T \left[\sup_{s\in[0,\tau_n]}\,\left[(\mathfrak{s}_1+\mathfrak{s}_2\Norm{X_s})^2\right]\right]\\
&\le T(\mathfrak{s}_1+\mathfrak{s}_2n)^2
<\infty.
\end{split}
\end{equation}
Lemma \ref{lem-daPratoZabzcyk}, \eqref{moments_of_solution_of_SDE:indicatorProcess}, \eqref{moments_of_solution_of_SDE:ass1},
 and the triangle inequality therefore establish that
for all $t \in [0, T]$, $n\in\N\cap (\Norm{\xi},\infty)$ it holds that
\begin{equation}
\label{moments_of_solution_of_SDE:eq3}
\begin{split}
	&\left(
		\Exp{ 
			\norm{
				\int_0^{\min\{t,\tau_n\}} \sigma(X_s) \, dW_s 
			}^p \,
		}
	\right)^{ \! \! \nicefrac{1}{p}}\\
		&=\left(
	\Exp{ 
		\norm{
			\int_0^{t}  \sigma(X_s) \mathbbm{1}_{\{s\le \tau_n\}} \, dW_s 
		}^p \,
	}
	\right)^{ \! \! \nicefrac{1}{p}}\\
&\leq	
	\sqrt{\tfrac{p(p-1)}{2}}
	\left(
		\int_0^t
			\left(
				\Exp{ 
					\HSNorm{{ \sigma(X_s)}}^p \mathbbm{1}_{\{s\le \tau_n\}}
				}
			\right)^{\nicefrac{2}{p}}
		ds
	\right)^{ \! \! \nicefrac{1}{2}} \\
	&\leq	
	\sqrt{\tfrac{p(p-1)}{2}}
	\left(
	\int_0^t
	\left(
	\Exp{ 
		\HSNormStandard{{ \sigma(X_{\min\{s,\tau_n\}})}}^p
	}
	\right)^{\nicefrac{2}{p}}
	ds
	\right)^{ \! \! \nicefrac{1}{2}} \\
	&\leq	
	\sqrt{\tfrac{p(p-1)}{2}}
	\left(
	\int_0^t
	\left(
	\Exp{
		\big(  \mathfrak{s}_1 + \mathfrak{s}_2 \norm{X_{\min\{s,\tau_n\}}}  \big)^p
	}
	\right)^{\nicefrac{2}{p}}
	ds
	\right)^{ \! \! \nicefrac{1}{2}}\\
&\leq	
	\sqrt{\tfrac{p(p-1)}{2}}	
	\left(
		\int_0^t
			\left(
				\mathfrak{s}_1 +
				\mathfrak{s}_2
				\left(
					\EXPP{ \Norm{X_{\min\{s,\tau_n\}}}^p}
				\right)^{\nicefrac{1}{p}}
			\right)^{2}
		ds
	\right)^{ \! \! \nicefrac{1}{2}} \\
&\leq	
	\sqrt{\tfrac{p(p-1)}{2}}	
	\left(
		\mathfrak{s}_1
		\left[
			\int_0^t
				1^2 \,
			ds
		\right]^{ \nicefrac{1}{2}}
		+
		\mathfrak{s}_2
		\left[
			\int_0^t
				\left(
					\EXPP{  \Norm{X_{\min\{s,\tau_n\}}}^p}
				\right)^{\nicefrac{2}{p}}
			ds
		\right]^{ \nicefrac{1}{2}}
	\right) \\
&\leq
	\mathfrak{s}_1
	\sqrt{\tfrac{p(p-1)T}{2}}	
	+
	\mathfrak{s}_2
	\sqrt{\tfrac{p(p-1)}{2}}	
	\left[
		\int_0^t
			\left(
				\EXPP{ \Norm{X_{\min\{s,\tau_n\}}}^p}
			\right)^{\nicefrac{2}{p}}
		ds
	\right]^{ \nicefrac{1}{2}}.
\end{split}
\end{equation}
Combining this, \eqref{moments_of_solution_of_SDE:eq1}, and \eqref{moments_of_solution_of_SDE:eq2} proves that
for all $t \in [0, T]$, $n\in\N\cap (\Norm{\xi},\infty)$ it holds that
\begin{equation}
\begin{split}
	&\left(
		\EXPP{ \Norm{X_{\min\{t,\tau_n\}}}^p  }
	\right)^{\!\nicefrac{1}{p}} \\
&\leq
	\norm{\xi}
	+ 
	\mathfrak{m}_1 T
	+ 
	\mathfrak{s}_1
	\sqrt{\tfrac{p(p-1)T}{2}}	\\
&\qquad
	+ 
	\left(
		\mathfrak{m}_2 \sqrt{T}
		+
		\mathfrak{s}_2 \sqrt{\tfrac{p(p-1)}{2}}	
	\right)
	\left[
		\int_0^t
			\left(
				\EXPP{ \Norm{X_{\min\{s,\tau_n\}}}^p }
			\right)^{ \!  \nicefrac{2}{p}}
		ds
	\right]^{\nicefrac{1}{2}}.
\end{split}
\end{equation}
The fact that 
for all $x, y \in \R$ it holds that
$
	| x + y |^2 \leq 2 (x^2 + y^2)
$
therefore demonstrates that
for all $t \in [0, T]$, $n\in\N\cap (\Norm{\xi},\infty)$ it holds that
\begin{equation}
\label{moments_of_solution_of_SDE:eq4}
\begin{split}
	&\left(
		\EXPP{ \Norm{X_{\min\{t,\tau_n\}}}^p  }
	\right)^{\!\nicefrac{2}{p}} \\
&\leq
	2\left[
		\norm{\xi}
		+ 
		\mathfrak{m}_1 T
		+ 
		\mathfrak{s}_1
		\sqrt{\tfrac{p(p-1)T}{2}}
	\right]^2	\\
&\qquad
	+ 
	2
	\left[
		\mathfrak{m}_2 \sqrt{T}
		+
		\mathfrak{s}_2 \sqrt{\tfrac{p(p-1)}{2}}	
	\right]^2
	\left[
		\int_0^t
			\left(
				\EXPP{ \Norm{X_{\min\{s,\tau_n\}}}^p }
			\right)^{ \!  \nicefrac{2}{p}}
		ds
	\right].
\end{split}
\end{equation}
Next note that \eqref{moments_of_solution_of_SDE:defStoppingTimes} ensures for all $n\in\N\cap (\Norm{\xi},\infty)$ that 
\begin{equation}
\begin{split}
\int_0^T
\left(
\EXPP{ \Norm{X_{\min\{s,\tau_n\}}}^p }
\right)^{ \!  \nicefrac{2}{p}} 
ds 
&\leq
\int_0^T
\left(
\EXPP{ n^p }
\right)^{ \!  \nicefrac{2}{p}} 
ds =
T
n^2
< \infty.
\end{split}
\end{equation}
Combining this and \eqref{moments_of_solution_of_SDE:eq4} with  Lemma~\ref{gronwall} (with
$
	\alpha 
= 
	2\big[
		\Norm{\xi} + \mathfrak{m}_1 T +  \mathfrak{s}_1 \allowbreak\sqrt{p(p-1)T/2}
	\big]^2
$,
$
	\beta
=
	2
	\big[
		\mathfrak{m}_2\sqrt{T}
		+
		\mathfrak{s}_2 \sqrt{p(p-1)/{2}}	
	\big]^2
$,
$
T = T
$,
$
f
=
\big(
[0,T]\ni t\mapsto\left(
\EXPP{ \Norm{X_{\min\{t,\tau_n\}}}^p }
\right)^{ \! 2/p}  
\in \R\big)
$
in the notation of Lemma~\ref{gronwall})
demonstrates that 
for all $t \in [0, T]$, $n\in\N\cap (\Norm{\xi},\infty)$ it holds that
\begin{equation}
\begin{split}
	&\left(
		\EXPP{ \Norm{X_{\min\{t,\tau_n\}}}^p  }
	\right)^{\!\nicefrac{2}{p}} \\
&\leq
	2\left[
		\norm{\xi}
		+ 
		\mathfrak{m}_1 T
		+ 
		\mathfrak{s}_1
		\sqrt{\tfrac{p(p-1)T}{2}}
	\right]^2 
	\exp \!
	\left(
		2
		\left[
			\mathfrak{m}_2\sqrt{T}
			+
			\mathfrak{s}_2 \sqrt{\tfrac{p(p-1)}{2}}	
		\right]^2 t
	\right).
\end{split}
\end{equation}
Therefore, we obtain that for all $t \in [0, T]$, $n\in\N\cap (\Norm{\xi},\infty)$ it holds that
\begin{equation}\label{{moments_of_solution_of_SDE:estimateStoppedProcess}}
\begin{split}
	&\left(
		\EXPP{ \Norm{X_{\min\{t,\tau_n\}}}^p  }
	\right)^{\!\nicefrac{1}{p}} \\
&\leq
	\sqrt{2}
	\left[
		\norm{\xi}
		+ 
		\mathfrak{m}_1 T
		+ 
		\mathfrak{s}_1\sqrt{\tfrac{p(p-1)}{2}}
		\sqrt{T}
	\right] 
	\exp \!
	\left(
		\left[
			\mathfrak{m}_2 \sqrt{T}
			+
			\mathfrak{s}_2 \sqrt{\tfrac{p(p-1)}{2}}
		\right]^2 t
	\right).
\end{split}
\end{equation}
Furthermore, observe that \eqref{moments_of_solution_of_SDE:defStoppingTimes} and the fact that $X  \colon [0,T]\times \Omega \to \R^d$ is a stochastic process with continuous sample paths
ensure that for all $t\in[0,T]$ it holds  that $\lim_{n\to\infty}\min\{t,\tau_n\}=t$. Therefore, we obtain  that for all $t\in[0,T]$ it holds that
\begin{equation}
	\Norm{X_{t}}=\Norm{X_{(\lim_{n\to\infty}\min\{t,\tau_n\})}}=\Norm{\lim_{n\to\infty} X_{\min\{t,\tau_n\}}}
	=\lim_{n\to\infty} \Norm{ X_{\min\{t,\tau_n\}}}.
\end{equation}
Fatou's Lemma and \eqref{{moments_of_solution_of_SDE:estimateStoppedProcess}} hence imply for all $t\in[0,T]$ that 
\begin{equation}
\begin{split}
&\left(
\EXPP{ \Norm{X_{t}}^p  }
\right)^{\!\nicefrac{1}{p}}
=\left(
\EXPP{\lim_{n\to\infty} \Norm{X_{\min\{t,\tau_n\}}}^p  }
\right)^{\!\nicefrac{1}{p}}\\
&\le \left(\liminf_{n\to\infty}
\EXPP{\Norm{X_{\min\{t,\tau_n\}}}^p  }
\right)^{\!\nicefrac{1}{p}}\le \sup_{n\in\N\cap (\Norm{\xi},\infty)}\left(
\EXPP{\Norm{X_{\min\{t,\tau_n\}}}^p  }
\right)^{\!\nicefrac{1}{p}} \\
&\leq
\sqrt{2}
\left[
\norm{\xi}
+ 
\mathfrak{m}_1 T
+ 
\mathfrak{s}_1\sqrt{\tfrac{p(p-1)}{2}}
\sqrt{T}
\right] 
\exp 
\!\left(
\left[
\mathfrak{m}_2 \sqrt{T}
+
\mathfrak{s}_2 \sqrt{\tfrac{p(p-1)}{2}}
\right]^2 t
\right).
\end{split}
\end{equation}
The fact that 
$\sqrt{\tfrac{p(p-1)}{2}}	
\leq	
\sqrt{p^2 - p}
\leq
\sqrt{p^2}
=
p$
therefore establishes \eqref{moments_of_solution_of_SDE:concl1}.
The proof of Proposition~\ref{moments_of_solution_of_SDE} is thus completed.
\end{proof}


\subsection[SDEs with affine coefficient functions]{Stochastic differential equations with affine coefficient functions}\label{SectionAffineSDE}

In this subsection we establish in Proposition~\ref{affine_solutions_of_SDEs} elementary regularity properties for SDEs with affine coefficient functions. Our proof of Proposition~\ref{affine_solutions_of_SDEs}, roughly speaking, employs the elementary results in Lemma~\ref{affine_SDE_fixedpoint} and Proposition~\ref{affine_SDE_allpoints} (which are, loosely speaking, alleviated versions of Proposition~\ref{affine_solutions_of_SDEs}), the well-known fact that modifications of continuous stoch-astic processes are indistinguishable (cf.\ Lemma~\ref{mod_indist} below), the well-known fact that a modification of an adapted stochastic process is an adapted stochastic process (see Lemma~\ref{normal_adapted} below for details), and a version of the Kolmogorov-Chentsov theorem (see Lemma~\ref{Kolmorogov_chentsov} below for details).
Results similar to Lemma~\ref{Kolmorogov_chentsov} can, e.g., be found in Cox et al.\ \cite[Theorem 3.5 in Subsection 3.1]{CoxHutzenthalerJentzen14} and Mittmann \& Steinwart \cite[Theorem 2.1 in Section 2]{mittmannSteinwart}.
  For the sake of completeness we include in this subsection also proofs for  Lemmas~\ref{mod_indist} and \ref{normal_adapted}.
\begin{lemma}
 \label{affine_SDE_fixedpoint}
 Let $d\in \N$, $T \in (0,\infty)$, 
 let $(\Omega,\allowbreak \mathcal{F},\allowbreak\P,\allowbreak(\mathbbm{F}_t)_{t \in [0,T]})$ be a filtered probability space which fulfils the usual conditions, 
 let $W\colon [0,T]\times \Omega \to \R^d$ be a standard $(\mathbbm{F}_t)_{t \in [0,T]}$-Brownian motion, 
 let $\mu \colon \R^d \to \R^d$ and $\sigma \colon \R^d \to \R^{d \times d}$ be functions which satisfy 
 for all $x,y \in \R^d$, $\lambda \in \R$ that 
 \begin{equation}\label{affine_SDE_fixedpointMu}
 	\mu(\lambda x + y) + \lambda \mu(0) = \lambda \mu(x) + \mu(y)
 \end{equation}
 and 
 \begin{equation}\label{affine_SDE_fixedpointSigma}
 	\sigma(\lambda x + y) + \lambda \sigma(0) = \lambda \sigma(x) + \sigma(y),
 \end{equation} 
 and 
 let $X^x  \colon [0,T]\times \Omega \to \R^d$, $x \in \R^d$, be  $(\mathbbm{F}_t)_{t \in [0,T]}$-adapted stochastic processes with continuous sample paths which satisfy that 
 for all $x \in \R^d$, $t \in [0,T]$ it holds $\P$-a.s.\  that
 \begin{equation}
 \label{affine_SDE_fixedpoint:ass1}
   X_t^x 
 = 
   x + \int_0^t \mu(X_s^x)\, ds + \int_0^t \sigma(X_s^x) \, dW_s.
 \end{equation}
 Then 
 it holds 
 for all $t \in [0,T]$, $x,y \in \R^d$, $\lambda \in \R$ that
 \begin{equation}
 \label{affine_SDE_fixedpoint:concl1}
   \P\!\left( X_t^{\lambda x +y}+\lambda X_t^0 = \lambda X_t^x + X_t^y \right) = 1.
 \end{equation}
 \end{lemma}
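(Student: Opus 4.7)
The plan is to introduce the auxiliary process $Y \colon [0,T] \times \Omega \to \R^d$ given by $Y_t := \lambda X_t^x + X_t^y - \lambda X_t^0$ and to show that $Y$ solves the \emph{same} SDE as $X^{\lambda x + y}$ with the \emph{same} deterministic initial datum $\lambda x + y$; pathwise uniqueness will then yield $\P(X_t^{\lambda x + y} = Y_t) = 1$ for every $t \in [0,T]$, which is exactly \eqref{affine_SDE_fixedpoint:concl1}.

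For the first step I would exploit the affine identity \eqref{affine_SDE_fixedpointMu} twice: once with $(-\lambda, X_s^0, X_s^y)$ in place of $(\lambda, x, y)$ to obtain $\mu(X_s^y - \lambda X_s^0) = \mu(X_s^y) - \lambda \mu(X_s^0) + \lambda \mu(0)$, and then with $(\lambda, X_s^x, X_s^y - \lambda X_s^0)$ in place of $(\lambda, x, y)$; the two cancellations of $\lambda \mu(0)$ leave the clean identity $\mu(Y_s) = \lambda \mu(X_s^x) + \mu(X_s^y) - \lambda \mu(X_s^0)$. The analogous identity $\sigma(Y_s) = \lambda \sigma(X_s^x) + \sigma(X_s^y) - \lambda \sigma(X_s^0)$ follows in exactly the same way from \eqref{affine_SDE_fixedpointSigma}. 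Substituting these into the linear combination of the SDEs \eqref{affine_SDE_fixedpoint:ass1} satisfied by $X^x$, $X^y$, and $X^0$, and using linearity of both the Lebesgue and the It\^o stochastic integrals, one obtains that $\P$-a.s.\ for every $t \in [0,T]$
\begin{equation*}
Y_t = (\lambda x + y) + \int_0^t \mu(Y_s)\, ds + \int_0^t \sigma(Y_s)\, dW_s.
\end{equation*}

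To conclude I would invoke pathwise uniqueness for this SDE. Corollary~\ref{linear_growth_affine} and Corollary~\ref{linear_growth_affine_HilbertSchmidt} guarantee that $\mu$ and $\sigma$ are globally Lipschitz with at most linear growth, and Proposition~\ref{moments_of_solution_of_SDE} (with $p = 2$) supplies uniform-in-$t$ second moments for each of $X^x$, $X^y$, $X^0$, and $X^{\lambda x + y}$, and hence for $Y$ as well. A standard Gronwall argument applied to $[0,T] \ni t \mapsto \Exp{\Norm{X_t^{\lambda x + y} - Y_t}^2}$, using It\^o's isometry on the diffusion term and the Lipschitz estimate on the drift, forces this map to vanish identically and produces the desired almost-sure equality for every $t$.

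The only place that requires care is justifying pathwise uniqueness under coefficients that are merely Lipschitz with linear growth (not bounded). This can either be dispatched by a direct appeal to a classical SDE uniqueness theorem or by reproducing the stopping-time localization $\tau_n = \inf\{t \in [0,T] \colon \Norm{X_t^{\lambda x + y}} + \Norm{Y_t} > n\} \wedge T$ used in the proof of Proposition~\ref{moments_of_solution_of_SDE}, applying Gronwall on each stopped interval, and then passing to the limit $n \to \infty$ via Fatou's lemma. Apart from that, the entire argument is routine bookkeeping with the affine identity and linearity of the stochastic integral.
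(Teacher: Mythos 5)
Your proposal is correct and takes essentially the same route as the paper: both define the auxiliary process $Y_t = \lambda X_t^x + X_t^y - \lambda X_t^0$, use the affine identity to show $\mu(Y_s) = \lambda\mu(X_s^x) + \mu(X_s^y) - \lambda\mu(X_s^0)$ (and likewise for $\sigma$), deduce that $Y$ solves the SDE with initial value $\lambda x + y$, and conclude by pathwise uniqueness via Corollaries~\ref{linear_growth_affine} and~\ref{linear_growth_affine_HilbertSchmidt}. The paper dispatches the uniqueness step by citing Da Prato \& Zabczyk (Theorem~7.4) rather than redoing the Gronwall/localization argument, but the substance is identical.
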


\begin{proof}[Proof of Lemma~\ref{affine_SDE_fixedpoint}]
Throughout this proof 
let $x,y \in \R^d$, $\lambda \in \R$ and
let $Y  \colon [0,T]\times \Omega \to \R^d$ be the stochastic process which satisfies 
for all $t \in [0,T]$ that
\begin{equation}\label{affine_SDE_fixedpointY}
	Y_t = \lambda( X_t^x -  X_t^0 ) + X_t^y.
\end{equation}
Note that the hypothesis that 
for all $z \in \R^d$ it holds that 
$
	X^z \colon [0,T]\times \Omega \to \R^d
$
is an $(\mathbbm{F}_t)_{t \in [0,T]}$-adapted stochastic process with continuous sample paths assures that 
$Y$ is an $(\mathbbm{F}_t)_{t \in [0,T]}$-adapted stochastic process with continuous sample paths.
Moreover, observe that \eqref{affine_SDE_fixedpoint:ass1} and \eqref{affine_SDE_fixedpointY} ensure that 
for all $t \in [0,T]$ it holds $\P$-a.s.\  that
\begin{equation}
\label{affine_SDE_fixedpoint:eq1}
\begin{split}
	Y_t 
&= 
	\lambda (X^x_t -  X_t^0 ) + X_t^y \\
&=
	\lambda 
	\Big(
		\!\left[x + \textint_0^t \mu(X_s^x)\, ds + \int_0^t \sigma(X_s^x) \, dW_s \right]  \\
&\quad
		- 
		\left[
			0 + \textint_0^t \mu(X_s^0)\, ds + \int_0^t \sigma(X_s^0) \, dW_s 
		\right] \!
	\Big) \\
&\quad
	+ 
	\left[y + \textint_0^t \mu(X_s^y)\, ds + \int_0^t \sigma(X_s^y) \, dW_s \right]\\
&=
	\lambda x + y 
	+
	\textint_0^t \left[\lambda \left( \mu(X_s^x) -  \mu(X_s^0) \right) + \mu(X_s^y) \right] ds \\
&\quad
	+ 
	\textint_0^t \left[ \lambda \left( \sigma(X_s^x) -  \sigma(X_s^0) \right) + \sigma(X_s^y)\right]  dW_s
	.
\end{split}
\end{equation}
In addition, note that \eqref{affine_SDE_fixedpointMu} and \eqref{affine_SDE_fixedpointSigma} ensure that
for all $\nu \in \{\mu, \sigma \}$, $a, b, c \in \R^d$, $\lambda \in \R$ it holds that
\begin{equation}
\label{affine_SDE_fixedpoint:eq2}
\begin{split}
	\lambda \left( \nu(a) -  \nu(b) \right) + \nu(c)
&=
	\lambda  \nu(a)  + \nu(c) - \lambda \nu(b) \\
&=
	\nu(\lambda a + c)  + \lambda \nu(0) - \lambda \nu(b) \\
&=
	(- \lambda) \nu(b) + \nu(\lambda a + c)  + \lambda \nu(0)  \\
&=
	 \nu((- \lambda)b + \lambda a + c) + (- \lambda) \nu(0)  + \lambda \nu(0)  \\
&=
	 \nu(\lambda (a - b) + c).
\end{split}
\end{equation}
Combining this with \eqref{affine_SDE_fixedpoint:eq1} implies that
for all $t \in [0,T]$ it holds $\P$-a.s.\  that
\begin{equation}
\label{affine_SDE_fixedpoint:eq3}
\begin{split}
	Y_t 
&=
	\lambda x + y 
	+
	\textint_0^t  \mu\big(\lambda (X_s^x - X_s^0) + X_s^y\big) \, ds \\
&\quad
	+ 
	\textint_0^t \sigma\big(\lambda (X_s^x - X_s^0) + X_s^y\big) \, dW_s \\
&=
	\lambda x + y 
	+
	\textint_0^t  \mu(Y_s) \, ds 
	+ 
	\textint_0^t \sigma(Y_s) \, dW_s.
\end{split}
\end{equation}
The fact that 
for all $t \in [0,T]$ it holds $\P$-a.s.\  that
\begin{equation}
\begin{split}
	X^{\lambda x + y}_t 
=
	\lambda x + y 
	+
	\textint_0^t  \mu(X^{\lambda x + y}_s) \, ds 
	+ 
	\textint_0^t \sigma(X^{\lambda x + y}_s) \, dW_s,
\end{split}
\end{equation}
Corollary~\ref{linear_growth_affine}, Corollary~\ref{linear_growth_affine_HilbertSchmidt}, and, e.g.,  Da Prato \& Zabczyk \cite[Item (i) in Theorem 7.4]{DaPratoZabczyk92} (cf., e.g., Klenke~\cite[Theorem 26.8]{Klenke14}) hence demonstrate that  
for all $t \in [0,T]$ it holds that
\begin{equation}
	\P\!
	\left( 
		X_t^{\lambda x +y} = Y_t
	\right) 
= 
	1.
\end{equation}
This and \eqref{affine_SDE_fixedpointY} imply that
for all $t \in [0,T]$ it holds that
\begin{equation}
\begin{split}
	\P\!
	\left( 
		X_t^{\lambda x +y}+\lambda X_t^0 = \lambda X_t^x + X_t^y 
	\right)
&= 
	\P\!
	\left( 
		X_t^{\lambda x +y} = \lambda (X_t^x  - X_t^0 ) + X_t^y 
	\right) \\
&= 
	\P\!
	\left( 
		X_t^{\lambda x +y} = Y_t
	\right)
=
	1.
\end{split}
\end{equation}
The proof of Lemma~\ref{affine_SDE_fixedpoint} is thus completed.
\end{proof}
 

\begin{lemma}[Modifications of continuous random fields are indistinguishable]
	\label{mod_indist}
	Let $d \in \N$, 
	let $(E, \delta)$ be a separable metric space,
	let $(\Omega, \mathcal{F},\P)$ be a probability space,
	let $X, Y \colon E\times \Omega \to \R^d$ be random fields, assume
	for all $\omega \in \Omega$ that
		\begin{equation}\label{mod_indist_continuity}
	(E \ni e \mapsto X_{e}(\omega) \in \R^d),\, (E \ni e \mapsto Y_{e}(\omega) \in \R^d) \in C(E, \R^d),
	\end{equation}
	and 
	assume 
	for all $e \in E$ that 
	$
	\P ( X_e = Y_e ) = 1
	$.
	Then 
	\begin{enumerate}[(i)]
		\item \label{mod_indist:item1}
		it holds that
		$
		\{   \forall \, e \in E \colon X_e = Y_e \} \in \mathcal{F}
		$
		and
		\item \label{mod_indist:item2}
		it holds that
		$
		\P \!  \left(  \forall \, e \in E \colon X_e = Y_e \right) 
		= 
		1
		$.
	\end{enumerate}
\end{lemma}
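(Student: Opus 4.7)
My plan is to reduce the uncountable statement $\forall\, e \in E\colon X_e = Y_e$ to a countable one using separability of $E$, and then to deduce both measurability and full probability at once from continuity of the sample paths.

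First I would pick a countable dense set $D \subseteq E$, which exists by the hypothesis that $(E,\delta)$ is separable. For each $e \in D$ the hypothesis gives $\P(X_e = Y_e) = 1$, and the event $\{X_e = Y_e\}$ is measurable since $X_e, Y_e \colon \Omega \to \R^d$ are random variables. Hence the countable intersection
\begin{equation}
A \;=\; \bigcap_{e \in D} \{X_e = Y_e\}
\end{equation}
lies in $\mathcal{F}$ and satisfies $\P(A) = 1$.

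The key step is the pathwise upgrade: for every $\omega \in \Omega$, the maps $e \mapsto X_e(\omega)$ and $e \mapsto Y_e(\omega)$ are continuous by \eqref{mod_indist_continuity}, so the set $\{e \in E \colon X_e(\omega) = Y_e(\omega)\}$ is closed in $E$. If $\omega \in A$, this closed set contains the dense subset $D$, and therefore equals all of $E$. This yields the set equality
\begin{equation}
A \;=\; \{\omega \in \Omega \colon \forall\, e \in E, \; X_e(\omega) = Y_e(\omega)\},
\end{equation}
where the inclusion $\supseteq$ is trivial and $\subseteq$ is the pathwise upgrade just sketched. Both items then follow: item \eqref{mod_indist:item1} follows from $A \in \mathcal{F}$, and item \eqref{mod_indist:item2} from $\P(A) = 1$.

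I do not anticipate a serious obstacle; the only subtlety is to remember to invoke continuity of \emph{both} $X$ and $Y$ (so that the coincidence set of the two paths is closed, not merely the preimage of a single path), and to ensure $D$ is chosen nonempty (which is automatic unless $E = \emptyset$, in which case both conclusions are vacuous).
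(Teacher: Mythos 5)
Your proposal is correct and takes essentially the same approach as the paper: both reduce to a countable dense subset via separability, observe that the complement of the countable intersection is a null set, and upgrade to all of $E$ by continuity of the sample paths. The only cosmetic difference is that you phrase the upgrade as "the coincidence set of two continuous functions is closed," whereas the paper carries out the same argument by extracting explicit approximating subsequences $e_{n_v(k)} \to v$ and passing to the limit; these are the same argument.
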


\begin{proof}[Proof of Lemma~\ref{mod_indist}]
	Throughout this proof assume without loss of generality~that $E\neq \emptyset$,
	let $(e_n)_{n \in \N} \subseteq E$ satisfy that
	\begin{equation}\label{mod_indist_density}
	\overline{
		\{ e_n \in E \colon n \in \N \}
	}
	=
	E,
	\end{equation}
	and let $\mathcal{N}\subseteq \Omega$ satisfy that 
	\begin{equation}\label{mod_indistNullSet}
	\mathcal{N}=\cup_{n\in\N} \{ X_{e_n} \neq Y_{e_n} \}.
	\end{equation}
	Note that the fact that $X$ and $Y$ are random fields assures that 
	for all $e \in E$  it holds that
	\begin{equation}
	\{ X_e = Y_e \} = \{ X_e - Y_e = 0\} \in \mathcal{F}.
	\end{equation}
	Hence, we obtain that
	\begin{equation}\label{mod_indistIntersectionMeasurable}
	(\cap_{n \in \N} \{ X_{e_n} = Y_{e_n} \}) \in \mathcal{F}.
	\end{equation}
	Combining this and \eqref{mod_indistNullSet} implies that
	\begin{equation}\label{mod_indistNMeasurable}
	\mathcal{N}=\left[\Omega\backslash\left(\cap_{n \in \N} \{ X_{e_n} = Y_{e_n} \}\right)\right] \in \mathcal{F}.
	\end{equation}
	Moreover, observe that the hypothesis that
	for all $e \in E$ it holds that 
	$
	\P ( X_e = Y_e ) = 1
	$ ensures that for all $n\in\N$ it holds that $
	\P ( X_{e_n} \neq Y_{e_n} ) =0.
	$ 
	Therefore, we obtain that
	\begin{equation}\label{mod_indistNisNullSet}
	\P(\mathcal{N})\le \sum_{n=1}^\infty \P ( X_{e_n} \neq Y_{e_n} ) =0.
	\end{equation}
	Next note that \eqref{mod_indist_density} implies that for every $v\in E$ there exists a strictly increasing function $n_v\colon \N\to\N$ such that 
	$\limsup_{k\to\infty}\delta(e_{n_v(k)},v)=0.$ 
	Combining this with \eqref{mod_indist_continuity} ensures that for every $v\in E$  there exists a strictly increasing function $n_v\colon \N\to\N$ such that for every
		$\omega\in  \{ \forall\, k\in\N\colon X_{e_k} = Y_{e_k} \}$ it holds that $\limsup_{k\to\infty}\delta(e_{n_v(k)},v)=0$ and  
		\begin{equation}
		X_v(\omega)=\lim_{k\to\infty} X_{e_{n_v(k)}}(\omega)=\lim_{k\to\infty} Y_{e_{n_v(k)}}(\omega)=Y_v(\omega).
		\end{equation}
	This and \eqref{mod_indistIntersectionMeasurable} demonstrate that
	%
	%
	%
	%
	%
	%
	\begin{equation}
	\label{mod_indist:eq1}
	\begin{split}
	\left\{  \forall \, e \in E \colon X_e = Y_e \right\}
	&=
	\left\{  \forall \, n \in \N \colon  X_{e_n} = Y_{e_n} \right\} \\
	&=
	(\cap_{n \in \N} \{  X_{e_n} = Y_{e_n}  \})
	\in 
	\mathcal{F}.
	\end{split}
	\end{equation}
	This proves item~\eqref{mod_indist:item1}. 
	Combining \eqref{mod_indistNMeasurable} and \eqref{mod_indistNisNullSet} hence implies that
	\begin{equation}
	\begin{split}
	\P \!  \left(  \forall \, e \in E \colon X_e = Y_e \right) 
	=
	\P \!  \left(  
	\cap_{n \in \N} \left\{  X_{e_n} = Y_{e_n} \right \}
	\right)  =\P(\Omega\backslash \mathcal{N})=1-\P(\mathcal{N})=1.
	\end{split}
	\end{equation}
	This establishes item~\eqref{mod_indist:item2}.
	The proof of Lemma~\ref{mod_indist} is thus completed.
\end{proof}

\begin{prop}
 \label{affine_SDE_allpoints}
 Let $d\in \N$, $T \in (0,\infty)$, 
 let $(\Omega,\allowbreak \mathcal{F},\allowbreak\P,\allowbreak(\mathbbm{F}_t)_{t \in [0,T]})$ be a filtered probability space which fulfils the usual conditions, 
 let $W \colon [0,T]\times \Omega \to \R^d$ be a standard $(\mathbbm{F}_t)_{t \in [0,T]}$-Brownian motion, 
 let $\mu \colon \R^d \to \R^d$ and $\sigma \colon \R^d \to \R^{d \times d}$ be functions which satisfy 
 for all $x,y \in \R^d$, $\lambda \in \R$ that 
 \begin{equation}
 	   \mu(\lambda x + y) + \lambda \mu(0) = \lambda \mu(x) + \mu(y)
 \end{equation} 
 and 
 \begin{equation}
 	\sigma(\lambda x + y) + \lambda \sigma(0) = \lambda \sigma(x) + \sigma(y),
 \end{equation}
 let $X^x  \colon [0,T]\times \Omega \to \R^d$, $x \in \R^d$, be $(\mathbbm{F}_t)_{t \in [0,T]}$-adapted stochastic processes,
  assume for all $\omega \in \Omega$ that
 \begin{equation}\label{affine_SDE_allpointsContinuityAssumption}
 	 \left(\R^d \times [0,T]\ni  (x,t) \mapsto X_t^x(\omega) \in \R^d\right)\in C(\R^d\times[0,T],\R^d),
 \end{equation}
 and assume that 
 for all $x \in \R^d$, $t \in [0,T]$  it holds $\P$-a.s.\  that
 \begin{equation} 
   X_t^x 
 = 
   x + \int_0^t \mu(X_s^x)\, ds + \int_0^t \sigma(X_s^x) \, dW_s.
 \end{equation}
 Then
 \begin{enumerate}[(i)]
 \item \label{affine_SDE_allpoints:item1}
  it holds that
  \begin{equation}
  \left\{ 
     \forall\, x,y \in \R^d,  \lambda \in \R, t \in [0, T] \colon X_t^{\lambda x +y}+\lambda X_t^0 = \lambda X_t^x + X_t^y 
   \right\}
   \in \mathcal{F}
  \end{equation}
  and
 \item \label{affine_SDE_allpoints:item2}
 it holds that
 \begin{equation}
   \P\!
   \left( 
     \forall\, x,y \in \R^d,  \lambda \in \R, t \in [0, T] \colon X_t^{\lambda x +y}+\lambda X_t^0 = \lambda X_t^x + X_t^y 
   \right) 
 = 
   1.
 \end{equation} 
 \end{enumerate}
\end{prop}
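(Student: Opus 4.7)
The plan is to view the affine identity
\begin{equation}
X_t^{\lambda x + y} + \lambda X_t^0 = \lambda X_t^x + X_t^y
\end{equation}
as an equality between two continuous random fields on the parameter space $E = \R^d \times \R^d \times \R \times [0,T]$, and then to upgrade the pointwise (in parameter) almost-sure equality furnished by Lemma~\ref{affine_SDE_fixedpoint} to a simultaneous almost-sure equality via the indistinguishability principle in Lemma~\ref{mod_indist}.

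More precisely, first I would define random fields $A, B \colon E \times \Omega \to \R^d$ by setting, for all $(x,y,\lambda,t) \in E$, $\omega \in \Omega$,
\begin{equation}
A_{(x,y,\lambda,t)}(\omega) = X_t^{\lambda x + y}(\omega) + \lambda X_t^0(\omega)
\qandq
B_{(x,y,\lambda,t)}(\omega) = \lambda X_t^x(\omega) + X_t^y(\omega).
\end{equation}
Measurability in $\omega$ for each fixed parameter follows from the hypothesis that each $X^z$ is an adapted (and hence measurable) stochastic process together with the fact that addition and scalar multiplication in $\R^d$ are continuous. For the continuity in the parameter required by Lemma~\ref{mod_indist}, I would invoke the joint-continuity assumption \eqref{affine_SDE_allpointsContinuityAssumption}: the map $(z,t) \mapsto X_t^z(\omega)$ is continuous on $\R^d \times [0,T]$, the map $(x,y,\lambda) \mapsto \lambda x + y$ is a continuous $\R^d$-valued function on $\R^d \times \R^d \times \R$, and composing and adding these continuous maps shows that for every $\omega \in \Omega$ the maps $E \ni (x,y,\lambda,t) \mapsto A_{(x,y,\lambda,t)}(\omega), B_{(x,y,\lambda,t)}(\omega) \in \R^d$ are continuous.

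Next I would apply Lemma~\ref{affine_SDE_fixedpoint} (whose hypotheses are exactly those of Proposition~\ref{affine_SDE_allpoints}) to conclude that for every fixed $(x,y,\lambda,t) \in E$ it holds that
\begin{equation}
\P\!\left(A_{(x,y,\lambda,t)} = B_{(x,y,\lambda,t)}\right) = 1.
\end{equation}
Since $E = \R^d \times \R^d \times \R \times [0,T]$ equipped with its natural Euclidean distance is a separable metric space, Lemma~\ref{mod_indist} (applied with that $E$, the random fields $A$ and $B$ just constructed) then directly yields both that
\begin{equation}
\left\{\forall\,(x,y,\lambda,t) \in E \colon A_{(x,y,\lambda,t)} = B_{(x,y,\lambda,t)}\right\} \in \mathcal{F}
\end{equation}
and that this event has probability one, which is precisely the assertion in items \eqref{affine_SDE_allpoints:item1} and \eqref{affine_SDE_allpoints:item2}.

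I do not anticipate a serious obstacle: the probabilistic core (the affine relation at a fixed parameter) is already packaged in Lemma~\ref{affine_SDE_fixedpoint}, and the only genuine content is the passage from pointwise-in-parameter a.s.\ equality to a single exceptional null set. That passage is fully absorbed by Lemma~\ref{mod_indist}; the mildly delicate point is simply to verify the continuity of $A$ and $B$ on $E$ as above, which reduces to composing the joint continuity of $(z,t) \mapsto X_t^z(\omega)$ with the continuous affine map $(x,y,\lambda) \mapsto \lambda x + y$.
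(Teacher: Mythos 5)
Your proposal is correct and follows essentially the same route as the paper: define the two random fields on $E = \R^d \times \R^d \times \R \times [0,T]$, use Lemma~\ref{affine_SDE_fixedpoint} for the pointwise-in-parameter a.s.\ equality, observe continuity by composing \eqref{affine_SDE_allpointsContinuityAssumption} with the continuous map $(x,y,\lambda,t)\mapsto(\lambda x+y,t)$, and invoke Lemma~\ref{mod_indist} to conclude. The paper's proof is the same argument with the random fields named $Y$ and $Z$ instead of $A$ and $B$.
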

 
\begin{proof}[Proof of Proposition~\ref{affine_SDE_allpoints}]
Throughout this proof let
$
	Y, Z  \colon (\R^d \times \R^d \times \R \times [0,T])  \times \Omega \to \R^d
$ 
be the random fields which satisfy 
for all $x,y \in \R^d$, $\lambda \in \R$, $t \in [0, T]$ that
\begin{equation}
\label{affine_SDE_allpoints:setting1}
	Y_{(x, y, \lambda, t)} = X_t^{\lambda x +y}+\lambda X_t^0 
\qandq
	Z_{(x, y, \lambda, t)} = \lambda X_t^x + X_t^y.
\end{equation}
Observe that Lemma~\ref{affine_SDE_fixedpoint} assures that
for all $x,y \in \R^d$, $\lambda \in \R$, $t \in [0, T]$ it holds that
\begin{equation}
\label{affine_SDE_allpoints:eq1}
\begin{split}
	\P \big( Y_{(x, y, \lambda, t)} = Z_{(x, y, \lambda, t)} \big)
=
	\P \big( X_t^{\lambda x +y}+\lambda X_t^0 = \lambda X_t^x + X_t^y  \big)
=
	1.
\end{split}
\end{equation}
Moreover, note that 
\eqref{affine_SDE_allpointsContinuityAssumption}
and the fact that
\begin{multline}
\left(\R^d \times \R^d \times \R\times [0,T] \ni (x, y, \lambda,t) \mapsto (\lambda x + y,t) \in \R^d\times [0,T]\right)\\\in C(\R^d \times \R^d \times \R\times [0,T],\R^d\times [0,T])
\end{multline}
demonstrate that 
for all  $\omega \in \Omega$ it holds that
\begin{equation}
		Y_{\cdot}(\omega), Z_{\cdot}(\omega) \in C(\R^d \times \R^d \times \R \times [0,T], \R^d).
\end{equation}
Combining this, \eqref{affine_SDE_allpoints:eq1}, and Lemma~\ref{mod_indist} 
(with
$d = d$,
$E = \R^d \times \R^d \times \R\times [0,T]$,
$(\Omega, \mathcal{F},\P) = (\Omega, \mathcal{F},\P)$,
$X = Y$,
$Y = Z$
in the notation of Lemma~\ref{mod_indist}) proves that
\begin{equation}
	\left\{ 
		\forall\, x,y \in \R^d,  \lambda \in \R, t \in [0, T] \colon Y_{(x, y, \lambda, t)} = Z_{(x, y, \lambda, t)} 
	\right\}
	\in \mathcal{F} 
\end{equation}
and
\begin{equation}
	\P \!  \left(  \forall \, x,y \in \R^d,  \lambda \in \R, t \in [0, T] \colon Y_{(x, y, \lambda, t)} = Z_{(x, y, \lambda, t)}  \right) 
= 
	1.
\end{equation}
This and \eqref{affine_SDE_allpoints:setting1} demonstrate that
\begin{equation}
\begin{split}
	&\left\{ 
		\forall\, x,y \in \R^d,  \lambda \in \R, t \in [0, T] \colon X_t^{\lambda x +y}+\lambda X_t^0 = \lambda X_t^x + X_t^y 
	\right\} \\
&=
	\left\{ 
		\forall\, x,y \in \R^d,  \lambda \in \R, t \in [0, T] \colon Y_{(x, y, \lambda, t)} = Z_{(x, y, \lambda, t)} 
	\right\}
	\in \mathcal{F} 
\end{split}
\end{equation}
and
\begin{equation}
\begin{split}
	&\P \!  \left(  \forall \, x,y \in \R^d,  \lambda \in \R, t \in [0, T] \colon X_t^{\lambda x +y}+\lambda X_t^0 = \lambda X_t^x + X_t^y   \right)  \\
&=
	\P \!  \left(  \forall \, x,y \in \R^d,  \lambda \in \R, t \in [0, T] \colon Y_{(x, y, \lambda, t)} = Z_{(x, y, \lambda, t)}  \right) 
= 
	1.
\end{split}
\end{equation}
This establishes items~\eqref{affine_SDE_allpoints:item1}--\eqref{affine_SDE_allpoints:item2}.
The proof of Proposition~\ref{affine_SDE_allpoints} is thus completed.
\end{proof}

\begin{lemma}[Modifications of adapted processes are adapted]
\label{normal_adapted}
Let $d\in \N$, $T \in (0,\infty)$,
let $(\Omega,\allowbreak \mathcal{F},\allowbreak\P,\allowbreak(\mathbbm{F}_t)_{t \in [0,T]})$ be a filtered probability space which fulfils the usual conditions,
let $X, Y \colon [0, T] \times \Omega \to \R^d$ be stochastic processes, assume that
$X$ is an $(\mathbbm{F}_t)_{t \in [0,T]}$-adapted stochastic process, and assume for all $t\in[0,T]$ that 
$
	\P(X_t = Y_t) = 1.
$
Then 
$Y$ is an $(\mathbbm{F}_t)_{t \in [0,T]}$-adapted stochastic process.
\end{lemma}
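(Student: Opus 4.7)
The plan is to verify for each fixed $t\in[0,T]$ that $Y_t$ is $\mathbbm{F}_t/\mathcal{B}(\R^d)$-measurable, which by definition gives the $(\mathbbm{F}_t)_{t\in[0,T]}$-adaptedness of $Y$. The crucial input is that the filtration $(\mathbbm{F}_t)_{t\in[0,T]}$ fulfils the usual conditions; in particular, each $\sigma$-algebra $\mathbbm{F}_t$ is $\P$-complete in the sense that every subset of a $\P$-null set belongs to $\mathbbm{F}_t$.

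First, I would fix $t\in[0,T]$ and set $N_t=\{X_t\neq Y_t\}$. From $\P(X_t=Y_t)=1$ I would deduce that $N_t$ is contained in a $\P$-null set, so by completeness $N_t\in\mathbbm{F}_t$ and hence also $\{X_t=Y_t\}=\Omega\setminus N_t\in\mathbbm{F}_t$. Next, for an arbitrary Borel set $B\in\mathcal{B}(\R^d)$, I would use the decomposition
\begin{equation}
Y_t^{-1}(B)
=\bigl(Y_t^{-1}(B)\cap\{X_t=Y_t\}\bigr)\cup\bigl(Y_t^{-1}(B)\cap N_t\bigr)
=\bigl(X_t^{-1}(B)\cap\{X_t=Y_t\}\bigr)\cup\bigl(Y_t^{-1}(B)\cap N_t\bigr).
\end{equation}
The hypothesis that $X$ is $(\mathbbm{F}_t)_{t\in[0,T]}$-adapted yields $X_t^{-1}(B)\in\mathbbm{F}_t$, so together with $\{X_t=Y_t\}\in\mathbbm{F}_t$ the first set on the right-hand side lies in $\mathbbm{F}_t$; the second set is a subset of the null set $N_t$ and therefore, again by completeness of $\mathbbm{F}_t$, also belongs to $\mathbbm{F}_t$. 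Hence $Y_t^{-1}(B)\in\mathbbm{F}_t$.

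Since $t\in[0,T]$ and $B\in\mathcal{B}(\R^d)$ were arbitrary, this shows that $Y_t$ is $\mathbbm{F}_t/\mathcal{B}(\R^d)$-measurable for every $t\in[0,T]$, which completes the proof. No step here is subtle; the only thing to be careful about is to explicitly invoke completeness of $\mathbbm{F}_t$ (a consequence of the usual conditions) both to place $N_t$ into $\mathbbm{F}_t$ and to place the possibly non-measurable sub-piece $Y_t^{-1}(B)\cap N_t$ into $\mathbbm{F}_t$. I anticipate no genuine obstacle.
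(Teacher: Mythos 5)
Your proof is correct and takes essentially the same approach as the paper's: both decompose $Y_t^{-1}(B) = \bigl(X_t^{-1}(B)\cap\{X_t=Y_t\}\bigr)\cup\bigl(Y_t^{-1}(B)\cap\{X_t\neq Y_t\}\bigr)$, invoke completeness (from the usual conditions) to place the null event $\{X_t\neq Y_t\}$ and the subpiece $Y_t^{-1}(B)\cap\{X_t\neq Y_t\}$ into $\mathbbm{F}_t$, and use adaptedness of $X$ for the first piece. One small remark: since $Y$ is already a stochastic process, each $Y_t$ is $\mathcal{F}$-measurable, so the second piece is a genuine $\P$-null set in $\mathcal{F}$ rather than a ``possibly non-measurable sub-piece''; your extra caution is harmless but unnecessary.
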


\begin{proof}[Proof of Lemma~\ref{normal_adapted}]
Throughout this proof let $t \in [0,T]$.
Note that the hypothesis that $
\P(X_t = Y_t) = 1
$ ensures that
\begin{equation}\label{normal_adaptedNullSet}
	\P(X_t \neq Y_t) = 0.
\end{equation}
This and the hypothesis 
that $(\Omega,\allowbreak \mathcal{F},\allowbreak\P,\allowbreak(\mathbbm{F}_t)_{t \in [0,T]})$ is a filtered probability space which fulfils the usual conditions
imply that
$
	\{ X_t \neq Y_t \} \in \mathbbm{F}_0 \subseteq \mathbbm{F}_t
$.
Hence, we obtain that 
\begin{equation}
\label{normal_adapted:eq1}
	\{ X_t = Y_t \} = \Omega \backslash \{ X_t \neq Y_t \} \in \mathbbm{F}_t.
\end{equation}
Moreover, observe that \eqref{normal_adaptedNullSet}
demonstrates that 
for all $B \in \mathcal{B}(\R^d)$ it holds that
\begin{equation}
	\P \! \left(
		\{ Y_t \in B \} \cap \{ X_t \neq Y_t \}
	\right) 
\leq
	\P \! \left(
		 X_t \neq Y_t 
	\right)  
=
	0.
\end{equation}
The hypothesis 
that $(\Omega,\allowbreak \mathcal{F},\allowbreak\P,\allowbreak(\mathbbm{F}_t)_{t \in [0,T]})$ is a filtered probability space which fulfils the usual conditions
 therefore implies that for all $B \in \mathcal{B}(\R^d)$ it holds that
 \begin{equation}
 	(\{ Y_t \in B \} \cap \{ X_t \neq Y_t \}) \in \mathbbm{F}_0 \subseteq \mathbbm{F}_t.
 \end{equation}
Combining this with the hypothesis that $X$ is an $(\mathbbm{F}_t)_{t \in [0,T]}$-adapted stochastic process and \eqref{normal_adapted:eq1} demonstrates that
for all $B \in \mathcal{B}(\R^d)$ it holds that
\begin{equation}
\begin{split}
\{ Y_t \in B \} 
&= 
\left( \{ Y_t \in B \} \cap \{ X_t = Y_t \} \right)
\cup
\left( \{ Y_t \in B \} \cap \{ X_t \neq Y_t \} \right) \\
&=
\left( \{ X_t \in B \} \cap  \{ X_t = Y_t \}\right)
\cup
\left( \{ Y_t \in B \} \cap \{ X_t \neq Y_t \} \right)
\in 
\mathbbm{F}_t.
\end{split}
\end{equation}
This establishes that 
$Y$ is an $(\mathbbm{F}_t)_{t \in [0,T]}$-adapted stochastic process.
The proof of Lemma~\ref{normal_adapted} is thus completed.
\end{proof}

\begin{lemma}[A version of the Kolmogorov-Chentsov theorem]
\label{Kolmorogov_chentsov}
Let $d,k \in \N$, $p \in (d, \infty)$, $\alpha \in (\nicefrac{d}{p},\infty)$,  for every $\mathfrak{d}\in\N$ let $\left\| \cdot \right\|_{\R^\mathfrak{d}} \colon \R^\mathfrak{d} \allowbreak\to [0,\infty)$ be the $\mathfrak{d}$-dimensional Euclidean norm,
let $(\Omega, \mathcal{F},\P)$ be a probability space,
let $D \subseteq \R^d$ be a non-empty set, and
let $X \colon D \times \Omega \to \R^k$ be a random field which satisfies 
for all $n \in \N$ that
\begin{multline}\label{Kolmorogov_chentsov:ass1}
\sup\!\Bigg(\Bigg\{\frac{ 
	\left(
	\EXPP{ \Norm{ X_v - X_w }_{\R^k}^p }
	\right)^{\nicefrac{1}{p}}
}
{ \norm{v - w}_{\R^d}^\alpha }\colon v, w \in D \cap [-n, n]^d,\, v \neq w\Bigg\}
\\\cup
\big\{\left(
\EXPP{ \Norm{ X_v }_{\R^k}^p }
\right)^{\nicefrac{1}{p}}
\colon v\in D \cap [-n, n]^d\big\}\cup\{0\} \Bigg)<\infty.
\end{multline}
Then
there exists a random field
$
	Y \colon D \times \Omega \to \R^k
$
which satisfies 
  \begin{enumerate}[(i)]
	\item \label{Kolmorogov_chentsov:item1}
	that for all $\omega \in \Omega$ it holds that
$
	(D \ni v \mapsto Y_v(\omega) \in \R^k) \in C(D, \R^k)
$
	and
	\item  \label{Kolmorogov_chentsov:item2}
	that for all $v \in D$ it holds that
	$
		\P ( X_v = Y_v ) = 1.
	$
\end{enumerate}
\end{lemma}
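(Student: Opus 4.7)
The plan is to localize: write $D$ as the countable union of the bounded pieces $D_n = D \cap [-n,n]^d$, $n \in \N$, apply the classical Kolmogorov--Chentsov theorem on each $D_n$ to obtain a continuous modification $Y^{(n)}$, and then glue the local modifications consistently using Lemma~\ref{mod_indist}.

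First I would exploit the assumption \eqref{Kolmorogov_chentsov:ass1} together with $p > d$ and $\alpha > d/p$ to conclude that on every bounded piece $D_n$ there is a H\"older-type moment bound
\begin{equation}
\EXPP{\Norm{X_v - X_w}_{\R^k}^p} \leq C_n \Norm{v - w}_{\R^d}^{d + \gamma}
\end{equation}
for all $v, w \in D_n$, where $\gamma = p\alpha - d > 0$ and $C_n \in [0,\infty)$. This is precisely the hypothesis of the classical Kolmogorov--Chentsov continuity theorem, which I would invoke to obtain, for each $n \in \N$ with $D_n \neq \emptyset$, a random field $Y^{(n)} \colon D_n \times \Omega \to \R^k$ with continuous (and, in fact, locally H\"older continuous) sample paths satisfying $\P(X_v = Y^{(n)}_v) = 1$ for every $v \in D_n$.

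Next I would glue the local modifications. Since $D_n \subseteq D_{n+1}$ and both $Y^{(n)}_v$ and $Y^{(n+1)}_v$ are modifications of $X_v$ for $v \in D_n$, item~\eqref{mod_indist:item2} of Lemma~\ref{mod_indist} (applied on the separable metric space $E = D_n$ with the Euclidean metric) ensures that the event $\Omega_n = \{\forall\, v \in D_n \colon Y^{(n)}_v = Y^{(n+1)}_v\}$ lies in $\mathcal{F}$ and satisfies $\P(\Omega_n) = 1$. On the full-probability event $\Omega_0 = \cap_{n\in\N} \Omega_n$ the sample paths of the $Y^{(n)}$ agree consistently as $n$ increases, so I would define $Y_v(\omega) = Y^{(n_v)}_v(\omega)$ for $\omega \in \Omega_0$ and $v \in D$ (where $n_v$ is the smallest $n$ with $v \in D_n$) and $Y_v(\omega) = 0$ otherwise. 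Every $v \in D$ has a bounded neighborhood in $D$ contained in some $D_n$, on which $Y$ coincides with the continuous $Y^{(n)}$, so the sample paths of $Y$ are continuous on all of $D$; the modification property $\P(X_v = Y_v) = 1$ is then inherited from the $Y^{(n)}$'s together with $\P(\Omega_0) = 1$.

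The principal obstacle I anticipate is the measurability bookkeeping required to verify that $(v,\omega) \mapsto Y_v(\omega)$ is indeed a random field after the gluing construction, since the definition of $Y$ involves the indicator of $\Omega_0$ and a countable collection of measurable pieces indexed by the local index $n_v$. A secondary technical point is that the classical Kolmogorov--Chentsov theorem is usually stated on a compact box; applying it to an arbitrary (possibly non-product) subset $D_n \subseteq [-n,n]^d$ is nevertheless unproblematic because the standard proof constructs the modification via a countable dense subset of the domain and extends by uniform continuity, a procedure insensitive to the precise shape of $D_n$.
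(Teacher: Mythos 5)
Your overall structure matches the paper's: localize to $D_n = D \cap [-n,n]^d$, construct a continuous modification on each piece, glue via Lemma~\ref{mod_indist} on a full-probability intersection event, and set $Y \equiv 0$ on the null complement; the moment bound $\EXPP{\|X_v - X_w\|^p} \leq C_n \|v-w\|^{d+\gamma}$ with $\gamma = p\alpha - d > 0$ is the right starting point, and the gluing is essentially the paper's. The genuine gap is in the step you downplay as ``secondary'': applying the classical Kolmogorov--Chentsov theorem directly to an arbitrary bounded $D_n$. The textbook proof of that theorem --- chaining over dyadic rationals, Borel--Cantelli across scales, extension by uniform continuity --- relies essentially on the nested grid structure of the cube to control the number of increments estimated at each scale, and the theorem's hypothesis assumes the process is defined on the whole cube. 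For an arbitrary $D_n \subseteq [-n,n]^d$ the process need not be defined at any convenient grid point, and a.s.\ uniform continuity on a countable dense subset of $D_n$ does not follow from the textbook argument verbatim; one would need a substitute chaining scheme (e.g.\ via $\varepsilon$-nets / metric entropy) adapted to $D_n$, a non-trivial modification that you neither carry out nor cite. The claim that the procedure is ``insensitive to the precise shape of $D_n$'' is therefore not justified, and without it your local modifications $Y^{(n)}$ are not actually produced.

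The paper resolves this with an explicit extension step that your proposal is missing: it regards $v \mapsto X_v$ as an $L^p(\Omega;\R^k)$-valued function $g_n$ on $D_n$, clips the H\"older exponent to $\mathfrak{a} = \min\{\alpha,1\}$ (absorbing the factor $\|v-w\|^{\alpha-\mathfrak{a}}$ into the $n$-dependent constant on the bounded set), and invokes the Banach-space-valued H\"older extension theorem of Mittmann \& Steinwart to produce a globally bounded, globally $\mathfrak{a}$-H\"older function $G_n \colon \R^d \to L^p(\Omega;\R^k)$ extending $g_n$. Only then is the classical Kolmogorov theorem applied (via Revuz \& Yor) on the cube $[-n,n]^d$ to a random field realizing $G_n$, and the resulting continuous modification is restricted back to $D_n$; the standing hypothesis $p > d$ guarantees $\mathfrak{a}p > d$, so the cube theorem's exponent condition is met. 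To repair your proof you should either insert this extension step or replace it with a complete chaining argument for general bounded domains; as written, the local continuous modifications you take as given are not supplied by the theorem you cite.
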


\begin{proof}[Proof of Lemma~\ref{Kolmorogov_chentsov}]
Throughout this proof let   
$g_n\colon D \cap [-n, n]^d\to \allowbreak L^p(\Omega;\allowbreak\R^k)$, $n\in\N$, be functions which satisfy that for all $n\in\N$, $v\in D \cap [-n, n]^d$ 
it holds $\P$-a.s.\ that $X_v=g_n(v)$ (cf.~\eqref{Kolmorogov_chentsov:ass1}), let $\mathfrak{c}\in [0,\infty)$ be a real number which satisfies that for all $n\in\N$, $v, w \in D \cap [-n, n]^d$ it holds that
\begin{equation}\label{Kolmorogov_chentsov:HoelderConstant}
		\left(
		\EXPP{ \Norm{ X_v - X_w }_{\R^k}^p }
		\right)^{\nicefrac{1}{p}}\le \mathfrak{c} \norm{v - w}_{\R^d}^\alpha
\end{equation}
(cf.~\eqref{Kolmorogov_chentsov:ass1}), let $\mathfrak{a}=\min\{\alpha,1\}$, and let 
$
(\cdot)^{+} \colon \R \to [0,\infty)
$ 
be the function which satisfies for all $q \in \R$ that 
$
(q)^{+} = \max \{ q , 0 \}
$. 
Note that for all $n\in\N$, $v, w \in D \cap [-n, n]^d$ it holds that 
\begin{equation}
\begin{split}
	\norm{v - w}_{\R^d}^{(\alpha-1)^+}
&\le (\Norm{v}_{\R^d}+\norm{w}_{\R^d})^{(\alpha-1)^+}
\le \big(n\sqrt{d}+n\sqrt{d}\big)^{(\alpha-1)^+}
\\&= \big(2 n\sqrt{d}\big)^{(\alpha-1)^+}.
\end{split}
\end{equation}
Combining this and \eqref{Kolmorogov_chentsov:HoelderConstant} with the fact that $\alpha-\mathfrak{a}=(\alpha-1)^+\ge 0$ ensures that for all $n\in\N$, $v, w \in D \cap [-n, n]^d$ it holds that 
\begin{equation}\label{Kolmorogov_chentsov:HoelderConstantTwo}
\begin{split}
\left(
\EXPP{ \Norm{ X_v - X_w }_{\R^k}^p }
\right)^{\nicefrac{1}{p}}
&\le \mathfrak{c} \norm{v - w}_{\R^d}^\alpha
= \mathfrak{c} \norm{v - w}_{\R^d}^{\mathfrak{a}} \norm{v - w}_{\R^d}^{\alpha-\mathfrak{a}}
\\&=\mathfrak{c} \norm{v - w}_{\R^d}^{\mathfrak{a}} \norm{v - w}_{\R^d}^{(\alpha-1)^+}
\\&\le \mathfrak{c} \norm{v - w}_{\R^d}^{\mathfrak{a}} \big(2 n\sqrt{d}\big)^{(\alpha-1)^+}.
\end{split}
\end{equation}
This and \eqref{Kolmorogov_chentsov:ass1} imply that for all $n\in\N$ it holds that
\begin{multline}\label{Kolmorogov_chentsov:ass1Proof}
\sup\!\Bigg(\Bigg\{\frac{ 
	\left(
	\EXPP{ \Norm{ X_v - X_w }_{\R^k}^p }
	\right)^{\nicefrac{1}{p}}
}
{ \norm{v - w}_{\R^d}^\mathfrak{a} }\colon v, w \in D \cap [-n, n]^d,\, v \neq w\Bigg\}
\\\cup
\big\{\left(
\EXPP{ \Norm{ X_v }_{\R^k}^p }
\right)^{\nicefrac{1}{p}}
\colon v\in D \cap [-n, n]^d\big\}\cup\{0\} \Bigg)<\infty.
\end{multline}
Therefore, we obtain that for all $n\in\N$ it holds that $g_n$ is a globally bounded and globally $\mathfrak{a}$-H\"older continuous function.
Mittmann \& Steinwart \cite[Theorem 2.2]{mittmannSteinwart} hence ensures that for every $n\in\N$ there is a globally bounded and globally $\mathfrak{a}$-H\"older continuous function $G_n\colon \R^d\to L^p(\Omega;\R^k)$ which satisfies for all $v\in D \cap [-n, n]^d$  that $G_n(v)=g_n(v)$.
This assures that there exist random fields $\xi_n\colon \R^d \times \Omega\to \R^k$, $n\in\N$, which satisfy 
\begin{enumerate}[(a)]
	\item that for all $n\in\N$, $v\in \R^d$ it holds $\P$-a.s.\ that $(\xi_n)_v=G_n(v)$ and 
	\item that for all $n\in\N$ it holds that
	\begin{equation}
	\label{Kolmorogov_chentsov:extensionHoelder1}
	\begin{split}
	&\sup_{\substack{v, w \in [-n,n+1)^d,\\ v \neq w}}
	\frac{ 
		\left(
		\EXPP{ \Norm{ (\xi_n)_v - (\xi_n)_w }_{\R^k}^p }
		\right)^{\nicefrac{1}{p}}
	}
	{ \norm{v - w}_{\R^d}^\mathfrak{a} }\\
	&\le 
	\sup_{\substack{v, w \in \R^d,\\ v \neq w}}
	\frac{ 
		\left(
		\EXPP{ \Norm{ (\xi_n)_v - (\xi_n)_w }_{\R^k}^p }
		\right)^{\nicefrac{1}{p}}
	}
	{ \norm{v - w}_{\R^d}^\mathfrak{a} }
	< 
	\infty	
	\end{split}
	\end{equation}
	\begin{equation}
	\label{Kolmorogov_chentsov:extensionHoelder2}
	\andq
	\sup_{v \in[-n,n+1)^d}
	\left(
	\EXPP{ \Norm{ (\xi_n)_v}_{\R^k}^p }
	\right)^{\nicefrac{1}{p}}
	\le \sup_{v\in \R^d}
	\left(
	\EXPP{ \Norm{ (\xi_n)_v}_{\R^k}^p }
	\right)^{\nicefrac{1}{p}}
	< 
	\infty.
	\end{equation}
\end{enumerate}
Combining this and, e.g., Revuz \& Yor \cite[Theorem 2.1 in Section 2 in Chapter I]{revuz1999} (with $X=\xi_n$, $\gamma=p$, $d=d$, $\varepsilon=\mathfrak{a} p-d$ in the notation of \cite[Theorem 2.1 in Section 2 in Chapter I]{revuz1999})
ensures that there exist random fields
$
	Y_n \colon [-n, n]^d \times \Omega \to \R^k
$, $n\in\N$, 
which satisfy 
\begin{enumerate}[(A)]
	\item that for all $n\in\N$,  $\omega \in \Omega$ it holds that
	\begin{equation}
	\label{Kolmorogov_chentsov:eq1}
	\left( [-n,n]^d \ni v \mapsto (Y_n)_v(\omega) \in \R^k\right) \in C([-n, n]^d, \R^k)
	\end{equation}
	and
	\item that  for all $n\in\N$,  $v \in  [-n, n]^d$ it holds that $\P ( (Y_n)_v = (\xi_n)_v ) = 1.$ 
\end{enumerate}
%
%
The fact that for all $n\in\N$, $v\in D \cap [-n, n]^d$ it holds that $\P ( X_v = (\xi_n)_v ) = 1$ therefore implies that
for all $n\in\N$, $v \in D \cap [-n, n]^d$ it holds that
\begin{equation}
\label{Kolmorogov_chentsov:eq2}
\P ( (Y_n)_v = X_v )=1.
\end{equation}
This assures that
for all $n\in\N, m\in\N\cap [1,n], v \in D \cap [-m, m]^d$ it holds that
\begin{equation}
	\P ( \{(Y_n)_v = X_v\}\cap  \{(Y_m)_v = X_v\}) = 1.
\end{equation}
The fact that for all $n\in\N, m\in\N\cap [1,n], v \in D \cap [-m, m]^d$ it holds that 
\begin{equation}
	\{(Y_n)_v = X_v\}\cap  \{(Y_m)_v = X_v\}\subseteq \{(Y_n)_v = (Y_m)_v\}
\end{equation}
therefore demonstrates that 
for all $n\in\N, m\in\N\cap [1,n], v \in D \cap [-m, m]^d$ it holds that
\begin{equation}
	\P ( (Y_n)_v = (Y_m)_v ) = 1.
\end{equation}
Combining this with \eqref{Kolmorogov_chentsov:eq1} and Lemma~\ref{mod_indist}
(with $d=k$, $E=D\cap [-m,m]^d$ for $m\in \N\cap [1,n]$, $n\in\N$ in the notation of Lemma~\ref{mod_indist})
 establishes that
for all $n\in \N,$ $m\in\N\cap [1,n]$ it holds that
\begin{equation}\label{Kolmorogov_chentsovYnYk}
	\P \big( \forall \, v \in D \cap [-m, m]^d \colon (Y_n)_v = (Y_m)_v \big) 
= 
	1.
\end{equation}
Next let $\Pi \in \mathcal{F}$ be the event given by
\begin{equation}
\label{Kolmorogov_chentsov:eq3}
\Pi
=
\left\{
\forall \, n \in \N,	\, m \in \N\cap [1,n],	\, v \in D \cap [-m, m]^d
\colon
(Y_n)_v = (Y_m)_v
\right\}.
\end{equation}
Observe that \eqref{Kolmorogov_chentsov:eq3} and \eqref{Kolmorogov_chentsovYnYk} show that
\begin{equation}
\label{Kolmorogov_chentsov:eq4}
\begin{split}
	\P ( \Pi )
=
	\P \!\left(
		\cap_{n=1}^\infty\cap_{m=1}^n
		\left\{ \forall \, v \in D \cap [-m, m]^d \colon (Y_n)_v = (Y_m)_v \right\}
	\right)
=
	1.
\end{split}
\end{equation}
Moreover, note that \eqref{Kolmorogov_chentsov:eq3} ensures that there exists a unique random field
$
Z \colon D \times \Omega \to \R^k
$
 which satisfies 
\begin{enumerate}[(I)]
	\item \label{Kolmorogov_chentsov:I} that for all $\omega \in \Pi$, $n \in \N$, $v \in D \cap [-n, n]^d$ it holds that $Z_v (\omega) = (Y_n)_v(\omega)$ and
	\item \label{Kolmorogov_chentsov:II} that for $\omega\in \Omega \backslash \Pi$, $v\in D$ it holds that $Z_v (\omega) = 0$.
\end{enumerate}
Observe that \eqref{Kolmorogov_chentsov:I}, \eqref{Kolmorogov_chentsov:eq2}, and \eqref{Kolmorogov_chentsov:eq4} demonstrate that 
for all $n \in \N$, $v \in D \cap [-n, n]^d$ it holds that
\begin{equation}
\begin{split}
	\P ( Z_v = X_v )&=\P ( \{Z_v = X_v\}\cap \Pi )
\\&=
\P ( \{(Y_n)_v = X_v\}\cap \Pi ) 
= \P ( (Y_n)_v = X_v) 
= 
1.
\end{split}
\end{equation}
This shows that 
for all $v \in D$ it holds that
\begin{equation}
\label{Kolmorogov_chentsov:eq6}
	\P ( Z_v = X_v )
= 
	1.
\end{equation}
Moreover, observe that \eqref{Kolmorogov_chentsov:eq1} and \eqref{Kolmorogov_chentsov:I} imply that
for all $\omega \in \Pi$, $n \in \N$ it holds that
\begin{equation}
\begin{split}
	&\left(D \cap [-n, n]^d \ni v \mapsto Z_v(\omega) \in \R^k\right)  \\
&=
	\left(D \cap [-n, n]^d \ni v \mapsto (Y_n)_v(\omega) \in \R^k\right) \in C(D \cap [-n, n]^d, \R^k).
\end{split}
\end{equation}
This ensures that
for all $\omega \in \Pi$ it holds that
\begin{equation}
\label{Kolmorogov_chentsov:eq7}
\begin{split}
	\left(D \ni v \mapsto Z_v(\omega) \in \R^k\right) 
\in 
	C(D, \R^k).
\end{split}
\end{equation}
In addition, note that \eqref{Kolmorogov_chentsov:II} assures that 
for all $\omega \in \Omega \backslash \Pi$ it holds that
\begin{equation}
\label{Kolmorogov_chentsov:eq8}
\begin{split}
	\left(D \ni v \mapsto Z_v(\omega) \in \R^k\right) 
=
	\left(D \ni v \mapsto 0 \in \R^k\right) 
\in 
	C(D, \R^k).
\end{split}
\end{equation}
Combining this and \eqref{Kolmorogov_chentsov:eq7} demonstrates that
for all $\omega \in \Omega$ it holds that
\begin{equation}
\begin{split}
	\left(D \ni v \mapsto Z_v(\omega) \in \R^k\right) 
\in 
	C(D, \R^k).
\end{split}
\end{equation}
This and \eqref{Kolmorogov_chentsov:eq6} complete the proof of Lemma~\ref{Kolmorogov_chentsov}.
\end{proof}

  \begin{prop}
  \label{affine_solutions_of_SDEs}
  Let $d\in \N$, $T \in (0,\infty)$,
  let $(\Omega,\allowbreak \mathcal{F},\allowbreak\P,\allowbreak(\mathbbm{F}_t)_{t \in [0,T]})$ be a filtered probability space which fulfils the usual conditions, 
 let $W \colon [0,T]\times \Omega \to \R^d$ be a standard $(\mathbbm{F}_t)_{t \in [0,T]}$-Brownian motion,
 and 
  let $\mu \colon \R^d \to \R^d$ and $\sigma \colon \R^d \to \R^{d \times d}$ be functions which satisfy for all $x,y \in \R^d$, $\lambda \in \R$ that 
  \begin{equation} \label{affine_solutions_of_SDEs:AffineLinearOne}
  	 \mu(\lambda x + y) + \lambda \mu(0) = \lambda \mu(x) + \mu(y)
  \end{equation}
  and 
  \begin{equation}\label{affine_solutions_of_SDEs:AffineLinearTwo}
  	\sigma(\lambda x + y) + \lambda \sigma(0) = \lambda \sigma(x) + \sigma(y).
  \end{equation}
  Then there exist up to indistinguishability unique  $(\mathbbm{F}_t)_{t \in [0,T]}$-adapted stochastic processes with continuous sample paths $X^x  \colon [0,T]\times \Omega \to \R^d$, $x \in \R^d$, which satisfy
  \begin{enumerate}[(i)]
   \item \label{affine_solutions_of_SDEs:item1}
   that for all $x \in \R^d$, $t \in [0,T]$ it holds $\P$-a.s.\ that
   \begin{equation}\label{affine_solutions_of_SDEsEquationX}
    X_t^x = x + \int_0^t \mu(X_s^x)\, ds + \int_0^t \sigma(X_s^x) \, dW_s 
   \end{equation}
   and
    \item  \label{affine_solutions_of_SDEs:item2}
    that for all  $x,y \in \R^d$, $\lambda \in \R$, $t \in [0,T]$,  $\omega \in \Omega$ it holds that
    \begin{equation}
    	X_t^{\lambda x+y}(\omega) + \lambda X_t^{0}(\omega) = \lambda X_t^{x}(\omega) + X_t^{y}(\omega).
    \end{equation}
  \end{enumerate}
 \end{prop}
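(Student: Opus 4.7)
The plan is to construct the family $(X^x)_{x \in \R^d}$ in three stages: first produce individual solutions by standard SDE theory, then glue them into a jointly continuous random field via Kolmogorov--Chentsov, and finally modify on a null set so that the affine relation in item~\eqref{affine_solutions_of_SDEs:item2} holds pointwise in $\omega$ rather than merely $\P$-a.s.

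First I would observe that Corollary~\ref{linear_growth_affine} applied to $\mu$ and Corollary~\ref{linear_growth_affine_HilbertSchmidt} applied to $\sigma$ yield a constant $c \in [0,\infty)$ such that $\mu$ and $\sigma$ are globally Lipschitz continuous and of at most linear growth. Standard existence/uniqueness theory for SDEs with Lipschitz coefficients (e.g., Da Prato \& Zabczyk \cite[Theorem 7.4]{DaPratoZabczyk92}) then provides, for every $x \in \R^d$, an $(\mathbbm{F}_t)_{t \in [0,T]}$-adapted continuous process $\tilde X^x \colon [0,T]\times\Omega \to \R^d$ satisfying \eqref{affine_solutions_of_SDEsEquationX}, unique up to indistinguishability. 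This already establishes the uniqueness assertion of the proposition.

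Next, for any fixed $p \in [2,\infty)$ I would derive two moment estimates. Proposition~\ref{moments_of_solution_of_SDE} yields
$
\sup_{t \in [0,T]} \Exp{\|\tilde X_t^x\|^p}^{1/p}
\leq C_p (1+\|x\|)
$
for some $C_p \in [0,\infty)$. Subtracting the integral equations for $\tilde X^x$ and $\tilde X^y$ and applying Lemma~\ref{Minkowski}, Lemma~\ref{lem-daPratoZabzcyk}, and the Lipschitz bounds, followed by Gr\"onwall's inequality (Lemma~\ref{gronwall}), yields
$
\sup_{t \in [0,T]}\Exp{\|\tilde X_t^x - \tilde X_t^y\|^p}^{1/p}
\leq C_p' \|x-y\|
$
for some $C_p' \in [0,\infty)$ independent of $x,y$. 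Similarly, for $0 \leq s \leq t \leq T$, applying Minkowski and the BDG-type Lemma~\ref{lem-daPratoZabzcyk} to $\tilde X_t^x - \tilde X_s^x = \int_s^t \mu(\tilde X_u^x)\,du + \int_s^t \sigma(\tilde X_u^x)\,dW_u$ together with the moment bound above yields $\Exp{\|\tilde X_t^x - \tilde X_s^x\|^p}^{1/p} \leq C_p''(1+\|x\|)|t-s|^{1/2}$. Combining these two estimates via the triangle inequality shows that the random field $\Phi(x,t) = \tilde X_t^x$ on $D = \R^d \times [0,T]$ satisfies, for every $n \in \N$, a Hölder-type bound of the form $\Exp{\|\Phi(x,t) - \Phi(y,s)\|^p}^{1/p} \leq K_n \|(x,t) - (y,s)\|^{1/2}$ on $D \cap [-n,n]^{d+1}$. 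Picking $p > 2(d+1)$ and $\alpha \in (\tfrac{d+1}{p}, \tfrac{1}{2})$ verifies hypothesis \eqref{Kolmorogov_chentsov:ass1} of Lemma~\ref{Kolmorogov_chentsov}, which yields a random field $\hat X \colon \R^d \times [0,T] \times \Omega \to \R^d$ that is continuous in $(x,t)$ for every $\omega$ and satisfies $\P(\hat X_t^x = \tilde X_t^x) = 1$ for every $(x,t)$. Lemma~\ref{normal_adapted} then gives adaptedness of each $\hat X^x$, and since indistinguishable processes satisfy the same SDE $\P$-a.s., $\hat X^x$ verifies \eqref{affine_solutions_of_SDEsEquationX}.

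Finally, to upgrade item~\eqref{affine_solutions_of_SDEs:item2} from almost-sure to everywhere, I would apply Proposition~\ref{affine_SDE_allpoints} to $\hat X$ to obtain an event $\Pi \in \mathcal{F}$ with $\P(\Pi)=1$ such that for all $\omega \in \Pi$, $x,y \in \R^d$, $\lambda \in \R$, $t \in [0,T]$ the identity $\hat X_t^{\lambda x+y}(\omega) + \lambda \hat X_t^0(\omega) = \lambda \hat X_t^x(\omega) + \hat X_t^y(\omega)$ holds. Since $(\Omega,\mathcal{F},\P,(\mathbbm{F}_t)_{t \in [0,T]})$ satisfies the usual conditions, $\Omega \setminus \Pi \in \mathbbm{F}_0$. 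I would then define
\begin{equation}
X_t^x(\omega) = \begin{cases} \hat X_t^x(\omega) & \text{if } \omega \in \Pi, \\ x & \text{if } \omega \in \Omega \setminus \Pi. \end{cases}
\end{equation}
The constant path $t \mapsto x$ is continuous and trivially satisfies $(\lambda x + y) + \lambda \cdot 0 = \lambda x + y$, so item~\eqref{affine_solutions_of_SDEs:item2} holds for every $\omega \in \Omega$; sample path continuity is preserved; $X^x$ differs from $\hat X^x$ only on the $\mathbbm{F}_0$-null set $\Omega \setminus \Pi$ so adaptedness (Lemma~\ref{normal_adapted}) and the $\P$-a.s.\ identity \eqref{affine_solutions_of_SDEsEquationX} are retained. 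The main obstacle in this program is the bookkeeping in the Kolmogorov--Chentsov step, namely choosing $p$ large enough and combining the spatial and temporal moment estimates into a single Hölder estimate with exponent $\alpha > (d+1)/p$ on each compact box $[-n,n]^{d+1}$; everything else is essentially a clean bookkeeping of the preceding lemmas.
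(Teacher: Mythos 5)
Your proposal follows the same architecture as the paper's proof (standard existence and uniqueness, joint $(x,t)$-Hölder moment estimates, Kolmogorov--Chentsov to get a jointly continuous modification, Proposition~\ref{affine_SDE_allpoints} to get the full-measure event $\Pi$, and a pointwise redefinition on $\Omega \setminus \Pi$), and each step is sound. The one tactical difference worth noting is how the spatial moment estimate is obtained. You derive $\sup_{t}\Exp{\|\tilde X_t^x-\tilde X_t^y\|^p}^{1/p}\leq C_p'\|x-y\|$ by the standard route: subtract the two integral equations, apply Lemma~\ref{Minkowski} and Lemma~\ref{lem-daPratoZabzcyk} with the Lipschitz bounds, square, and invoke Gr\"onwall via Lemma~\ref{gronwall} (which requires an a priori finiteness argument, e.g.\ via stopping times or the already-established $p$th-moment bound). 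The paper instead exploits the affine structure directly: having shown via Lemma~\ref{affine_SDE_fixedpoint} that $\P$-a.s.\ $X_t^x-X_t^y=\|x-y\|\big(X_t^{(x-y)/\|x-y\|}-X_t^0\big)$, it reduces the difference estimate to the a priori bound of Proposition~\ref{moments_of_solution_of_SDE} applied to unit initial values, avoiding a second Gr\"onwall argument entirely. Both routes give the same $\|x-y\|$ bound; the paper's shortcut is a bit tighter but yours is the more robust and self-contained version. Your choice to set $X_t^x(\omega)=x$ on $\Omega\setminus\Pi$ (versus the paper's choice of the zero path) is a harmless variant, and in fact has the small cosmetic advantage of keeping $X_0^x=x$ everywhere. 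One minor slip: having established a moment estimate with exponent exactly $\tfrac{1}{2}$, the natural move is to take $\alpha=\tfrac{1}{2}$ with $p>2(d+1)$ in Lemma~\ref{Kolmorogov_chentsov}, rather than $\alpha\in(\tfrac{d+1}{p},\tfrac{1}{2})$; the latter still works by absorbing a power of the diameter on each box, but it is an unnecessary extra step.
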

 
\begin{proof}[Proof of Proposition~\ref{affine_solutions_of_SDEs}]
Throughout this proof 
let $p \in (2(d+1), \infty)$,
for every $k \in \N$ let $\left\| \cdot \right\|_{\R^k} \colon \R^k \to [0,\infty)$ be the $k$-dimensional Euclidean norm,
let $\HSNormIndex{\cdot}{d}{d}\colon \R^{d \times d} \to [0,\infty)$ be the Hilbert-Schmidt norm on $\R^{d\times d}$,
let $\mathfrak{m}, \mathfrak{s} \in (0,\infty)$ satisfy 
for all $x \in \R^d$ that
\begin{equation}
\label{affine_solutions_of_SDEs:settingMu}
\norm{\mu(x)}_{\R^d} 
\leq 
\mathfrak{m}( 1 + \norm{x}_{\R^d})
\qandq
\norm{\mu(x) - \mu(y)}_{\R^d} 
\leq	
\mathfrak{m} \norm{x - y}_{\R^d}
\end{equation}
and
\begin{equation}
\label{affine_solutions_of_SDEs:settingSigma}
\HSNormIndex{{\sigma(x)}}{d}{d}
\leq 
\mathfrak{s}( 1 + \norm{x}_{\R^d})
\qandqShort
\HSNormIndex{{\sigma(x)- \sigma(y)}}{d}{d}
\leq	
\mathfrak{s} \norm{x - y}_{\R^d}
\end{equation}
(cf. Corollary~\ref{linear_growth_affine} and Corollary~\ref{linear_growth_affine_HilbertSchmidt}),
and let $C \in (0,\infty)$ be given by
\begin{equation}\label{affine_solutions_of_SDEs:setting2}
	C
=
	4d\sqrt{2}
	\big(
		1+ \mathfrak{m} T + \mathfrak{s}  p\sqrt{T} 
	\big)
	\exp{\!
	\Big( \!
		\big[
			\mathfrak{m} \sqrt{T} + \mathfrak{s} p
		\big]^2
		T
	\Big)}.
\end{equation}
Note that \eqref{affine_solutions_of_SDEs:settingMu}, \eqref{affine_solutions_of_SDEs:settingSigma}, e.g., Jentzen \& Kloeden \cite[Theorem 5.1]{JentzenKloeden11} (with $T=T$, $(\Omega,\allowbreak \mathcal{F},\allowbreak\P,\allowbreak(\mathbbm{F}_t)_{t \in [0,T]})\allowbreak=(\Omega,\allowbreak \mathcal{F},\allowbreak\P,\allowbreak(\mathbbm{F}_t)_{t \in [0,T]})$, $H=\R^d$, $\norm{\cdot}_H=\norm{\cdot}_{\R^d}$, $U=\R^d$, $\norm{\cdot}_U=\norm{\cdot}_{\R^d}$, $Q v=v$, $(W_t)_{t\in[0,T]}=(W_t)_{t\in[0,T]}$, $D(A)=\R^d$, $Av=0$, $\eta=1$, $\alpha=0$, $\delta=0$, $F(v)=\mu(v)$, $\beta=0$, $B(v)u=\sigma(v)u$, $\gamma=0$, $p=4$, $\xi=(\Omega\ni \omega\mapsto x\in\R^d)$ for $u,v,x\in\R^d$ in the notation of \cite[Theorem 5.1]{JentzenKloeden11}) (cf., e.g.,  Da Prato \& Zabczyk \cite[Item (i) in Theorem 7.4]{DaPratoZabczyk92} and Klenke~\cite[Theorem 26.8]{Klenke14}), 
and, e.g., the Kolmogorov-Chentsov type theorem in Lemma \ref{Kolmorogov_chentsov} 
(with $d=1$, $k=d$, $p=4$, $\alpha=1/2$, $D=[0,T]$ in the notation of Lemma \ref{Kolmorogov_chentsov})
 assure that
there exist $(\mathbbm{F}_t)_{t \in [0,T]}$-adapted stochastic processes with continuous sample paths $X^x  \colon [0,T]\times \Omega \to \R^d$, $x \in \R^d$, which satisfy
that for all $x \in \R^d$, $t \in [0,T]$ it holds $\P$-a.s.\ that
\begin{equation}
\label{affine_solutions_of_SDEs:eq01}
	X_t^x = x + \int_0^t \mu(X_s^x)\, ds + \int_0^t \sigma(X_s^x) \, dW_s.
\end{equation}
Observe that \eqref{affine_solutions_of_SDEs:eq01} and Lemma~\ref{affine_SDE_fixedpoint} prove that
for all $t \in [0,T]$, $x,y \in \R^d$, $\lambda \in \R$ it holds $\P$-a.s.\ that
\begin{equation}
\label{affine_solutions_of_SDEs:eq02}
	X_t^{\lambda x +y}+\lambda X_t^0 = \lambda X_t^x + X_t^y .
\end{equation}
This implies that 
for all $t \in [0,T]$, $v \in \R^d\backslash\{0\}$ it holds $\P$-a.s.\ that
\begin{equation}
\begin{split}
X^{v}_t 
&=
X^{\norm{v}_{\R^d} \frac{v}{\Norm{v}_{\R^d}}}_t 
=
\Big(X^{\norm{v}_{\R^d} \frac{v}{\Norm{v}_{\R^d}}}_t +\Norm{v}_{\R^d} X_t^0\Big)-\Norm{v}_{\R^d} X_t^0
\\&=	\Big(\Norm{v}_{\R^d} X^{\frac{v}{\Norm{v}_{\R^d}}}_t+X_t^0\Big)-\Norm{v}_{\R^d} X_t^0
=
\Norm{v}_{\R^d} \Big( X^{\frac{v}{\Norm{v}_{\R^d}}}_t  -  X^{0}_t \Big) + X^{0}_t.
\end{split}
\end{equation}
Combining this and \eqref{affine_solutions_of_SDEs:eq02} 
(with
$t = t$,
$x = y$, 
$y = x$, 
$\lambda = -1$
for $t\in [0,T]$, $x,y\in \R^d$
in the notation of  \eqref{affine_solutions_of_SDEs:eq02})
implies that
for all $t \in [0,T]$, $x,y \in \R^d$ with $x\neq y$ it holds that
\begin{equation}
\label{affine_solutions_of_SDEs:eq03}
\begin{split}
	&\left(
		\EXPP{  \Norm{ X^{x}_t - X^{y}_t }_{\R^d}^p }
	\right)^{\nicefrac{1}{p}}
=
	\left(
		\EXPP{ \Norm{ X^{x-y}_t - X^{0}_t }_{\R^d}^p }
	\right)^{\nicefrac{1}{p}} \\
&=
	\left(
		\EXPPP{ \Normm{\Norm{x-y}_{\R^d} \big( X^{\frac{x-y}{\Norm{x-y}_{\R^d}}}_t  -  X^{0}_t \big) + X^{0}_t - X^{0}_t }_{\R^d}^p }
	\right)^{ \! \nicefrac{1}{p}} \\
&=
	\Big(
		\EXPPP{ \Normm{ X^{\frac{x-y}{\Norm{x-y}_{\R^d}}}_t  -  X^{0}_t }_{\R^d}^p }
	\Big)^{ \! \nicefrac{1}{p}}
	\Norm{x-y}_{\R^d}.
\end{split}
\end{equation}
In addition, observe that \eqref{affine_solutions_of_SDEs:settingMu}, \eqref{affine_solutions_of_SDEs:settingSigma}, \eqref{affine_solutions_of_SDEs:eq01}, Proposition~\ref{moments_of_solution_of_SDE}
(with 
$d=d$, $m=d$, $p=p$, $T=T$, $\mathfrak{m}_1=\mathfrak{m}$, $\mathfrak{m}_2=\mathfrak{m}$, $\mathfrak{s}_1=\mathfrak{s}$, $\mathfrak{s}_2=\mathfrak{s}$ 
in the notation of Proposition~\ref{moments_of_solution_of_SDE}),  and the triangle inequality assure that 
for all $t \in [0,T]$ it holds that
\begin{equation}
\begin{split}
	&\sup_{v \in \R^d, \Norm{ v }_{\R^d} = 1}
	\left(
		\EXPP{ \Norm{ X^{v}_t  -  X^{0}_t }_{\R^d}^p }
	\right)^{ \! \nicefrac{1}{p}}\\
&\leq
	\left(
		\EXPP{ \Norm{ X^{0}_t }_{\R^d}^p }
	\right)^{ \! \nicefrac{1}{p}}
	+
	\sup_{v \in \R^d, \Norm{ v }_{\R^d} = 1}
	\left(
		\EXPP{ \Norm{ X^{v}_t }_{\R^d}^p }
	\right)^{ \! \nicefrac{1}{p}} \\
&\leq
	\sqrt{2} 
	\big(
		\mathfrak{m} T + \mathfrak{s}  p \sqrt{T} 
	\big)
	\exp\!
	\Big( \!
		\big[
			\mathfrak{m} \sqrt{T} + \mathfrak{s} p
		\big]^2
		T
	\Big) \\
&\quad
	+
	\sup_{v \in \R^d, \Norm{ v }_{\R^d} = 1}\left[
	\sqrt{2} 
	\big(
		\Norm{v} + \mathfrak{m} T + \mathfrak{s}  p\sqrt{T} 
	\big)
	\exp{\!
	\Big( \!
		\big[
			\mathfrak{m} \sqrt{T} + \mathfrak{s} p
		\big]^2
		T
	\Big)} \right]\\
&\leq
	2\sqrt{2}	
	\big(
		1+ \mathfrak{m} T + \mathfrak{s}  p\sqrt{T} 
	\big)
	\exp{\!
	\Big( \!
		\big[
			\mathfrak{m} \sqrt{T} + \mathfrak{s} p
		\big]^2
		T
	\Big)}.
\end{split}
\end{equation}
This, \eqref{affine_solutions_of_SDEs:setting2}, \eqref{affine_solutions_of_SDEs:eq03}, and the fact that
for all $n \in \N$, $x=(x_1,x_2,\dots, x_d),y=(y_1,y_2,\dots, y_d) \in [-n, n]^d$ it holds that
\begin{equation}
\begin{split}
\Norm{x-y}_{\R^d}
&=	
\Norm{x-y}_{\R^d}^{\nicefrac{1}{2}} \Norm{x-y}_{\R^d}^{\nicefrac{1}{2}}
\\&= \big[[\vert x_1-y_1\vert^2+\ldots+\vert x_d-y_d\vert^2]^{\nicefrac{1}{2}}\big]^{\nicefrac{1}{2}} \Norm{x-y}_{\R^d}^{\nicefrac{1}{2}}
\\&\leq
 \big[[d(2n)^2]^{\nicefrac{1}{2}}\big]^{\nicefrac{1}{2}}  \Norm{x-y}_{\R^d}^{\nicefrac{1}{2}}
\\&=
\big[2n \sqrt{d}\big]^{\nicefrac{1}{2}}  \Norm{x-y}_{\R^d}^{\nicefrac{1}{2}}
\leq
2dn \Norm{x-y}_{\R^d}^{\nicefrac{1}{2}}
\end{split}
\end{equation}
assure that 
for all $t \in [0,T]$, $n \in \N$, $x,y \in [-n, n]^d$ with $x\neq y$ it holds that
\begin{equation}
\label{affine_solutions_of_SDEs:eq04}
\begin{split}
	&\left(
		\EXPP{  \Norm{ X^{x}_t - X^{y}_t }_{\R^d}^p }
	\right)^{\nicefrac{1}{p}} 
	= 	\Big(
	\EXPPP{ \Normm{ X^{\frac{x-y}{\Norm{x-y}_{\R^d}}}_t  -  X^{0}_t }_{\R^d}^p }
	\Big)^{ \! \nicefrac{1}{p}}
	\Norm{x-y}_{\R^d}
	\\&\le \left[\sup_{v \in \R^d, \Norm{ v }_{\R^d} = 1}
	\left(
	\EXPP{ \Norm{ X^{v}_t  -  X^{0}_t }_{\R^d}^p }
	\right)^{ \! \nicefrac{1}{p}}\right] 
	\Norm{x-y}_{\R^d}
	\\
&\leq
	2\sqrt{2}	
	\big(
		1+ \mathfrak{m} T + \mathfrak{s}  p\sqrt{T} 
	\big)
	\exp{\!
	\Big( \!
		\big[
			\mathfrak{m} \sqrt{T} + \mathfrak{s} p
		\big]^2
		T
	\Big)}
	\Norm{x-y}_{\R^d}  \\
&\leq	
	2\sqrt{2}	
	\big(
		1+ \mathfrak{m} T + \mathfrak{s}  p\sqrt{T} 
	\big)
	\exp{\!
	\Big( \!
		\big[
			\mathfrak{m} \sqrt{T} + \mathfrak{s} p
		\big]^2
		T
	\Big)}
	2dn
	\Norm{x-y}_{\R^d}^{\nicefrac{1}{2}} \\
&=
	nC  \Norm{x-y}_{\R^d}^{\nicefrac{1}{2}}.
\end{split}
\end{equation}
Moreover, note that \eqref{affine_solutions_of_SDEs:eq01} and the triangle inequality imply that 
for all $x \in \R^d$, $s, t \in [0, T]$ with $s \leq t$ it holds that
\begin{equation}
\label{affine_solutions_of_SDEs:eq05}
\begin{split}
	&\left(
		\EXPP{  \Norm{ X^{x}_t - X^{x}_s }_{\R^d}^p }
	\right)^{\nicefrac{1}{p}} \\
&=
	\bigg(
		\EXPPP{ 
			\Normm{ 
				x + \textint_0^t \mu(X_u^x)\, du + \int_0^t \sigma(X^x_u) \, dW_u \\
&\qquad
				- 
				\left( 
					x + \textint_0^s \mu(X_u^x)\, du + \int_0^s \sigma(X^x_u) \, dW_u
				\right)\!
			}_{\R^d}^p 
		}
	\bigg)^{\nicefrac{1}{p}} \\
&=
	\Big(
		\EXPP{ 
			\Norm{ 
				\textint_s^t \mu(X_u^x)\, du + \int_s^t \sigma(X^x_u) \, dW_u 
			}_{\R^d}^p 
		}
	\Big)^{\nicefrac{1}{p}} \\
&\leq
	\Big(
		\EXPP{ 
			\Norm{ 
				\textint_s^t \mu(X_u^x)\, du
			}_{\R^d}^p 
		}
	\Big)^{\nicefrac{1}{p}} 
	+
	\Big(
		\EXPP{ 
			\Norm{ 
				\int_s^t \sigma(X^x_u) \, dW_u 
			}_{\R^d}^p 
		}
	\Big)^{\nicefrac{1}{p}}.
\end{split}
\end{equation}
Furthermore, observe that Lemma~\ref{Minkowski}, Proposition~\ref{moments_of_solution_of_SDE}, \eqref{affine_solutions_of_SDEs:settingMu}, and the fact that
for all $s, t \in [0, T]$ it holds that
$
	| t - s | 
= 
	| t - s |^{\nicefrac{1}{2}} | t - s |^{\nicefrac{1}{2}} 
\leq 
	\sqrt{T} | t - s |^{\nicefrac{1}{2}}
$
ensure that 
for all $n \in \N$, $x \in [-n,n]^d$, $s, t \in [0, T]$ with  $s \leq t$ it holds that
\begin{equation}
\label{affine_solutions_of_SDEs:eq06}
\begin{split}
		&\left(
	\Exp{ 
		\norm{
			\int_s^t \mu(X_u^x)\, du 
		}_{\R^d}^p \,
	}
	\right)^{ \! \! \nicefrac{1}{p}}
		\leq 	\left(
	\Exp{ 
		\left|
		\int_s^t \norm{\mu(X_u^x)}_{\R^d}\, du 
		\right|^p \,
	}
	\right)^{ \! \! \nicefrac{1}{p}} \\
&\leq
	\int_s^t
	\big(
		\EXPP{ 
			\Norm{ 
				 \mu(X_u^x)
			}_{\R^d}^p 
		}
	\big)^{\nicefrac{1}{p}} 	
	\, du 
\leq
	\int_s^t
	\mathfrak{m}  
	\big( 
		1 +
		\big(
			\EXPP{ 
				\Norm{ 
					 X_u^x
				}_{\R^d}^p 
			}
		\big)^{\nicefrac{1}{p}} 	
	\big)
	\, du \\
&\leq
	\int_s^t
	\mathfrak{m}  
	\Big[ 
		1 +
		\sqrt{2} 
		\big(
			\Norm{x} + \mathfrak{m} T + \mathfrak{s}  p\sqrt{T} 
		\big)
		\exp{\!
		\Big( \!
			\big[
				\mathfrak{m} \sqrt{T} + \mathfrak{s} p
			\big]^2
			T
		\Big)}
	\Big]
	\, du \\
&=
	\mathfrak{m}  
	\Big[ 
		1 +
		\sqrt{2} 
		\big(
			n\sqrt{d} + \mathfrak{m} T + \mathfrak{s}  p\sqrt{T} 
		\big)
		\exp{\!
		\Big( \!
			\big[
				\mathfrak{m} \sqrt{T} + \mathfrak{s} p
			\big]^2
			T
		\Big)}
	\Big]
	(t-s) \\
&\leq
	\mathfrak{m}  
	(1 + \sqrt{2})
	n d
	\big(
		1 + \mathfrak{m} T + \mathfrak{s}  p\sqrt{T} 
	\big)
	\exp{\!
	\Big( \!
		\big[
			\mathfrak{m} \sqrt{T} + \mathfrak{s} p
		\big]^2
		T
	\Big)}
	| t - s | \\
&= \left(\frac{1 + \sqrt{2}}{4\sqrt{2}}\right) \mathfrak{m} n C | t - s |
\leq
	\mathfrak{m}  n C
	\sqrt{T}
	| t - s |^{\nicefrac{1}{2}}.
\end{split}
\end{equation}
Moreover, note that the fact that for all $x\in\R^d$ it holds that $X^x  \colon [0,T]\times \Omega \to \R^d$ is an 
$(\mathbbm{F}_t)_{t \in [0,T]}$-adapted stochastic process with continuous sample paths and \eqref{affine_solutions_of_SDEs:settingSigma} ensure that
it holds $\P$-a.s. that 
\begin{equation}
\begin{split}
	&\int_0^T \HSNormIndex{{\sigma(X_s^x)}}{d}{d}^2 \,ds
\le T \left[\sup_{s\in[0,T]}\HSNormIndex{{\sigma(X_s^x)}}{d}{d}^2\right]\\
&\le T \left[\sup_{s\in[0,T]}\,\left[\mathfrak{m}^2(1+\Norm{X_s^x}_{\R^d})^2\right]\right]
\le 2T\mathfrak{m}^2 \left(1+\sup_{s\in[0,T]}\Norm{X_s^x}_{\R^d}^2\right)
<\infty.
\end{split}
\end{equation}
Lemma \ref{lem-daPratoZabzcyk},
Proposition~\ref{moments_of_solution_of_SDE},
the fact that for all $x\in\R^d$ it holds that $X^x\colon [0,T]\times \Omega \to \R^{d}$ is an 
$(\mathbbm{F}_t)_{t \in [0,T]}$-adapted stochastic process with continuous sample paths,
 and \eqref{affine_solutions_of_SDEs:settingSigma} hence demonstrate that 
for all $n \in \N$, $x \in [-n,n]^d$, $s, t \in [0, T]$ with  $s \leq t$ it holds that
\begin{equation}
\begin{split}
	&\Big(
		\EXPP{ 
			\Norm{ 
				\textint_s^t \sigma(X^x_u) \, dW_u 
			}_{\R^d}^p 
		}
	\Big)^{\nicefrac{1}{p}} \\
&\leq
\left(\frac{p(p-1)}{2}\right)^{\nicefrac{1}{2}}
\bigg(
\int_s^t
\big(
\EXPP{ 
	\HSNormIndex{{	\sigma(X^x_u)}}{d}{d}^p
}
\big)^{\!\nicefrac{2}{p}} 
\, du
\bigg)^{\nicefrac{1}{2}} \\
&\leq
p
\bigg(
\int_s^t
\Big[
\big(
\EXPP{ 
		\HSNormIndex{{	\sigma(X^x_u)}}{d}{d}^p
}
\big)^{\!\nicefrac{1}{p}} 
\Big]^2
\, du
\bigg)^{\nicefrac{1}{2}} \\
&\leq
	p
	\bigg(
		\int_s^t
		\Big[
		\mathfrak{s}
		\left(
		1
		+ 
		\big(
			\EXPP{ 
				\Norm{ 
						X^x_u
				}_{\R^d}^p 
			}
		\big)^{\!\nicefrac{1}{p}} \right)
		\Big]^2
		\, du
	\bigg)^{\nicefrac{1}{2}} \\
&\leq
	p \mathfrak{s}
	\bigg[
		\left(
			\int_s^t 1^2\, du
		\right)^{\nicefrac{1}{2}}
		+
		\bigg(
		\int_s^t
		\Big[
		\big(
			\EXPP{ 
				\Norm{ 
						X^x_u
				}_{\R^d}^p 
			}
		\big)^{\!\nicefrac{1}{p}} 
		\Big]^2
		\, du
		\bigg)^{\!\!\nicefrac{1}{2}} \,
	\bigg] \\
&\leq
	p \mathfrak{s}
	\bigg[
		| t-s | ^{\nicefrac{1}{2}} \\
&\quad
		+
		\bigg(
		\int_s^t
		\Big[
			\sqrt{2} 
			\big(
				\Norm{x} + \mathfrak{m} T + \mathfrak{s}  p\sqrt{T} 
			\big)
			\exp{\!
			\Big( \!
				\big[
					\mathfrak{m} \sqrt{T} + \mathfrak{s} p
				\big]^2
				T
			\Big)}
		\Big]^2
		\, du
		\bigg)^{\!\!\nicefrac{1}{2}} \,
	\bigg] \\
&\leq
	p \mathfrak{s}
	| t-s | ^{\nicefrac{1}{2}}
	\bigg[
		1
		+
			\sqrt{2} 
			\big(
				n\sqrt{d} + \mathfrak{m} T + \mathfrak{s}  p\sqrt{T} 
			\big)
			\exp{\!
			\Big( \!
				\big[
					\mathfrak{m} \sqrt{T} + \mathfrak{s} p
				\big]^2
				T
			\Big)}
	\bigg] \\
&\leq
	p \mathfrak{s}
	| t-s | ^{\nicefrac{1}{2}}
	(1 + \sqrt{2}) n\sqrt{d}
	\big(
		1 + \mathfrak{m} T + \mathfrak{s}  p\sqrt{T} 
	\big)
	\exp{\!
	\Big( \!
		\big[
			\mathfrak{m} \sqrt{T} + \mathfrak{s} p
		\big]^2
		T
	\Big)}\\
&= p \mathfrak{s} n C | t-s | ^{\nicefrac{1}{2}} \left(\frac{1 + \sqrt{2}}{4\sqrt{2d}}\right) 
\leq
	p \mathfrak{s} n C
	| t-s | ^{\nicefrac{1}{2}}.
\end{split}
\end{equation}
Combining this with \eqref{affine_solutions_of_SDEs:eq05} and  \eqref{affine_solutions_of_SDEs:eq06} establishes that
for all $n \in \N$, $x \in [-n,n]^d$, $s, t \in [0, T]$ it holds that
\begin{equation}
\label{affine_solutions_of_SDEs:eq07}
\begin{split}
	\left(
		\EXPP{  \Norm{ X^{x}_t - X^{x}_s }_{\R^d}^p }
	\right)^{\nicefrac{1}{p}} 
&\leq
	\mathfrak{m}  
	n C
	\sqrt{T}
	| t - s |^{\nicefrac{1}{2}}
	+
	p \mathfrak{s} n C 
	| t-s | ^{\nicefrac{1}{2}} \\
&=
	n C 
	( \mathfrak{m}  \sqrt{T} + p \mathfrak{s} )
	| t-s | ^{\nicefrac{1}{2}}.
\end{split}
\end{equation}
Moreover, observe that the fact that 
for all $a, b \in [0,\infty)$ it holds that
$
	a + b \leq \sqrt{2} (a^2 + b^2)^{\nicefrac{1}{2}}
$
ensures that  
for all $a, b \in [0,\infty)$ it holds that
\begin{equation}
	\sqrt{a} +\sqrt{b} 
\leq 
	\sqrt{2} (a + b)^{\nicefrac{1}{2}}
\leq
	\sqrt{2} \big( \sqrt{2} (a^2 + b^2)^{\nicefrac{1}{2}} \big)^{\nicefrac{1}{2}}
\leq
	2 \big( (a^2 + b^2)^{\nicefrac{1}{2}} \big)^{\nicefrac{1}{2}}.
\end{equation}
This, \eqref{affine_solutions_of_SDEs:eq04}, and \eqref{affine_solutions_of_SDEs:eq07} 
demonstrate that
for all $n \in \N$, $x, y \in [-n,n]^d$, $s, t \in [0, T]$ it holds that
\begin{equation}
\begin{split}
 	&\left(
		\EXPP{  \Norm{ X^{x}_t - X^{y}_s }_{\R^d}^p }
	\right)^{\nicefrac{1}{p}} \\
&\leq
	 \left(
		\EXPP{  \Norm{ X^{x}_t - X^{y}_t}_{\R^d}^p }
	\right)^{\nicefrac{1}{p}}
	+
	 \left(
		\EXPP{  \Norm{ X^{y}_t - X^{y}_s }_{\R^d}^p }
	\right)^{\nicefrac{1}{p}} \\
&\leq
	n C \Norm{x-y}_{\R^d}^{\nicefrac{1}{2}}
	+
	n C 
	( \mathfrak{m}  \sqrt{T} + p \mathfrak{s} )
	| t-s | ^{\nicefrac{1}{2}} \\
&\leq
	n C
	( 1 + \mathfrak{m}  \sqrt{T} + p \mathfrak{s} )
	(\Norm{x-y}_{\R^d}^{\nicefrac{1}{2}} + | t-s | ^{\nicefrac{1}{2}}) \\
&\leq
	n C
	( 1 + \mathfrak{m}  \sqrt{T} + p \mathfrak{s} )
	2
	\left[
		(\Norm{x-y}_{\R^d}^{2} + | t-s | ^{2})^{\nicefrac{1}{2}}
	\right]^{\nicefrac{1}{2}}\\
&=
	2 n C
	( 1 + \mathfrak{m}  \sqrt{T} + p \mathfrak{s} )
			\norm{(x,t)-(y,s)
		}_{\R^{d+1}}^{\nicefrac{1}{2}}.
\end{split}
\end{equation}
Hence, we obtain for all $n \in \N$ that
\begin{equation}
\label{affine_solutions_of_SDEs:eq08}
\begin{split}
	&\sup_{ \substack{ (x,t), (y,s) \in(\R^d \times [0, T]) \cap [-n,n]^{d+1}, \\ (x,t) \neq (y,s)}}
	\left(\frac{ 
		\left(
			\EXPP{  \Norm{ X^{x}_t - X^{y}_s }_{\R^d}^p }
		\right)^{\nicefrac{1}{p}}
	}
	{
	\norm{(x,t)-(y,s)
	}_{\R^{d+1}}^{\nicefrac{1}{2}}
	} \right)\\
&\leq
	2 n C
	( 1 + \mathfrak{m}  \sqrt{T} + p \mathfrak{s} )
<
	\infty.
\end{split}
\end{equation}
In addition, note that \eqref{affine_solutions_of_SDEs:settingMu},  \eqref{affine_solutions_of_SDEs:settingSigma}, and Proposition~\ref{moments_of_solution_of_SDE}  assure that
for all $n \in \N$ it holds that
\begin{equation}
\begin{split}
	&\sup_{ (x,t) \in (\R^d \times [0, T]) \cap [-n,n]^{d+1}}
		\left[\left(
			\EXPP{  \Norm{ X^{x}_t }_{\R^d}^p }
		\right)^{\nicefrac{1}{p}}\right]\\
&\leq
	\sup_{ (x,t) \in (\R^d \times [0, T]) \cap [-n,n]^{d+1}}
	\left[	\sqrt{2} 
		\big(
			\Norm{x} + \mathfrak{m} T + \mathfrak{s}  p\sqrt{T} 
		\big)
		\exp{\!
		\Big( \!
			\big[
				\mathfrak{m} \sqrt{T} + \mathfrak{s} p
			\big]^2
			T
		\Big)}  \right] \\
&\leq
		\sqrt{2} 
		\big(
			n \sqrt{d} + \mathfrak{m} T + \mathfrak{s}  p\sqrt{T} 
		\big)
		\exp{\!
		\Big( \!
			\big[
				\mathfrak{m} \sqrt{T} + \mathfrak{s} p
			\big]^2
			T
		\Big)}
<
	\infty.
\end{split}
\end{equation}
Lemma~\ref{Kolmorogov_chentsov} 
(with
$d = d+1$, $k=d$, $p = p$, 
$\alpha = \nicefrac{1}{2}$,
$(\Omega, \mathcal{F},\P)=(\Omega, \mathcal{F},\P)$, 
$D = \R^d \times [0, T]$,
$X = ((\R^d\times [0,T])\times \Omega\ni ((x,t),\omega)\mapsto X^x_t(\omega)\in \R^d)$
in the notation of Lemma~\ref{Kolmorogov_chentsov})
and \eqref{affine_solutions_of_SDEs:eq08} hence prove that
there exist stochastic processes
$
	Y^x \colon [0, T] \times \Omega \to \R^d
$, $x \in \R^d$, which satisfy
\begin{enumerate}[(I)]
	\item that for all $\omega \in \Omega$ it holds that
	\begin{equation}
	\label{affine_solutions_of_SDEs:eq09}
	\left( \R^d \times [0, T] \ni (x, t) \mapsto Y^x_t(\omega) \in \R^d\right) \in C(\R^d \times [0, T], \R^d)
	\end{equation}
	and 
	\item that for all $x \in \R^d$, $t \in [0, T] $ it holds that
	\begin{equation}
	\label{affine_solutions_of_SDEs:eq10}
	\P ( Y^x_t = X^x_t ) = 1.
	\end{equation}
\end{enumerate}
The fact that 
for all $x \in \R^d$ it holds that
$X^x$ is an $(\mathbbm{F}_t)_{t \in [0,T]}$-adapted stochastic process
and Lemma~\ref{normal_adapted} therefore ensure that
for all $x \in \R^d$ it holds that
$Y^x$ is an $(\mathbbm{F}_t)_{t \in [0,T]}$-adapted stochastic process.
Next note that \eqref{affine_solutions_of_SDEs:eq09}, \eqref{affine_solutions_of_SDEs:eq10}, and the fact that for all $x\in\R^d$ it holds that $X^x$ has continuous sample paths assure that for all $x\in\R^d$ it holds that
\begin{equation}
\label{affine_solutions_of_SDEsXYindist}
\P \big(\forall\,t\in [0,T] \colon Y^x_t = X^x_t \big)=\P \big(\forall\,t\in [0,T]\cap \Q \colon Y^x_t = X^x_t \big) = 1.
\end{equation}
Moreover, observe that \eqref{affine_solutions_of_SDEs:eq10} implies that for all $x \in \R^d$, $t \in [0,T]$  it holds $\P$-a.s.\  that
\begin{equation}
	\int_0^t \mu(X^x_s)\, ds=\int_0^t \mu(Y^x_s)\, ds\qandq
	\int_0^t \sigma(X^x_s) \, dW_s=\int_0^t \sigma(Y^x_s) \, dW_s.
\end{equation}
Combining this, \eqref{affine_solutions_of_SDEs:eq10}, and \eqref{affine_solutions_of_SDEs:eq01} ensures that for all $x \in \R^d$, $t \in [0,T]$  it holds $\P$-a.s.\  that
\begin{equation} 
\label{affine_solutions_of_SDEs:eq12}
\begin{split}
Y^x_t 
&=X^x_t=
x + \int_0^t \mu(X^x_s)\, ds + \int_0^t \sigma(X^x_s) \, dW_s\\
&=x + \int_0^t \mu(Y^x_s)\, ds + \int_0^t \sigma(Y^x_s) \, dW_s.
\end{split}
\end{equation}
Next let 
$\Pi \subseteq \Omega$ be the set given by
\begin{equation}
\label{affine_solutions_of_SDEs:eq11}
\begin{split}
\Pi 
&= 
\Big\{
\forall\, x,y \in \R^d,  \lambda \in \R, t \in [0, T] \colon \,
Y_t^{\lambda x +y} +\lambda Y_t^0 = \lambda Y_t^x + Y_t^y 
\Big\}.
\end{split}
\end{equation}
Combining \eqref{affine_solutions_of_SDEs:AffineLinearOne}, \eqref{affine_solutions_of_SDEs:AffineLinearTwo}, \eqref{affine_solutions_of_SDEs:eq09} \eqref{affine_solutions_of_SDEs:eq12}, and  \eqref{affine_solutions_of_SDEs:eq11} with Proposition~\ref{affine_SDE_allpoints} demonstrates that 
\begin{equation}\label{affine_solutions_of_SDEsPi}
	\Pi \in \mathcal{F}\qandq \P (\Pi ) = 1.
\end{equation}
This proves that there exist unique stochastic processes
$
	Z^x \colon [0, T] \times \Omega \to \R^d
$, $x \in \R^d$,
 which satisfy for all $x \in \R^d$, $t \in [0,T]$, $\omega \in \Omega$ that
	\begin{align}\label{affine_solutions_of_SDEs:eq13}
&Z_t^x(\omega)=\!\left\{\begin{aligned}
&Y_t^x(\omega)\colon\quad \omega\in \Pi\\&0\hspace{0.9cm}\colon\quad \omega\in \Omega\backslash \Pi.
\end{aligned}\! 
\right.
\end{align}
Observe that  \eqref{affine_solutions_of_SDEs:eq11} and \eqref{affine_solutions_of_SDEs:eq13} imply that 
for all $x,y \in \R^d$, $\lambda \in \R$, $t \in [0, T]$, $\omega \in \Pi$ it holds that
\begin{equation}
\label{affine_solutions_of_SDEs:eq14}
\begin{split}
	Z_t^{\lambda x +y} (\omega)+\lambda Z_t^0(\omega) 
&= 	
	Y_t^{\lambda x +y} (\omega)+\lambda Y_t^0(\omega) \\
&= 
	\lambda Y_t^x(\omega) + Y_t^y (\omega) 
= 
	\lambda Z_t^x(\omega) + Z_t^y (\omega).
\end{split}
\end{equation}
Moreover, note that \eqref{affine_solutions_of_SDEs:eq13} assures that
for all $x,y \in \R^d$, $\lambda \in \R$, $t \in [0, T]$, $\omega \in \Omega \backslash \Pi$ it holds that
\begin{equation}
\begin{split}
	Z_t^{\lambda x +y} (\omega)+\lambda Z_t^0(\omega) 
= 	
	0 + 0 
= 
	\lambda Z_t^x(\omega) + Z_t^y (\omega).
\end{split}
\end{equation}
Combining this with \eqref{affine_solutions_of_SDEs:eq14} demonstrates that
for all $x,y \in \R^d$, $\lambda \in \R$, $t \in [0, T]$, $\omega \in \Omega $ it holds that
\begin{equation}
\label{affine_solutions_of_SDEs:eq15}
\begin{split}
	Z_t^{\lambda x +y} (\omega)+\lambda Z_t^0(\omega) 
= 
	\lambda Z_t^x(\omega) + Z_t^y (\omega).
\end{split}
\end{equation}
Furthermore, observe that \eqref{affine_solutions_of_SDEs:eq13} ensures for all $x\in\R^d$ that 
\begin{equation}
	\Pi\subseteq \{\forall \, t \in [0, T] \colon Z^x_t = Y^x_t\}.
\end{equation}
This and \eqref{affine_solutions_of_SDEsPi} show for all $x\in\R^d$ that
\begin{equation}\label{affine_solutions_of_SDEs:ZandYindist}
	\P \big( \forall \, t \in [0, T] \colon Z^x_t = Y^x_t\big) = 1.
\end{equation}
The fact that for all $x\in\R^d$ it holds that $Y^x$ is an $(\mathbbm{F}_t)_{t \in [0,T]}$-adapted stochastic process with continuous sample paths, Lemma \ref{normal_adapted},
and \eqref{affine_solutions_of_SDEs:eq13}  therefore imply that for all $x\in\R^d$ it holds that $Z^x$ is an $(\mathbbm{F}_t)_{t \in [0,T]}$-adapted stochastic process with continuous sample paths. Combining this and  \eqref{affine_solutions_of_SDEs:eq12}  with \eqref{affine_solutions_of_SDEs:ZandYindist} demonstrates
that
for all $x \in \R^d$, $t \in [0,T]$  it holds $\P$-a.s.\  that
\begin{equation} 
\label{affine_solutions_of_SDEs:eq16}
Z^x_t 
= 
x + \int_0^t \mu(Z^x_s)\, ds + \int_0^t \sigma(Z^x_s) \, dW_s.
\end{equation}
This,
\eqref{affine_solutions_of_SDEs:settingMu}, \eqref{affine_solutions_of_SDEs:settingSigma},
and, e.g., Jentzen \& Kloeden \cite[Theorem 5.1]{JentzenKloeden11} (with $T=T$, $(\Omega,\allowbreak \mathcal{F},\allowbreak\P,\allowbreak(\mathbbm{F}_t)_{t \in [0,T]})=(\Omega,\allowbreak \mathcal{F},\allowbreak\P,\allowbreak(\mathbbm{F}_t)_{t \in [0,T]})$, $H=\R^d$, $\norm{\cdot}_H=\norm{\cdot}_{\R^d}$,  $U=\R^d$, $\norm{\cdot}_U=\norm{\cdot}_{\R^d}$, $Q v=v$, $(W_t)_{t\in[0,T]}=(W_t)_{t\in[0,T]}$, $D(A)=\R^d$, $Av=0$, $\eta=1$, $\alpha=0$, $\delta=0$, $F(v)=\mu(v)$, $\beta=0$, $B(v)u=\sigma(v)u$, $\gamma=0$, $p=2$, $\xi=(\Omega\ni \omega\mapsto x\in\R^d)$ for $u,v,x\in\R^d$ in the notation of \cite[Theorem 5.1]{JentzenKloeden11})
(cf., e.g., Da Prato \& Zabczyk \cite[Item (i) in Theorem 7.4]{DaPratoZabczyk92} and Klenke~\cite[Theorem 26.8]{Klenke14}) establish that for all $x \in \R^d$ and all $(\mathbbm{F}_t)_{t \in [0,T]}$-adapted stochastic processes with continuous sample paths $V  \colon [0,T]\times \Omega \to \R^d$ 
which satisfy
that for all $t \in [0,T]$ it holds $\P$-a.s.\ that
\begin{equation}
	V_t = x + \int_0^t \mu(V_s)\, ds + \int_0^t \sigma(V_s) \, dW_s
\end{equation}
it holds that
$
\forall \, t \in [0,T] \colon\,	\P(  V_t = Z^x_t) = 1
$. Lemma \ref{mod_indist} hence demonstrates that for all $x\in\R^d$ and all $(\mathbbm{F}_t)_{t \in [0,T]}$-adapted stochastic processes with continuous sample paths $V  \colon [0,T]\times \Omega \to \R^d$ 
which satisfy
that for all $t \in [0,T]$ it holds $\P$-a.s.\ that
\begin{equation}
V_t = x + \int_0^t \mu(V_s)\, ds + \int_0^t \sigma(V_s) \, dW_s
\end{equation}
it holds that
$
	\P( \forall \, t \in [0,T] \colon V_t = Z^x_t) = 1
$.
Combining this with \eqref{affine_solutions_of_SDEs:eq15} and  \eqref{affine_solutions_of_SDEs:eq16} completes the proof of Proposition~\ref{affine_solutions_of_SDEs}.
\end{proof}


\subsection[Viscosity solutions for partial differential equations (PDEs)]{Viscosity solutions for partial differential equations}\label{SectionExistenceOfViscosity}s

In this subsection we apply results on viscosity solutions for PDEs from the literature (cf., e.g., Crandall et al.\ \cite{UsersGuideViscosity}, Crandall \& Lions \cite{ViscosityOriginalPaper}, and Hairer et al.\ \cite[Subsections 4.3--4.4]{HairerHutzenthalerJentzen15}) 
to establish in Proposition~\ref{viscosity_existence} and Corollary~\ref{viscosity_affine_existence} the existence, uniqueness, and regularity results for viscosity solutions which we need for our proofs of the ANN approximation results.
We recall that the notion of viscosity solutions allows to extend the classical notion of solutions of second order PDEs to functions which are merely continuous and not necessarily $C^2$.
Our proof of Proposition~\ref{viscosity_existence} employs the following well-known result, Lemma~\ref{viscosity_lyapunov} below, on the existence of a suitable Lyapunov-type function $V \colon \R^d \to \R$ where $d \in \N$ under the coercivity-type hypothesis in \eqref{viscosity_lyapunovCoercivity}. 
For the sake of completeness we include in this subsection also a proof of Lemma~\ref{viscosity_lyapunov}.

\begin{lemma}
	\label{viscosity_lyapunov}
	Let $d,m \in \N$, $p \in [4, \infty)$,
	let $\langle \cdot, \cdot \rangle \colon \R^d \times \R^d \to \R$ be the $d$-dimensional Euclidean scalar product,
	let $\left\| \cdot \right\| \colon \R^d \to [0,\infty)$ be the $d$-dimensional Euclidean norm, 
	let $\HSNorm{\cdot} \colon \R^{d \times m} \to [0,\infty)$ be the Hilbert-Schmidt norm on $\R^{d\times m}$,
	let 
	$\mu \colon \R^d \to \R^d$ and
	$\sigma \colon \R^d \to \R^{d \times m}$
	be functions which satisfy that
	\begin{equation}\label{viscosity_lyapunovCoercivity}
			 \left[ \sup_{x \in \R^d} \frac{\langle x, \mu(x) \rangle}{(1 + \norm{x}^2)} \right] 
		+
		\left[  \sup_{x \in \R^d} \frac{ \HSNorm{{\sigma (x)}}}{(1 + \norm{x})}   \right] 
		<
		\infty,
	\end{equation}
	and let $V \colon \R^d \to \R$ be the function which satisfies 
	for all $x \in \R^d$ that
	$
	V(x) = 1 + \norm{x}^p
	$.
	Then
	\begin{enumerate}[(i)]
		\item \label{viscosity_lyapunov:item1}
		it holds that 
		$V \in C^2(\R^d, (0,\infty))$ 
		and
		\item \label{viscosity_lyapunov:item2}
		there exists $\rho \in (0,\infty)$ such that 
		for all $x \in \R^d$ it holds that
		\begin{equation}
		\left< \mu(x) , (\nabla V) (x) \right>
		+
		\operatorname{Trace}\! \big( 
		\sigma(x)[\sigma(x)]^{\ast}(\operatorname{Hess} V )(x)
		\big)  
		\leq
		\rho V(x).
		\end{equation}
	\end{enumerate}
\end{lemma}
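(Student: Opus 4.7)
The plan is to prove item (i) by direct computation of the derivatives of $V$ and item (ii) by combining the explicit formulas with the coercivity assumption \eqref{viscosity_lyapunovCoercivity}.

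First, I would establish item (i). For $x \in \R^d \setminus \{0\}$ one computes directly that
\begin{equation}
(\nabla V)(x) = p\,\|x\|^{p-2}\,x
\qandq
(\operatorname{Hess} V)(x) = p\,\|x\|^{p-2}\,I_d + p(p-2)\,\|x\|^{p-4}\,x\,x^{\ast}.
\end{equation}
Since $p \geq 4$, the exponents $p-2 \geq 2$ and $p-4 \geq 0$ are nonnegative, so these expressions extend continuously to $x = 0$ (both vanish there). A verification using the limit definition of partial derivatives confirms that the formulas are correct at the origin, yielding $V \in C^2(\R^d, \R)$. Positivity is immediate from $V(x) = 1 + \norm{x}^p \geq 1 > 0$.

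Next, for item (ii), I would plug in the formulas above. Writing $A(x) := \sigma(x)[\sigma(x)]^\ast$, one gets
\begin{equation}
\begin{split}
&\left< \mu(x) , (\nabla V) (x) \right> + \operatorname{Trace}(A(x)(\operatorname{Hess} V )(x)) \\
&= p\,\|x\|^{p-2}\langle x, \mu(x)\rangle + p\,\|x\|^{p-2}\HSNorm{\sigma(x)}^2 + p(p-2)\,\|x\|^{p-4}\,\|[\sigma(x)]^\ast x\|^2.
\end{split}
\end{equation}
The Cauchy--Schwarz inequality gives $\|[\sigma(x)]^\ast x\|^2 \leq \HSNorm{\sigma(x)}^2 \|x\|^2$, so the last two terms combine into $p(p-1)\,\|x\|^{p-2}\HSNorm{\sigma(x)}^2$.

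Now I would invoke \eqref{viscosity_lyapunovCoercivity}: let $c_1, c_2 \in [0,\infty)$ satisfy $\langle x, \mu(x)\rangle \leq c_1(1+\|x\|^2)$ and $\HSNorm{\sigma(x)} \leq c_2(1+\|x\|)$ for all $x\in\R^d$. This yields
\begin{equation}
\left< \mu(x) , (\nabla V) (x) \right> + \operatorname{Trace}(A(x)(\operatorname{Hess} V )(x)) \leq p\,c_1\,\|x\|^{p-2}(1+\|x\|^2) + 2p(p-1)c_2^2\,\|x\|^{p-2}(1+\|x\|^2),
\end{equation}
using $(1+\|x\|)^2 \leq 2(1+\|x\|^2)$. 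Expanding, this is bounded by a constant multiple of $\|x\|^{p-2} + \|x\|^p$. Since $p \geq 4$ ensures $p - 2 \in [0,p]$, Young's inequality gives $\|x\|^{p-2} \leq 1 + \|x\|^p$, so $\|x\|^{p-2} + \|x\|^p \leq 1 + 2\|x\|^p \leq 2\,V(x)$. Collecting constants yields item (ii) with $\rho = 2p(c_1 + 2(p-1)c_2^2)$ (or any similar explicit constant).

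The calculations are entirely routine; the only mildly delicate point is the $C^2$ regularity of $V$ at the origin, which is where the assumption $p \geq 4$ is essential (for $p \in (2,4)$ the Hessian would still exist but its components $\|x\|^{p-4} x_i x_j$ would not be twice classically differentiable at $0$; however since only $C^2$ is required, $p \geq 4$ is comfortably sufficient). I expect no other obstacle.
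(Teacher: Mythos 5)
Your proposal is correct and follows essentially the same route as the paper's proof: compute $\nabla V$ and $\operatorname{Hess} V$ explicitly, combine the two noise terms via the Cauchy--Schwarz bound $\|[\sigma(x)]^*x\|^2 \le \HSNorm{\sigma(x)}^2\norm{x}^2$ into $p(p-1)\norm{x}^{p-2}\HSNorm{\sigma(x)}^2$, and then absorb $\norm{x}^{p-2}$ and $\norm{x}^p$ into $V(x)$. Your side remark on why $p\geq 4$ matters is slightly off (the Hessian formula is already continuous at the origin for $p>2$; $p\geq4$ is just a comfortable sufficient condition matching the paper), but this has no bearing on the validity of the argument.
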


\begin{proof}[Proof of Lemma~\ref{viscosity_lyapunov}]
	Throughout this proof let $\mu_i\colon \R^d \to \R$, $i \in \{1, 2, \ldots,\allowbreak d \}$, and  $\sigma_{i,j} \colon \R^d \to \R$, $i\in \{1, 2, \ldots, d \}$, $j \in \{1, 2, \ldots, m \}$, satisfy 
	for all $x \in \R^d$ that
	$
	\mu(x) = (\mu_i(x))_{i \in \{1, 2, \ldots, d \}}
	$
	and
	$
	\sigma(x) = (\sigma_{i,j}(x))_{i\in \{1, 2, \ldots, d \}, j \in \{1, 2, \ldots, m \}}
	$
	and let $c \in [0,\infty)$ satisfy 
	for all $x \in \R^d$ that
	\begin{equation}
	\label{viscosity_lyapunov:setting1}
	\langle x, \mu(x) \rangle
	\leq
	c( 1 + \norm{x}^2)
	\qandq
	\HSNorm{{\sigma (x)}} 
	\leq	
	c( 1 + \norm{x}).
	\end{equation}
	Note that the fact that 
	for all $x = (x_1, x_2, \ldots, x_d) \in \R^d$, $i \in \{1, 2, \ldots, d \}$ it holds that
	$
	V(x) = 1+\big[\sum_{i = 1}^d |x_i|^2 \big]^{\nicefrac{p}{2}}
	$
	assures that
	for all $x = (x_1, x_2, \ldots, x_d) \in \R^d$, $i \in \{1, 2, \ldots, d \}$ it holds that
	\begin{equation}
	\label{viscosity_lyapunov:eq1}
	\big(   \tfrac{ \partial }{ \partial x_i} V   \big) (x) 
	= 
	\frac{p}{2} \left[\smallsum_{i = 1}^d |x_i|^2 \right]^{\nicefrac{p}{2}-1}2x_i
	=
	p  \norm{x}^{p-2} x_i.
	\end{equation}
	This ensures that
	for all $x = (x_1, x_2, \ldots, x_d) \in \R^d$, $i,j \in \{1, 2, \ldots, d \}$ it holds that
	\begin{equation}
	\label{viscosity_lyapunov:eq2}
	\big(   \tfrac{ \partial^2  }{ \partial x_j \partial x_i} V  \big) (x) 
	= 
	\begin{cases}
	p (p-2)  \norm{x}^{p-4} x_i x_j &: i \neq j \\
	p (p-2)  \norm{x}^{p-4} |x_i|^2 + p  \norm{x}^{p-2} &: i = j
	\end{cases}.
	\end{equation}
	Combining this and \eqref{viscosity_lyapunov:eq1} proves item~\eqref{viscosity_lyapunov:item1}.
	Next observe that \eqref{viscosity_lyapunov:eq1} and  \eqref{viscosity_lyapunov:eq2} demonstrate that
	for all $x = (x_1, x_2, \ldots, x_d) \in \R^d$ it holds that
	\begin{equation}
	\begin{split}
	&\left< \mu(x) , (\nabla V) (x) \right>
	+
	\operatorname{Trace}\! \big( 
	\sigma(x)[\sigma(x)]^{\ast}(\operatorname{Hess} V )(x)
	\big)   \\
	&=
	\left[
	\smallsum_{i = 1}^d  \mu_i(x)  \big(   \tfrac{ \partial }{ \partial x_i}V   \big) (x) 
	\right]
	+
	\left[
	\smallsum_{i,j= 1}^d  \smallsum_{k= 1}^m  
	\sigma_{i,k}(x) \sigma_{j,k}(x) 
	\big(   \tfrac{ \partial^2  }{ \partial x_i \partial x_j}V   \big) (x)
	\right]  \\
	&=
	\left[
	\smallsum_{i = 1}^d  
	\mu_i(x) \,
	p  \norm{x}^{p-2} x_i
	\right]
	+
	\left[
	\smallsum_{i = 1}^d  \smallsum_{k = 1}^m 
	\sigma_{i,k}(x) \sigma_{i,k}(x) \,
	p  \norm{x}^{p-2}
	\right] \\
	&\quad
	+
	\left[
	\smallsum_{i,j= 1}^d  \smallsum_{k= 1}^m  
	\sigma_{i,k}(x) \sigma_{j,k}(x) \,
	p (p-2)  \norm{x}^{p-4} x_i x_j 
	\right] \\
	&=	
	p  \norm{x}^{p-2}
	\langle x, \mu(x) \rangle
	+
	p  \norm{x}^{p-2}
	\left[
	\smallsum_{i = 1}^d  \smallsum_{k = 1}^m 
	\vert \sigma_{ik}(x)\vert^2
	\right] \\
	&\quad
	+ p (p-2)  \norm{x}^{p-4}
	\left[
\smallsum_{i,j= 1}^d  \smallsum_{k= 1}^m  
\sigma_{i,k}(x) \sigma_{j,k}(x) \,
 x_i x_j 
\right]\\
	&=	
p  \norm{x}^{p-2}
\langle x, \mu(x) \rangle
+
p  \norm{x}^{p-2}
\HSNorm{{\sigma(x)}}^2 \\
&\quad
+ p (p-2)  \norm{x}^{p-4}
(x^* \sigma(x)[\sigma(x)]^* x)
\\	&\le	
p  \norm{x}^{p-2}
\langle x, \mu(x) \rangle
+
p  \norm{x}^{p-2}
\HSNorm{{\sigma(x)}}^2
+ p (p-2)  \norm{x}^{p-2}
\HSNorm{{\sigma(x)}}^2
\\	&=	
p  \norm{x}^{p-2}
\langle x, \mu(x) \rangle
+ p (p-1)  \norm{x}^{p-2}
\HSNorm{{\sigma(x)}}^2 .
	\end{split}
	\end{equation}
	This and \eqref{viscosity_lyapunov:setting1} ensure that
	for all $x \in \R^d$ it holds that
	\begin{equation}
	\begin{split}
	&\left< \mu(x) , (\nabla V) (x) \right>
	+
	\operatorname{Trace}\! \big( 
	\sigma(x)[\sigma(x)]^{\ast}(\operatorname{Hess} V )(x)
	\big)   \\
	&\leq	
	p  \norm{x}^{p-2}
	c (1 + \norm{x}^2)
	+
	p (p-1)  \norm{x}^{p-2}
	 c^2 (1 + \norm{x})^2 \\
	&=
	(pc  + p (p-1)  c^2) \norm{x}^{p-2}  
	+
	 2 p (p-1)  c^2 \norm{x}^{p-1} 
	+
	(pc + p (p-1)  c^2) \norm{x}^{p} \\
		&\le
	(pc  + p (p-1)  c^2) (1+\norm{x}^{p})  
	+
	 2 p (p-1)  c^2 (1+\norm{x}^{p})\\ 
	&\quad+
	(pc + p (p-1)  c^2) (1+\norm{x}^{p}) \\
			&=
	(2pc  + 4p (p-1)  c^2) (1+\norm{x}^{p})
	=
	(2pc  + 4p (p-1)  c^2) V(x).    
	\end{split}
	\end{equation}
	This establishes item~\eqref{viscosity_lyapunov:item2}.
	The proof of Lemma~\ref{viscosity_lyapunov} is thus completed.
\end{proof}

\begin{prop}[Existence and uniqueness of viscosity solutions]
	\label{viscosity_existence}
	Let $d,m \in \N$, $c \in [0,\infty)$,
	let $\langle \cdot, \cdot \rangle \colon \R^d \times \R^d \to \R$ be the $d$-dimensional Euclidean scalar product,
	let $\left\| \cdot \right\| \colon \R^d \to [0,\infty)$ be the $d$-dimensional Euclidean norm, 
	let $\HSNorm{\cdot} \colon \R^{d \times m} \to [0,\infty)$ be the Hilbert-Schmidt norm on $\R^{d\times m}$,
	let 
	$\varphi \colon \R^d \to \R$ be a continuous and at most polynomially growing function, and
	let
	$\mu \colon \R^d \to \R^d$ and
	$\sigma \colon \R^d \to \R^{d \times m}$
	be functions which satisfy 
	for all $x,y \in \R^d$ that
	\begin{equation}
	\label{viscosity_existence:ass1}
	\langle x, \mu(x) \rangle \leq c(1 + \norm{x}^2),
	\qquad
	\HSNorm{{\sigma (x)}} \leq c(1 + \norm{x}),
	\end{equation}
	\begin{equation}
	\label{viscosity_existence:ass2}
	\andq
	\norm{\mu(x) - \mu(y)} + \HSNorm{{\sigma(x) - \sigma(y)}} \leq c \norm{x-y}.
	\end{equation}
	Then
	\begin{enumerate}[(i)]
		\item \label{viscosity_existence:item1}
		there exists a continuous function $u\colon [0,\infty) \times \R^d \to \R$ which satisfies
		for all $x \in \R^d$ that 
		$
		u(0,x) = \varphi(x)
		$, 
		which satisfies for all $T \in (0,\infty)$ that
$
\inf_{q \in (0, \infty)} 
\sup_{(t, x) \in [0, T] \times \R^d} 
\frac{ | u(t, x) | }{ 1 + \norm{x}^q }
<
\infty,
$
		and which satisfies that $u|_{(0,\infty) \times \R^d}$  is a viscosity solution of
		\begin{equation}
		\begin{split}
		(\tfrac{\partial }{\partial t} u)(t,x) 
		&= 
		\tfrac{1}{2} 
		\operatorname{Trace}\! \big( 
		\sigma(x)[\sigma(x)]^{\ast}(\operatorname{Hess}_x u )(t,x)
		\big)  
		+
		\langle (\nabla_x u)(t,x),\mu(x)\rangle
		\end{split}
		\end{equation}
		for $(t,x) \in (0,\infty) \times \R^d$,
		
		\item \label{viscosity_existence:item2}
		for all $T \in (0,\infty)$ it holds that
		$u|_{(0,T) \times \R^d}$ is a viscosity solution of
		\begin{equation}
		\begin{split}
		(\tfrac{\partial }{\partial t}u)(t,x) 
		&= 
		\tfrac{1}{2} 
		\operatorname{Trace}\! \big( 
		\sigma(x)[\sigma(x)]^{\ast}(\operatorname{Hess}_x u )(t,x)
		\big)  
		+
		\langle (\nabla_x u)(t,x),\mu(x)\rangle
		\end{split}
		\end{equation}
		for $(t,x) \in (0,T) \times \R^d$,
		
		\item \label{viscosity_existence:item3}
		for all $T \in (0,\infty)$ and
		all continuous functions $v\colon \allowbreak [0,T] \times \R^d \to \R$ which satisfy
		for all $x \in \R^d$ that 
		$
		v(0,x) = \varphi(x)
		$, 
		which satisfy that
			$\inf_{q \in (0, \infty)} 
			\sup_{(t, x) \in [0, T] \times \R^d} 
			\frac{ | v(t, x) | }{ 1 + \norm{x}^q }
			<
			\infty,$
		and which satisfy that $v|_{(0,T) \times \R^d}$ is a viscosity solution of
		\begin{equation}
		\begin{split}
		(\tfrac{\partial }{\partial t}v)(t,x) 
		&= 
		\tfrac{1}{2} 
		\operatorname{Trace}\! \big( 
		\sigma(x)[\sigma(x)]^{\ast}(\operatorname{Hess}_x v )(t,x)
		\big)  
		+
		\langle (\nabla_x v)(t,x),\mu(x)\rangle
		\end{split}
		\end{equation}
		for $(t,x) \in (0,T) \times \R^d$
		it holds that
		$v = u|_{[0, T] \times \R^d}$, 
		and
		
		\item \label{viscosity_existence:item4}
		for every $T \in (0,\infty)$, $x \in \R^d$,
		every filtered probability space $(\Omega,\allowbreak \mathcal{F},\allowbreak\P,\allowbreak(\mathbbm{F}_t)_{t \in [0,T]})$ which fulfils the usual conditions,
		every standard $(\mathbbm{F}_t)_{t \in [0,T]}$-Brownian motion $W \colon [0,T] \times \Omega \to \R^m$, and
		every $(\mathbbm{F}_t)_{t \in [0,T]}$-adapted stochastic process with continuous sample paths $X \colon [0,T] \times \Omega \to \R^d$ which satisfies
		that for all $t \in [0,T]$ it holds	$\P$-a.s.\ that
		\begin{equation}
		X_t 
		= 
		x 
		+
		\int_0^t \mu(X_s) \, ds
		+
		\int_0^t \sigma(X_s) \, dW_s
		\end{equation}
		it holds that
		\begin{equation}
		u(T,x)
		=
		\EXPP{\varphi(X_T)}. 
		\end{equation} 
	\end{enumerate}
\end{prop}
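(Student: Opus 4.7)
The plan is to define the candidate viscosity solution via the Feynman-Kac representation $u(t,x) = \Exp{\varphi(X_t^x)}$, where $X^x$ denotes the solution of the SDE with initial value $x$, and then to apply the existence and uniqueness theorem for viscosity solutions of Kolmogorov PDEs in Hairer, Hutzenthaler \& Jentzen \cite[Corollary 4.17 in Subsection 4.4]{HairerHutzenthalerJentzen15}. First I would fix a filtered probability space which fulfils the usual conditions and which carries a standard Brownian motion $W$ on $[0,\infty)$. Thanks to the Lipschitz and linear growth bounds~\eqref{viscosity_existence:ass1}--\eqref{viscosity_existence:ass2} and the standard well-posedness theory for SDEs (cf., e.g., Da Prato \& Zabczyk \cite[Theorem 7.4]{DaPratoZabczyk92}), for every $x\in\R^d$ there exists an (up to indistinguishability unique) $(\mathbbm{F}_t)$-adapted continuous process $X^x$ solving the SDE with initial value $x$. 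Proposition~\ref{moments_of_solution_of_SDE} together with the polynomial growth of $\varphi$ would then imply that $u(t,x)=\Exp{\varphi(X_t^x)}$ is well-defined and that for every $T\in (0,\infty)$ there exists some $q\in(0,\infty)$ with $\sup_{(t,x)\in [0,T]\times \R^d} \tfrac{|u(t,x)|}{1+\norm{x}^q}<\infty$, which is the growth bound asked for in item~\eqref{viscosity_existence:item1}.

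Next I would invoke Lemma~\ref{viscosity_lyapunov} (whose coercivity hypothesis is furnished by~\eqref{viscosity_existence:ass1}) to produce, for every sufficiently large $p\in [4,\infty)$, a function $V_p\in C^2(\R^d,(0,\infty))$ with $V_p(x)=1+\norm{x}^p$ satisfying $\operatorname{Trace}(\sigma(x)[\sigma(x)]^\ast (\operatorname{Hess} V_p)(x))+\langle \mu(x),(\nabla V_p)(x)\rangle \le \rho_p V_p(x)$ for all $x\in\R^d$ and some $\rho_p\in(0,\infty)$. Joint continuity of $u$ in $(t,x)$ would follow from $L^p$-continuity of $(t,x)\mapsto X_t^x$ along the lines of the Kolmogorov--Chentsov-type estimates performed in the proof of Proposition~\ref{affine_solutions_of_SDEs}, combined with uniform integrability supplied by the Lyapunov bound. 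With the Lyapunov family in hand I would verify the remaining hypotheses of \cite[Corollary 4.17]{HairerHutzenthalerJentzen15} and thereby derive items~\eqref{viscosity_existence:item1}, \eqref{viscosity_existence:item3}, and \eqref{viscosity_existence:item4} in one stroke; item~\eqref{viscosity_existence:item2} then follows from item~\eqref{viscosity_existence:item1} by restriction, since any viscosity solution on $(0,\infty)\times \R^d$ is \emph{a fortiori} a viscosity solution on $(0,T)\times\R^d$.

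The principal obstacle I anticipate is notational bookkeeping rather than a new conceptual difficulty: the statement of \cite[Corollary 4.17]{HairerHutzenthalerJentzen15} must be matched precisely with the present convention for the second-order term (including the factor $\nicefrac{1}{2}$) and with the polynomial-growth class in which uniqueness is asserted. A secondary but still routine task is to certify joint continuity of $u$ on $[0,\infty)\times \R^d$ assuming only continuity and polynomial growth of $\varphi$, for which one combines $L^p$-continuity of $x\mapsto X_t^x$ with dominated convergence based on the Lyapunov majorant $V_p$.
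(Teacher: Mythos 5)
Your overall strategy mirrors the paper's: invoke Hairer--Hutzenthaler--Jentzen for the existence of a polynomially growing continuous viscosity solution together with its Feynman--Kac representation, and use Lemma~\ref{viscosity_lyapunov} (applied to $2^{-1/2}\sigma$ to absorb the factor $\tfrac12$) to supply the Lyapunov functions $V_p(x)=1+\|x\|^p$ needed for well-posedness. However, your plan to derive item~\eqref{viscosity_existence:item3} ``in one stroke'' from \cite[Corollary~4.17]{HairerHutzenthalerJentzen15} is where the two treatments diverge in a way that leaves a genuine gap. Corollary~4.17 delivers existence of a polynomially growing viscosity solution on $[0,\infty)\times\R^d$ and the Feynman--Kac formula, but the uniqueness assertion in item~\eqref{viscosity_existence:item3} concerns an \emph{arbitrary} polynomially growing candidate $v$ on $[0,T]\times\R^d$; to compare such a $v$ with $u$ one needs a comparison principle in a Lyapunov-controlled class, which in the paper is \cite[Corollary~4.14]{HairerHutzenthalerJentzen15}, not Corollary~4.17. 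Applying that comparison principle requires the following chain that your proposal elides: first pick $p\in[3,\infty)$ so that both $\sup_{(t,x)}\tfrac{|u(t,x)|}{1+\|x\|^p}$ and $\sup_{(t,x)}\tfrac{|v(t,x)|}{1+\|x\|^p}$ are finite; then show the crucial vanishing ratio
\begin{equation}
\lim_{n\to\infty}\ \sup_{(t,x)\in[0,T]\times\{\|x\|>n\}} \frac{|w(t,x)|}{1+\|x\|^{p+1}}=0,\qquad w\in\{u,v\},
\end{equation}
which certifies that both functions lie in the $o(V_{p+1})$-class; only then does Lemma~\ref{viscosity_lyapunov} applied at exponent $p+1$ feed into \cite[Corollary~4.14]{HairerHutzenthalerJentzen15} to yield $v=u|_{[0,T]\times\R^d}$. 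Without noting this intermediate bootstrap from exponent $p$ to $p+1$ and the correct corollary, the uniqueness argument is not closed.

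Two lesser remarks. First, your preliminary construction of $u$ via $\E[\varphi(X_t^x)]$ followed by separate arguments for joint continuity and polynomial growth is unnecessary: Corollary~4.17 already yields a continuous $u$ with the required growth, so the Kolmogorov--Chentsov machinery can be dropped. Second, for item~\eqref{viscosity_existence:item4} the hypothesis provides a process $X$ only on $[0,T]$, while the Feynman--Kac formula in Corollary~4.17 is stated for the global solution on $[0,\infty)$; the paper therefore first extends $X$ to a global solution $Y$ coinciding with $X$ on $[0,T]$ (via Klenke \cite[Theorem 26.8]{Klenke14}) before applying the formula. Your sketch should incorporate this extension step explicitly.
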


\begin{proof}[Proof of Proposition~\ref{viscosity_existence}]
	Throughout this proof let $C_n \subseteq \R^d$, $n \in \N$, be the sets which satisfy for all $n \in \N$ that
	$
	C_n = \{x \in \R^d \colon \norm{x} > n \}
	$
	and for every $p \in (0,\infty)$ let $V_p \colon \R^d \to (0,\infty)$ be the function which satisfies 
	for all $x \in \R^d$ that
	$
	V_p(x) = 1 + \norm{x}^p
	$.
	Note that the hypothesis that $\varphi$ is a continuous and at most polynomially growing function, \eqref{viscosity_existence:ass1}, \eqref{viscosity_existence:ass2}, and
	Hairer et al.\ \cite[Corollary 4.17]{HairerHutzenthalerJentzen15}
	demonstrate that 
	there exists a continuous function $u\colon [0, \infty) \times \R^d \to \R$ which satisfies
	for all $x \in \R^d$ that 
	$u(0,x) = \varphi(x)$,
	which satisfies for all $T \in (0,\infty)$ that
	\begin{equation}\label{viscosity_existenceGrowthu}
			\inf_{q \in (0, \infty)} 
		\sup_{(t, x) \in [0, T] \times \R^d} 
		\frac{ | u(t, x) | }{ 1 + \norm{x}^q }
		<
		\infty,
	\end{equation}
	and which satisfies that $u|_{(0,\infty) \times \R^d}$ is a viscosity solution of
	\begin{equation}
	\begin{split}
	(\tfrac{\partial }{\partial t}u)(t,x) 
	&= 
	\tfrac{1}{2} 
	\operatorname{Trace}\! \big( 
	\sigma(x)[\sigma(x)]^{\ast}(\operatorname{Hess}_x u )(t,x)
	\big)  
	+
	\langle (\nabla_x u)(t,x),\mu(x)\rangle
	\end{split}
	\end{equation}
	for $(t,x) \in (0,\infty) \times \R^d$.
	This establishes items~\eqref{viscosity_existence:item1}--\eqref{viscosity_existence:item2}.
	For the next step 
	let $T \in (0,\infty)$ and 
	let 
	$v \colon \allowbreak [0,T] \times \R^d \to \R$ 
	be a continuous function which satisfies
	for all $x \in \R^d$ that 
	$
	v(0,x) = \varphi(x)
	$, 
	which satisfies that
	\begin{equation}\label{viscosity_existenceGrowthv}
			\inf_{q \in (0, \infty)} 
		\sup_{(t, x) \in [0, T] \times \R^d} 
		\frac{ | v(t, x) | }{ 1 + \norm{x}^q }
		<
		\infty,
	\end{equation}
	and which satisfies that $v|_{(0,T) \times \R^d}$ is a viscosity solution of
	\begin{equation}
	\label{viscosity_existence:eq1}
	\begin{split}
	(\tfrac{\partial }{\partial t}v)(t,x) 
	&= 
	\tfrac{1}{2} 
	\operatorname{Trace}\! \big( 
	\sigma(x)[\sigma(x)]^{\ast}(\operatorname{Hess}_x v )(t,x)
	\big)  
	+
	\langle (\nabla_x v)(t,x),\mu(x)\rangle
	\end{split}
	\end{equation}
	for $(t,x) \in (0,T) \times \R^d$.
	Observe that \eqref{viscosity_existenceGrowthu} and \eqref{viscosity_existenceGrowthv} show that 
		\begin{equation}
			\inf_{q \in [3, \infty)} 
			\sup_{(t, x) \in [0, T] \times \R^d} 
			\frac{ | u(t, x) | }{ 1 + \norm{x}^q }
			<
			\infty
	\quad \text{and}\quad
			\inf_{q \in [3, \infty)} 
			\sup_{(t, x) \in [0, T] \times \R^d} 
			\frac{ | v(t, x) | }{ 1 + \norm{x}^q }
			<
			\infty.
		\end{equation}
Therefore, we obtain that there exists $p\in [3,\infty)$ such that 
	\begin{equation}
	\sup_{(t, x) \in [0, T] \times \R^d} 
	\frac{ | u(t, x) | }{ 1 + \norm{x}^p }
	<
	\infty
	\qandq
	\sup_{(t, x) \in [0, T] \times \R^d} 
	\frac{ | v(t, x) | }{ 1 + \norm{x}^p }
	<
	\infty.
	\end{equation}
	Next note that the fact that there exists $r_0>0$ such that the function $\big([r_0,\infty)\ni r\mapsto \tfrac{1 + r^p }{ 1 + r^{p+1} }\big)$ is monotonically decreasing ensures that there exists $n_0\in\N$ such that for all $n\in\N\cap [n_0,\infty)$, $x\in C_n$ it holds that 
	\begin{align*}
		\tfrac{1 + \norm{x}^p }{ 1 + \norm{x}^{p+1} }\le \tfrac{1 + n^p }{ 1 + n^{p+1} }.
	\end{align*}
	Therefore, we obtain that for all $w \in \{u, v \}$ it holds that
	\begin{equation}
	\label{viscosity_existence:eq2}
	\begin{split}
	&\lim_{n \to \infty} 
	\sup_{(t, x) \in [0, T] \times C_n} 
	\tfrac{ | w(t, x) | }{ V_{p+1}(x) } \\
	&=
	\lim_{n \to \infty} 
	\sup_{(t, x) \in [0, T] \times C_n} 
	\left(\tfrac{ | w(t, x) | }{1 + \norm{x}^p }
	\tfrac{1 + \norm{x}^p }{ 1 + \norm{x}^{p+1} } \right)\\
	&\leq
	\left[
	\sup_{(t, x) \in [0, T] \times \R^d} 
	\tfrac{ | w(t, x) | }{ 1 + \norm{x}^p }
	\right]
	\lim_{n \to \infty} 
	\sup_{(t, x) \in [0, T] \times C_n} 
	\tfrac{1 + \norm{x}^p }{ 1 + \norm{x}^{p+1} } \\
	&\leq
	\left[
	\sup_{(t, x) \in [0, T] \times \R^d} 
	\tfrac{ | w(t, x) | }{ 1 + \norm{x}^p }
	\right]
	\lim_{n \to \infty} 
	\tfrac{1 + n^p }{ 1 + n^{p+1} }
	=
	0.
	\end{split}
	\end{equation}
	In addition, note that \eqref{viscosity_existence:ass1}, the fact that $p+1 \in [4,\infty)$, and  Lemma~\ref{viscosity_lyapunov} 
	(with $d=d$, $m=m$, $p=p+1$, $\mu=\mu$, $\sigma=2^{-1/2}\sigma$, $V=V_{p+1}$ in the notation of Lemma~\ref{viscosity_lyapunov})	
	prove that 
	there exists $\rho \in (0,\infty)$ such that 
	for all $x \in \R^d$ it holds that
	$V_{p+1} \in C^2(\R^d, (0,\infty))$ 
	and
	\begin{equation}
	\left< \mu(x) , (\nabla V_{p+1}) (x) \right>
	+\tfrac{1}{2}
	\operatorname{Trace}\! \big( 
	\sigma(x)[\sigma(x)]^{\ast}(\operatorname{Hess} V_{p+1} )(x)
	\big)  
	\leq
	\rho V_{p+1}(x).
	\end{equation}
	Combining this, 
	\eqref{viscosity_existence:ass2}, item~\eqref{viscosity_existence:item2}, 
	the fact that 
	for all $x \in \R^d$ it holds that
	$
	u(0,x) = \varphi(x) = v(0,x)
	$, 
	\eqref{viscosity_existence:eq1}, and  \eqref{viscosity_existence:eq2} 
	with 
	Hairer et al.\ \cite[Corollary 4.14]{HairerHutzenthalerJentzen15} 
	(with
	$T = T$, 
	$d = d$,
	$m = m$, 
	$\rho = \rho$, 
	$O = \R^d$, 
	$\varphi = \varphi$, 
	$v = 0$, 
	$\mu = \mu$, 
	$\sigma =2^{-1/2}\sigma$, 
	$V = V_{p+1}$
	in the notation of  Hairer et al.\ \cite[Corollary 4.14]{HairerHutzenthalerJentzen15})
	establishes that
	$v = u|_{[0, T] \times \R^d}$.
	This proves item~\eqref{viscosity_existence:item3}.
	It thus remains to prove item~\eqref{viscosity_existence:item4}. 
	For this 
	let $T \in (0,\infty)$, $x \in \R^d$,
	let $(\Omega,\allowbreak \mathcal{F},\allowbreak\P,\allowbreak(\mathbbm{F}_t)_{t \in [0,T]})$ be a filtered probability space which fulfils the usual conditions,
	let $W \colon [0,\infty) \times \Omega \to \R^m$ be a standard $ (\mathbbm{F}_t)_{t \in [0,\infty)}$-Brownian motion, and
	let $X \colon [0,T] \times \Omega \to \R^d$ be an $(\mathbbm{F}_t)_{t \in [0,T]}$-adapted stochastic process with continuous sample paths which satisfies
	that for all $t \in [0,T]$ it holds	$\P$-a.s.\ that
	\begin{equation}
	X_t 
	= 
	x 
	+
	\int_0^t \mu(X_s) \, ds
	+
	\int_0^t \sigma(X_s) \, dW_s.
	\end{equation}
	Observe that \eqref{viscosity_existence:ass2} and  Klenke~\cite[Theorem 26.8]{Klenke14}
	assure that there exists an $(\mathbbm{F}_t)_{t \in [0,\infty)}$-adapted stochastic process with continuous sample paths $Y \colon [0,\infty) \allowbreak\times \Omega \to \R^d$ which satisfies
	that for all $t \in [0,\infty)$ it holds $\P$-a.s.\ that	
	\begin{equation}
Y_t 
= 
x 
+
\int_0^t \mu(Y_s) \, ds
+
\int_0^t \sigma(Y_s) \, dW_s.
\end{equation}
Moreover, note that, e.g., Jentzen \& Kloeden \cite[Theorem 5.1]{JentzenKloeden11} (with $T=T$, $(\Omega,\allowbreak \mathcal{F},\allowbreak\P,\allowbreak(\mathbbm{F}_t)_{t \in [0,T]})=(\Omega,\allowbreak \mathcal{F},\allowbreak\P,\allowbreak(\mathbbm{F}_t)_{t \in [0,T]})$, $H=\R^d$, $U=\R^m$, $Q u=u$, $(W_t)_{t\in[0,T]}=(W_t)_{t\in[0,T]}$, $D(A)=\R^d$, $Av=0$, $\eta=1$, $\alpha=0$, $\delta=0$, $F(v)=\mu(v)$, $\beta=0$, $B(v)u=\sigma(v)u$, $\gamma=0$, $p=2$, $\xi=(\Omega\ni \omega\mapsto x\in\R^d)$ for $v,x\in\R^d$, $u\in\R^m$ in the notation of \cite[Theorem 5.1]{JentzenKloeden11}) implies that for all $s \in [0,T]$ it holds that $\P (Y_s = X_s) = 1$.
	This, item~\eqref{viscosity_existence:item1}, and the Feynman-Kac formula in Hairer et al.\ \cite[Corollary 4.17]{HairerHutzenthalerJentzen15} demonstrate that
	\begin{equation}
	u(T,x)
	=
	\EXPP{\varphi(Y_T)}
	=
	\EXPP{\varphi(X_T)}.
	\end{equation} 
	This implies item~\eqref{viscosity_existence:item4}.
	The proof of Proposition~\ref{viscosity_existence} is thus completed.
\end{proof}

\begin{cor}
	\label{viscosity_affine_existence}
	Let $d,m \in \N$, $T \in (0,\infty)$, 
	let $\left\| \cdot \right\| \colon \R^d \to [0,\infty)$ be the $d$-dimensional Euclidean norm, 
	let 
	$\varphi \colon \R^d \to \R$ be a continuous and at most polynomially growing function, 
	and
	let 
	$\mu \colon \R^d \to \R^d$ and
	$\sigma \colon \R^d \to \R^{d \times m}$
	be functions which satisfy 
	for all $x,y \in \R^d$, $\lambda \in \R$ that
\begin{equation}\label{viscosity_affine_existenceAffineCoefficients}
\begin{split}
	\mu(\lambda x+y) + \lambda \mu(0) = \lambda\mu(x)+\mu(y)\\ \andq 
\sigma(\lambda x+y) + \lambda \sigma(0) = \lambda\sigma(x)+\sigma(y).
\end{split}
\end{equation}
	Then
	\begin{enumerate}[(i)]
		\item \label{viscosity_affine_existence:item1}
		there exists a unique continuous function $u\colon [0,T]\allowbreak \times \R^d \to \R$ 
		which satisfies that
		$
		\inf_{q \in (0,\infty)} \allowbreak
		\sup_{(t, x) \in [0, T] \times \R^d} \allowbreak
		\frac{ | u(t, x) | }{ 1 + \norm{x}^q }
		<
		\infty
		$,
		which satisfies
		for all $x \in \R^d$ that 
		$
		u(0,x) = \varphi(x)
		$, 
		and which satisfies that $u|_{(0,T) \times \R^d}$ is a viscosity solution of
		\begin{equation}\label{viscosity_affine_existence:PDE}
		\begin{split}
		(\tfrac{\partial }{\partial t}u)(t,x) 
		&= 
		\tfrac{1}{2} 
		\operatorname{Trace}\! \big( 
		\sigma(x)[\sigma(x)]^{\ast}(\operatorname{Hess}_x u )(t,x)
		\big)  
		+
		\langle (\nabla_x u)(t,x),\mu(x)\rangle
		\end{split}
		\end{equation}
		for $(t,x) \in (0,T) \times \R^d$ and
		
		\item \label{viscosity_affine_existence:item2}
		for every $x \in \R^d$,
				every filtered probability space $(\Omega,\allowbreak \mathcal{F},\allowbreak\P,\allowbreak(\mathbbm{F}_t)_{t \in [0,T]})$ which fulfils the usual conditions,
		every standard $(\mathbbm{F}_t)_{t \in [0,T]}$-Brownian motion $W \colon [0,T] \times \Omega \to \R^m$, and
		every $(\mathbbm{F}_t)_{t \in [0,T]}$-adapted stochastic process with continuous sample paths $X \colon [0,T] \times \Omega \to \R^d$ which satisfies
		that for all $t \in [0,T]$ it holds	$\P$-a.s.\ that
		$
		X_t 
		= 
		x 
		+
		\int_0^t \mu(X_s) \, ds
		+
		\int_0^t \sigma(X_s) \, dW_s
$
		it holds that
		\begin{equation}
		u(T,x)
		=
		\EXPP{\varphi(X_T)}. 
		\end{equation} 
	\end{enumerate}
\end{cor}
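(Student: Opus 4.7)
The plan is to verify the hypotheses of Proposition~\ref{viscosity_existence} and then invoke items~\eqref{viscosity_existence:item1}, \eqref{viscosity_existence:item3}, and \eqref{viscosity_existence:item4} of that proposition. Since $\mu$ and $\sigma$ are affine by hypothesis, I would first apply Corollary~\ref{linear_growth_affine} to $\mu$ and Corollary~\ref{linear_growth_affine_HilbertSchmidt} to $\sigma$ in order to obtain a constant $c_0 \in [0,\infty)$ such that for all $x, y \in \R^d$ it holds that $\|\mu(x)\| \leq c_0 ( 1 + \|x\|)$, $\HSNorm{{\sigma(x)}} \leq c_0 ( 1 + \|x\|)$, $\|\mu(x) - \mu(y)\| \leq c_0 \|x - y\|$, and $\HSNorm{{\sigma(x) - \sigma(y)}} \leq c_0 \|x-y\|$.

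Next, to obtain the coercivity-type bound $\langle x, \mu(x) \rangle \leq c(1 + \|x\|^2)$ required by \eqref{viscosity_existence:ass1}, I would apply the Cauchy-Schwarz inequality together with the linear growth of $\mu$: for all $x \in \R^d$ it holds that $\langle x, \mu(x) \rangle \leq \|x\| \, \|\mu(x)\| \leq c_0 \|x\|(1 + \|x\|)$. Combining this with the elementary estimate $\|x\| \leq \tfrac{1}{2}(1 + \|x\|^2)$ yields $\langle x, \mu(x) \rangle \leq 2 c_0 (1 + \|x\|^2)$. Setting $c = \max\{c_0, 2 c_0\}$, the hypotheses \eqref{viscosity_existence:ass1}--\eqref{viscosity_existence:ass2} of Proposition~\ref{viscosity_existence} are satisfied (with $m = d$), and the continuous and at most polynomially growing function $\varphi$ is the same in both statements.

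Proposition~\ref{viscosity_existence}~\eqref{viscosity_existence:item1} then delivers a continuous function $\tilde u \colon [0, \infty) \times \R^d \to \R$ with $\tilde u(0, \cdot) = \varphi$, with $\inf_{q \in (0,\infty)} \sup_{(t,x) \in [0,T] \times \R^d} \frac{ | \tilde u(t,x) | }{ 1 + \|x\|^q } < \infty$, and with $\tilde u|_{(0, \infty) \times \R^d}$ being a viscosity solution of the PDE. Restricting to $[0, T] \times \R^d$ and setting $u = \tilde u|_{[0, T] \times \R^d}$ then establishes existence in item~\eqref{viscosity_affine_existence:item1}, while Proposition~\ref{viscosity_existence}~\eqref{viscosity_existence:item3} (applied with $T = T$) yields the uniqueness claim in item~\eqref{viscosity_affine_existence:item1}. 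Finally, Proposition~\ref{viscosity_existence}~\eqref{viscosity_existence:item4} applied to the same data provides the Feynman-Kac identity $u(T,x) = \E[\varphi(X_T)]$ for every admissible SDE solution $X$, which is precisely item~\eqref{viscosity_affine_existence:item2}. Since the argument is purely a verification of hypotheses of an already-established result, there is no substantive obstacle; the only point requiring brief care is the passage from linear growth of $\mu$ to the quadratic coercivity condition via Cauchy-Schwarz.
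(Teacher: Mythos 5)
Your proof is correct and follows essentially the same route as the paper: apply Corollaries~\ref{linear_growth_affine} and~\ref{linear_growth_affine_HilbertSchmidt} to convert the affine structure into linear growth and Lipschitz bounds, upgrade the linear growth of $\mu$ to the coercivity bound $\langle x, \mu(x)\rangle \le c(1+\|x\|^2)$ via Cauchy--Schwarz, and then invoke Proposition~\ref{viscosity_existence}. One small slip: you write that Proposition~\ref{viscosity_existence} is applied ``with $m=d$,'' but since both the corollary and the proposition allow $\sigma$ to take values in $\R^{d\times m}$ for an arbitrary $m\in\N$, the correct instantiation is simply $m=m$; using $m=d$ would not cover the general hypothesis. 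This does not affect the substance of the argument.
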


\begin{proof}[Proof of Corollary~\ref{viscosity_affine_existence}]
	Throughout this proof 
	let $\langle \cdot, \cdot \rangle \colon \R^d \times \R^d \to \R$ be the $d$-dimensional Euclidean scalar product and let $\HSNorm{\cdot} \colon \R^{d \times m} \to [0,\infty)$ be the Hilbert-Schmidt norm on $\R^{d\times m}$.
	Note that 
	\eqref{viscosity_affine_existenceAffineCoefficients}, Corollary~\ref{linear_growth_affine}, and Corollary~\ref{linear_growth_affine_HilbertSchmidt} assure that
	there exists $\kappa \in (0,\infty)$ such that 
	for all $x, y \in \R^d$ it holds that
	\begin{equation}
	\label{viscosity_affine_existence:eq1}
	\norm{\mu(x)} +\HSNorm{{\sigma (x)}}
	\leq 
	\kappa  ( 1 + \norm{x})
	\end{equation}
	and
	\begin{equation}
	\label{viscosity_affine_existence:eq2}
	\norm{\mu(x) - \mu(y)} + \HSNorm{{\sigma (x)- \sigma(y)}}  \leq \kappa \norm{x-y}.
	\end{equation}
	This ensures that 
	for all $x \in \R^d$ it holds that
	\begin{equation}
	\label{viscosity_affine_existence:eq3}
	\begin{split}
	\langle x, \mu(x) \rangle 
	&\leq 
	\norm{x}\norm{\mu(x)}
	\leq
	\norm{x}\kappa( 1 + \norm{x}) 
	=
	\kappa( \norm{x} + \norm{x}^2) \\
	&\leq
	\kappa( 1 + \norm{x}^2 + \norm{x}^2)
	\leq
	2\kappa( 1 + \norm{x}^2).
	\end{split}
	\end{equation}
	Combining this, the hypothesis that $\varphi$ is a continuous function, and \eqref{viscosity_affine_existence:eq1}--\eqref{viscosity_affine_existence:eq2} 
	with 
	items~\eqref{viscosity_existence:item1}--\eqref{viscosity_existence:item3} in Proposition~\ref{viscosity_existence} 
	(with
	$ d = d $,
	$ m = m $,
	$ c = 2 \kappa$,
	$ \varphi = \varphi $,
	$ \mu = \mu $,
	$ \sigma = \sigma $
	in the notation of Proposition~\ref{viscosity_existence})
	proves item~\eqref{viscosity_affine_existence:item1}.
	Moreover, note that item~\eqref{viscosity_existence:item4} in Proposition~\ref{viscosity_existence} and item~\eqref{viscosity_affine_existence:item1} establish item~\eqref{viscosity_affine_existence:item2}.
	The proof of Corollary~\ref{viscosity_affine_existence} is thus completed.
\end{proof}

\section[Artificial neural network (ANN) approximations]{Artificial neural network approximations}\label{approximationSection}

\subsection{Construction of a realization on the artificial probability space}\label{SectionRealization}

In Theorem~\ref{cont_NN_approx} in Subsection~\ref{subsectionContinuousANN} below we establish that the number of required parameters of an ANN to approximate the solution of the Black-Scholes PDE grows at most polynomially in both the reciprocal of the prescribed approximation accuracy $ \varepsilon > 0 $ and the PDE dimension $ d \in \N $. An important ingredient in our proof of Theorem~\ref{cont_NN_approx} is an artificial probability space on which we establish the existence of a suitable realization with the desired approximation properties. In this subsection we provide, roughly speaking, in the elementary result in Proposition~\ref{construction_realization} below on a very abstract level the argument for the existence of such a realization on the artificial probability space. Proposition~\ref{construction_realization} is an immediate consequence from the elementary result in Corollary~\ref{construction_realization_help} below. Corollary~\ref{construction_realization_help}, in turn, follows from the following elementary lemma, Lemma \ref{construction_realization_helpLemma}. 

\begin{lemma}
	\label{construction_realization_helpLemma}
	Let $\varepsilon \in \R$, 
	let $ ( \Omega, \mathcal{F}, \P ) $ be a probability space, and
	let $ X \colon \Omega \to \R $ be a random variable which satisfies 
	that 	$
	\P(  X  > \varepsilon) = 1.
	$
	Then 
	\begin{enumerate}[(i)]
		
		\item \label{construction_realization_helpLemma:item1}
		it holds that $\Exp{\max\{-X,0\}}<\infty$ and
		
		\item \label{construction_realization_helpLemma:item2}
		it holds that $\Exp{  X  } > \varepsilon.$
	\end{enumerate}
\end{lemma}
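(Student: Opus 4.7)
The plan is to handle the two items separately, using only that $\{X > \varepsilon\}$ has full measure.

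For item \eqref{construction_realization_helpLemma:item1}, I would argue by a pathwise comparison on the event $\{X > \varepsilon\}$. On this event one has $-X < -\varepsilon$, and a case split on the sign of $-X$ yields $\max\{-X,0\} \leq \max\{-\varepsilon,0\}$ (if $-X \leq 0$ the left-hand side is $0$; if $-X > 0$ then $\max\{-X,0\} = -X < -\varepsilon \leq \max\{-\varepsilon,0\}$). Since $\P(X>\varepsilon)=1$, this estimate holds $\P$-almost surely, and monotonicity of expectation yields $\E[\max\{-X,0\}] \leq \max\{-\varepsilon,0\} < \infty$, establishing item~\eqref{construction_realization_helpLemma:item1}.

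For item \eqref{construction_realization_helpLemma:item2}, I would first note that item~\eqref{construction_realization_helpLemma:item1} guarantees that $\E[X]$ is well-defined with values in $\R \cup \{\infty\}$, and similarly $\E[X - \varepsilon]$ is well-defined with $\E[X-\varepsilon] = \E[X] - \varepsilon$. Since $\P(X - \varepsilon > 0) = 1$, the monotone continuity of probability measures applied to the increasing sequence of events $\{X - \varepsilon > \tfrac{1}{n}\}$, $n \in \N$, gives $\lim_{n\to\infty}\P(X - \varepsilon > \tfrac{1}{n}) = \P(X - \varepsilon > 0) = 1$, so there exists $n \in \N$ with $\P(X - \varepsilon > \tfrac{1}{n}) > 0$. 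Combining this with the a.s.\ lower bound $X - \varepsilon > 0$ gives
\begin{equation}
\E[X] - \varepsilon = \E[X - \varepsilon] \geq \E\!\left[(X - \varepsilon)\, \mathbf{1}_{\{X - \varepsilon > 1/n\}}\right] \geq \tfrac{1}{n}\,\P(X - \varepsilon > \tfrac{1}{n}) > 0,
\end{equation}
which proves item~\eqref{construction_realization_helpLemma:item2}.

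No step here is genuinely hard; the only subtlety is to keep track of integrability of the negative part so that $\E[X]$ makes sense before manipulating it, and to use countable continuity of $\P$ to convert the strict inequality $X > \varepsilon$ (a.s.) into a strictly positive expectation rather than only $\E[X] \geq \varepsilon$.
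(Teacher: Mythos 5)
Your proof is correct and takes essentially the same route as the paper: for item~(ii) you use continuity of $\P$ from below to extract a set $\{X-\varepsilon > 1/n\}$ of positive measure, then split the expectation to get a strictly positive contribution beyond $\varepsilon$, which is exactly the paper's argument up to the cosmetic choice of shifting by $\varepsilon$ first rather than decomposing $\Omega$ into $\{X\ge\varepsilon+\delta\}$ and $\{X\in(\varepsilon,\varepsilon+\delta)\}$. Your proof of item~(i) via the a.s.\ bound $\max\{-X,0\}\le\max\{-\varepsilon,0\}$ is a nice explicit version of what the paper leaves as a one-line observation.
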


\begin{proof}[Proof of Lemma~\ref{construction_realization_helpLemma}]
	Observe that the hypothesis that 	$
	\P(  X  > \varepsilon) = 1
	$  establishes item \eqref{construction_realization_helpLemma:item1}. Next note that 
	the hypothesis that 	$
	\P( X > \varepsilon) = 1
	$
	implies that 
	\begin{equation}
	0<1=\P( X > \varepsilon)=\P\!\left(\cup_{n\in\N}\big\{X \ge \varepsilon+\tfrac{1}{n}\big\}\right)=\lim_{n\to\infty} \P( X \ge \varepsilon+\tfrac{1}{n}).
	\end{equation}
	Hence, we obtain that there exists $\delta \in (0,\infty)$ such that 
	$
	\label{construction_realization_help:setting1}
	\P( X \geq \varepsilon + \delta) 
	> 
	0
	$.
	The hypothesis that 
	$
	\P( X > \varepsilon) = 1
	$
	therefore ensures that
	\begin{equation}
	\begin{split}
	\Exp{ X }
	&\geq
	(\varepsilon + \delta) \, \P( X \geq \varepsilon + \delta )
	+
	\varepsilon \,\P( X \in  (\varepsilon, \varepsilon + \delta)) \\
	&=
	\delta \, \P( X \geq \varepsilon + \delta )
	+
	\varepsilon \, \P( X > \varepsilon) 
	=
	\delta \, \P( X \geq \varepsilon + \delta )
	+
	\varepsilon 
	>
	\varepsilon.		 
	\end{split}
	\end{equation}This establishes item \eqref{construction_realization_helpLemma:item2}.
	The proof of Lemma~\ref{construction_realization_helpLemma} is thus completed.
\end{proof}

\begin{cor}
	\label{construction_realization_help}
	Let $\varepsilon \in [0,\infty)$, 
	let $ ( \Omega, \mathcal{F}, \P ) $ be a probability space, and
	let $ X \colon \Omega \to \R $ be a random variable which satisfies 
	that 	$
	\P( | X | > \varepsilon) = 1.
	$
	Then 
	\begin{equation}
	\Exp{ | X | } > \varepsilon.
	\end{equation}
\end{cor}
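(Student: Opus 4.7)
The plan is to deduce Corollary~\ref{construction_realization_help} directly from Lemma~\ref{construction_realization_helpLemma} by applying the lemma to the non-negative random variable $|X|$. Concretely, I would set $Y = |X| \colon \Omega \to \R$ and observe that the hypothesis $\P(|X| > \varepsilon) = 1$ translates to $\P(Y > \varepsilon) = 1$. Since $\varepsilon \in [0,\infty) \subseteq \R$, all the hypotheses of Lemma~\ref{construction_realization_helpLemma} (with $X$ replaced by $Y$ and $\varepsilon$ replaced by $\varepsilon$ in the notation of Lemma~\ref{construction_realization_helpLemma}) are satisfied.

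Applying item~\eqref{construction_realization_helpLemma:item2} of Lemma~\ref{construction_realization_helpLemma} then immediately yields $\E[Y] > \varepsilon$, which is exactly the desired conclusion $\E[|X|] > \varepsilon$. Note that item~\eqref{construction_realization_helpLemma:item1} is automatic here since $Y \geq 0$, so $\max\{-Y,0\} = 0$ and the integrability condition is trivial; nonetheless, no issue with measurability arises because $|X|$ is a random variable whenever $X$ is.

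There is no serious obstacle in this argument: the content of the statement is entirely captured by the lemma, and Corollary~\ref{construction_realization_help} is just the specialization to $|X|$ in place of a general real-valued random variable that is almost surely strictly greater than $\varepsilon$. Hence the proof should consist of a single short paragraph of text that invokes Lemma~\ref{construction_realization_helpLemma} and records the resulting inequality, concluding with ``The proof of Corollary~\ref{construction_realization_help} is thus completed.''
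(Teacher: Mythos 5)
Your proposal is correct and matches the paper's argument exactly: both apply item~(ii) of Lemma~\ref{construction_realization_helpLemma} with $X$ replaced by $|X|$. The extra remark about item~(i) being trivially satisfied is harmless clarification, not a different route.
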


\begin{proof}[Proof of Corollary~\ref{construction_realization_help}]	
	Note that item \eqref{construction_realization_helpLemma:item2} in Lemma~\ref{construction_realization_helpLemma} 
	(with
	$\varepsilon=\varepsilon$, $X=\vert X\vert$
	in the notation of Lemma~\ref{gronwall}) ensures that $\Exp{ | X | } > \varepsilon$.
	The proof of Corollary~\ref{construction_realization_help} is thus completed. 
\end{proof}

\begin{prop}
	\label{construction_realization}
	Let $\varepsilon \in [0,\infty)$, 
	let $ ( \Omega, \mathcal{F}, \P ) $ be a probability space, and
	let $ X \colon \Omega \to \R $ be a random variable which satisfies that
	\begin{equation}\label{construction_realizationHypothesis}
	\Exp{ | X | } \leq \varepsilon.
	\end{equation}
	Then
	$\P( | X | \leq \varepsilon)>0.$
\end{prop}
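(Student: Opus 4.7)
The plan is to prove Proposition~\ref{construction_realization} by contradiction, leveraging Corollary~\ref{construction_realization_help} which was set up precisely for this purpose. Assume toward a contradiction that $\P(|X| \leq \varepsilon) = 0$. Since $\{|X| \leq \varepsilon\}$ and $\{|X| > \varepsilon\}$ are complementary events in $\mathcal{F}$, this assumption forces $\P(|X| > \varepsilon) = 1$.

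Next, I would apply Corollary~\ref{construction_realization_help} (with $\varepsilon = \varepsilon$, $(\Omega,\mathcal{F},\P) = (\Omega,\mathcal{F},\P)$, $X = X$ in the notation of Corollary~\ref{construction_realization_help}) to conclude that $\E[|X|] > \varepsilon$. This directly contradicts the hypothesis \eqref{construction_realizationHypothesis} that $\E[|X|] \leq \varepsilon$. Hence the initial assumption must be false, which yields $\P(|X| \leq \varepsilon) > 0$, completing the proof.

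There is essentially no obstacle here: Corollary~\ref{construction_realization_help} already encapsulates the quantitative content (via Lemma~\ref{construction_realization_helpLemma}, where the decomposition of $\{X > \varepsilon\} = \cup_{n\in\N}\{X \geq \varepsilon + 1/n\}$ and continuity from below produce a positive gap $\delta$), so Proposition~\ref{construction_realization} is merely the contrapositive reformulation. The only minor care needed is to note that $\{|X| \leq \varepsilon\} \in \mathcal{F}$ because $X$ is measurable, so the probability is well-defined and the dichotomy $\P(|X|\leq \varepsilon) = 0$ versus $\P(|X|\leq \varepsilon) > 0$ is meaningful.
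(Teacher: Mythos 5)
Your argument is correct and is essentially the paper's proof, merely recast as a proof by contradiction: the paper applies Corollary~\ref{construction_realization_help} directly to conclude $\P(|X|>\varepsilon)<1$ and then writes $\P(|X|\le\varepsilon)=1-\P(|X|>\varepsilon)>0$, which is logically the same contrapositive use of that corollary. No gaps.
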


\begin{proof}[Proof of Proposition~\ref{construction_realization}]
	Combining Corollary~\ref{construction_realization_help}  and \eqref{construction_realizationHypothesis} completes the proof of Proposition~\ref{construction_realization}.
%
\end{proof}

\subsection{Approximation error estimates}\label{SectionQuantitativeError}
\begin{samepage}
\begin{prop}
\label{quantitative}
Let $d, n \in \N$, $p\in [2,\infty)$, $T \in (0,\infty)$, $c, \varepsilon, L\in [0,\infty)$, $\mathbf{v}, \mathbf{w} \in [1/p,\infty)$, 
	let $\langle \cdot, \cdot \rangle \colon \R^d \times \R^d \to \R$ be the $d$-dimensional Euclidean scalar product,
let $\left\| \cdot \right\| \colon \R^d \to [0,\infty)$ be the $d$-dimensional Euclidean norm, 
let $\HSNorm{\cdot} \colon \R^{d \times d} \to [0,\infty)$ be the Hilbert-Schmidt norm on $\R^{d\times d}$,
let $\nu \colon \mathcal{B}(\R^d) \to [0,1]$ be a probability measure,
let $\varphi \colon \R^d \to \R$ be a continuous function, 
let $\phi \colon \R^d \to \R$ be a $\mathcal{B}(\R^d)/ \mathcal{B}(\R)$-measurable function which satisfies
for all $ x \in \R^d$ that
\begin{equation}
\label{quantitative:ass1}
	\left| 
		\phi(x)
	\right| 
\leq 
	c \, (1+\| x \|^{\mathbf{v}})
\qandq
	\left| 
		\varphi(x)-\phi(x)
	\right|
\leq 
	\varepsilon (1+ \| x \|^{\mathbf{w}}),
\end{equation}
and 
let $\mu \colon \R^d \to \R^d$ and $\sigma \colon \R^d \to \R^{d \times d}$ be functions which satisfy 
for all $x,y \in \R^d$, $\lambda \in \R$ that 
\begin{equation}\label{quantitative:AffineOne}
	\mu(\lambda x+y) + \lambda \mu(0) = \lambda\mu(x)+\mu(y),
\end{equation}
\begin{equation}\label{quantitative:AffineTwo}
	\sigma(\lambda x+y) + \lambda \sigma(0) = \lambda\sigma(x)+\sigma(y),
\end{equation}
and
$
		\norm{\mu(x)} + \HSNorm{{\sigma(x)}} 
\leq  
	L  (1 + \norm{x}).
$
Then 
\begin{enumerate}[(i)]

\item \label{quantitative:item1}
there exists a unique continuous function $u\colon [0,T]\allowbreak \times \R^d \to \R$
which satisfies that
$\inf_{q \in (0,\infty)} \allowbreak
\sup_{(t, x) \in [0, T] \times \R^d} \allowbreak
\frac{ | u(t, x) | }{ 1 + \norm{x}^q }
<
\infty,$
 which satisfies
for all $x \in \R^d$ that 
$
u(0,x) = \varphi(x)
$, 
and which satisfies that $u|_{(0,T) \times \R^d}$ is a viscosity solution of
\begin{equation}
\begin{split}
	(\tfrac{\partial }{\partial t}u)(t,x) 
&= 
	\tfrac{1}{2} 
	\operatorname{Trace}\! \big( 
		\sigma(x)[\sigma(x)]^{\ast}(\operatorname{Hess}_x u )(t,x)
	\big)  
	+
	\langle (\nabla_x u)(t,x),\mu(x)\rangle
\end{split}
\end{equation}
for $(t,x) \in (0,T) \times \R^d$ and

\item \label{quantitative:item2}
there exist $A_1,A_2,\ldots,A_{n} \in \R^{d \times d}$, $b_1,b_2,\ldots,b_{n} \in \R^d$ such that
\begin{equation}
\label{quantitative:concl1} 
\begin{split}
	&\left[
		\int_{\R^d}  
		\left|
			u(T,x) - \tfrac{1}{n}\big[ \!  \smallsum_{i=1}^{n}\, \phi( A_i x + b_i)  \big] 
		\right|^p \,
		\nu(dx)
	\right]^{\nicefrac{1}{p}} \\
&\leq 
		\varepsilon 
	\Bigg(
		1 +
		2^{\mathbf{w}/2} 
		\exp{\!
		\left( 
			\big[
				\sqrt{T}+\max\{2, \mathbf{w} \}
			\big]^2
			L^2 \, T \mathbf{w}
		\right)} \\
		&\qquad \cdot
		\left[
			L 
			\big(
				T + \max\{2, \mathbf{w} \}\sqrt{T} 
			\big)
			+
			\Big[ \!
				\textint_{\R^d}  
					\Norm{x}^{\mathbf{w} p} \,
				\nu(dx)
			\Big]^{ \!\nicefrac{1}{(\mathbf{w}p)}}
		\right]^\mathbf{w}
	\Bigg) \\
&\quad 
	+n^{-1/2} 4\,c\,(p-1)^{1/2}	
	\Bigg( 
		1
		+
		2^{\mathbf{v}/2} 
		\exp{ \!
		\left( 
			\big[
				\sqrt{T} + \max\{2, \mathbf{v}p\}
			\big]^2
			L^2 \, T \mathbf{v}
		\right)} \\
	&\qquad \cdot	
		\left[
			L
			\big(
				T +  \max\{2, \mathbf{v}p\} \sqrt{T} 
			\big)		
			+
			\Big[
				\textint_{\R^d} 
					\Norm{x}^{\mathbf{v}p} \,
				\nu(dx)
			\Big]^{ \nicefrac{1}{(\mathbf{v}p)}}
		\right]^\mathbf{v} 
	\Bigg).
\end{split}
\end{equation}
\end{enumerate}
\end{prop}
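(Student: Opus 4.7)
\textbf{Proof plan for Proposition~\ref{quantitative}.} Item~\eqref{quantitative:item1} is immediate from Corollary~\ref{viscosity_affine_existence} applied with $d=d$, $m=d$, $T=T$, $\varphi=\varphi$, $\mu=\mu$, $\sigma=\sigma$, so the main content is item~\eqref{quantitative:item2}. My strategy is the standard Feynman--Kac/Monte Carlo/realization scheme: introduce an auxiliary filtered probability space $(\Omega,\mathcal{F},\mathbb{P},(\mathbbm{F}_t)_{t\in[0,T]})$ carrying an i.i.d.\ family of $d$-dimensional standard Brownian motions $W^{(i)}$, $i\in\{1,\dots,n\}$. For each $i$, apply Proposition~\ref{affine_solutions_of_SDEs} with the Brownian motion $W^{(i)}$, obtaining $(\mathbbm{F}_t)$-adapted SDE solutions $X^{i,x}\colon[0,T]\times\Omega\to\R^d$, $x\in\R^d$, with continuous sample paths, satisfying the affine identity $X^{i,\lambda x+y}_t+\lambda X^{i,0}_t=\lambda X^{i,x}_t+X^{i,y}_t$ pointwise in $\omega$. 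In particular, using Lemma~\ref{affine_representation} fiberwise, there exist random matrices $\mathcal{A}_i\colon\Omega\to\R^{d\times d}$ (built from $X^{i,e_j}_T-X^{i,0}_T$) and random vectors $b_i=X^{i,0}_T$ such that $X^{i,x}_T(\omega)=\mathcal{A}_i(\omega)x+b_i(\omega)$ for every $x,\omega$. By item~\eqref{viscosity_affine_existence:item2} of Corollary~\ref{viscosity_affine_existence}, $u(T,x)=\mathbb{E}[\varphi(X^{1,x}_T)]$.

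Next I will split the pointwise error via the triangle inequality:
\begin{equation}
\Big|u(T,x)-\tfrac{1}{n}\smallsum_{i=1}^n\phi(\mathcal{A}_i x+b_i)\Big|
\leq \big|\mathbb{E}[\varphi(X^{1,x}_T)-\phi(X^{1,x}_T)]\big|
+\Big|\mathbb{E}[\phi(X^{1,x}_T)]-\tfrac{1}{n}\smallsum_{i=1}^n\phi(X^{i,x}_T)\Big|.
\end{equation}
The first summand is controlled pointwise by $\varepsilon(1+\mathbb{E}[\|X^{1,x}_T\|^{\mathbf{w}}])$ using~\eqref{quantitative:ass1} and Jensen. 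The second summand is an i.i.d.\ Monte Carlo error since $(\phi(X^{i,x}_T))_{i=1}^n$ are i.i.d.\ real random variables; I apply Corollary~\ref{mc_Lp_error2} (with $d=1$, the same $p$) in the $\omega$-variable to obtain
\begin{equation}
\left(\mathbb{E}\Big|\mathbb{E}[\phi(X^{1,x}_T)]-\tfrac{1}{n}\smallsum_{i=1}^n\phi(X^{i,x}_T)\Big|^p\right)^{\!1/p}\le 2\big[\tfrac{p-1}{n}\big]^{1/2}\big(\mathbb{E}|\phi(X^{1,x}_T)|^p\big)^{1/p}\cdot 2,
\end{equation}
and the last factor is bounded using the growth of $\phi$ from~\eqref{quantitative:ass1} by $2c(1+\mathbb{E}\|X^{1,x}_T\|^{\mathbf{v}p})^{1/p}$.

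I then take the $L^p(\nu)$ norm in $x$ and use Fubini on $\nu\otimes\mathbb{P}$ to reverse the order of integration, which yields
\begin{equation}
\begin{split}
&\left(\mathbb{E}\!\left[\textint_{\R^d}\big|u(T,x)-\tfrac{1}{n}\smallsum_{i=1}^n\phi(\mathcal{A}_i x+b_i)\big|^p\,\nu(dx)\right]\right)^{\!1/p}\\
&\le\varepsilon\!\left(\textint_{\R^d}(1+\mathbb{E}\|X^{1,x}_T\|^{\mathbf{w}})^p\,\nu(dx)\right)^{\!1/p}
+\tfrac{4(p-1)^{1/2}c}{\sqrt{n}}\!\left(\textint_{\R^d}\mathbb{E}\|X^{1,x}_T\|^{\mathbf{v}p}\,\nu(dx)+1\right)^{\!1/p}\!\!,
\end{split}
\end{equation}
after routine triangle inequalities. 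Applying Proposition~\ref{moments_of_solution_of_SDE} (with $\mathfrak{m}_1=\mathfrak{m}_2=L$, $\mathfrak{s}_1=\mathfrak{s}_2=L$) with exponents $\mathbf{w}p$ and $\mathbf{v}p$ (both $\geq 2$ by assumption) to bound $(\mathbb{E}\|X^{1,x}_T\|^{\mathbf{w}})^{1/\mathbf{w}}$ and $(\mathbb{E}\|X^{1,x}_T\|^{\mathbf{v}p})^{1/(\mathbf{v}p)}$ by $\sqrt{2}(\|x\|+LT+L\max\{2,\mathbf{w}\}\sqrt{T})\exp([\sqrt{T}+\max\{2,\mathbf{w}\}]^2L^2T)$ and the analogous expression for $\mathbf{v}p$, and then using Minkowski in $L^p(\nu)$ to pull $\|x\|$ inside produces exactly the right-hand side of~\eqref{quantitative:concl1}. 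Finally, Proposition~\ref{construction_realization} applied to the nonnegative random variable $\omega\mapsto[\textint_{\R^d}|u(T,x)-\tfrac{1}{n}\sum_i\phi(\mathcal{A}_i(\omega)x+b_i(\omega))|^p\,\nu(dx)]^{1/p}$ yields some $\omega_0\in\Omega$ with positive probability at which the pathwise error is bounded by its $L^p(\mathbb{P})$-norm; setting $A_i:=\mathcal{A}_i(\omega_0)$, $b_i:=b_i(\omega_0)$ gives the claim. The principal obstacle is the bookkeeping in the Fubini/Minkowski step: the bound must be expressed purely in terms of $\textint_{\R^d}\|x\|^{\mathbf{w}p}\nu(dx)$ and $\textint_{\R^d}\|x\|^{\mathbf{v}p}\nu(dx)$ respectively, raised to the reciprocal powers appearing in~\eqref{quantitative:concl1}, which forces a careful application of Minkowski's inequality to the linear-growth expression for $(\mathbb{E}\|X^{1,x}_T\|^{\mathbf{v}p})^{1/(\mathbf{v}p)}$ before raising to the $\mathbf{v}$-th power.
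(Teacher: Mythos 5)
Your plan reproduces the paper's proof essentially step for step: the same artificial probability space with i.i.d.\ Brownian motions, the same use of Proposition~\ref{affine_solutions_of_SDEs} and Lemma~\ref{affine_representation} to get the pathwise affine representation $X^{i,x}_T=\mathcal{A}_ix+b_i$, the same Feynman--Kac identity from Corollary~\ref{viscosity_affine_existence}, the same triangle-inequality split into a deterministic bias term and a Monte Carlo term bounded via Corollary~\ref{mc_Lp_error2} (with $d=1$) after H\"older/Fubini, and the same appeal to Proposition~\ref{construction_realization} to extract a fixed $\omega_0$. One bookkeeping slip: the parenthetical claim that $\mathbf{w}p$ and $\mathbf{v}p$ are ``both $\ge 2$ by assumption'' is false --- the hypothesis $\mathbf{v},\mathbf{w}\ge 1/p$ with $p\ge 2$ only gives $\mathbf{v}p,\mathbf{w}p\ge 1$; moreover for the bias term the relevant moment exponent is $\mathbf{w}$, not $\mathbf{w}p$. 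The paper handles this by first applying Jensen with the gadget $m(z)=\max\{2,z\}$, i.e.\ $\E\|X\|^z\le(\E\|X\|^{m(z)})^{z/m(z)}$, and only then applying Proposition~\ref{moments_of_solution_of_SDE} with exponent $m(z)\ge 2$. Your final bound already carries the $\max\{2,\mathbf{w}\}$ and $\max\{2,\mathbf{v}p\}$ factors, so you clearly have the right target in mind; you just need to state the Jensen step rather than assert an inequality that is not implied by the hypotheses. With that patch the argument goes through exactly as in the paper.
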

\end{samepage}

\begin{proof}[Proof of Proposition~\ref{quantitative}]
Throughout this proof 
let $e_j \in \R^d$, $j \in \{1,2,\ldots, d \}$, be given by
$
e_1 = (1, 0, \ldots, 0),
e_2 = (0, 1, 0, \ldots, 0),
\ldots,
e_d = (0, \ldots, 0, 1)
$,
let $m \colon (0,\infty) \to [2,\infty)$ be the function which satisfies for all $z \in (0,\infty)$ that 
$
	m(z) = \max \{2, z \}
$,
let $(\Omega,\allowbreak \mathcal{F},\allowbreak \P,\allowbreak(\mathbbm{F}_t)_{t \in [0,T]})$ be a filtered probability space which fulfils the usual conditions,
let $W^{i} \colon [0,T] \times \Omega \to \R^d$, $i \in \N$, be independent standard $(\mathbbm{F}_t)_{t \in [0,T]}$-Brownian motions, 
let $X^{i, x} \colon [0,T] \times \Omega \to \R^d$, $i \in \N$, $x \in \R^d$, be $(\mathbbm{F}_t)_{t \in [0,T]}$-adapted stochastic processes with continuous sample paths which satisfy 
that for all $i \in \N$, $x \in \R^d$, $t \in [0,T]$ it holds $\P$-a.s.\ that
\begin{equation}
\label{quantitative:setting1}
	X^{i, x}_t 
= 
	x 
	+
	\int_0^t \mu(X^{i, x}_s) \, ds
	+
	\int_0^t \sigma(X^{i, x}_s) \, dW^{i}_s
\end{equation}
and that for all $i \in \N$, $t \in [0,T]$, $\omega \in \Omega$, $\lambda \in \R$, $x, y \in \R^d$ it holds that
\begin{equation}
\label{quantitative:setting2}
	X^{i, \lambda x + y}_t(\omega) 
	+ 
	\lambda X^{i, 0}_t(\omega) 
=
	\lambda X^{i, x}_t(\omega) 
	+
	 X^{i, y}_t(\omega)
\end{equation}
(cf.~Proposition~\ref{affine_solutions_of_SDEs}), 
and let $\mathscr{A}_{i} \colon \Omega \to \R^{d \times d}$,  $i \in \N$, and $\mathscr{B}_{i} \colon \Omega \to \R^{d}$,  $i \in \N$, be the random variables which satisfy that
for all $i \in \N$, $\omega \in \Omega$ it holds that
$
	\mathscr{B}_{i} (\omega)
=
	X^{i, 0}_T (\omega) 
$
and 
\begin{multline} 
\label{quantitative:setting3}
	\mathscr{A}_{i}(\omega) \\
=
	\bigg(
		X^{i, e_1}_T (\omega) - X^{i, 0}_T (\omega)
		\Big| \,
		X^{i, e_2}_T (\omega) - X^{i, 0}_T (\omega)
		\Big|
		\cdots
		\Big| \,
		X^{i, e_d}_T (\omega) - X^{i, 0}_T (\omega)
	\bigg).
\end{multline}
Observe that \eqref{quantitative:ass1} assures for all $x\in\R^d$ that 
\begin{equation}
\begin{split}
\vert 
\varphi(x)\vert&\leq 
\left| \phi(x)
\right| +\left| 
\varphi(x)-\phi(x)
\right|
\leq 
c  (1+\| x \|^{\mathbf{v}})+
\varepsilon (1+ \| x \|^{\mathbf{w}})
\\&\leq 
c  (2+\| x \|^{\max\{\mathbf{v},\mathbf{w}\}})+
\varepsilon (2+\| x \|^{\max\{\mathbf{v},\mathbf{w}\}})
\\&\leq  2(c+\varepsilon) (1+\| x \|^{\max\{\mathbf{v},\mathbf{w}\}}).
\end{split}
\end{equation}
Therefore, we obtain that $\varphi$ is an at most polynomially growing function.
This, \eqref{quantitative:AffineOne}, \eqref{quantitative:AffineTwo}, and item~\eqref{viscosity_affine_existence:item1} in Corollary~\ref{viscosity_affine_existence}
(with
$ d = d $,
$ m=d $,
$ T = T $,
$ \varphi = \varphi $,
$ \mu = \mu $,
$ \sigma = \sigma $
in the notation of Corollary~\ref{viscosity_affine_existence})
demonstrate that
there exists a unique continuous function $u\colon [0,T] \times \R^d \to \R$ which satisfies
for all $x \in \R^d$ that 
$
	u(0,x) = \varphi(x)
$, 
which satisfies that
$
	\inf_{q \in (0,\infty)} 
	\sup_{(t, x) \in [0, T] \times \R^d} 
	\frac{ | u(t, x) | }{ 1 + \norm{x}^q }
<
	\infty
$,
and which satisfies that $u|_{(0,T) \times \R^d}$ is a viscosity solution of
\begin{equation}
\begin{split}
	(\tfrac{\partial }{\partial t}u)(t,x) 
&= 
	\tfrac{1}{2} 
	\operatorname{Trace}\! \big( 
		\sigma(x)[\sigma(x)]^{\ast}(\operatorname{Hess}_x u )(t,x)
	\big)  
	+
	\langle (\nabla_x u)(t,x),\mu(x)\rangle
\end{split}
\end{equation}
for $(t,x) \in (0,T) \times \R^d$.
This proves item~\eqref{quantitative:item1}.
Next note that item~\eqref{viscosity_affine_existence:item2} in Corollary~\ref{viscosity_affine_existence}, item~\eqref{quantitative:item1}, and \eqref{quantitative:setting1} establish that
for all $x \in \R^d$ it holds that
\begin{equation}
\label{quantitative:eq01}
	u(T,x)
=
	\EXPP{\varphi(X^{1, x}_T)}. 
\end{equation}
Moreover, observe that \eqref{quantitative:setting2}, \eqref{quantitative:setting3}, and Lemma~\ref{affine_representation}
(with
$d = d$,
$m = d$, 
$e_j=e_j$,
$\varphi(x)=X^{i, x}_T(\omega)$
for $j\in \{1,2,\dots,d\}$, $i \in \N$, $x \in \R^d$, $\omega \in \Omega$
in the notation of Lemma~\ref{affine_representation})
 demonstrate that
for all $i \in \N$, $x \in \R^d$, $\omega \in \Omega$ it holds that
\begin{equation}
\label{quantitative:eq02}
	X^{i, x}_T(\omega) 
= 
	\mathscr{A}_{i}(\omega) x +\mathscr{B}_{i}(\omega).
\end{equation}
This ensures that 
for all $i \in \N$, $\omega \in \Omega$ it holds that the function
\begin{equation}
\R^d \ni x \mapsto X^{i, x}_T(\omega) \in \R^d
\end{equation}
is continuous.
Combining this and the fact that 
for all $i \in \N$, $x \in \R^d$ it holds that $X^{i, x}_T \colon \Omega \to \R^d$ is $\mathcal{F} / \mathcal{B}(\R^d)$-measurable 
with Beck et al.\ \cite[Lemma 2.4]{KolmogorovArxiv} 
(with
$E = \R^d$,
$\mathcal{E} = \R^d$, 
$\Omega=\Omega$,
$X(x,\omega)=X^{i, x}_T(\omega)$
for $i \in \N$, $x \in \R^d$, $\omega \in \Omega$
in the notation of \cite[Lemma 2.4]{KolmogorovArxiv})
establishes that
for all $i \in \N$ it holds that the function
\begin{equation}
\label{quantitative:eq03}
	\R^d \times \Omega \ni (x, \omega) 
\mapsto 
	X^{i, x}_T(\omega) \in \R^d
\end{equation}
is $(\mathcal{B}(\R^d) \otimes \mathcal{F}) / \mathcal{B}(\R^d)$-measurable.
This, \eqref{quantitative:ass1}, and the triangle inequality assure that
for all $x \in \R^d$ it holds that
\begin{equation}
\label{quantitative:eq04}
\begin{split}
	&\left[
		\int_{\R^d}  
		\left|
			\EXPP{\varphi(X^{1, x}_T)}
			- 
			\EXPP{\phi (X^{1, x}_T)}
		\right|^p \,
		\nu(dx)
	\right]^{\nicefrac{1}{p}} \\
&=
	\left[
		\int_{\R^d}  
		\left|
			\EXPP{\varphi(X^{1, x}_T)
			-
			\phi (X^{1, x}_T)}
		\right|^p \,
		\nu(dx)
	\right]^{\nicefrac{1}{p}} \\
&\leq
	\left[
		\int_{\R^d}  
		\left(
			\Exp{ 
			\big|
				\varphi(X^{1, x}_T) - \phi (X^{1, x}_T) 
			\big| }
		\right)^p \,
		\nu(dx)
	\right]^{\nicefrac{1}{p}} \\
&\leq	
	\left[
		\int_{\R^d}  
		\left(
			\Exp{ 
			\varepsilon 
			\big(
				1+ \Normm{ X^{1, x}_T }^{\mathbf{w}}
			\big) }
		\right)^p \,
		\nu(dx)
	\right]^{\nicefrac{1}{p}} \\
&=	
	\varepsilon 
	\left[
		\int_{\R^d}  
		\left(
			1
			+
			\Exp{ \Normm{ X^{1, x}_T }^{\mathbf{w}} }
		\right)^p \,
		\nu(dx)
	\right]^{\nicefrac{1}{p}} \\
&\leq	
	\varepsilon 
	\left(
		1 +
		\left[
			\int_{\R^d}  
			\left(
				\Exp{ \Normm{ X^{1, x}_T }^{\mathbf{w}} }
			\right)^p \,
			\nu(dx)
		\right]^{\nicefrac{1}{p}}
	\right).
\end{split}
\end{equation}
Furthermore, observe that Jensen's inequality, 
the hypothesis that 
for all $x \in \R^d$ it holds that
$
		\norm{\mu(x)} + \HSNorm{{\sigma(x)}} 
\leq  
	L  (1 + \norm{x})
$,
\eqref{quantitative:setting1}, and
Proposition~\ref{moments_of_solution_of_SDE}
(with
$d = d$,
$m=d$,
$p = m(z)$, 
$\mathfrak{m}_1=\mathfrak{m}_2=\mathfrak{s}_1=\mathfrak{s}_2=L$,
$T = T$,
$\xi = x$,
$\mu = \mu$,
$\sigma = \sigma$ 
for $z \in (0,\infty)$, $x \in \R^d$
in the notation of Proposition~\ref{moments_of_solution_of_SDE})
prove that 
for all $x \in \R^d$, $z \in (0,\infty)$ it holds that
\begin{equation}
\label{quantitative:eq05}
\begin{split}
	&\Exp{ \Normm{ X^{1, x}_T }^z} 
= 
	\Exp{ \Big[ \Normm{ X^{1, x}_T }^{m(z)} \Big]^{\nicefrac{z}{m(z)}} }
\leq 
	\left(
		\Exp{ \Normm{ X^{1, x}_T }^{m(z)} }
	\right)^{\nicefrac{z}{m(z)}}\\
&\leq
	\left[
		\sqrt{2} 
		\big(
			\Norm{x} +L T + L  m(z)\sqrt{T} 
		\big) 
		\exp{ \!
		\Big( 
			\big[
				L\sqrt{T}+L m(z)
			\big]^2 \,
			T
		\Big)}
	\right]^{z} \\
&=
	2^{z/2} 
	\Big[
		\Norm{x} + L \big(   T + m(z)\sqrt{T}   \big) 
	\Big]^z
	\exp{\!
	\Big( 
		\big[
			 \sqrt{T} + m(z)
		\big]^2
		L^2 \, T z
	\Big)}.
\end{split}
\end{equation}
The fact that $\mathbf{w}p\ge 1$ and the triangle inequality hence prove that
for all $x \in \R^d$ it holds that
\begin{equation}
\begin{split}
	&\left[
		\int_{\R^d}  
		\left(
			\Exp{ \Normm{ X^{1, x}_T }^{\mathbf{w}} }
		\right)^p \,
		\nu(dx)
	\right]^{\nicefrac{1}{p}} \\
&\leq
	\Bigg[
		\int_{\R^d}  
		\bigg[
			2^{\mathbf{w}/2} 
			\Big[
				\Norm{x} +L \big(   T + m(\mathbf{w})\sqrt{T}   \big)
			\Big]^\mathbf{w} \\
			&\qquad \cdot
			\exp{ \!
			\Big( 
				\big[
					 \sqrt{T} + m(\mathbf{w})
				\big]^2
				L^2 \, T \mathbf{w}
			\Big)}
		\bigg]^p \,
		\nu(dx)
	\Bigg]^{\nicefrac{1}{p}} \\
&=
	2^{\mathbf{w}/2} 
	\exp{ \!
	\Big( 
		\big[
			\sqrt{T} + m(\mathbf{w})
		\big]^2
		L^2 \, T \mathbf{w}
	\Big)} \\
	&\qquad \cdot
	\left[
		\Bigg[
			\int_{\R^d}  
			\Big[
				\Norm{x} + L \big(   T +  m(\mathbf{w})\sqrt{T}   \big)
			\Big]^{\mathbf{w} p}\,
			\nu(dx)
		\Bigg]^{\nicefrac{1}{(\mathbf{w}p)}}
	\right]^\mathbf{w} \\
&\leq
	2^{\mathbf{w}/2} 
	\exp{ \!
	\Big( 
		\big[
			\sqrt{T} + m(\mathbf{w})
		\big]^2
		L^2 \, T \mathbf{w}
	\Big)} \\
	&\qquad \cdot
	\left[
		\left[
			\int_{\R^d}  
				\Norm{x}^{\mathbf{w} p}\,
			\nu(dx)
		\right]^{\nicefrac{1}{(\mathbf{w}p)}}
		+ 
		L \big(   T +  m(\mathbf{w})\sqrt{T}   \big)
	\right]^\mathbf{w}.
\end{split}
\end{equation}
Combining this and \eqref{quantitative:eq04} demonstrates that
for all $x \in \R^d$ it holds that
\begin{equation}
\label{quantitative:eq06}
\begin{split}
	&\left[
		\int_{\R^d}  
		\left|
			\EXPP{\varphi(X^{1, x}_T)}
			- 
			\EXPP{\phi (X^{1, x}_T)}
		\right|^p \,
		\nu(dx)
	\right]^{\nicefrac{1}{p}} \\
&\leq
	\varepsilon 
	\Bigg(
		1 +
		2^{\mathbf{w}/2} 
		\exp{\!
		\Big( 
			\big[
				\sqrt{T} + m(\mathbf{w})
			\big]^2
			L^2 \, T \mathbf{w}
		\Big)} \\
		&\qquad \cdot
		\left[
			\left[
				\int_{\R^d}  
					\Norm{x}^{\mathbf{w} p} \,
				\nu(dx)
			\right]^{\nicefrac{1}{(\mathbf{w}p)}}
			+ 
			L \big(   T + m(\mathbf{w})\sqrt{T}   \big)
		\right]^\mathbf{w}
	\Bigg).
\end{split}
\end{equation}
Moreover, observe that the fact that  $W^{i} \colon [0,T] \times \Omega \to \R^d$, $i \in \N$, are independent Brownian motions ensures for every $x\in\R^d$ that   $X^{i, x}_T \colon \Omega \to \R^d$, $i \in \N$, are i.i.d.\ random variables (cf., e.g., Beck et al. \cite[Theorem 2.8]{KolmogorovArxiv} and Klenke \cite[Theorem 15.8]{Klenke14}).
Combining this, \eqref{quantitative:ass1}, \eqref{quantitative:eq05}, and the hypothesis that $\phi \colon \R^d \to \R$ is a $\mathcal{B}(\R^d)/ \mathcal{B}(\R)$-measurable function proves for every $x\in\R^d$ that $\phi(X^{i, x}_T) \colon \Omega \to \R$, $i \in \{1,2,\dots, n\}$, are i.i.d. random variables which satisfy for every $x\in\R^d$ that 
\begin{equation}
	\EXPP{\vert{\phi (X^{1, x}_T)}\vert}\le c \, \left(1+\Exp{ \Normm{ X^{1, x}_T }^\mathbf{v}} \right)<\infty.
\end{equation} 
This, H\"older's inequality, Fubini's theorem, and Corollary~\ref{mc_Lp_error2}
(with
$ p = p $,
$ d = 1 $,
$ n = n $,
$ X_i = \phi (X^{i, x}_T) $ for $i \in \{1,2,\dots, n\}$, $x\in\R^d$
in the notation of Corollary~\ref{mc_Lp_error2})
 demonstrate that 
\begin{equation}
\label{quantitative:eq07}
\begin{split}
	&\Exp{
	\left[
		\int_{\R^d}  
		\left|
			\EXPP{\phi (X^{1, x}_T)}
			- 
			\tfrac{1}{n} \big[ \! \smallsum_{i=1}^{n}\, \phi(X^{i, x}_T)  \big] 
		\right|^p \,
		\nu(dx)
	\right]^{\nicefrac{1}{p}}
	} \\
&\leq
	\left(
	\Exp{
		\int_{\R^d}  
		\left|
			\EXPP{\phi (X^{1, x}_T)}
			- 
			\tfrac{1}{n} \! \left[\smallsum_{i=1}^{n}\, \phi(X^{i, x}_T)\right] 
		\right|^p \,
		\nu(dx)
	}
	\right)^{ \! \! \nicefrac{1}{p}} \\
&=
	\left(
		\int_{\R^d} 
		\Exp{
			\left|
				\EXPP{\phi (X^{1, x}_T)}
				- 
				\tfrac{1}{n} \! \left[\smallsum_{i=1}^{n}\, \phi(X^{i, x}_T)\right] 
			\right|^p \,
		}
		\nu(dx)
	\right)^{ \! \! \nicefrac{1}{p}} \\
&\leq
	\left(
		\int_{\R^d} 
		\left[
			\frac{2(p-1)^{1/2}}{n^{1/2}}
		\right]^p
		\Exp{
			\big|
				\phi(X^{1, x}_T)
				- 
				\EXPP{\phi (X^{1, x}_T)}
			\big|^p \,
		}
		\nu(dx)
	\right)^{ \! \! \nicefrac{1}{p}} \\
&=
			\frac{2(p-1)^{1/2}}{n^{1/2}}
	\left(
		\int_{\R^d} 
		\Exp{
			\big|
				\phi(X^{1, x}_T)
				- 
				\EXPP{\phi (X^{1, x}_T)}
			\big|^p \,
		}
		\nu(dx)
	\right)^{ \! \! \nicefrac{1}{p}}.
\end{split}
\end{equation}
Moreover, observe that H\"older's inequality demonstrates that for all $x\in \R^d$ it holds that
\begin{equation}
	\EXPPP{
		\big|
		\EXPP{\phi (X^{1, x}_T)}
		\big|^p}
	=\big|
	\EXPP{\phi (X^{1, x}_T)}
	\big|^p
	\le \EXPP{\big|
	\phi (X^{1, x}_T)
	\big|^p}.
\end{equation}
The triangle inequality, H\"older's inequality, \eqref{quantitative:ass1}, and \eqref{quantitative:eq07} hence imply that
\begin{equation}
\label{quantitative:eq08}
\begin{split}
	&\Exp{
	\left[
		\int_{\R^d}  
		\left|
			\EXPP{\phi (X^{1, x}_T)}
			- 
			\tfrac{1}{n} \! \left[\smallsum_{i=1}^{n}\, \phi(X^{i, x}_T)\right] 
		\right|^p \,
		\nu(dx)
	\right]^{\nicefrac{1}{p}}
	} \\
&\leq
			\frac{2(p-1)^{1/2}}{n^{1/2}}
	\Bigg[
		\left(
			\int_{\R^d} 
			\Exp{
				\big|
					\phi(X^{1, x}_T)
				\big|^p \,
			}
			\nu(dx)
		\right)^{ \! \! \nicefrac{1}{p}} \\
	&\hspace{3cm}+
		\left(
			\int_{\R^d} 
			\EXPPP{
				\big|
					\EXPP{\phi (X^{1, x}_T)}
				\big|^p \,
			}
			\nu(dx)
		\right)^{ \! \! \nicefrac{1}{p}}
	\Bigg] \\
&\leq
			\frac{4(p-1)^{1/2}}{n^{1/2}}
	\left(
		\int_{\R^d} 
		\Exp{
			\big|
				\phi(X^{1, x}_T)
			\big|^p \,
		}
		\nu(dx)
	\right)^{ \! \! \nicefrac{1}{p}} \\
&\leq
			\frac{4(p-1)^{1/2}}{n^{1/2}}
	\left(
		\int_{\R^d} 
		\Exp{
			\big|
				c  (1+\| X^{1, x}_T \|^{\mathbf{v}}   )
			\big|^p 
		}
		\nu(dx)
	\right)^{ \! \! \nicefrac{1}{p}} \\
&=
			\frac{4c(p-1)^{1/2}}{n^{1/2}}
	\left(
		\int_{\R^d} 
		\Exp{
			\big(
				 1+\| X^{1, x}_T \|^{\mathbf{v}}
			\big)^p 
		}
		\nu(dx)
	\right)^{ \! \! \nicefrac{1}{p}} \\
&\leq
\frac{4c(p-1)^{1/2}}{n^{1/2}}
	\left( 
		1
		+
		\left[
		\int_{\R^d} 
		\Exp{
				 \| X^{1, x}_T \|^{\mathbf{v}p}
		}
		\nu(dx)
		\right]^{ \nicefrac{1}{p}}
	\right).
\end{split}
\end{equation}
This, \eqref{quantitative:eq05}, the triangle inequality, and the fact that $\mathbf{v}p\ge 1$ assure that
\begin{equation}
\begin{split}
	&\Exp{
	\left[
		\int_{\R^d}  
		\left|
			\EXPP{\phi (X^{1, x}_T)}
			- 
			\tfrac{1}{n} \! \left[\smallsum_{i=1}^{n}\, \phi(X^{i, x}_T)\right] 
		\right|^p \,
		\nu(dx)
	\right]^{\nicefrac{1}{p}}
	} \\
&\leq
\frac{4c(p-1)^{1/2}}{n^{1/2}}
	\Bigg( 
		1
		+
		\bigg[
			\int_{\R^d} 
			2^{(\mathbf{v}p)/2} 
			\Big[
				\Norm{x} + L \big(   T + m(\mathbf{v}p)\sqrt{T}   \big) 
			\Big]^{\mathbf{v}p}
			\\
			&\qquad \cdot
			\exp{\!
			\Big( 
				\big[
					\sqrt{T} + m(\mathbf{v}p)
				\big]^2
				L^2 \, T \mathbf{v}p
			\Big)}		
			\nu(dx)
		\bigg]^{ \nicefrac{1}{p}}
	\Bigg) \\
&=
\frac{4c(p-1)^{1/2}}{n^{1/2}}
	\Bigg( 
		1
		+
		2^{\mathbf{v}/2} 
		\exp{\!
		\Big( 
			\big[
				\sqrt{T} + m(\mathbf{v}p)
			\big]^2
			L^2 \, T \mathbf{v}
		\Big)} \\
	&\qquad \cdot	
		\left[		
			\bigg[
				\int_{\R^d} 
				\Big[
					\Norm{x} + L \big(   T + m(\mathbf{v}p)\sqrt{T}   \big) 
				\Big]^{\mathbf{v}p}
				\nu(dx)
			\bigg]^{ \nicefrac{1}{(\mathbf{v}p)}}
		\right]^\mathbf{v}
	\Bigg) \\
&\leq
\frac{4c(p-1)^{1/2}}{n^{1/2}}
	\Bigg( 
		1
		+
		2^{\mathbf{v}/2} 
		\exp{\!
		\Big( 
			\big[
				\sqrt{T} + m(\mathbf{v}p)
			\big]^2
			L^2 \, T \mathbf{v}
		\Big)} \\
	&\qquad \cdot	
		\left[		
			\left[
				\int_{\R^d} 
					\Norm{x}^{\mathbf{v}p} \,
				\nu(dx)
			\right]^{ \nicefrac{1}{(\mathbf{v}p)}}
			+ 
			L \big(   T + m(\mathbf{v}p)\sqrt{T}   \big) 
		\right]^\mathbf{v}
	\Bigg).
\end{split}
\end{equation}
Proposition~\ref{construction_realization} hence demonstrates that
there exists $\bm\omega \in \Omega$ such that 
\begin{equation}\label{quantitative:ChoiceOfRealization}
\begin{split}
	&\left[
		\int_{\R^d}  
		\left|
			\EXPP{\phi (X^{1, x}_T)}
			- 
			\tfrac{1}{n} \! \left[\smallsum_{i=1}^{n}\, \phi(X^{i, x}_T(\bm\omega))\right] 
		\right|^p \,
		\nu(dx)
	\right]^{\nicefrac{1}{p}}\\
&\leq
\frac{4c(p-1)^{1/2}}{n^{1/2}}
	\Bigg( 
		1
		+
		2^{\mathbf{v}/2} 
		\exp{\!
		\Big( 
			\big[
				\sqrt{T} + m(\mathbf{v}p)
			\big]^2
			L^2 \, T \mathbf{v}
		\Big)} \\
	&\qquad \cdot	
		\left[		
			\left[
				\int_{\R^d} 
					\Norm{x}^{\mathbf{v}p} \,
				\nu(dx)
			\right]^{ \nicefrac{1}{(\mathbf{v}p)}}
			+ 
			L \big(   T + m(\mathbf{v}p)\sqrt{T}   \big) 
		\right]^\mathbf{v}
	\Bigg).
\end{split}
\end{equation}
Combining this, the triangle inequality, \eqref{quantitative:eq01}, \eqref{quantitative:eq02}, and \eqref{quantitative:eq06} establishes that there exists $\bm\omega \in \Omega$ such that 
\begin{equation}
\begin{split}
	&\left[
		\int_{\R^d}  
		\left|
			u(T,x) 
			- 
			\tfrac{1}{n} 
			\Big[ \!
				\smallsum_{i=1}^{n}\, 
				\phi ( 
					\mathscr{A}_{i}(\bm\omega) x +  \mathscr{B}_{i}(\bm\omega) 
					)
			\Big] 
		\right|^p \,
		\nu(dx)
	\right]^{\nicefrac{1}{p}} \\
&=
	\left[
		\int_{\R^d}  
		\left|
			\EXPP{\varphi(X^{1, x}_T)}
			- 
			\tfrac{1}{n} \!
			\left[
				\smallsum_{i=1}^{n}\, 
				\phi ( 
					X^{i, x}_T(\bm\omega) 
					)
			\right] 
		\right|^p \,
		\nu(dx)
	\right]^{\nicefrac{1}{p}}\\
&\leq
	\left[
		\int_{\R^d}  
		\left|
			\EXPP{\varphi(X^{1, x}_T)}
			- 
			\EXPP{\phi (X^{1, x}_T)}
		\right|^p \,
		\nu(dx)
	\right]^{\nicefrac{1}{p}} \\
&\quad	+
	\left[
		\int_{\R^d}  
		\left|
			\EXPP{\phi (X^{1, x}_T)}
			- 
			\tfrac{1}{n} \! \left[\smallsum_{i=1}^{n}\, \phi(X^{i, x}_T(\bm\omega) )\right] 
		\right|^p \,
		\nu(dx)
	\right]^{\nicefrac{1}{p}}\\
&\leq
	\varepsilon 
	\Bigg(
		1 +
		2^{\mathbf{w}/2} 
		\exp{\!
		\Big( 
			\big[
				\sqrt{T} + m(\mathbf{w})
			\big]^2
			L^2 \, T \mathbf{w}
		\Big)} \\
		&\qquad \cdot
		\left[
			\left[
				\int_{\R^d}  
					\Norm{x}^{\mathbf{w} p} \,
				\nu(dx)
			\right]^{\nicefrac{1}{(\mathbf{w}p)}}
			+ 
			L \big(   T + m(\mathbf{w})\sqrt{T}   \big)
		\right]^\mathbf{w}
	\Bigg) \\
&\quad 
	+
\frac{4c(p-1)^{1/2}}{n^{1/2}}
	\Bigg( 
		1
		+
		2^{\mathbf{v}/2} 
		\exp{\!
		\Big( 
			\big[
				\sqrt{T} + m(\mathbf{v}p)
			\big]^2
			L^2 \, T \mathbf{v}
		\Big)} \\
	&\qquad \cdot	
		\left[		
			\left[
				\int_{\R^d} 
					\Norm{x}^{\mathbf{v}p} \,
				\nu(dx)
			\right]^{ \nicefrac{1}{(\mathbf{v}p)}}
			+ 
			L \big(   T + m(\mathbf{v}p)\sqrt{T}   \big) 
		\right]^\mathbf{v}
	\Bigg).
\end{split}
\end{equation}
The proof of Proposition~\ref{quantitative} is thus completed.
\end{proof}

\begin{cor}
\label{quantitative_nicer}
Let $d, n \in \N$, $T\in (0,\infty)$, $ \varepsilon, c, L, C \in [0,\infty)$, $\mathbf{v}, p \in [2, \infty)$, 
	let $\langle \cdot, \cdot \rangle \colon \R^d \times \R^d \to \R$ be the $d$-dimensional Euclidean scalar product,
let $\left\| \cdot \right\| \colon \R^d \to [0,\infty)$ be the $d$-dimensional Euclidean norm, 
let $\HSNorm{\cdot} \colon \R^{d \times d} \to [0,\infty)$ be the Hilbert-Schmidt norm on $\R^{d\times d}$,
let $\nu \colon \mathcal{B}(\R^d) \to [0,1]$ be a probability measure,
assume that
\begin{equation}\label{quantitative_nicerConstant}
		C 
	=
	(p-1)^{1/2}
	\exp \!
	\big(
	3\mathbf{v} (1+ L^2 T (\sqrt{T} + \mathbf{v} p )^2 ) 
	\big) \allowbreak
	\big(
	1
	+
	\left[ \!
	\textint_{\R^d}  
	\Norm{x}^{ p \mathbf{v} } \,
	\nu(dx)
	\right]^{ \!\nicefrac{1}{ p }}
	\big),
\end{equation}
let $\varphi \colon \R^d \to \R$ be a continuous function,  
let $\phi \colon \R^d \to \R$ be a $\mathcal{B}(\R^d)/ \mathcal{B}(\R)$-measurable function which satisfies 
for all $x \in \R^d$ that
\begin{equation}\label{quantitative_nicer:growth}
	\left| 
		\phi(x)
	\right| 
\leq 
	c \, (1+\| x \|^{\mathbf{v}})
\qandq
	\left| 
		\varphi(x)-\phi(x)
	\right|
\leq 
	\varepsilon (1+ \| x \|^{\mathbf{v}}),
\end{equation}
and let
$\mu \colon \R^d \to \R^d$ and $\sigma \colon \R^d \to \R^{d \times d}$ be functions which satisfy
for all $x,y \in \R^d$, $\lambda \in \R$ that
\begin{equation}\label{quantitative_nicer:mu}
\mu(\lambda x+y) + \lambda \mu(0) = \lambda\mu(x)+\mu(y),
\end{equation}
\begin{equation}\label{quantitative_nicer:sigma}
\sigma(\lambda x+y) + \lambda \sigma(0) = \lambda\sigma(x)+\sigma(y),
\end{equation}
and
$
		\norm{\mu(x)} + \HSNorm{{\sigma(x)}} 
\leq  
	L  (1 + \norm{x})
$.
Then 
\begin{enumerate}[(i)]

\item \label{quantitative_nicer:item1}
there exists a unique continuous function $u \colon [0,T] \allowbreak\times \R^d \to \R$
which satisfies that
$
\inf_{q \in (0,\infty)} \allowbreak
\sup_{(t, x) \in [0, T] \times \R^d} \allowbreak
\frac{ | u(t, x) | }{ 1 + \norm{x}^q }
<
\infty
$,
which satisfies
for all $x \in \R^d$ that 
$
u(0,x) = \varphi(x)
$, 
and which satisfies that $u|_{(0,T) \times \R^d}$ is a viscosity solution of
\begin{equation}
\begin{split}
	(\tfrac{\partial }{\partial t}u)(t,x) 
&= 
	\tfrac{1}{2} 
	\operatorname{Trace}\! \big( 
		\sigma(x)[\sigma(x)]^{\ast}(\operatorname{Hess}_x u )(t,x)
	\big)  
	+
	\langle (\nabla_x u)(t,x),\mu(x)\rangle
\end{split}
\end{equation}
for $(t,x) \in (0,T) \times \R^d$ and

\item \label{quantitative_nicer:item2}
there exist $A_1,A_2,\ldots,A_{n} \in \R^{d \times d}$, $b_1,b_2,\ldots,b_{n} \in \R^d$ such that
\begin{equation}
\label{quantitative_nicer:concl1} 
\begin{split}
	&\left[
		\int_{\R^d}  
		\left|
			u(T,x) - \tfrac{1}{n} \big[   \smallsum_{i=1}^{n}\, \phi( A_i x + b_i)  \big] 
		\right|^p \,
		\nu(dx)
	\right]^{\nicefrac{1}{p}}\leq 
	(
	\varepsilon 
	+
	n^{-\nicefrac{1}{2}} \, c
	)
	C.
\end{split}
\end{equation}
\end{enumerate}
\end{cor}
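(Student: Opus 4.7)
The plan is to read off item~\eqref{quantitative_nicer:item1} directly from item~\eqref{quantitative:item1} of Proposition~\ref{quantitative} (applied with $\mathbf{w}=\mathbf{v}$; admissible since $\mathbf{v}=\mathbf{w}\ge 2\ge 1/p$), and to deduce item~\eqref{quantitative_nicer:item2} from item~\eqref{quantitative:item2} by simplifying the constants in~\eqref{quantitative:concl1}. Since $\mathbf{v},p\ge 2$ we have $\max\{2,\mathbf{v}\}=\mathbf{v}$ and $\max\{2,\mathbf{v}p\}=\mathbf{v}p$, and the bracket in the $\varepsilon$-summand of~\eqref{quantitative:concl1} is dominated by the one in the $n^{-1/2}$-summand (since $\mathbf{v}\le \mathbf{v}p$ makes both the exponent and the affine part of the bracket monotone). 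Hence the right hand side of~\eqref{quantitative:concl1} is at most $\big(\varepsilon + 4c(p-1)^{1/2}n^{-1/2}\big)\,B$, where
\begin{align*}
B := 1 + 2^{\mathbf{v}/2}\exp\!\big((\sqrt{T}+\mathbf{v}p)^2 L^2T\mathbf{v}\big)\,(K+M)^{\mathbf{v}},
\end{align*}
with $K:=L\sqrt{T}(\sqrt{T}+\mathbf{v}p)$ and $M:=\big[\int_{\R^d}\|x\|^{\mathbf{v}p}\,\nu(dx)\big]^{1/(\mathbf{v}p)}$.

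To establish~\eqref{quantitative_nicer:concl1} it then suffices to prove the two scalar inequalities $B\le C$ and $4(p-1)^{1/2}B\le C$, since adding them after multiplication by $\varepsilon$ and $cn^{-1/2}$ respectively yields the required bound. For both I plan to control $(K+M)^{\mathbf{v}}$ via the elementary chain
\begin{align*}
(K+M)^{\mathbf{v}}\le \big((1+K)(1+M)\big)^{\mathbf{v}}\le e^{\mathbf{v}K}\cdot 2^{\mathbf{v}}(1+M^{\mathbf{v}}),
\end{align*}
which combines $K+M\le(1+K)(1+M)$, the scalar bound $(1+K)^{\mathbf{v}}\le e^{\mathbf{v}K}$, and $(1+M)^{\mathbf{v}}\le 2^{\mathbf{v}}(1+M^{\mathbf{v}})$ (from convexity of $t\mapsto t^{\mathbf{v}}$ on $[0,\infty)$), together with the AM-GM estimate
\begin{align*}
\mathbf{v}K=\mathbf{v}L\sqrt{T}(\sqrt{T}+\mathbf{v}p)\le \tfrac{\mathbf{v}}{2}\big(1+L^2T(\sqrt{T}+\mathbf{v}p)^2\big).
\end{align*}
Substituting these and using $(p-1)^{1/2}\ge 1$ reduces both inequalities to verifying
\begin{align*}
(\tfrac{3\mathbf{v}}{2}+k)\ln 2 + \tfrac{\mathbf{v}}{2}\le 3\mathbf{v}\qquad\text{for }k\in\{1,3\},
\end{align*}
after which the remaining slack $\tfrac{3\mathbf{v}}{2}L^2T(\sqrt{T}+\mathbf{v}p)^2$ absorbs the $L^2T$-dependent part of the exponent into the $3\mathbf{v}\cdot L^2T(\sqrt{T}+\mathbf{v}p)^2$ term appearing in the definition of $C$.

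The proof is essentially bookkeeping; the only subtle point is that the coefficient $3\mathbf{v}$ in the exponent of $C$ is \emph{just} large enough, under the assumption $\mathbf{v}\ge 2$, to simultaneously absorb the factor $2^{3\mathbf{v}/2+1}$ coming from $B$ and the additional Monte-Carlo prefactor $4(p-1)^{1/2}$ (yielding a $2^{3\mathbf{v}/2+3}$ factor in the second inequality). Indeed, both displayed scalar inequalities hold at $\mathbf{v}=2$ with a narrow margin ($(3\mathbf{v}/2+3)\ln 2+\mathbf{v}/2\approx 5.16\le 6=3\mathbf{v}$) and then with increasing slack as $\mathbf{v}$ grows, so the hypothesis $\mathbf{v}\in[2,\infty)$ is essential to the exact form of $C$ given in~\eqref{quantitative_nicerConstant}.
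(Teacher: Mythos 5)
Your proposal is correct and follows essentially the same route as the paper's proof: apply Proposition~\ref{quantitative} with $\mathbf{w}=\mathbf{v}$, observe that $\max\{2,\mathbf{v}\}\le\max\{2,\mathbf{v}p\}=\mathbf{v}p$ so both brackets are dominated by the Monte-Carlo one, and then absorb the algebraic factors into the exponential $\exp(3\mathbf{v}(1+L^2T(\sqrt{T}+\mathbf{v}p)^2))$ using $\mathbf{v}\ge 2$. Your reorganisation into the two scalar inequalities $B\le C$ and $4(p-1)^{1/2}B\le C$ is a clean way to phrase what the paper does in one chain of inequalities, and your elementary bounds ($K+M\le(1+K)(1+M)$, $(1+K)^{\mathbf{v}}\le e^{\mathbf{v}K}$, AM-GM for $\mathbf{v}K\le\tfrac{\mathbf{v}}{2}(1+L^2T(\sqrt{T}+\mathbf{v}p)^2)$) are essentially the same estimates the paper uses ($r+M\le(1+r)\max\{1,M\}$, $1+r\le e^r$, $1+r+r^2\le\tfrac32(1+r^2)$), just packaged slightly differently; the step $(1+M)^{\mathbf{v}}\le 2^{\mathbf{v}}(1+M^{\mathbf{v}})$ is looser than the paper's $\max\{1,M\}^{\mathbf{v}}\le 1+M^{\mathbf{v}}$ but still leaves enough margin.
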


\begin{proof}[Proof of Corollary~\ref{quantitative_nicer}]
Throughout this proof 
let $ r \in (0, \infty)$ be given by
$
	r = L \sqrt{T}(\sqrt{T} + \mathbf{v} p )
$.
Note that  \eqref{quantitative_nicer:growth} implies that $\varphi$ is an at most polynomially growing function. Combining this,
\eqref{quantitative_nicer:mu}, and \eqref{quantitative_nicer:sigma} with
item~\eqref{viscosity_affine_existence:item1} in Corollary~\ref{viscosity_affine_existence}
(with
$ d = d $,
$ m = d $,
$ T = T $,
$ \varphi = \varphi $,
$ \mu = \mu $,
$ \sigma = \sigma $
in the notation of Corollary~\ref{viscosity_affine_existence})
demonstrates that
there exists a unique continuous function $u\colon [0,T] \times \R^d \to \R$ which satisfies
for all $x \in \R^d$ that 
$
	u(0,x) = \varphi(x)
$, 
which satisfies that
$
	\inf_{q \in (0,\infty)} 
	\sup_{(t, x) \in [0, T] \times \R^d} 
	\frac{ | u(t, x) | }{ 1 + \norm{x}^q }
<
	\infty
$,
and which satisfies that $u|_{(0,T) \times \R^d}$ is a viscosity solution of
\begin{equation}
\begin{split}
	(\tfrac{\partial }{\partial t}u)(t,x) 
&= 
	\tfrac{1}{2} 
	\operatorname{Trace}\! \big( 
		\sigma(x)[\sigma(x)]^{\ast}(\operatorname{Hess}_x u )(t,x)
	\big)  
	+
	\langle (\nabla_x u)(t,x),\mu(x)\rangle
\end{split}
\end{equation}
for $(t,x) \in (0,T) \times \R^d$.
This proves item~\eqref{quantitative_nicer:item1}.
Furthermore, note that item~\eqref{quantitative:item2} in Proposition~\ref{quantitative} 
(with
$ d = d $,
$ n = n $,
$ p = p $,
$ T = T $,
$ c = c $,
$ \varepsilon = \varepsilon $,
$ L = L $,
$ \mathbf{v} =\mathbf{v} $,
$ \mathbf{w} =\mathbf{v} $,
$ \nu =\nu $,
$ \varphi = \varphi $,
$ \phi = \phi $,
$ \mu = \mu $,
$ \sigma = \sigma $
in the notation of Proposition~\ref{quantitative})
assures that 
there exist $A_1,A_2,\ldots,A_{n} \in \R^{d \times d}$, $b_1,b_2,\ldots,b_{n} \in \R^d$ such that
\begin{equation}
\label{quantitative_nicer:eq1} 
\begin{split}
	&\left[
		\int_{\R^d}  
		\left|
			u(T,x) - \tfrac{1}{n}\big[ \!  \smallsum_{i=1}^{n}\, \phi( A_i x + b_i)  \big] 
		\right|^p \,
		\nu(dx)
	\right]^{\nicefrac{1}{p}} \\
&\leq 
		\varepsilon 
	\Bigg(
		1 +
		2^{\mathbf{v}/2} 
		\exp{\!
		\left( 
			\big[
				\sqrt{T}+\max\{2, \mathbf{v} \}
			\big]^2
			L^2 \, T \mathbf{v}
		\right)} \\
		&\qquad \cdot
		\left[
			L 
			\big(
				T + \max\{2, \mathbf{v} \}\sqrt{T} 
			\big)
			+
			\Big[ \!
				\textint_{\R^d}  
					\Norm{x}^{\mathbf{v} p} \,
				\nu(dx)
			\Big]^{ \!\nicefrac{1}{(\mathbf{v}p)}}
		\right]^\mathbf{v}
	\Bigg) \\
&\quad 
	+
	n^{-\nicefrac{1}{2}} \, 
	4  \, c \, (p-1)^{1/2}	
	\Bigg( 
		1
		+
		2^{\mathbf{v}/2} 
		\exp{ \!
		\left( 
			\big[
				\sqrt{T} + \max\{2, \mathbf{v}p\}
			\big]^2
			L^2 \, T \mathbf{v}
		\right)} \\
	&\qquad \cdot	
		\left[
			L
			\big(
				T +  \max\{2, \mathbf{v}p\} \sqrt{T} 
			\big)		
			+
			\Big[
				\textint_{\R^d} 
					\Norm{x}^{\mathbf{v}p} \,
				\nu(dx)
			\Big]^{ \nicefrac{1}{(\mathbf{v}p)}}
		\right]^\mathbf{v} 
	\Bigg).
\end{split}
\end{equation}
Next note that the fact that $\mathbf{v}p \geq 2$ and the fact that $p \geq 1$ imply that
\begin{equation}
	\max\{2, \mathbf{v}\}
\leq
	\max\{2, \mathbf{v}p\}
=
	\mathbf{v}p.
\end{equation}
This demonstrates that
\begin{equation}
\label{quantitative_nicer:eq3}
\begin{split}
		&1 +
		2^{\mathbf{v}/2} 
		\exp{\!
		\left( 
			\big[
				\sqrt{T}+\max\{2, \mathbf{v} \}
			\big]^2
			L^2 \, T \mathbf{v}
		\right)} \\
		&\qquad \cdot
		\left[
			L 
			\big(
				T + \max\{2, \mathbf{v} \}\sqrt{T} 
			\big)
			+
			\Big[ \!
				\textint_{\R^d}  
					\Norm{x}^{\mathbf{v} p} \,
				\nu(dx)
			\Big]^{ \!\nicefrac{1}{(\mathbf{v}p)}}
		\right]^\mathbf{v} \\
&\leq
	1
	+
	2^{\mathbf{v}/2} 
	\exp{\!
	\left( 
		\big[
			\sqrt{T} + \max\{2, \mathbf{v}p\}
		\big]^2
		L^2 \, T \mathbf{v}
	\right)} \\
&\qquad \cdot	
	\left[
		L
		\big(
			T +  \max\{2, \mathbf{v}p\} \sqrt{T} 
		\big)		
		+
		\Big[
			\textint_{\R^d} 
				\Norm{x}^{\mathbf{v}p} \,
			\nu(dx)
		\Big]^{ \nicefrac{1}{(\mathbf{v}p)}}
	\right]^\mathbf{v} \\
&=
	1
	+
	2^{\mathbf{v}/2} 
	\exp{\!
	\left(
		L^2 \, T
		\big[
			\sqrt{T} + \mathbf{v}p
		\big]^2
		 \mathbf{v}
	\right)} \\
&\qquad \cdot	
	\left[
		L \sqrt{T}
		\big(
			\sqrt{T} +  \mathbf{v}p 
		\big)		
		+
		\Big[
			\textint_{\R^d} 
				\Norm{x}^{\mathbf{v}p} \,
			\nu(dx)
		\Big]^{ \nicefrac{1}{(\mathbf{v}p)}}
	\right]^\mathbf{v} \\
&=
	1 +
	2^{\mathbf{v}/2} 
	\exp{ \!
	\left( 
		r^2 \mathbf{v}
	\right)} 
	\left[
		r
		+
		\Big[ \!
			\textint_{\R^d}  
				\Norm{x}^{\mathbf{v} p} \,
			\nu(dx)
		\Big]^{ \!\nicefrac{1}{(\mathbf{v}p)}}
	\right]^\mathbf{v}.
\end{split}
\end{equation}
In addition, note that the fact that for all $x \in \R$ it holds that
$
	1 + x \leq \exp(x)
$
and the fact that for all $y \in (0, \infty)$ it holds that
$
	1 + y + y^2 \leq \frac{3}{2} ( 1 + y^2 )
$
ensure that
\begin{equation}
\label{quantitative_nicer:eq4}
\begin{split}
	&1 +
	2^{\mathbf{v}/2} 
	\exp{ \!
	\left( 
		r^2 \mathbf{v}
	\right)} 
	\left[
		r
		+
		\Big[ \!
			\textint_{\R^d}  
				\Norm{x}^{\mathbf{v} p} \,
			\nu(dx)
		\Big]^{ \!\nicefrac{1}{(\mathbf{v}p)}}
	\right]^\mathbf{v} \\
&\leq
	1 +
	\exp (\nicefrac{\mathbf{v}}{2}) 
	\exp{ \!
	\left( 
		r^2 \mathbf{v}
	\right)} 
	\left[
		(1 + r)
		\max 
		\left\{
			1
			,
			\Big[ \!
				\textint_{\R^d}  
					\Norm{x}^{\mathbf{v} p} \,
				\nu(dx)
			\Big]^{ \!\nicefrac{1}{(\mathbf{v}p)}}
		\right\}
	\right]^\mathbf{v} \\
	&\leq
	1 +
	\exp (\nicefrac{\mathbf{v}}{2}) 
	\exp{ \!
		\left( 
		r^2 \mathbf{v}
		\right)} 
	\left[
	\exp(r)
	\max 
	\left\{
	1
	,
	\Big[ \!
	\textint_{\R^d}  
	\Norm{x}^{\mathbf{v} p} \,
	\nu(dx)
	\Big]^{ \!\nicefrac{1}{(\mathbf{v}p)}}
	\right\}
	\right]^\mathbf{v} \\
&\leq
	2
	\exp{ \!
	\left( 
		\mathbf{v}
		+
		r^2 \mathbf{v}
	\right)} 
	\exp(r \mathbf{v})
	\max 
	\left\{
		1
		,
		\Big[ \!
			\textint_{\R^d}  
				\Norm{x}^{\mathbf{v} p} \,
			\nu(dx)
		\Big]^{ \!\nicefrac{1}{p}}
	\right\}\\
&\leq
	2\exp{ \!
	\left( 
		(1 + r + r^2)\mathbf{v}
	\right)} 
	\left(
		1
		+
		\Big[ \!
			\textint_{\R^d}  
				\Norm{x}^{\mathbf{v} p} \,
			\nu(dx)
		\Big]^{ \!\nicefrac{1}{p}}
	\right) \\
&\leq
	2\exp{ \!
	\left( 
		\tfrac{3}{2}(1 + r^2)\mathbf{v}
	\right)} 
	\left(
		1
		+
		\Big[ \!
			\textint_{\R^d}  
				\Norm{x}^{\mathbf{v} p} \,
			\nu(dx)
		\Big]^{ \!\nicefrac{1}{p}}
	\right).
\end{split}
\end{equation}
Moreover, note that the fact that $\mathbf{v} \geq 2$ implies that $8=2^3\leq \exp(3)\le \exp
\big(
\tfrac{3}{2}\mathbf{v}
\big)\le 	\exp
\big(
\tfrac{3}{2}(1 + r^2)\mathbf{v}
\big).$ Hence, we obtain that
\begin{equation}
\begin{split}
&8\exp{ \!
	\left( 
	\tfrac{3}{2}(1 + r^2)\mathbf{v}
	\right)}
\leq
\exp\!
\left(
\tfrac{3}{2}(1 + r^2)\mathbf{v}
\right)
\exp{ \!
	\left( 
	\tfrac{3}{2}(1 + r^2)\mathbf{v}
	\right)} 
=
\exp \!
\left(
3(1 + r^2)\mathbf{v}
\right).
\end{split}
\end{equation}
Combining this, \eqref{quantitative_nicerConstant}, \eqref{quantitative_nicer:eq1}, \eqref{quantitative_nicer:eq3}, and \eqref{quantitative_nicer:eq4}
establishes that 
\begin{equation}
\begin{split}
	&\left[
		\int_{\R^d}  
		\left|
			u(T,x) - \tfrac{1}{n}\big[  \smallsum_{i=1}^{n}\, \phi( A_i x + b_i)  \big] 
		\right|^p \,
		\nu(dx)
	\right]^{\nicefrac{1}{p}} \\ 
&\leq 
	\varepsilon 
	\Bigg(
	1 +
		2^{\mathbf{v}/2} 
		\exp{ \!
		\left( 
			r^2 \mathbf{v}
		\right)} 
		\left[
			r
			+
			\Big[ \!
				\textint_{\R^d}  
					\Norm{x}^{\mathbf{v} p} \,
				\nu(dx)
			\Big]^{ \!\nicefrac{1}{(\mathbf{v}p)}}
		\right]^\mathbf{v} 
	\Bigg)\\
&\quad 
	+
	n^{-\nicefrac{1}{2}} \, 
	4  \, c \, (p-1)^{1/2}	
	\Bigg( 
		1 +
		2^{\mathbf{v}/2} 
		\exp{ \!
		\left( 
			r^2 \mathbf{v}
		\right)} 
		\left[
			r
			+
			\Big[ \!
				\textint_{\R^d}  
					\Norm{x}^{\mathbf{v} p} \,
				\nu(dx)
			\Big]^{ \!\nicefrac{1}{(\mathbf{v}p)}}
		\right]^\mathbf{v}
	\Bigg) \\
&\leq
	\left[\varepsilon + n^{-\nicefrac{1}{2}}\,4  \, c \, (p-1)^{1/2}\right]
	\, 2\exp{ \!
	\left( 
		\tfrac{3}{2}(1 + r^2)\mathbf{v}
	\right)} 
	\left(
		1
		+
		\Big[ \!
			\textint_{\R^d}  
				\Norm{x}^{\mathbf{v} p} \,
			\nu(dx)
		\Big]^{ \!\nicefrac{1}{p}}
	\right) \\
&\leq
	(\varepsilon + n^{-\nicefrac{1}{2}}\, c)  (p-1)^{1/2}\,
	 8 \exp{ \!
	\left( 
		\tfrac{3}{2}(1 + r^2)\mathbf{v}
	\right)} 
	\left(
		1
		+
		\Big[ \!
			\textint_{\R^d}  
				\Norm{x}^{\mathbf{v} p} \,
			\nu(dx)
		\Big]^{ \!\nicefrac{1}{p}}
	\right) \\
&\leq
	(\varepsilon + n^{-\nicefrac{1}{2}}\, c) \,
	 (p-1)^{1/2}  \exp{ \!
	\left( 
		3(1 + r^2)\mathbf{v}
	\right)} 
	\left(
		1
		+
		\Big[ \!
			\textint_{\R^d}  
				\Norm{x}^{\mathbf{v} p} \,
			\nu(dx)
		\Big]^{ \!\nicefrac{1}{p}}
	\right) \\
&=
	(\varepsilon  + n^{-\nicefrac{1}{2}}\,c)
	C.
\end{split}
\end{equation}
The proof of Corollary~\ref{quantitative_nicer} is thus completed.
\end{proof}

\subsection{Cost estimates}

\begin{prop}
\label{qualitative}
Let $d \in \N$, $T, \varepsilon \in (0,\infty)$, $c, L, C \in [0,\infty)$, $\mathbf{v}, p \in [2, \infty)$, 
$
	n \in \N \cap [ c^2 C^2 \varepsilon^{-2}, \infty)
$, 
	let $\langle \cdot, \cdot \rangle \colon \R^d \times \R^d \to \R$ be the $d$-dimensional Euclidean scalar product,
let $\left\| \cdot \right\| \colon \R^d \to [0,\infty)$ be the $d$-dimensional Euclidean norm, 
let $\HSNorm{\cdot} \colon \R^{d \times d} \to [0,\infty)$ be the Hilbert-Schmidt norm on $\R^{d\times d}$,
let $\nu \colon \mathcal{B}(\R^d) \to [0,1]$ be a probability measure,
assume that
\begin{equation}
		C 
	=
	2 (p-1)^{1/2}
	\exp \!
	\big(
	3\mathbf{v} (1+ L^2 T (\sqrt{T} + \mathbf{v} p )^2 ) 
	\big) \allowbreak
	\big(
	1
	+
	\left[ \!
	\textint_{\R^d}  
	\Norm{x}^{ p \mathbf{v} } \,
	\nu(dx)
	\right]^{ \!\nicefrac{1}{ p }}
	\big),
\end{equation}
let 
$\varphi \colon \R^d \to \R$ be a continuous function, 
let $\phi \colon \R^d \to \R$ be a $\mathcal{B}(\R^d)/ \mathcal{B}(\R)$-measurable function which satisfies for all $x\in\R^d$ that
\begin{equation}\label{qualitative:growth}
	\left| 
		\phi(x)
	\right| 
\leq 
	c  (1+\| x \|^{\mathbf{v}})
\qandq
	\left| 
		\varphi(x)-\phi(x)
	\right| 
\leq 
	C^{-1} \varepsilon  (1+ \| x \|^{\mathbf{v}})
\end{equation}
and let
$\mu \colon \R^d \to \R^d$ and
$\sigma \colon \R^d \to \R^{d \times d}$
be functions which satisfy 
for all $x,y \in \R^d$, $\lambda \in \R$ that
\begin{equation}\label{qualitative:mu}
\mu(\lambda x+y) + \lambda \mu(0) = \lambda\mu(x)+\mu(y),
\end{equation}
\begin{equation}\label{qualitative:sigma}
\sigma(\lambda x+y) + \lambda \sigma(0) = \lambda\sigma(x)+\sigma(y),
\end{equation}
and
$
		\norm{\mu(x)} + \HSNorm{{\sigma(x)}} 
\leq  
	L  (1 + \norm{x}).
$
Then
\begin{enumerate}[(i)]
\item \label{qualitative:item1}
there exists a unique continuous function $u \colon [0,T]\allowbreak \times \R^d \to \R$
which satisfies that
$
	\inf_{q \in (0,\infty)} \allowbreak
	\sup_{(t, x) \in [0, T] \times \R^d} \allowbreak
	\frac{ | u(t, x) | }{ 1 + \norm{x}^q }
<
	\infty
$, which satisfies
for all $x \in \R^d$ that 
$u(0,x) = \varphi(x)$, 
and which satisfies that $u|_{(0,T) \times \R^d}$ is a viscosity solution of
\begin{equation}
\begin{split}
	(\tfrac{\partial }{\partial t}u)(t,x) 
&= 
	\tfrac{1}{2} 
	\operatorname{Trace}\! \big( 
		\sigma(x)[\sigma(x)]^{\ast}(\operatorname{Hess}_x u )(t,x)
	\big)  
	+
	\langle (\nabla_x u)(t,x),\mu(x)\rangle
\end{split}
\end{equation}
for $(t,x) \in (0,T) \times \R^d$
and

\item \label{qualitative:item2}
there exist $A_1,A_2,\ldots,A_{n} \in \R^{d \times d}$, $b_1,b_2,\ldots,b_{n} \in \R^d$ such that
\begin{equation}
\label{qualitative:concl1} 
	\left[
		\int_{\R^d}  
		\left|
			u(T,x) - \tfrac{1}{n}\big[  \smallsum_{i=1}^{n}  \phi( A_i x + b_i)  \big] 
		\right|^p \,
		\nu(dx)
	\right]^{\nicefrac{1}{p}} 
\leq
		\varepsilon.
\end{equation}
\end{enumerate}
\end{prop}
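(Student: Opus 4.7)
The plan is to deduce Proposition~\ref{qualitative} as an essentially immediate consequence of Corollary~\ref{quantitative_nicer}. Let $C_0$ denote the constant given by formula~\eqref{quantitative_nicerConstant} in Corollary~\ref{quantitative_nicer} with parameters $L,T,\mathbf{v},p,\nu$ as in Proposition~\ref{qualitative}; then the constant $C$ of Proposition~\ref{qualitative} satisfies $C = 2 C_0$. The hypotheses on $\mu$, $\sigma$, $\varphi$, $\phi$ in Proposition~\ref{qualitative} are precisely the hypotheses of Corollary~\ref{quantitative_nicer} once we take the role of $\varepsilon$ in~\eqref{quantitative_nicer:growth} to be played by $C^{-1}\varepsilon$, because the upper bound $|\varphi(x)-\phi(x)| \leq C^{-1}\varepsilon(1+\|x\|^{\mathbf{v}})$ in~\eqref{qualitative:growth} is of the required form.

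Item~\eqref{qualitative:item1} of Proposition~\ref{qualitative} then follows verbatim from item~\eqref{quantitative_nicer:item1} of Corollary~\ref{quantitative_nicer}. For item~\eqref{qualitative:item2}, applying item~\eqref{quantitative_nicer:item2} of Corollary~\ref{quantitative_nicer} (with the aforementioned substitution) produces matrices $A_1,A_2,\ldots,A_n \in \R^{d\times d}$ and vectors $b_1,b_2,\ldots,b_n \in \R^d$ such that
\begin{equation*}
\left[\int_{\R^d}\Big| u(T,x) - \tfrac{1}{n}\smallsum_{i=1}^{n}\phi(A_ix+b_i)\Big|^p \nu(dx)\right]^{\nicefrac{1}{p}} \leq \big(C^{-1}\varepsilon + n^{-\nicefrac{1}{2}}\,c\big)\,C_0 = \tfrac{\varepsilon}{2} + \tfrac{n^{-\nicefrac{1}{2}}\,c\,C}{2}.
\end{equation*}

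The remaining task is purely arithmetic: the hypothesis $n \geq c^2 C^2 \varepsilon^{-2}$ rearranges to $n^{-\nicefrac{1}{2}}\,c\,C \leq \varepsilon$, so the second summand above is bounded by $\varepsilon/2$ and the total right-hand side is at most $\varepsilon$, which is exactly~\eqref{qualitative:concl1}. I expect no real obstacle in this argument: all the nontrivial analytic content (the Feynman-Kac viscosity representation, the affine dependence of SDE solutions on initial data, the $L^p$-Monte Carlo estimate, and the existence of a deterministic realization with the desired error via the abstract argument in Proposition~\ref{construction_realization}) has already been packaged into Corollary~\ref{quantitative_nicer}. Proposition~\ref{qualitative} merely reformulates the two-term error bound ``$\varepsilon$-part plus $n^{-1/2}$-part'' as a clean one-sided bound by $\varepsilon$, where the doubled constant $C = 2C_0$ absorbs the factor $\tfrac{1}{2}$ split needed to equalize the two summands.
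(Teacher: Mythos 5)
Your proof is correct and follows essentially the same route as the paper: item~(i) via Corollary~\ref{quantitative_nicer} (which the paper derives directly from Corollary~\ref{viscosity_affine_existence}, but this is the same content), and item~(ii) by invoking Corollary~\ref{quantitative_nicer} with $\varepsilon$ replaced by $C^{-1}\varepsilon$ and then using the hypothesis $n \geq c^2 C^2 \varepsilon^{-2}$ to bound the Monte-Carlo term by $\varepsilon/2$. The only cosmetic difference is that you rearrange the hypothesis to $n^{-1/2}\, c\, C \leq \varepsilon$ before adding, whereas the paper substitutes $n = c^2 C^2 \varepsilon^{-2}$ directly; both lead to the same two-term bound $\varepsilon/2 + \varepsilon/2$.
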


\begin{proof}[Proof of Proposition~\ref{qualitative}]
Note that  \eqref{qualitative:growth} implies that $\varphi$ is an at most polynomially growing function. Combining this,
\eqref{qualitative:mu}, and \eqref{qualitative:sigma} with item~\eqref{viscosity_affine_existence:item1} in Corollary~\ref{viscosity_affine_existence}
(with
$ d = d $,
$ m = d $,
$ T = T $,
$ \varphi = \varphi $,
$ \mu = \mu $,
$ \sigma = \sigma $
in the notation of Corollary~\ref{viscosity_affine_existence})
establishes that
there exists a unique continuous function $u\colon [0,T] \times \R^d \to \R$ which satisfies
for all $x \in \R^d$ that 
$u(0,x) = \varphi(x)$, 
which satisfies that
$
	\inf_{q \in (0,\infty)} 
	\sup_{(t, x) \in [0, T] \times \R^d} 
	\frac{ | u(t, x) | }{ 1 + \norm{x}^q }
<
	\infty
$,
and which satisfies that $u|_{(0,T) \times \R^d}$ is a viscosity solution of
\begin{equation}
\begin{split}
	(\tfrac{\partial }{\partial t}u)(t,x) 
&= 
	\tfrac{1}{2} 
	\operatorname{Trace}\! \big( 
		\sigma(x)[\sigma(x)]^{\ast}(\operatorname{Hess}_x u )(t,x)
	\big)  
	+
	\langle (\nabla_x u)(t,x),\mu(x)\rangle
\end{split}
\end{equation}
for $(t,x) \in (0,T) \times \R^d$.
This proves item~\eqref{qualitative:item1}.
Next note that Corollary~\ref{quantitative_nicer} 
(with 
$d = d$, 
$n = n$, 
$T = T$, 
$\varepsilon = C^{-1} \varepsilon $, 
$c = c$, 
$L = L$, 
$\mathbf{v} = \mathbf{v}$, 
$p = p$,
$ \nu = \nu $,
$ \phi = \phi $,
$ \mu = \mu $,
$ \sigma = \sigma $
in the notation of Corollary~\ref{quantitative_nicer})
assures that
there exist $A_1,A_2,\ldots,A_{n} \in \R^{d \times d}$, $b_1,b_2,\ldots,b_{n} \in \R^d$ such that
\begin{equation}
\begin{split}
	&\left[
		\int_{\R^d}  
		\left|
			u(T,x) - \tfrac{1}{n}\big[ \smallsum_{i=1}^{n}\, \phi( A_i x + b_i)   \big] 
		\right|^p \,
		\nu(dx)
	\right]^{\nicefrac{1}{p}} \leq
	\left(
	C^{-1} \varepsilon 
	+
	n^{-\nicefrac{1}{2}} \,c
	\right)
	\frac{C}{2}.
\end{split}
\end{equation}
The hypothesis that 
$
	n \geq  c^2 C^2 \varepsilon^{-2}
$
hence assures that
\begin{equation}
\begin{split}
	&\left[
		\int_{\R^d}  
		\left|
			u(T,x) - \tfrac{1}{n} \big[ \smallsum_{i=1}^{n}\, \phi( A_i x + b_i) \big] 
		\right|^p \,
		\nu(dx)
	\right]^{\nicefrac{1}{p}} \\
&\leq
	\left(
	C^{-1} \varepsilon 
	+
	( c^2 C^2 \varepsilon^{-2})^{-\nicefrac{1}{2}} \,c
	\right) \frac{C}{2}\\
&=
	\left(
	C^{-1} \varepsilon 
	+
	C^{-1} \varepsilon 
	\right)\frac{C}{2} 
=
	\varepsilon.
\end{split}
\end{equation}
This establishes item~\eqref{qualitative:item2}.
The proof of Proposition~\ref{qualitative} is thus completed.
\end{proof}

\subsection[Representation properties for ANNs]{Representation properties for artificial neural networks}

\begin{setting}
\label{setting_NN}
For every $l \in \N$ let $\mathcal{M}_l$ be the set of all Borel measurable functions from $\R^l$ to $\R$,
let 
\begin{equation}
\begin{split}
	\mathcal{N}
&=
	\cup_{\mathcal{L} \in \{2, 3, \ldots \}}
	\cup_{ (l_0,l_1,\ldots, l_\mathcal{L}) \in ((\N^{\mathcal{L}}) \times \{ 1 \} ) }
		\left(
			\times_{k = 1}^\mathcal{L} (\R^{l_k \times l_{k-1}} \times \R^{l_k})
		\right),
\end{split}
\end{equation}
let $\mathbf{A}_l\colon \R^l\to\R^l$, $l \in \N$, and $\mathbf{a} \in \mathcal{M}_1$ be functions which satisfy 
for all $l \in \N$, $x = (x_1,x_2, \ldots, x_l) \in \R^l$ that
\begin{equation}
\label{setting_NN:ass1}
	\mathbf{A}_l(x)
=
	(\mathbf{a}(x_1), \mathbf{a}(x_2), \ldots, \mathbf{a}(x_l)),
\end{equation} 
and let 
$
	\mathcal{P}, \mathscr{P} \colon \mathcal{N} \to \N
$ and 
$
	\mathcal{R} \colon \mathcal{N} \to \cup_{l = 1}^\infty \mathcal{M}_l
$
be the functions which satisfy
for all $ \mathcal{L} \in \{2, 3, \ldots \}$, $ (l_0,l_1,\ldots, l_\mathcal{L}) \in ((\N^{\mathcal{L}}) \times \{ 1 \}) $, 
$
\Phi 
=
((W_1, B_1), \ldots, (W_\mathcal{L},\allowbreak B_\mathcal{L}))
\allowbreak=
( 
(W_k^{(i,j)})_{ i \in \{1, 2, \ldots, l_k \}, j \in \{1, 2, \ldots, l_{k-1} \}}, \allowbreak
(B_k^{(i)})_{i \in \{1, 2, \ldots, l_k \}} 
)_{k \in \{1, 2, \ldots, \mathcal{L} \} } 
\in  \allowbreak
( \times_{k = 1}^\mathcal{L} \allowbreak(\R^{l_k \times l_{k-1}}\allowbreak \times \R^{l_k}))$,
$x_0 \in \R^{l_0}, x_1 \in \R^{l_1}, \ldots, x_{\mathcal{L}-1} \in \R^{l_{\mathcal{L}-1}}$ 
with $\forall \, k \in \N \cap (0,\mathcal{L}) \colon x_k = \mathbf{A}_{l_k}(W_k x_{k-1} + B_k)$  
that
\begin{equation}
\label{setting_NN:ass2}
	\mathcal{R}(\Phi) \in \mathcal{M}_{l_0},
\qquad
	( \mathcal{R}(\Phi) ) (x_0) = W_\mathcal{L} x_{\mathcal{L}-1} + B_\mathcal{L},
\end{equation}
\begin{equation}
	\mathscr{P}(\Phi) 
=
	\sum_{k = 1}^\mathcal{L} 
	\sum_{i = 1}^{l_k}
	\left(
		\mathbbm{1}_{\R \backslash \{ 0 \}} (B_k^{(i)})
		+
		\smallsum\limits_{j = 1}^{l_{k-1}}
				\mathbbm{1}_{\R \backslash \{ 0 \}} (W_k^{(i,j)})
	\right),
\end{equation} 
and 
$
	\mathcal{P}(\Phi)
=
	\sum_{k = 1}^\mathcal{L} l_k(l_{k-1} + 1) 
$.

\end{setting}

\begin{lemma}
\label{multichannel_network}
Assume Setting~\ref{setting_NN} and
let $d, n \in \N$, $A_1,A_2,\ldots,A_{n} \in \R^{d \times d}$, $b_1,b_2,\ldots,b_{n} \in \R^d$, $\phi \in \mathcal{N}$ satisfy that 
$
	\mathcal{R}(\phi) \in \mathcal{M}_d
$.
Then
there exists $\psi \in \mathcal{N}$ which satisfies that 
$ \mathcal{P}(\psi) \leq n^2 \, \mathcal{P}(\phi) $, 
$ \mathscr{P}(\psi) \leq n \, \mathcal{P}(\phi) $, and
$ \mathcal{R}(\psi) \in \mathcal{M}_d $
and which satisfies for all $x \in \R^d$ that 
\begin{equation}
	(\mathcal{R}(\psi)) (x)
=
	\tfrac{1}{n}\big[  \smallsum_{i=1}^{n} (\mathcal{R}(\phi))( A_i x + b_i)  \big]. 
\end{equation}
\end{lemma}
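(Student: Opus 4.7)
My plan is to construct $\psi$ explicitly as a ``parallelized'' version of $\phi$ in which $n$ copies of $\phi$ are run independently on the affine transformed inputs $A_i x + b_i$ and their outputs are averaged in the final affine layer. Concretely, write $\phi = ((W_k, B_k))_{k \in \{1, \ldots, \mathcal{L}\}}$ with architecture $(l_0, l_1, \ldots, l_{\mathcal{L}}) \in (\N^{\mathcal{L}}) \times \{1\}$ and $l_0 = d$, and define $\psi = ((\tilde W_k, \tilde B_k))_{k \in \{1, \ldots, \mathcal{L}\}}$ with architecture $(d, n l_1, n l_2, \ldots, n l_{\mathcal{L}-1}, 1)$ by setting $\tilde W_1$ to be the vertical stacking of the $n$ blocks $W_1 A_i \in \R^{l_1 \times d}$ with $\tilde B_1$ the vertical stacking of $W_1 b_i + B_1$; for $k \in \{2, \ldots, \mathcal{L}-1\}$, take $\tilde W_k$ to be the block-diagonal matrix with $n$ copies of $W_k$ on the diagonal and $\tilde B_k$ to be $n$ stacked copies of $B_k$; finally set $\tilde W_{\mathcal{L}} = \tfrac{1}{n}(W_{\mathcal{L}} \mid W_{\mathcal{L}} \mid \cdots \mid W_{\mathcal{L}}) \in \R^{1 \times (n l_{\mathcal{L}-1})}$ and $\tilde B_{\mathcal{L}} = B_{\mathcal{L}}$.

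Next I would verify the functional identity by a short induction. Using \eqref{setting_NN:ass1}, the componentwise action of $\mathbf{A}_{n l_k}$ together with the block structure of $\tilde W_k$ and $\tilde B_k$ for $k \in \{2, \ldots, \mathcal{L}-1\}$ show that if the $k$-th hidden activation of $\psi$ applied to $x \in \R^d$ equals the vertical stacking of the $k$-th hidden activations of $\phi$ applied to $A_1 x + b_1, \ldots, A_n x + b_n$, then the same holds for index $k+1$; the base case $k = 1$ follows directly from the definition of $\tilde W_1$ and $\tilde B_1$ since $W_1(A_i x + b_i) + B_1 = (W_1 A_i) x + (W_1 b_i + B_1)$. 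Applying the final affine layer then yields $(\mathcal{R}(\psi))(x) = \tfrac{1}{n}\sum_{i=1}^n [W_{\mathcal{L}} y_i + B_{\mathcal{L}}] = \tfrac{1}{n}\sum_{i=1}^n (\mathcal{R}(\phi))(A_i x + b_i)$, where $y_i \in \R^{l_{\mathcal{L}-1}}$ is the final hidden activation of $\phi$ on input $A_i x + b_i$.

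For the parameter counts I would argue directly from the formulas in Setting~\ref{setting_NN}. The total parameter count of $\psi$ is
\begin{equation}
\mathcal{P}(\psi) = n l_1 (d+1) + \smallsum_{k = 2}^{\mathcal{L}-1} n l_k (n l_{k-1} + 1) + (n l_{\mathcal{L}-1} + 1),
\end{equation}
and elementary estimation using $n l_{k-1} + 1 \leq n^2 (l_{k-1} + 1)$ gives the bound $\mathcal{P}(\psi) \leq n^2 \mathcal{P}(\phi)$. For $\mathscr{P}(\psi)$, the block-diagonal nature of $\tilde W_k$ for intermediate $k$ and the repetition structure of $\tilde W_1$ and $\tilde W_{\mathcal{L}}$ ensure the number of nonzero entries in each layer is at most $n$ times the corresponding count for $\phi$, yielding $\mathscr{P}(\psi) \leq n \sum_{k=1}^{\mathcal{L}} l_k(l_{k-1}+1) = n \mathcal{P}(\phi)$.

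There is no serious obstacle here; the proof is a matter of careful construction and indexing. The main sources of annoyance are purely bookkeeping: keeping the block structure of $\tilde W_k$ consistent across $k$, handling the special roles of the first and last layers (which are not block-diagonal), and ensuring the special case $\mathcal{L} = 2$ is covered automatically by interpreting the empty product over $k \in \{2, \ldots, \mathcal{L}-1\}$ as trivial.
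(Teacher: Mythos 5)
Your construction of $\psi$ (stacking $W_1 A_i$ in the first layer with biases $W_1 b_i + B_1$, block-diagonal intermediate layers, and an averaging final layer) is exactly the one used in the paper's proof of Lemma~\ref{multichannel_network}, and the induction verifying the functional identity together with the layer-by-layer bookkeeping for $\mathcal{P}(\psi) \leq n^2 \mathcal{P}(\phi)$ and $\mathscr{P}(\psi) \leq n \mathcal{P}(\phi)$ match the paper's argument. The proposal is correct and takes essentially the same approach.
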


\begin{proof}[Proof of Lemma~\ref{multichannel_network}]
Throughout this proof let $x \in \R^d$, 
for all $\mathcal{L} \in \{2, 3, \ldots \}$, 
$ (l_0,l_1,\ldots, l_\mathcal{L}) \in ((\N^{\mathcal{L}}) \times \{ 1 \}) $, 
$\Phi \in  (\times_{k = 1}^\mathcal{L} (\R^{l_k \times l_{k-1}} \times \R^{l_k}))$
let $W^{(\Phi )}_1 \in \R^{l_1 \times l_0}$, $W^{(\Phi )}_2 \in \R^{l_2 \times l_1}, \ldots,$   $W^{(\Phi )}_\mathcal{L} \in \R^{l_\mathcal{L} \times l_{\mathcal{L}-1}}$, 
$B^{(\Phi )}_1 \in \R^{l_1}$, $B^{(\Phi )}_2 \in \R^{l_2 }, \ldots$,   $B^{(\Phi )}_\mathcal{L} \in \R^{l_\mathcal{L}}$ 
satisfy that
\begin{equation}
\label{multichannel_network:setting0}
	\Phi
= 
	\big(
		(W^{(\Phi )}_1, B^{(\Phi )}_1), 
		(W^{(\Phi )}_2, B^{(\Phi )}_2), 
		\ldots, 
		(W^{(\Phi )}_\mathcal{L}, B^{(\Phi )}_\mathcal{L})
	\big),
\end{equation}
let $N \in \{2, 3, \ldots \}$, $(u_0, u_1, \ldots, u_N) \in (\{ d\} \times (\N^{N-1}) \times \{ 1 \})$ satisfy that 
$
	\phi \in  \times_{k = 1}^N (\R^{u_k \times u_{k-1}} \times \R^{u_k})
$
(i.e., $\phi$ corresponds to a fully connected feedforward artificial neural network with $N + 1$ layers with dimensions $(u_0, u_1, \ldots, u_N)$),
let
$
	\psi 
\in  
	(\R^{(n u_1) \times  u_0} \times \R^{n u_1}) 
	\times 
	(\times_{k = 2}^{N-1} (\R^{(n u_k) \times (n u_{k-1})} \times \R^{n u_k}))
	\times
	(\R^{u_N \times  (n u_{N-1})} \times \R^{u_N}) 
\subseteq 
	\mathcal{N}
$
(i.e., $\psi$ corresponds to a fully connected feedforward artificial neural network with $N + 1$ layers with dimensions $(u_0, nu_1, n u_2, \ldots, n u_{N-1}, u_N)$)
satisfy for all $k \in \{2, 3, \ldots, N-1 \}$ that
\begin{equation}
\label{multichannel_network:setting11}
	W^{(\psi)}_1
=
	\begin{pmatrix}
	 	W^{(\phi)}_1 A_1 \\
	 	W^{(\phi)}_1 A_2 \\
	 	\vdots \\
	 	W^{(\phi)}_1 A_n
	\end{pmatrix}
\in  
	\R^{(n u_1) \times u_0}
=
	\R^{(n u_1) \times d},
\end{equation}
\begin{equation}
\label{multichannel_network:setting12}
	B^{(\psi)}_1
=
	 \begin{pmatrix}
	 	W^{(\phi)}_1 b_1 + B^{(\phi)}_1\\
	 	W^{(\phi)}_1 b_2 + B^{(\phi)}_1 \\
	 	\vdots \\
	 	W^{(\phi)}_1 b_n + B^{(\phi)}_1
	 \end{pmatrix}
\in  
	\R^{n u_1},
\qquad 
	B^{(\psi)}_k
=
	 \begin{pmatrix}
	 	B^{(\phi)}_k\\
	 	B^{(\phi)}_k \\
	 	\vdots \\
	 	B^{(\phi)}_k
	 \end{pmatrix}
\in  
	\R^{n u_k},
\end{equation}
\begin{equation}
\label{multichannel_network:setting2}
	W^{(\psi)}_k
=
	\begin{pmatrix}
	 	W^{(\phi)}_k& 	0&						\cdots& 			0\\
	 	0&						W^{(\phi)}_k&  	\cdots&			\vdots\\
	 	\vdots& 				\vdots&				\ddots&			0\\
	 	0&	 					\cdots&				0&					W^{(\phi)}_k
	\end{pmatrix}
\in  
	\R^{(n u_k) \times (n u_{k-1})},
\end{equation}
\begin{equation}
\label{multichannel_network:setting3}
	W^{(\psi)}_N
= 
	\begin{pmatrix}
		\tfrac{1}{n}  W^{(\phi)}_N&
		\tfrac{1}{n}  W^{(\phi)}_N&
		\cdots&
		\tfrac{1}{n}  W^{(\phi)}_N
	\end{pmatrix}
\in 
	\R^{u_N \times (n u_{N-1})} =\R^{1 \times (n u_{N-1})},
\end{equation}
\begin{equation}
\label{multichannel_network:setting4}
\andq
	B^{(\psi)}_N
=
	B^{(\phi)}_N
\in
	\R^{u_N} = \R,
\end{equation}
let $y_{i, k} \in \R^{u_k}$, $i \in \{1, 2, \ldots, n \}$, $k \in \{0, 1, \ldots, N \}$, satisfy 
for all $i \in \{1, 2, \ldots, n \}$, $k \in \{1, 2, \ldots, N -1\}$  that
\begin{equation}
\label{multichannel_network:setting6}
	y_{i, 0} = A_i x + b_i,
\qquad
	y_{i, k} = \mathbf{A}_{u_k}(W^{(\phi)}_k y_{i, k-1} + B^{(\phi)}_k), 
\end{equation}
\begin{equation}
\label{multichannel_network:setting7}
\andq
	y_{i, N} = W^{(\phi)}_N y_{i, N-1} + B^{(\phi)}_N, 
\end{equation}
and let $z_0 \in \R^{u_0}, z_1 \in \R^{n u_1}, z_2 \in \R^{n u_2}, \ldots, z_{N-1} \in \R^{n u_{N-1}}, z_N \in \R^{u_N}$ satisfy that
\begin{equation}
\label{multichannel_network:setting8}
	z_0 = x,
\qquad
	z_k = \mathbf{A}_{n u_k}(W^{(\psi)}_k z_{k-1} + B^{(\psi)}_k),
\end{equation}
\begin{equation}
\label{multichannel_network:setting9}
\andq
	z_N = W^{(\psi)}_N z_{N-1} + B^{(\psi)}_N.
\end{equation}
Observe that \eqref{setting_NN:ass1} proves that for all $l,L\in\N$, $v=(v_1,v_2,\ldots, v_{L l})\in \R^{(L l)}$ it holds that 
\begin{equation}\label{multichannel_network:propertyActivationFunction}
\begin{split}
\mathbf{A}_{L l}(v)&=\big(\mathbf{a}(v_1),\mathbf{a}(v_2),\ldots, \mathbf{a}(v_{L l})\big)\\
&=\big(\mathbf{A}_l(v_1,v_2,\ldots, v_l), \mathbf{A}_l(v_{l+1},v_{l+2},\ldots, v_{2l}),\ldots,\\ &\hspace{5cm}\mathbf{A}_l(v_{(L-1)l+1},v_{(L-1)l+2},\ldots, v_{Ll})\big).
\end{split}
\end{equation} 
Furthermore, note that the fact that $u_0 = d$ ensures that 
$
	\mathcal{R}(\psi) \in \mathcal{M}_{u_0} = \mathcal{M}_d
$.
Next observe that \eqref{setting_NN:ass2}, \eqref{multichannel_network:setting0}, \eqref{multichannel_network:setting6}, and \eqref{multichannel_network:setting7} imply that 
for all $i \in \{1, 2, \ldots, n \}$ it holds that
\begin{equation}
\label{multichannel_network:eq01}
 y_{i, N} = (\mathcal{R}(\phi))( y_{i, 0}) = (\mathcal{R}(\phi))(A_i x + b_i).
\end{equation}
Moreover, observe that \eqref{setting_NN:ass2}, \eqref{multichannel_network:setting0}, \eqref{multichannel_network:setting8}, and \eqref{multichannel_network:setting9} ensure that 
\begin{equation}
\label{multichannel_network:eq02}
z_N = (\mathcal{R}(\psi))( z_0 ) =  (\mathcal{R}(\psi))( x ).
\end{equation}
Next we claim that for all $k \in \{1,2, \ldots, N-1 \}$ it holds that
\begin{equation}
\label{multichannel_network:eq03}
	z_k = \big(y_{1, k}, y_{2, k}, \ldots, y_{n, k} \big).
\end{equation} 
We now prove \eqref{multichannel_network:eq03} by induction on $k \in \{1,2, \ldots, N-1 \}$. 
For the base case $k = 1$ note that \eqref{multichannel_network:setting6} assures that 
for all $i \in \{1, 2, \ldots, n \}$ it holds that
\begin{equation}
\begin{split}
	y_{i, 1} 
&= 
	\mathbf{A}_{u_1}(W^{(\phi)}_1 y_{i, 0} + B^{(\phi)}_1)\\
&=
	\mathbf{A}_{u_1}(W^{(\phi)}_1 (A_i x + b_i) + B^{(\phi)}_1) \\
&=
	\mathbf{A}_{u_1}(W^{(\phi)}_1 A_i x + W^{(\phi)}_1 b_i + B^{(\phi)}_1). 
\end{split}
\end{equation}
This,  \eqref{multichannel_network:setting11}, \eqref{multichannel_network:setting12},  \eqref{multichannel_network:setting8}, and \eqref{multichannel_network:propertyActivationFunction} demonstrate that
\begin{equation}
\begin{split}
	z_1 
&= 
	\mathbf{A}_{n u_1}(W^{(\psi)}_1 x + B^{(\psi)}_1) \\
&=
	\mathbf{A}_{n u_1}
	\left(
		\begin{pmatrix}
			W^{(\phi)}_1 A_1 x \\
			W^{(\phi)}_1 A_2 x  \\
			\vdots \\
			W^{(\phi)}_1 A_n x 
		\end{pmatrix}
		+
		\begin{pmatrix}
			W^{(\phi)}_1 b_1 + B^{(\phi)}_1 \\
			W^{(\phi)}_1 b_2 + B^{(\phi)}_1 \\
			\vdots \\
			W^{(\phi)}_1 b_n + B^{(\phi)}_1 
		\end{pmatrix}
	\right) \\
&=
		\begin{pmatrix}
			\mathbf{A}_{u_1} ( W^{(\phi)}_1 A_1 x + W^{(\phi)}_1 b_1 + B^{(\phi)}_1) \\
			\mathbf{A}_{u_1} ( W^{(\phi)}_1 A_2 x + W^{(\phi)}_1 b_2 + B^{(\phi)}_1) \\
			\vdots \\
			\mathbf{A}_{u_1} ( W^{(\phi)}_1 A_n x + W^{(\phi)}_1 b_n + B^{(\phi)}_1)
		\end{pmatrix} 
=
	\begin{pmatrix}
		y_{1, 1} \\
		y_{2, 1} \\
		\vdots      \\
		y_{n, 1}
	\end{pmatrix}.
\end{split}
\end{equation}
This establishes \eqref{multichannel_network:eq03} in the base case $k = 1$.
For the induction step $\{1,2,\ldots, N-2 \} \ni k-1 \to k \in \{2, 3 \ldots, N -1 \}$ observe that \eqref{multichannel_network:setting12}, \eqref{multichannel_network:setting2}, \eqref{multichannel_network:setting6},  \eqref{multichannel_network:setting8}, and \eqref{multichannel_network:propertyActivationFunction} imply that
for all $k \in \{2, 3 \ldots, N \}$ with $z_{k-1} = (y_{1, k-1}, y_{2, k-1}, \ldots, y_{n, k-1} )$ it holds that
\begin{equation}
\begin{split}
	z_k
&=
	\mathbf{A}_{n u_k}
	\left(
		W^{(\psi)}_k 
		\begin{pmatrix}
			y_{1, k-1} \\
			y_{2, k-1} \\
			\vdots				\\
			y_{n, k-1}
		\end{pmatrix}
		 + B^{(\psi)}_k
	\right) \\
&=
	\mathbf{A}_{n u_k}
	\left(
		\begin{pmatrix}
			W^{(\phi)}_k y_{1, k-1} \\
			W^{(\phi)}_k y_{2, k-1} \\
			\vdots				\\
			W^{(\phi)}_k y_{n, k-1}
		\end{pmatrix}
		 + 
		 \begin{pmatrix}
			B^{(\phi)}_k \\
			B^{(\phi)}_k \\
			\vdots				\\
			B^{(\phi)}_k
		\end{pmatrix}
	\right) \\
&=
	\begin{pmatrix}
		\mathbf{A}_{u_k}( W^{(\phi)}_k y_{1, k-1} + B^{(\phi)}_k) \\
		\mathbf{A}_{u_k}( W^{(\phi)}_k y_{2, k-1} + B^{(\phi)}_k) \\
		\vdots				\\
		\mathbf{A}_{u_k}( W^{(\phi)}_k y_{n, k-1} + B^{(\phi)}_k)
	\end{pmatrix}
=
	\begin{pmatrix}
		y_{1, k} \\
		y_{2, k} \\
		\vdots      \\
		y_{n, k}
	\end{pmatrix}.
\end{split}
\end{equation}
Induction thus proves \eqref{multichannel_network:eq03}.
Next note that \eqref{multichannel_network:setting3}, \eqref{multichannel_network:setting4}, \eqref{multichannel_network:setting7}, \eqref{multichannel_network:setting9}, and \eqref{multichannel_network:eq03} demonstrate that
\begin{equation}
\begin{split}
	z_N 
&= 
	W^{(\psi)}_N z_{N-1} + B^{(\psi)}_N
=
		\begin{pmatrix}
	\tfrac{1}{n}  W^{(\phi)}_N&
	\tfrac{1}{n}  W^{(\phi)}_N&
	\cdots&
	\tfrac{1}{n}  W^{(\phi)}_N
	\end{pmatrix}
	\begin{pmatrix}
		y_{1, N-1} \\
		y_{2, N-1} \\
		\vdots      \\
		y_{n, N-1}
	\end{pmatrix}
	+
	B^{(\phi)}_N \\
&=
	\tfrac{1}{n}
	\big[
		\smallsum_{i = 1}^n W^{(\phi)}_N y_{i, N-1}
	\big]
	+
	B^{(\phi)}_N
=
	\tfrac{1}{n}
	\big[
		\smallsum_{i = 1}^n W^{(\phi)}_N y_{i, N-1}
		+
		B^{(\phi)}_N
	\big] \\
&=
	\tfrac{1}{n}
	\big[
		\smallsum_{i = 1}^n y_{i, N}
	\big].
\end{split}
\end{equation}
Combining this with  \eqref{multichannel_network:eq01} and \eqref{multichannel_network:eq02} establishes that
\begin{equation}
\label{multichannel_network:eq04}
	(\mathcal{R}(\psi))(x)
=
	z_N
=
	\tfrac{1}{n}
	\big[
		\smallsum_{i = 1}^n y_{i, N}
	\big]
=
	\tfrac{1}{n}
	\big[
		\smallsum_{i = 1}^n  (\mathcal{R}(\phi))(A_i x + b_i)
	\big].
\end{equation}
In addition, observe that the fact that
$
	\mathcal{P}(\phi) 
=
	\smallsum_{k = 1}^N u_k (u_{k-1} + 1)
$
assures that 
\begin{equation}
\label{multichannel_network:eq05}
\begin{split}
	\mathcal{P}(\psi) 
&=
	n u_1(u_0 + 1) 
	+
	\left[
		\smallsum\limits_{k = 2}^{N-1} n u_k(n u_{k-1} + 1) 
	\right]
	+
	u_N (n u_{N-1} + 1) \\
&\leq n^2 u_1(u_0 + 1) +	\left[
\smallsum\limits_{k = 2}^{N-1} n^2 u_k(u_{k-1} + 1) 
\right]
+
n^2 u_N ( u_{N-1} + 1) \\
&\leq
	n^2 \left[ \smallsum\limits_{k = 1}^N u_k (u_{k-1} + 1) \right]
=
	n^2 \, \mathcal{P}(\phi).
\end{split}
\end{equation}
Furthermore, note that \eqref{multichannel_network:setting11} -- \eqref{multichannel_network:setting4} and \eqref{multichannel_network:eq05} demonstrate that
\begin{equation}
\begin{split}
	\mathscr{P}(\psi) 
&\leq
	n u_1u_0  + n u_1
	+
	\left[ \smallsum\limits_{k = 2}^{N-1} n u_k u_{k-1} + n u_k \right] 
	+
	u_N n u_{N-1} 
	+
	u_N \\
&\leq
	n \left[ \smallsum\limits_{k = 1}^N u_k (u_{k-1} + 1) \right]
=
	n \, \mathcal{P}(\phi).
\end{split}
\end{equation}
Combining this, \eqref{multichannel_network:eq04}, and \eqref{multichannel_network:eq05} completes the proof of Lemma~\ref{multichannel_network}.
\end{proof}

\subsection[Cost estimates for ANNs]{Cost estimates for artificial neural networks}

\begin{samepage}
\begin{lemma}
\label{NN_approx_single}
Assume Setting~\ref{setting_NN},
let $d \in \N$, $T, \varepsilon \in (0,\infty)$, $ L, C, \mathbf{C} \in [0,\infty)$, $c \in [1,\infty)$, $\mathbf{v}, p \in [2, \infty)$, 
	let $\langle \cdot, \cdot \rangle \colon \R^d \times \R^d \to \R$ be the $d$-dimensional Euclidean scalar product,
let $\left\| \cdot \right\| \colon \R^d \to [0,\infty)$ be the $d$-dimensional Euclidean norm, 
let $\HSNorm{\cdot} \colon \R^{d \times d} \to [0,\infty)$ be the Hilbert-Schmidt norm on $\R^{d\times d}$,
let $\nu \colon \mathcal{B}(\R^d) \to [0,1]$ be a probability measure,
assume that
\begin{equation}
C 
=
2 (p-1)^{1/2}
\exp \!
\big(
3\mathbf{v} (1+ L^2 T (\sqrt{T} + \mathbf{v} p )^2 ) 
\big) \allowbreak
\big(
1
+
\left[ \!
\textint_{\R^d}  
\Norm{x}^{ p \mathbf{v} } \,
\nu(dx)
\right]^{ \!\nicefrac{1}{ p }}
\big),
\end{equation}
assume that $
\mathbf{C} = 4 (\max \{C, \varepsilon \})^4
$,
let 
$\varphi \colon \R^d \to \R$ be a continuous function,
let
$\mu \colon \R^d \to \R^d$ and
$\sigma \colon \R^d \to \R^{d \times d}$
be functions which satisfy 
for all $x,y \in \R^d$, $\lambda \in \R$ that
\begin{equation}\label{NN_approx_single:mu}
\mu(\lambda x+y) + \lambda \mu(0) = \lambda\mu(x)+\mu(y),
\end{equation}
\begin{equation}\label{NN_approx_single:sigma}
\sigma(\lambda x+y) + \lambda \sigma(0) = \lambda\sigma(x)+\sigma(y),
\end{equation}
and
$
\norm{\mu(x)} + \HSNorm{{\sigma(x)}} 
\leq  
L  (1 + \norm{x}),$
and let $\phi \in \mathcal{N}$ satisfy 
for all $x \in \R^d$ that
$
	\mathcal{R}(\phi) \in \mathcal{M}_d
$,
$	\left| 
		( \mathcal{R}(\phi) )  (x)
	\right| 
\leq 
	c  (1+\| x \|^{\mathbf{v}})
$,
and
\begin{equation}\label{NN_approx_single:growth}
\left| 
		\varphi(x) - ( \mathcal{R}(\phi) )(x)
	\right|
\leq 
	C^{-1} \varepsilon  (1+ \| x \|^{\mathbf{v}}).
\end{equation}
Then
\begin{enumerate}[(i)]

\item \label{NN_approx_single:item1}
there exists a unique continuous function $u\colon [0,T]\allowbreak \times \R^d \to \R$
which satisfies that
$
	\inf_{q \in (0,\infty)} \allowbreak
	\sup_{(t, x) \in [0, T] \times \R^d} \allowbreak
	\frac{ | u(t, x) | }{ 1 + \norm{x}^q }
<
	\infty
$,  which satisfies
for all $x \in \R^d$ that 
$u(0,x) = \varphi(x)$, 
and which satisfies that $u|_{(0,T) \times \R^d}$ is a viscosity solution of
\begin{equation}
\begin{split}
	(\tfrac{\partial }{\partial t}u)(t,x) 
&= 
	\tfrac{1}{2} 
	\operatorname{Trace}\! \big( 
		\sigma(x)[\sigma(x)]^{\ast}(\operatorname{Hess}_x u )(t,x)
	\big)  
	+
	\langle (\nabla_x u)(t,x),\mu(x)\rangle
\end{split}
\end{equation}
for $(t,x) \in (0,T) \times \R^d$
and

\item \label{NN_approx_single:item2}
there exists $\psi \in \mathcal{N}$ such that
$
	\mathcal{P}(\psi) 
\leq
	 c^4 \, \mathbf{C} \, \mathcal{P}(\phi) \, \varepsilon^{-4}
$,
$
	\mathscr{P}(\psi) 
\leq
	c^2 \, \mathbf{C} \, \mathcal{P}(\phi) \,\allowbreak \varepsilon^{-2}
$,
$
	\mathcal{R}(\psi) \in \mathcal{M}_d
$,
and
\begin{equation}
\label{NN_approx_single:concl1} 
	\left[
		\int_{\R^d}  
		\left|
			u(T,x) - ( \mathcal{R}(\psi) ) (x)
		\right|^p \,
		\nu(dx)
	\right]^{\nicefrac{1}{p}} 
\leq
		\varepsilon.
\end{equation}
\end{enumerate}
\end{lemma}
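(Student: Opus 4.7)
The plan is to combine Proposition~\ref{qualitative} with the multichannel ANN construction of Lemma~\ref{multichannel_network}. The realization $\mathcal{R}(\phi)\colon\R^d\to\R$ is Borel measurable and, by the hypotheses of the lemma, satisfies precisely the growth and approximation conditions that Proposition~\ref{qualitative} places on its auxiliary function. Applying that proposition with $\mathcal{R}(\phi)$ in place of $\phi$ therefore yields the unique continuous viscosity solution $u\colon[0,T]\times\R^d\to\R$, which proves item~\eqref{NN_approx_single:item1}, and, for every integer $n$ with $n\geq c^2C^2\varepsilon^{-2}$, produces matrices $A_1,\dots,A_n\in\R^{d\times d}$ and vectors $b_1,\dots,b_n\in\R^d$ for which the $L^p(\nu)$-distance from $u(T,\cdot)$ to the averaged sum $x\mapsto\tfrac{1}{n}\sum_{i=1}^n(\mathcal{R}(\phi))(A_ix+b_i)$ is at most $\varepsilon$.

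I would then set $n:=\max\{1,\lceil c^2C^2\varepsilon^{-2}\rceil\}$ and apply Lemma~\ref{multichannel_network} with this $n$ and the data $(A_i)_{i=1}^n$, $(b_i)_{i=1}^n$, and $\phi$ to obtain an ANN $\psi\in\mathcal{N}$ whose realization coincides with the averaged sum and which satisfies $\mathcal{R}(\psi)\in\mathcal{M}_d$, $\mathcal{P}(\psi)\leq n^2\,\mathcal{P}(\phi)$, and $\mathscr{P}(\psi)\leq n\,\mathcal{P}(\phi)$. The $L^p(\nu)$-error bound~\eqref{NN_approx_single:concl1} is then immediate.

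The principal remaining obstacle is to translate the elementary estimate $n\leq 1+c^2C^2\varepsilon^{-2}$ into the stated parameter bounds involving $\mathbf{C}=4(\max\{C,\varepsilon\})^4$. I would split the analysis according to whether $c^2C^2\geq\varepsilon^2$ or not. In the former case, the crude bounds $n\leq 2c^2C^2\varepsilon^{-2}$ and $n^2\leq 4c^4C^4\varepsilon^{-4}$, combined with the observation that the explicit formula for $C$ together with the assumption $p\geq 2$ implies $C\geq 2\sqrt{p-1}\geq 2$ (and hence $\max\{C,\varepsilon\}\geq 2$), readily give $n^2\leq c^4\mathbf{C}\varepsilon^{-4}$ and $n\leq c^2\mathbf{C}\varepsilon^{-2}$. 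In the latter case one has $n\leq 2$ and $\varepsilon>cC\geq 2$, so $\max\{C,\varepsilon\}=\varepsilon$ and $c^2\mathbf{C}\varepsilon^{-2}=4c^2\varepsilon^2>16$, which trivially dominates both $n$ and $n^2$. Carrying out these routine estimates then completes the proof.
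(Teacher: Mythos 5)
Your proposal is correct and follows essentially the same route as the paper's proof: Proposition~\ref{qualitative} (applied with $\mathcal{R}(\phi)$ in place of the measurable function) yields item~\eqref{NN_approx_single:item1} together with the $A_i,b_i$ and the $L^p(\nu)$-bound for the averaged sum, Lemma~\ref{multichannel_network} packages that sum into a single ANN $\psi$ with $\mathcal{P}(\psi)\leq n^2\mathcal{P}(\phi)$ and $\mathscr{P}(\psi)\leq n\mathcal{P}(\phi)$, and the remaining work is elementary arithmetic on $n=\min(\N\cap[c^2C^2\varepsilon^{-2},\infty))$. The only difference is cosmetic: the paper derives the single inequality $n\leq c^2\sqrt{\mathbf{C}}\,\varepsilon^{-2}$ (from $\mathbf{C}\geq4\varepsilon^4$, which forces $c^2\sqrt{\mathbf{C}}\,\varepsilon^{-2}\geq2$) and squares it, whereas you verify the same two bounds by splitting into the cases $c^2C^2\geq\varepsilon^2$ and $c^2C^2<\varepsilon^2$; both routes rest on the same observations, $C\geq 2$ and $n\leq 1+c^2C^2\varepsilon^{-2}$.
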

\end{samepage}

\begin{proof}[Proof of Lemma~\ref{NN_approx_single}]
Throughout this proof let $n = \min ( \N \cap [c^2 C^2 \varepsilon^{-2}, \infty))$.
Note that  \eqref{NN_approx_single:growth} implies that $\varphi$ is an at most polynomially growing function. Combining this,
\eqref{NN_approx_single:mu}, and \eqref{NN_approx_single:sigma} with
item~\eqref{viscosity_affine_existence:item1} in Corollary~\ref{viscosity_affine_existence} 
(with
$ d = d $,
$ m = d $,
$ T = T $,
$ \varphi = \varphi $, 
$ \mu = \mu $,
$ \sigma = \sigma $
in the notation of Corollary~\ref{viscosity_affine_existence})
establishes that 
there exists a unique continuous function $u\colon [0,T] \times \R^d \to \R$ which satisfies
for all $x \in \R^d$ that 
$u(0,x) = \varphi(x)$, 
which satisfies that
$
	\inf_{q \in (0,\infty)} 
	\sup_{(t, x) \in [0, T] \times \R^d} 
	\frac{ | u(t, x) | }{ 1 + \norm{x}^q }
<
	\infty
$,
and which satisfies that $u|_{(0,T) \times \R^d}$ is a viscosity solution of
\begin{equation}
\begin{split}
	(\tfrac{\partial }{\partial t}u)(t,x) 
&= 
	\tfrac{1}{2} 
	\operatorname{Trace}\! \big( 
		\sigma(x)[\sigma(x)]^{\ast}(\operatorname{Hess}_x u )(t,x)
	\big)  
	+
	\langle (\nabla_x u)(t,x),\mu(x)\rangle
\end{split}
\end{equation}
for $(t,x) \in (0,T) \times \R^d$.
This proves item~\eqref{NN_approx_single:item1}.
Next note that Proposition~\ref{qualitative} 
(with
$d = d$,
$T = T$,
$\varepsilon = \varepsilon $,
$ L = L $,
$ c = c $,
$ \mathbf{v} = \mathbf{v} $,
$ p = p $,
$ \nu = \nu $,
$ n = n $,
$ \varphi = \varphi $,
$ \phi = \mathcal{R}(\phi) $,
$ \mu = \mu $,
$ \sigma = \sigma $
in the notation of  Proposition~\ref{qualitative})
ensures that
there exist $A_1,A_2,\ldots,A_{n} \in \R^{d \times d}$, $b_1,b_2,\ldots,b_{n} \in \R^d$ such that
\begin{equation}
\label{NN_approx_single:eq1}
	\left[
		\int_{\R^d}  
		\left|
			u(T,x) - \tfrac{1}{n}\big[  \smallsum_{i=1}^{n}  (\mathcal{R}(\phi)) ( A_i x + b_i)  \big] 
		\right|^p \,
		\nu(dx)
	\right]^{\nicefrac{1}{p}} 
\leq
		\varepsilon.
\end{equation}
Moreover, observe that Lemma~\ref{multichannel_network} demonstrates that there exists $\psi \in \mathcal{N}$ 
which satisfies that
$ \mathcal{P}(\psi) \leq n^2 \, \mathcal{P}(\phi) $, 
$ \mathscr{P}(\psi) \leq n \, \mathcal{P}(\phi) $, and
$ \mathcal{R}(\psi) \in \mathcal{M}_d $ and which satisfies for all $x \in \R^d$ that 
\begin{equation}
	(\mathcal{R}(\psi)) (x)
=
	\tfrac{1}{n}\big[  \smallsum_{i=1}^{n} (\mathcal{R}(\phi))( A_i x + b_i)  \big]. 
\end{equation}
This and \eqref{NN_approx_single:eq1} assure that
\begin{equation}
\label{NN_approx_single:eq2}
\begin{split}
	&\left[
		\int_{\R^d}  
		\left|
			u(T,x) - (\mathcal{R}(\psi)) (x) 
		\right|^p \,
		\nu(dx)
	\right]^{\nicefrac{1}{p}} \\
&=
	\left[
		\int_{\R^d}  
		\left|
			u(T,x) - \tfrac{1}{n}\big[  \smallsum_{i=1}^{n}  (\mathcal{R}(\phi)) ( A_i x + b_i)  \big] 
		\right|^p \,
		\nu(dx)
	\right]^{\nicefrac{1}{p}} 
\leq
		\varepsilon.
\end{split}
\end{equation}
Moreover, note that the hypothesis that $
\mathbf{C} = 4 (\max \{C, \varepsilon \})^4
$ and $c\in [1,\infty)$ implies that
$
c^2\, \sqrt{\mathbf{C}} \, \varepsilon^{-2}\allowbreak
\geq
\sqrt{4\varepsilon^4} \, \varepsilon^{-2} 
=
2.
$
This ensures that
\begin{equation}
\label{NN_approx_single:eq3}
\begin{split}
	n
&\leq
	c^2 \, C^2 \, \varepsilon^{-2} + 1
\leq
	2\max \{c^2 \, C^2 \, \varepsilon^{-2}, 1\} \\
&=
	\max \{c^2 \, 2 \, C^2 \, \varepsilon^{-2}, 2\} \\
&\leq
	\max \{c^2 \, \sqrt{\mathbf{C}} \, \varepsilon^{-2}, 2\} \\
&=
	c^2 \, \sqrt{\mathbf{C}} \, \varepsilon^{-2}.
\end{split}
\end{equation}
This and the fact that $\mathcal{P}(\psi) \leq n^2 \, \mathcal{P}(\phi)$ imply that 
\begin{equation}
\label{NN_approx_single:eq4}
\begin{split}
	\mathcal{P}(\psi) 
&\leq 
	(c^2 \, \sqrt{\mathbf{C}} \, \varepsilon^{-2})^2 \, \mathcal{P}(\phi)
=
	c^4 \, \mathbf{C} \, \mathcal{P}(\phi) \, \varepsilon^{-4}.
\end{split}
\end{equation}
Furthermore, note that \eqref{NN_approx_single:eq3}, the fact that $\mathscr{P}(\psi) \leq n \, \mathcal{P}(\phi)$, and the fact that $\mathbf{C} \geq 1$ ensure that 
\begin{equation}
	\mathscr{P}(\psi) 
\leq 
	c^2 \, \sqrt{\mathbf{C}} \, \varepsilon^{-2} \, \mathcal{P}(\phi)
\leq
	c^2 \, \mathbf{C} \, \mathcal{P}(\phi) \, \varepsilon^{-2}.
\end{equation}
This, \eqref{NN_approx_single:eq2}, \eqref{NN_approx_single:eq4}, and 
the fact that $ \mathcal{R}(\psi) \in \mathcal{M}_d $ establish item~\eqref{NN_approx_single:item2}.
The proof of Lemma~\ref{NN_approx_single} is thus completed.
\end{proof}

\begin{prop}
\label{NN_approx_family}
Assume Setting~\ref{setting_NN},
let $d \in \N$, $T, a, r, R \in (0,\infty)$, $ L, \mathbf{C}, \mathbf{z} \in [0,\infty)$, $b, c \in [1,\infty)$, $\mathbf{v}, p \in [2, \infty)$, 
	let $\langle \cdot, \cdot \rangle \colon \R^d \times \R^d \to \R$ be the $d$-dimensional Euclidean scalar product,
let $\left\| \cdot \right\| \colon \R^d \to [0,\infty)$ be the $d$-dimen\-sional Euclidean norm, 
let $\HSNorm{\cdot} \colon \R^{d \times d} \to [0,\infty)$ be the Hilbert-Schmidt norm on $\R^{d\times d}$,
let $\nu \colon \mathcal{B}(\R^d) \to [0,1]$ be a probability measure,
assume that
\begin{multline}\label{NN_approx_familyDefinitionC}
\mathbf{C} 
= 
4 
\big[ \!
\max \{1, \tfrac{R}{r} \}\max \!
\big\{ 
2 (p-1)^{1/2}
\exp \!
\big(
3\mathbf{v} (1+ L^2 T (\sqrt{T} + \mathbf{v} p )^2 ) 
\big) \\
\big(
1
+
\left[ \!
\textint_{\R^d}  
\Norm{x}^{ p \mathbf{v} } \,
\nu(dx)
\right]^{ \!\nicefrac{1}{ p }}
\big),R
\big\}
\big]^{4 + \mathbf{z}},
\end{multline}
let 
$\varphi \colon \R^d \to \R$ be a continuous function, 
let
$\mu \colon \R^d \to \R^d$ and
$\sigma \colon \R^d \to \R^{d \times d}$
be functions which satisfy 
for all $x,y \in \R^d$, $\lambda \in \R$ that
\begin{equation}\label{NN_approx_family:mu}
\mu(\lambda x+y) + \lambda \mu(0) = \lambda\mu(x)+\mu(y),
\end{equation}
\begin{equation}\label{NN_approx_family:sigma}
\sigma(\lambda x+y) + \lambda \sigma(0) = \lambda\sigma(x)+\sigma(y),
\end{equation}
and
$
\norm{\mu(x)} + \HSNorm{{\sigma(x)}} 
\leq  
L  (1 + \norm{x}),$
and let $(\phi_\delta)_{\delta \in (0,r]} \subseteq \mathcal{N}$ satisfy 
for all $\delta \in (0,r]$, $x \in \R^d$ that
$
	\mathcal{P}(\phi_\delta) \leq a \, \delta^{-\mathbf{z}} 
$,
$
	\mathcal{R}(\phi_\delta) \in \mathcal{M}_d
$,
$	\left| 
		( \mathcal{R}(\phi_\delta) )  (x)
	\right| 
\leq 
	c \, (1+\| x \|^{\mathbf{v}})
$,
and
\begin{equation}
\label{NN_approx_family:ass1}
\left| 
		\varphi(x) - ( \mathcal{R}(\phi_\delta) )(x)
	\right|
\leq 
	b \, \delta \, (1+ \| x \|^{\mathbf{v}}).
\end{equation}
Then  
\begin{enumerate}[(i)]

\item \label{NN_approx_family:item1}
there exists a unique continuous function $u\colon [0,T] \allowbreak\times \R^d \to \R$ 
which satisfies that
$
	\inf_{q \in (0,\infty)}\allowbreak 
	\sup_{(t, x) \in [0, T] \times \R^d} \allowbreak
	\frac{ | u(t, x) | }{ 1 + \norm{x}^q }
<
	\infty
$,
which satisfies
for all $x \in \R^d$ that 
$u(0,x) = \varphi(x)$,
and which satisfies that $u|_{(0,T) \times \R^d}$ is a viscosity solution of
\begin{equation}
\begin{split}
	(\tfrac{\partial }{\partial t}u)(t,x) 
= 
	\tfrac{1}{2} 
	\operatorname{Trace}\! \big( 
		\sigma(x)[\sigma(x)]^{\ast}(\operatorname{Hess}_x u )(t,x)
	\big) 
	+
	\langle (\nabla_x u)(t,x),\mu(x)\rangle
\end{split}
\end{equation}
for $(t,x) \in (0,T) \times \R^d$
and

\item \label{NN_approx_family:item2}
there exist $(\psi_\varepsilon)_{\varepsilon \in (0,R]} \subseteq \mathcal{N}$ such that
for all  $\varepsilon \in (0,R]$ it holds that
$
	\mathcal{P}(\psi_\varepsilon) 
\leq
	 \mathbf{C} \, a \, b^\mathbf{z} \, c^4 \, \varepsilon^{-4-\mathbf{z}}
$,
$
	\mathscr{P}(\psi_\varepsilon) 
\leq
	\mathbf{C} \, a \, b^\mathbf{z} \, c^2 \, \varepsilon^{-2-\mathbf{z}}
$,
$
	\mathcal{R}(\psi_\varepsilon) \in \mathcal{M}_d
$,
and
\begin{equation}
\label{NN_approx_family:concl1} 
	\left[
		\int_{\R^d}  
		\left|
			u(T,x) - ( \mathcal{R}(\psi_\varepsilon) ) (x)
		\right|^p \,
		\nu(dx)
	\right]^{\nicefrac{1}{p}} 
\leq
		\varepsilon.
\end{equation}
\end{enumerate}
\end{prop}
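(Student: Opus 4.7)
\textbf{Proof sketch of Proposition~\ref{NN_approx_family}.} The claim in item~\eqref{NN_approx_family:item1} follows at once from item~\eqref{viscosity_affine_existence:item1} in Corollary~\ref{viscosity_affine_existence}: the approximation hypothesis \eqref{NN_approx_family:ass1} combined with the pointwise growth bound on $\mathcal{R}(\phi_\delta)$ shows that $\varphi$ is at most polynomially growing (apply the triangle inequality and fix any $\delta \in (0,r]$), and the affine hypotheses \eqref{NN_approx_family:mu}--\eqref{NN_approx_family:sigma} on $\mu, \sigma$ are exactly those required by Corollary~\ref{viscosity_affine_existence}. The remaining task is therefore to construct, for each prescribed $\varepsilon \in (0,R]$, a network $\psi_\varepsilon$ realising the desired $L^p(\nu)$-approximation of $u(T, \cdot)$ with the claimed cost bounds, and this will be carried out via a single application of Lemma~\ref{NN_approx_single} with an appropriately chosen parameter $\delta_\varepsilon$ from the given family.

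To set up the reduction, let $C_0$ denote the quantity appearing in Lemma~\ref{NN_approx_single}, namely
\begin{equation*}
C_0 = 2(p-1)^{1/2} \exp\!\big( 3 \mathbf{v}(1 + L^2 T (\sqrt{T} + \mathbf{v} p)^2) \big) \big( 1 + [\textint_{\R^d} \Norm{x}^{p\mathbf{v}} \, \nu(dx)]^{\nicefrac{1}{p}} \big),
\end{equation*}
which is precisely the inner maximand that appears inside the definition \eqref{NN_approx_familyDefinitionC} of $\mathbf{C}$. For each $\varepsilon \in (0,R]$ define
\begin{equation*}
\delta_\varepsilon = \min\{ r, \, \varepsilon / (b C_0) \} \in (0,r].
\end{equation*}
Then by construction $b \delta_\varepsilon \leq \varepsilon / C_0$, so that \eqref{NN_approx_family:ass1} yields $|\varphi(x) - (\mathcal{R}(\phi_{\delta_\varepsilon}))(x)| \leq C_0^{-1} \varepsilon (1 + \Norm{x}^{\mathbf{v}})$; the pointwise growth bound on $\mathcal{R}(\phi_{\delta_\varepsilon})$ with constant $c$ carries over unchanged. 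Hence Lemma~\ref{NN_approx_single} applied with $\phi = \phi_{\delta_\varepsilon}$ produces $\psi_\varepsilon \in \mathcal{N}$ with $\mathcal{R}(\psi_\varepsilon) \in \mathcal{M}_d$, with approximation error $\leq \varepsilon$, and with $\mathcal{P}(\psi_\varepsilon) \leq c^4 \mathbf{C}_0 \, \mathcal{P}(\phi_{\delta_\varepsilon}) \, \varepsilon^{-4}$ and $\mathscr{P}(\psi_\varepsilon) \leq c^2 \mathbf{C}_0 \, \mathcal{P}(\phi_{\delta_\varepsilon}) \, \varepsilon^{-2}$, where $\mathbf{C}_0 = 4(\max\{C_0, \varepsilon\})^4$.

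The remaining task, and the main bookkeeping obstacle, is to convert these bounds into the claimed expression $\mathbf{C} \, a \, b^{\mathbf{z}} \, c^4 \, \varepsilon^{-4-\mathbf{z}}$ (and the analogous one for $\mathscr{P}$). From $\delta_\varepsilon^{-1} = \max\{r^{-1}, bC_0/\varepsilon\}$ one obtains
\begin{equation*}
\delta_\varepsilon^{-1} = (b/\varepsilon) \max\{ \varepsilon/(br), C_0 \} \leq (b/\varepsilon) \max\{ R/r, C_0 \},
\end{equation*}
using $\varepsilon \leq R$ and $b \geq 1$, and therefore $\mathcal{P}(\phi_{\delta_\varepsilon}) \leq a \, b^{\mathbf{z}} \, \varepsilon^{-\mathbf{z}} (\max\{R/r, C_0\})^{\mathbf{z}}$. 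Since $\varepsilon \leq R$ also implies $\max\{C_0, \varepsilon\} \leq \max\{C_0, R\}$, and since $C_0 \geq 2 \geq 1$ and $\max\{1, R/r\} \geq 1$, one sees that both $\max\{C_0, R\}$ and $\max\{R/r, C_0\}$ are bounded above by the single quantity $M := \max\{1, R/r\} \cdot \max\{C_0, R\}$. Hence $(\max\{C_0, \varepsilon\})^4 (\max\{R/r, C_0\})^{\mathbf{z}} \leq M^{4+\mathbf{z}}$, which combined with the preceding bounds gives $\mathcal{P}(\psi_\varepsilon) \leq 4 \, a \, b^{\mathbf{z}} \, c^4 \, M^{4+\mathbf{z}} \, \varepsilon^{-4-\mathbf{z}} = \mathbf{C} \, a \, b^{\mathbf{z}} \, c^4 \, \varepsilon^{-4-\mathbf{z}}$ by \eqref{NN_approx_familyDefinitionC}; the estimate for $\mathscr{P}(\psi_\varepsilon)$ is identical except for the factor $c^2$ in place of $c^4$ and $\varepsilon^{-2}$ in place of $\varepsilon^{-4}$. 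The only real obstacle is this last constant-tracking step, which works out cleanly because $\mathbf{C}$ in \eqref{NN_approx_familyDefinitionC} has been engineered with the exponent $4 + \mathbf{z}$ precisely to absorb the product of the $(\max\{C_0, \varepsilon\})^4$ factor from Lemma~\ref{NN_approx_single} with the $(\max\{R/r, C_0\})^{\mathbf{z}}$ factor coming from the family's cost.
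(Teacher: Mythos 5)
Your proposal is correct and follows essentially the same route as the paper: establish item~(i) via Corollary~\ref{viscosity_affine_existence}, choose $\delta_\varepsilon = \min\{r, \varepsilon/(bC)\}$ from the family, apply Lemma~\ref{NN_approx_single}, and then absorb the resulting factors into $\mathbf{C}$. The only cosmetic difference is in the constant-tracking: you bound $\delta_\varepsilon^{-1} \leq (b/\varepsilon)\max\{R/r, C_0\}$ directly and then dominate both $\max\{C_0, R\}$ and $\max\{R/r, C_0\}$ by the single quantity $\max\{1, R/r\}\max\{C_0, R\}$, whereas the paper first lower-bounds $\delta_\varepsilon \geq \min\{1, r/R\}b^{-1}C^{-1}\varepsilon$ and then inflates $C^{\mathbf{z}}$ and $(\max\{1, R/r\})^{\mathbf{z}}$ separately to the exponent $4+\mathbf{z}$; both manipulations land on the same $\mathbf{C}$.
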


\begin{proof}[Proof of Proposition~\ref{NN_approx_family}]
Throughout this proof let $\varepsilon \in (0,R]$ and let $C \in [0,\infty)$ be given by
\begin{equation}\label{NN_approx_familySimplificationC}
C 
=
2 (p-1)^{1/2}
\exp \!
\big(
3\mathbf{v} (1+ L^2 T (\sqrt{T} + \mathbf{v} p )^2 ) 
\big) \allowbreak
\big(
1
+
\left[ \!
\textint_{\R^d}  
\Norm{x}^{ p \mathbf{v} } \,
\nu(dx)
\right]^{ \!\nicefrac{1}{ p }}
\big).
\end{equation}
Note that  \eqref{NN_approx_family:ass1} implies that $\varphi$ is an at most polynomially growing function. Combining this,
\eqref{NN_approx_family:mu}, and \eqref{NN_approx_family:sigma} with
item~\eqref{viscosity_affine_existence:item1} in Corollary~\ref{viscosity_affine_existence}
(with
$ d = d $,
$ m = d $,
$ T = T $,
$ \varphi = \varphi $, 
$ \mu = \mu $,
$ \sigma = \sigma $
in the notation of Corollary~\ref{viscosity_affine_existence})
establishes that
there exists a unique continuous function $u\colon [0,T] \times \R^d \to \R$ which satisfies
for all $x \in \R^d$ that 
$u(0,x) = \varphi(x)$, 
which satisfies that
$
	\inf_{q \in (0,\infty)} 
	\sup_{(t, x) \in [0, T] \times \R^d} 
	\frac{ | u(t, x) | }{ 1 + \norm{x}^q }
<
	\infty
$,
and which satisfies that $u|_{(0,T) \times \R^d}$ is a viscosity solution of
\begin{equation}
\begin{split}
	(\tfrac{\partial }{\partial t}u)(t,x) 
= 
	\tfrac{1}{2} 
	\operatorname{Trace}\! \big( 
		\sigma(x)[\sigma(x)]^{\ast}(\operatorname{Hess}_x u )(t,x)
	\big) 
	+
	\langle (\nabla_x u)(t,x),\mu(x)\rangle
\end{split}
\end{equation}
for $(t,x) \in (0,T) \times \R^d$.
This proves item~\eqref{NN_approx_family:item1}.
Next observe that \eqref{NN_approx_family:ass1} ensures that 
for all $x \in \R^d$ it holds that
\begin{equation}
\begin{split}
\left| 
		\varphi(x) - \big( \mathcal{R}(\phi_{\min \{b^{-1} C^{-1} \varepsilon, r\}}) \big)(x)
	\right|
&\leq 
	\min \{ b^{-1} C^{-1} \varepsilon, r\} \, b (1+ \| x \|^{\mathbf{v}}) \\
&\leq
	b^{-1} C^{-1} \varepsilon \, b (1+ \| x \|^{\mathbf{v}}) \\
&=
	C^{-1} \varepsilon  (1+ \| x \|^{\mathbf{v}}).
\end{split}
\end{equation}
Lemma~\ref{NN_approx_single}
(with
$d = d$,
$T = T$,
$\varepsilon = \varepsilon$,
$L = L$,
$c = c$,
$\mathbf{v} = \mathbf{v}$,
$p = p$,
$\nu = \nu$,
$ \varphi = \varphi $,
$ \mu = \mu $,
$ \sigma = \sigma $,
$\phi = \phi_{\min \{b^{-1} C^{-1} \varepsilon, r\}}$
in the notation of Lemma~\ref{NN_approx_single}) hence assures that 
there exists $\psi \in \mathcal{N}$ such that
\begin{equation}
\label{NN_approx_family:eq1}
	\mathcal{P}(\psi) 
\leq
	c^4 \,  
	4 (\max \{ C, \varepsilon\})^4  \,  
	\mathcal{P}(\phi_{ \min \{b^{-1} C^{-1} \varepsilon, r\}})
	\, \varepsilon^{-4},
\end{equation}
\begin{equation}
\label{NN_approx_family:eq11}
	\mathscr{P}(\psi) 
\leq
	c^2 \,  \allowbreak
	4 (\max \{ C, \varepsilon\})^4  \,  
	\mathcal{P}(\phi_{ \min \{b^{-1} C^{-1} \varepsilon, r\}})
	\, \varepsilon^{-2},
\end{equation}
\begin{equation}
\label{NN_approx_family:eq2}
	\mathcal{R}(\psi) \in \mathcal{M}_d,
\qandq
	\left[
		\int_{\R^d}  
		\left|
			u(T,x) - ( \mathcal{R}(\psi) ) (x)
		\right|^p \,
		\nu(dx)
	\right]^{\nicefrac{1}{p}} 
\leq
		\varepsilon.
\end{equation}
Moreover, note that the fact that $b,C \geq 1$ and the fact that $\varepsilon R^{-1}\leq 1$ assures that
\begin{equation}
\begin{split}
	\min \{b^{-1} C^{-1} \varepsilon, r\} 
&\geq 
	\min \big\{b^{-1} C^{-1} \varepsilon, r \, b^{-1}C^{-1}  \varepsilon R^{-1}\big\} \\
&=
	 \min \{1, \tfrac{r}{R}\} \, b^{-1} C^{-1} \varepsilon.
\end{split}
\end{equation}  
This, the fact that $C \geq 1$, the fact that $\varepsilon \in (0,R]$, \eqref{NN_approx_familyDefinitionC}, and \eqref{NN_approx_familySimplificationC} ensure that
\begin{equation}
\label{NN_approx_family:eq3}
\begin{split}
	&4(\max \{ C, \varepsilon\})^4 (\min \{b^{-1} C^{-1} \varepsilon, r\})^{-\mathbf{z}} \\ 
&\leq
	4 (\max \{ C, R \})^4 
	\left( \min \{1, \tfrac{r}{R}\} \, b^{-1} C^{-1} \varepsilon\right)^{-\mathbf{z}}  \\
&=
	4 (\max \{ C,R\})^4  
	 (\max \{1, \tfrac{R}{r}\})^{\mathbf{z}} \,
	 b^{\mathbf{z}} \, C^{\mathbf{z}} \,\varepsilon^{-\mathbf{z}}\\
&\leq
	4 (\max \{ C,R\})^{4 + \mathbf{z}}
	 (\max \{1, \tfrac{R}{r}\})^{4 + \mathbf{z}} \,
	 b^{\mathbf{z}} \,\varepsilon^{-\mathbf{z}}\\
&=
	\mathbf{C} \, b^{\mathbf{z}} \, \varepsilon^{-\mathbf{z}}.
\end{split}
\end{equation}
Combining this with the hypothesis that 
for all $\delta \in (0,r]$ it holds that
$
	\mathcal{P}(\phi_\delta) \leq a \delta^{-\mathbf{z}}
$ 
and \eqref{NN_approx_family:eq1}
demonstrates that
\begin{equation}
\label{NN_approx_family:eq4}
\begin{split}
	\mathcal{P}(\psi)
&\leq
	c^4 \,  
	4 (\max \{ C, \varepsilon\})^4  \,  
	a \, (\min \{b^{-1} C^{-1} \varepsilon, r\})^{-\mathbf{z}} \,
	\varepsilon^{-4} \\
&\leq
	c^4 \, a \,
	\mathbf{C} \, b^{\mathbf{z}} \, \varepsilon^{-\mathbf{z}}
	\varepsilon^{-4} 
=
	\mathbf{C} \,
	a \, b^{\mathbf{z}} \, c^4 \,
	\varepsilon^{-4-\mathbf{z}}.
\end{split}
\end{equation}
Furthermore, observe that the hypothesis that 
for all $\delta \in (0,r]$ it holds that
$
	\mathcal{P}(\phi_\delta) \leq a \delta^{-\mathbf{z}} 
$,
\eqref{NN_approx_family:eq11}, and \eqref{NN_approx_family:eq3} 
demonstrate that
\begin{equation}
\begin{split}
	\mathscr{P}(\psi)
&\leq
	 c^2 \,  
	4 (\max \{ C, \varepsilon\})^4  \,  
	a \, (\min \{b^{-1} C^{-1} \varepsilon, r\})^{-\mathbf{z}} \,
	\varepsilon^{-2} \\
&\leq
	c^2 \,  a \,
	\mathbf{C} \, b^{\mathbf{z}} \, \varepsilon^{-\mathbf{z}} \,
	\varepsilon^{-2}  
=
	\mathbf{C} \, 
	a \, b^{\mathbf{z}} \, c^2 \, \varepsilon^{-2 -\mathbf{z}}.
\end{split}
\end{equation}
Combining this, \eqref{NN_approx_family:eq2}, and \eqref{NN_approx_family:eq4} establishes item~\eqref{NN_approx_family:item2}.
The proof of Proposition~\ref{NN_approx_family} is thus completed.
\end{proof}

\begin{samepage}
\begin{cor}
\label{NN_approx_family2}
Assume Setting~\ref{setting_NN},
let $d \in \N$, $T, r, R \in (0,\infty)$, $ L, \mathfrak{C}, v, w, z,\allowbreak \mathbf{z} \in [0,\infty)$, $\mathfrak{c} \in [1,\infty)$, $\mathbf{v}, p \in [2, \infty)$, 
	let $\langle \cdot, \cdot \rangle \colon \R^d \times \R^d \to \R$ be the $d$-dimensional Euclidean scalar product,
let $\left\| \cdot \right\| \colon \R^d \to [0,\infty)$ be the $d$-dimensional Euclidean norm, 
let $\HSNorm{\cdot} \colon \R^{d \times d} \to [0,\infty)$ be the Hilbert-Schmidt norm on $\R^{d\times d}$,
let $\nu \colon \mathcal{B}(\R^d) \to [0,1]$ be a probability measure,
assume that
\begin{multline}
\mathfrak{C} 
= 
\big[ 
\mathfrak{c}
\max \{1, \tfrac{R}{r} \}
\max \!
\big\{ 
2(p-1)^{1/2}
\exp \!
\big(
3\mathbf{v} (1+ L^2 T (\sqrt{T} + \mathbf{v} p )^2 ) 
\big)
\\\big(
1
+
\left[ \!
\textint_{\R^d}  
\Norm{x}^{ p \mathbf{v} } \,
\nu(dx)
\right]^{ \!\nicefrac{1}{ p }}
\big),R
\big\}
\big]^{5 + \mathbf{z}},
\end{multline}
let 
$\varphi \colon \R^d \to \R$ be a continuous function,
let
$\mu \colon \R^d \to \R^d$ and
$\sigma \colon \R^d \to \R^{d \times d}$
be functions which satisfy 
for all $x,y \in \R^d$, $\lambda \in \R$ that
\begin{equation}\label{NN_approx_family2:mu}
\mu(\lambda x+y) + \lambda \mu(0) = \lambda\mu(x)+\mu(y),
\end{equation}
\begin{equation}\label{NN_approx_family2:sigma}
\sigma(\lambda x+y) + \lambda \sigma(0) = \lambda\sigma(x)+\sigma(y),
\end{equation}
and
$
\norm{\mu(x)} + \HSNorm{{\sigma(x)}} 
\leq  
L  (1 + \norm{x}),$ 
and let $(\phi_\delta)_{\delta \in (0,r]} \subseteq \mathcal{N}$ satisfy 
for all $\delta \in (0,r]$, $x \in \R^d$ that
$
	\mathcal{P}(\phi_\delta) \leq \mathfrak{c} \, d^z \delta^{-\mathbf{z}} 
$,
$
	\mathcal{R}(\phi_\delta) \in \mathcal{M}_d
$,
$	\left| 
		( \mathcal{R}(\phi_\delta) )  (x)
	\right| 
\leq 
	\mathfrak{c} \, d^v (1+\| x \|^{\mathbf{v}})
$,
and
\begin{equation}
\label{NN_approx_family2:ass1}
\left| 
		\varphi(x) - ( \mathcal{R}(\phi_\delta) )(x)
	\right|
\leq 
	 \mathfrak{c} \, d^w \, \delta \, (1+ \| x \|^{\mathbf{v}}).
\end{equation}
Then  
\begin{enumerate}[(i)]

\item \label{NN_approx_family2:item1}
there exists a unique continuous function $u\colon [0,T] \allowbreak\times \R^d \to \R$
which satisfies that
$
	\inf_{q \in (0,\infty)} \allowbreak
	\sup_{(t, x) \in [0, T] \times \R^d} \allowbreak
	\frac{ | u(t, x) | }{ 1 + \norm{x}^q }
<
	\infty
$,
which satisfies
for all $x \in \R^d$ that 
$u(0,x) = \varphi(x)$, 
and which satisfies that $u|_{(0,T) \times \R^d}$ is a viscosity solution of
\begin{equation}
\begin{split}
	(\tfrac{\partial }{\partial t}u)(t,x) 
= 
	\tfrac{1}{2} 
	\operatorname{Trace}\! \big( 
		\sigma(x)[\sigma(x)]^{\ast}(\operatorname{Hess}_x u )(t,x)
	\big) 
	+
	\langle (\nabla_x u)(t,x),\mu(x)\rangle
\end{split}
\end{equation}
for $(t,x) \in (0,T) \times \R^d$
and

\item \label{NN_approx_family2:item2}
there exist $(\psi_\varepsilon)_{\varepsilon \in (0,R]} \subseteq \mathcal{N}$ such that
for all  $\varepsilon \in (0,R]$ it holds that
$
	\mathcal{P}(\psi_\varepsilon) 
\leq
	\mathfrak{C} \, d^{z + w \mathbf{z} + 4v} \, \varepsilon^{-4-\mathbf{z}}
$,
$
	\mathscr{P}(\psi_\varepsilon) 
\leq
	\mathfrak{C} \,d^{z + w \mathbf{z} + 2v} \, \varepsilon^{-2-\mathbf{z}}
$,
$
	\mathcal{R}(\psi_\varepsilon) \in \mathcal{M}_d
$,
and
\begin{equation}
\label{NN_approx_family2:concl1} 
	\left[
		\int_{\R^d}  
		\left|
			u(T,x) - ( \mathcal{R}(\psi_\varepsilon) ) (x)
		\right|^p \,
		\nu(dx)
	\right]^{\nicefrac{1}{p}} 
\leq
		\varepsilon.
\end{equation}
\end{enumerate}
\end{cor}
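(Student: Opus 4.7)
The plan is to derive Corollary~\ref{NN_approx_family2} as a direct consequence of Proposition~\ref{NN_approx_family} by substituting the $d$-dependent constants into the roles of the generic constants $a$, $b$, $c$. Concretely, I would apply Proposition~\ref{NN_approx_family} with $a = \mathfrak{c} d^{z}$, $b = \mathfrak{c} d^{w}$, and $c = \mathfrak{c} d^{v}$. Since $\mathfrak{c} \geq 1$, $d \geq 1$, and $v, w \geq 0$, these values indeed lie in $[1, \infty)$ as required by Proposition~\ref{NN_approx_family}. With these identifications the hypotheses on $\phi_\delta$, $\varphi$, $\mu$, and $\sigma$ in Corollary~\ref{NN_approx_family2} translate verbatim into those of Proposition~\ref{NN_approx_family}.

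Item~\eqref{NN_approx_family2:item1} is inherited directly from item~\eqref{NN_approx_family:item1} of Proposition~\ref{NN_approx_family}. For item~\eqref{NN_approx_family2:item2}, Proposition~\ref{NN_approx_family} supplies networks $(\psi_\varepsilon)_{\varepsilon \in (0, R]} \subseteq \mathcal{N}$ with $\mathcal{R}(\psi_\varepsilon) \in \mathcal{M}_d$, the integrated approximation error in \eqref{NN_approx_family2:concl1} bounded by $\varepsilon$, and bounds
\begin{equation}
\mathcal{P}(\psi_\varepsilon) \leq \mathbf{C}\, \mathfrak{c}^{5+\mathbf{z}}\, d^{z + w\mathbf{z} + 4v}\, \varepsilon^{-4-\mathbf{z}}, \qquad \mathscr{P}(\psi_\varepsilon) \leq \mathbf{C}\, \mathfrak{c}^{3+\mathbf{z}}\, d^{z + w\mathbf{z} + 2v}\, \varepsilon^{-2-\mathbf{z}},
\end{equation}
where $\mathbf{C}$ denotes the constant from \eqref{NN_approx_familyDefinitionC}. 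To close the argument, it remains to verify that $\mathbf{C}\, \mathfrak{c}^{5+\mathbf{z}} \leq \mathfrak{C}$; since $\mathfrak{c} \geq 1$, the same inequality then also absorbs the smaller factor $\mathfrak{c}^{3+\mathbf{z}}$ arising in the $\mathscr{P}$-bound.

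Denoting the inner maximum by $A = \max\{1, R/r\}\, \max\{C_0, R\}$ with $C_0$ the expression $2(p-1)^{1/2} \exp(3\mathbf{v}(1 + L^2 T (\sqrt{T} + \mathbf{v} p)^2))(1 + [\int_{\R^d}\|x\|^{p\mathbf{v}}\nu(dx)]^{1/p})$, we have $\mathbf{C} = 4 A^{4+\mathbf{z}}$ and $\mathfrak{C} = (\mathfrak{c} A)^{5+\mathbf{z}}$. The required inequality $\mathbf{C}\, \mathfrak{c}^{5+\mathbf{z}} \leq \mathfrak{C}$ reduces to the purely numerical statement $4 \leq A$. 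To establish this, I would exploit that $\mathbf{v} \geq 2$ and $p \geq 2$ force the exponential factor in $C_0$ to be at least $\exp(6)$ and the prefactor $2(p-1)^{1/2}$ to be at least $2$, yielding $C_0 \geq 2 e^{6} \geq 4$ and hence $A \geq 4$. The only, and rather mild, bookkeeping task is tracking how the three substituted constants jointly contribute a factor $\mathfrak{c}^{5+\mathbf{z}}$; the raised exponent $5 + \mathbf{z}$ (versus $4 + \mathbf{z}$ in $\mathbf{C}$) in the definition of $\mathfrak{C}$ is precisely what allows us to absorb this factor, so no substantive obstacle arises.
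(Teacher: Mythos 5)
Your proposal is correct and follows essentially the same route as the paper: apply Proposition~\ref{NN_approx_family} with $a=\mathfrak{c}\,d^{z}$, $b=\mathfrak{c}\,d^{w}$, $c=\mathfrak{c}\,d^{v}$, then absorb the resulting $\mathfrak{c}^{5+\mathbf{z}}\mathbf{C}$ into $\mathfrak{C}$ by noting that $p,\mathbf{v}\ge 2$ forces $4\le 2\exp(6)\le C$. The only cosmetic difference is that the paper re-invokes Corollary~\ref{viscosity_affine_existence} directly for item~(i) rather than citing item~(i) of Proposition~\ref{NN_approx_family}, but both rest on the same lemma.
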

\end{samepage}

\begin{proof}[Proof of Corollary~\ref{NN_approx_family2}]
Throughout this proof let $C, \mathbf{C} \in [0,\infty)$ be given by
\begin{equation}
		C 
	=
	2(p-1)^{1/2}
	\exp \!
	\big(
	3\mathbf{v} (1+ L^2 T (\sqrt{T} + \mathbf{v} p )^2 ) 
	\big) \allowbreak
	\big(
	1
	+
	\left[ \!
	\textint_{\R^d}  
	\Norm{x}^{ p \mathbf{v} } \,
	\nu(dx)
	\right]^{ \!\nicefrac{1}{ p }}
	\big)
\end{equation}
and
\begin{equation}
		\mathbf{C}
	= 
	4 
	\big[ \!
	\max \{1, \tfrac{R}{r} \}
	\max 
	\{ 
	C
	,
	R 
	\}
	\big]^{4 + \mathbf{z}}.
\end{equation}
Note that  \eqref{NN_approx_family2:ass1} implies that $\varphi$ is an at most polynomially growing function. Combining this,
\eqref{NN_approx_family2:mu}, and \eqref{NN_approx_family2:sigma} with
item~\eqref{viscosity_affine_existence:item1} in Corollary~\ref{viscosity_affine_existence}
(with
$ d = d $,
$ m = d $,
$ T = T $,
$ \varphi = \varphi $, 
$ \mu = \mu $,
$ \sigma = \sigma $
in the notation of Corollary~\ref{viscosity_affine_existence}) 
establishes that 
there exists a unique continuous function $u\colon [0,T] \times \R^d \to \R$ 
which satisfies that
$
	\inf_{q \in (0,\infty)} 
	\sup_{(t, x) \in [0, T] \times \R^d} 
	\frac{ | u(t, x) | }{ 1 + \norm{x}^q }
<
	\infty
$, which satisfies
for all $x \in \R^d$ that 
$u(0,x) = \varphi(x)$, 
and which satisfies that $u|_{(0,T) \times \R^d}$ is a viscosity solution of
\begin{equation}
\begin{split}
	(\tfrac{\partial }{\partial t}u)(t,x) 
= 
	\tfrac{1}{2} 
	\operatorname{Trace}\! \big( 
		\sigma(x)[\sigma(x)]^{\ast}(\operatorname{Hess}_x u )(t,x)
	\big) 
	+
	\langle (\nabla_x u)(t,x),\mu(x)\rangle
\end{split}
\end{equation}
for $(t,x) \in (0,T) \times \R^d$.
This proves item~\eqref{NN_approx_family2:item1}.
Next observe that Proposition~\ref{NN_approx_family}
(with
$ d = d $,
$ T = T $,
$ a = \mathfrak{c} \, d^z $,
$ r = r $,
$ R = R $,
$ L = L $,
$ \mathbf{z} = \mathbf{z} $,
$ b = \mathfrak{c} \, d^w $,
$ c = \mathfrak{c} \, d^v $,
$ \mathbf{v} = \mathbf{v} $,
$p = p$,
$ \nu = \nu$,
$ \varphi = \varphi $,
$ \mu = \mu $,
$ \sigma = \sigma $,
$ (\phi_\delta)_{\delta \in (0,r]} = (\phi_\delta)_{\delta \in (0,r]}$
in the notation of Proposition~\ref{NN_approx_family}) proves that
there exist $(\psi_\varepsilon)_{\varepsilon \in (0,R]} \subseteq \mathcal{N}$ such that
for all  $\varepsilon \in (0,R]$ it holds that
\begin{equation}
\label{NN_approx_family2:eq1} 
	\mathcal{P}(\psi_\varepsilon) 
\leq
	\mathbf{C}  \mathfrak{c} \, d^z  ( \mathfrak{c} \, d^w)^\mathbf{z} ( \mathfrak{c} \, d^v)^4  \varepsilon^{-4-\mathbf{z}} , 
\end{equation}
\begin{equation}
\label{NN_approx_family2:eq2} 
	\mathscr{P}(\psi_\varepsilon) 
\leq
	 \mathbf{C} \mathfrak{c} \, d^z ( \mathfrak{c} \, d^w)^\mathbf{z} ( \mathfrak{c} \, d^v)^2  \varepsilon^{-2-\mathbf{z}} , 
\end{equation}
\begin{equation}
\label{NN_approx_family2:eq3} 
	\mathcal{R}(\psi_\varepsilon) \in \mathcal{M}_d,
\qandq
	\left[
		\int_{\R^d}  
		\left|
			u(T,x) - ( \mathcal{R}(\psi_\varepsilon) ) (x)
		\right|^p \,
		\nu(dx)
	\right]^{\nicefrac{1}{p}} 
\leq
		\varepsilon.
\end{equation}
Furthermore, note that the fact that $p, \mathbf{v} \geq 2$ assures that
\begin{equation}
	C
\geq
	2(p-1)^{1/2}
	\exp \!
	\big(
		3\mathbf{v} (1+ L^2 T (\sqrt{T} + \mathbf{v} p )^2 ) 
	\big)
\geq
	2 \exp(6)
\geq 
	4.
\end{equation}
This implies that
\begin{equation}
\label{NN_approx_family2:eq4}
\begin{split}
	\mathfrak{c}^{5 + \mathbf{z}} \,  \mathbf{C}
&=
	\mathfrak{c}^{5 + \mathbf{z}} \, 4 \big[ \!\max \{1, \tfrac{R}{r} \}\max \{ C ,R \}\big]^{4 + \mathbf{z}} \\
&\leq 
	\mathfrak{c}^{5 + \mathbf{z}} \,
	C
	\big[ \!\max \{1, \tfrac{R}{r} \}\max \{ C ,R \}\big]^{4 + \mathbf{z}} \\
&\leq
	\mathfrak{c}^{5 + \mathbf{z}} 
	\big[ \!\max \{1, \tfrac{R}{r} \}\max \{ C ,R \}\big]^{5 + \mathbf{z}} \\
&=
	\mathfrak{C}.
\end{split}
\end{equation}
This and \eqref{NN_approx_family2:eq1} demonstrate that for all $\varepsilon \in (0,R]$ it holds that
\begin{equation}
\label{NN_approx_family2:eq5}
	\mathcal{P}(\psi_\varepsilon) 
\leq
	  \mathbf{C} \, \mathfrak{c}^{5 + \mathbf{z}} d^{z+w\mathbf{z} + 4v}   \varepsilon^{-4-\mathbf{z}} 
\leq	
	  \mathfrak{C} \, d^{z+w\mathbf{z} + 4v} \varepsilon^{-4-\mathbf{z}}.
\end{equation}
Moreover, observe that \eqref{NN_approx_family2:eq2}, \eqref{NN_approx_family2:eq4}, and the fact that $\mathfrak{c} \geq 1$ assure that
\begin{equation}
\begin{split}
	\mathscr{P}(\psi_\varepsilon) 
&\leq
	\mathbf{C} \, \mathfrak{c}^{3 + \mathbf{z}}d^{z + w \mathbf{z} + 2v} \varepsilon^{-2-\mathbf{z}} \\
&\leq
	\mathbf{C} \, \mathfrak{c}^{5 + \mathbf{z}}d^{z + w \mathbf{z} + 2v} \varepsilon^{-2-\mathbf{z}}
\leq
	 \mathfrak{C} \, d^{z + w \mathbf{z} + 2v} \,  \varepsilon^{-2-\mathbf{z}} .
\end{split}
\end{equation}
Combining this, \eqref{NN_approx_family2:eq3}, and \eqref{NN_approx_family2:eq5} establishes item~\eqref{NN_approx_family2:item2}.
The proof of Corollary~\ref{NN_approx_family2} is thus completed.
\end{proof}

\begin{samepage}
\begin{prop}
\label{NN_approx_I}
Assume Setting~\ref{setting_NN},
let $\mathcal{I}$ be a set, 
let $\mathfrak{d} = (\mathfrak{d}_i)_{i \in \mathcal{I}} \colon \mathcal{I} \to \N$ be a function, 
for every $d \in \N$ 	let $\langle \cdot, \cdot \rangle_{\R^d} \colon \R^d \times \R^d \to \R$ be the $d$-dimensional Euclidean scalar product,
for every $d \in \N$ 
	let $\left\| \cdot \right\|_{\R^d} \colon \R^d \to [0,\infty)$ be the $d$-dimensional Euclidean norm and
	let $\HSNormIndex{\cdot}{d}{d}\colon \R^{d \times d} \to [0,\infty)$ be the Hilbert-Schmidt norm on $\R^{d\times d}$,
let $T, r, R \in (0,\infty)$, $ \mathfrak{C}, L, v, w, z,\mathbf{z},\theta  \in [0,\infty)$, $\mathfrak{c} \in [1,\infty)$, $\mathbf{v}, p \in [2, \infty)$, 
let $\nu_d \colon \mathcal{B}(\R^d) \to [0,1]$, $d \in \Image(\mathfrak{d})$, be probability measures,
assume that 
\begin{multline}\label{NN_approxSupConstant}
\mathfrak{C} 
= 
\big[ 
\mathfrak{c}
\max \{1, \tfrac{R}{r} \} \allowbreak
\max \!
\big\{
2(p-1)^{1/2}
\exp \!
\big(
3\mathbf{v} (1+ L^2 T (\sqrt{T} + \mathbf{v} p )^2 ) 
\big) \\
\big(
1
+ 
\sup\nolimits_{i \in \mathcal{I}} \big[(\mathfrak{d}_i)^{-\theta}
\left[ \!
\textint_{\R^{\mathfrak{d}_i}}  
\Norm{x}_{\R^{\mathfrak{d}_i} }^{ p \mathbf{v} } \,
\nu_{ \mathfrak{d}_i } (dx)
\right]^{ \!\nicefrac{1}{ p }}\big]
\big) ,R
\big\}
\big]^{5 + \mathbf{z}},
\end{multline}
let 
$\varphi_i \colon \R^{\mathfrak{d}_i} \to \R$, $i \in \mathcal{I}$, be continuous functions, 
let $\mu_i \colon \R^{\mathfrak{d}_i} \to \R^{\mathfrak{d}_i}$, $i \in \mathcal{I}$, and
$\sigma_i \colon \R^{\mathfrak{d}_i} \to \R^{\mathfrak{d}_i \times \mathfrak{d}_i}$, $i \in \mathcal{I}$, 
be functions which satisfy 
for all $i \in \mathcal{I}$, $x,y \in \R^{\mathfrak{d}_i}$, $\lambda \in \R$ that
\begin{equation}\label{NN_approx_I:mu}
\mu_i(\lambda x+y) + \lambda \mu_i(0) = \lambda\mu_i(x)+\mu_i(y),
\end{equation}
\begin{equation}\label{NN_approx_I:sigma}
\sigma_i(\lambda x+y) + \lambda \sigma_i(0) = \lambda\sigma_i(x)+\sigma_i(y),
\end{equation}
and 
$\norm{{\mu_i(x)}}_{\R^{\mathfrak{d}_i}} + \HSNormIndex{{\sigma_i(x)}}{\mathfrak{d}_i}{\mathfrak{d}_i}
\leq  
L  (1 + \norm{x}_{\R^{\mathfrak{d}_i}}),$
and let $(\phi_{i, \delta})_{i \in \mathcal{I}, \, \delta \in (0,r]} \subseteq \mathcal{N}$ satisfy 
for all $i \in \mathcal{I}$, $\delta \in (0,r]$, $x \in \R^{\mathfrak{d}_i}$ that
$
	\mathcal{P}(\phi_{i, \delta}) \leq \mathfrak{c} \, (\mathfrak{d}_i)^z \delta^{-\mathbf{z}} 
$,
$
	\mathcal{R}(\phi_{i, \delta}) \in \mathcal{M}_{\mathfrak{d}_i}
$,
$	\left| 
		( \mathcal{R}(\phi_{i, \delta}) )  (x)
	\right| 
\leq 
	\mathfrak{c} \, (\mathfrak{d}_i)^v (1+\| x \|_{\R^{\mathfrak{d}_i}}^{\mathbf{v}})
$,
and
\begin{equation}
\label{NN_approx_I:ass1}
\left| 
		\varphi_i(x) - ( \mathcal{R}(\phi_{i, \delta}) )(x)
	\right|
\leq 
	 \mathfrak{c} \, (\mathfrak{d}_i)^w \, \delta \, (1+ \| x \|_{\R^{\mathfrak{d}_i}}^{\mathbf{v}}).
\end{equation}
Then  
\begin{enumerate}[(i)]

\item \label{NN_approx_I:item1}
there exist unique continuous functions $u_i\colon [0,T]\allowbreak \times \R^{\mathfrak{d}_i} \to \R$, $i \in \mathcal{I}$, which satisfy
for all $i \in \mathcal{I}$, $x \in \R^{\mathfrak{d}_i}$ that 
$u_i(0,x) = \varphi_i(x)$,  
which satisfy for all $i \in \mathcal{I}$ that
$
	\inf_{q \in (0,\infty)} \allowbreak
	\sup_{(t, x) \in [0, T] \times \R^{\mathfrak{d}_i}} \allowbreak
	\frac{ | u_i(t, x) | }{ 1 + \norm{x}_{\R^{\mathfrak{d}_i}}^q }
\allowbreak<
	\infty
$,
and which satisfy for all $i \in \mathcal{I}$ that $u_i|_{(0,T) \times \R^{\mathfrak{d}_i}}$ is a viscosity solution of
\begin{equation}
\begin{split}
	(\tfrac{\partial }{\partial t}u_i)(t,x) 
&= 
	\tfrac{1}{2} 
	\operatorname{Trace}\! \big( 
		\sigma_i(x)[\sigma_i(x)]^{\ast}(\operatorname{Hess}_x u_i )(t,x)
	\big) 
	\\&\quad+
	\langle (\nabla_x u_i)(t,x),\mu_i(x)\rangle_{\R^{\mathfrak{d}_i}}
\end{split}
\end{equation}
for $(t,x) \in (0,T) \times \R^{\mathfrak{d}_i}$
and

\item \label{NN_approx_I:item2}
there exist $(\psi_{i, \varepsilon})_{i \in \mathcal{I}, \,\varepsilon \in (0,R]} \subseteq \mathcal{N}$ such that
for all $i \in \mathcal{I}$, $\varepsilon \in (0,R]$ it holds that
\begin{equation}
\mathcal{P}(\psi_{i,\varepsilon}) 
\leq
\mathfrak{C}\,(\mathfrak{d}_i)^{(5 + \mathbf{z})\theta+z + w \mathbf{z} + 4v} \, \varepsilon^{-4-\mathbf{z}},
\end{equation}
\begin{equation}
\mathscr{P}(\psi_{i,\varepsilon}) 
\leq
\mathfrak{C}\,(\mathfrak{d}_i)^{(5 + \mathbf{z})\theta+z + w \mathbf{z} + 2v} \, \varepsilon^{-2-\mathbf{z}} , 
\qquad
\mathcal{R}(\psi_{i,\varepsilon}) \in\mathcal{M}_{\mathfrak{d}_i},
\end{equation}
\begin{equation}
\andq
\left[
\int_{\R^d}  
\left|
u_i(T,x) - ( \mathcal{R}(\psi_{i,\varepsilon}) ) (x)
\right|^p \,
\nu_{\mathfrak{d}_i}(dx)
\right]^{\nicefrac{1}{p}} 
\leq
\varepsilon.
\end{equation}
\end{enumerate}
\end{prop}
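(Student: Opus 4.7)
The plan is to reduce Proposition~\ref{NN_approx_I} to a pointwise (in $i \in \mathcal{I}$) application of Corollary~\ref{NN_approx_family2}, and to use the parameter $\theta$ to pass from the $i$-dependent moment constants produced by Corollary~\ref{NN_approx_family2} to the single uniform constant $\mathfrak{C}$ in \eqref{NN_approxSupConstant}. First, for each fixed $i \in \mathcal{I}$ one checks that all hypotheses of Corollary~\ref{NN_approx_family2} are satisfied with $d = \mathfrak{d}_i$, $\nu = \nu_{\mathfrak{d}_i}$, $\varphi = \varphi_i$, $\mu = \mu_i$, $\sigma = \sigma_i$, $\mathfrak{c} = \mathfrak{c}$, $(\phi_\delta)_{\delta \in (0,r]} = (\phi_{i,\delta})_{\delta \in (0,r]}$, and with $\mathbf{v}, p, T, r, R, L, v, w, z, \mathbf{z}$ inherited unchanged. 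Item~\eqref{NN_approx_I:item1} then follows immediately by applying item~\eqref{NN_approx_family2:item1} of Corollary~\ref{NN_approx_family2} separately for each $i$: for every $i \in \mathcal{I}$ there exists a unique continuous $u_i \colon [0,T] \times \R^{\mathfrak{d}_i} \to \R$ with the stated polynomial growth, initial condition, and viscosity PDE.

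For item~\eqref{NN_approx_I:item2} I would, for each $i \in \mathcal{I}$, introduce the $i$-dependent constant $\mathfrak{C}_i$ that arises in Corollary~\ref{NN_approx_family2} applied in dimension $\mathfrak{d}_i$, namely
\begin{equation}
\begin{split}
\mathfrak{C}_i
&=
\big[
\mathfrak{c}\max \{1, \tfrac{R}{r} \}
\max \!
\big\{
2(p-1)^{1/2}
\exp \!
\big(
3\mathbf{v} (1+ L^2 T (\sqrt{T} + \mathbf{v} p )^2 )
\big) \\
&\qquad \cdot \big(
1
+
\textstyle\left[
\int_{\R^{\mathfrak{d}_i}}
\Norm{x}_{\R^{\mathfrak{d}_i}}^{ p \mathbf{v} } \,
\nu_{\mathfrak{d}_i}(dx)
\right]^{ \!\nicefrac{1}{ p }}
\big),R
\big\}
\big]^{5 + \mathbf{z}}.
\end{split}
\end{equation}
Corollary~\ref{NN_approx_family2} then yields a family $(\psi_{i,\varepsilon})_{\varepsilon \in (0,R]} \subseteq \mathcal{N}$ with $\mathcal{R}(\psi_{i,\varepsilon}) \in \mathcal{M}_{\mathfrak{d}_i}$, with the $L^p$-error bounded by $\varepsilon$, and with $\mathcal{P}(\psi_{i,\varepsilon}) \leq \mathfrak{C}_i (\mathfrak{d}_i)^{z + w\mathbf{z} + 4v}\varepsilon^{-4-\mathbf{z}}$ and $\mathscr{P}(\psi_{i,\varepsilon}) \leq \mathfrak{C}_i (\mathfrak{d}_i)^{z + w\mathbf{z} + 2v}\varepsilon^{-2-\mathbf{z}}$.

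The main (and essentially only nontrivial) step is to estimate $\mathfrak{C}_i$ uniformly in $i$ by $(\mathfrak{d}_i)^{(5+\mathbf{z})\theta} \mathfrak{C}$. This uses the elementary observation that
\begin{equation}
\left[\textint_{\R^{\mathfrak{d}_i}} \Norm{x}_{\R^{\mathfrak{d}_i}}^{p\mathbf{v}}\, \nu_{\mathfrak{d}_i}(dx) \right]^{\!\nicefrac{1}{p}}
=
(\mathfrak{d}_i)^{\theta}\left[(\mathfrak{d}_i)^{-\theta}\left[\textint_{\R^{\mathfrak{d}_i}} \Norm{x}_{\R^{\mathfrak{d}_i}}^{p\mathbf{v}}\, \nu_{\mathfrak{d}_i}(dx) \right]^{\!\nicefrac{1}{p}}\right]
\leq (\mathfrak{d}_i)^{\theta}\, \mathscr{M},
\end{equation}
where $\mathscr{M} = \sup_{j \in \mathcal{I}}\big[(\mathfrak{d}_j)^{-\theta}[\int_{\R^{\mathfrak{d}_j}} \Norm{x}_{\R^{\mathfrak{d}_j}}^{p\mathbf{v}} \nu_{\mathfrak{d}_j}(dx)]^{1/p}\big]$. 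Substituting this into the defining expression for $\mathfrak{C}_i$, using $(\mathfrak{d}_i)^{\theta} \geq 1$ to factor $(\mathfrak{d}_i)^{\theta}$ outside the inner max (after noting one may upper bound $R$ by $(\mathfrak{d}_i)^{\theta}R$ and $1$ by $(\mathfrak{d}_i)^{\theta}$), and then raising to the power $5+\mathbf{z}$ yields
$\mathfrak{C}_i \leq (\mathfrak{d}_i)^{(5+\mathbf{z})\theta}\,\mathfrak{C}.$ Combining this inequality with the exponent bounds from Corollary~\ref{NN_approx_family2} gives
$\mathcal{P}(\psi_{i,\varepsilon}) \leq \mathfrak{C}\,(\mathfrak{d}_i)^{(5+\mathbf{z})\theta + z + w\mathbf{z} + 4v}\, \varepsilon^{-4-\mathbf{z}}$ and similarly for $\mathscr{P}(\psi_{i,\varepsilon})$, completing the proof of item~\eqref{NN_approx_I:item2}. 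The only conceptual care is in the algebraic manipulation that pulls the factor $(\mathfrak{d}_i)^{\theta}$ out of the nested maximum in the definition of $\mathfrak{C}_i$; the rest is routine bookkeeping.
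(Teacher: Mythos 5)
Your proposal is correct and follows essentially the same route as the paper: apply Corollary~\ref{NN_approx_family2} separately for each $i\in\mathcal{I}$, introduce the $i$-dependent constant $\mathfrak{C}_i$ (which the paper denotes $\mathcal{C}$), and then absorb the $i$-dependence into the uniform constant $\mathfrak{C}$ via the estimate $\mathfrak{C}_i\leq(\mathfrak{d}_i)^{(5+\mathbf{z})\theta}\mathfrak{C}$, which is obtained exactly as you describe by writing $1=(\mathfrak{d}_i)^{\theta}(\mathfrak{d}_i)^{-\theta}$, $R=(\mathfrak{d}_i)^{\theta}(\mathfrak{d}_i)^{-\theta}R$ and pulling the factor $(\mathfrak{d}_i)^{\theta}$ outside the inner maximum before raising to the power $5+\mathbf{z}$. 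The paper's \eqref{NN_approx_ComparingConstants} is precisely your manipulation.
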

\end{samepage}

\begin{proof}[Proof of Proposition~\ref{NN_approx_I}]
Throughout this proof let $i \in \mathcal{I},$ let $c_0\in(0,\infty)$ be given by 
\begin{equation}\label{NN_approxSmallConstant}
	c_0=2 (p-1)^{1/2}
	\exp \!
	\big(
	3\mathbf{v} (1+ L^2 T (\sqrt{T} + \mathbf{v} p )^2 ) 
	\big),
\end{equation}
and let $\mathcal{C} \in [0,\infty)$ be given by
\begin{equation}\label{NN_approxBigConstant}
		\mathcal{C}
	=
		\big[ 
			\mathfrak{c}
			\max \{1, \tfrac{R}{r} \} \allowbreak
			\max \!
			\big\{R, 
			c_0 \allowbreak
				\big(
					1
					+
					\left[ \!
						\textint_{\R^{ \mathfrak{d}_i }}  
							\Norm{x}_{\R^{ \mathfrak{d}_i }}^{ p \mathbf{v} } \,
						\nu_{ \mathfrak{d}_i } (dx)
					\right]^{ \!\nicefrac{1}{ p }}
				\big)
			\big\}
		\big]^{5 + \mathbf{z}}.
\end{equation}
Note that the fact that  $0<(\mathfrak{d}_i)^{-\theta}\le 1$ implies that
\begin{equation}
\begin{split}
&\max \!
\big\{ 
c_0
\big(
1
+
\left[ \!
\textint_{\R^{ \mathfrak{d}_i }}  
\Norm{x}_{\R^{ \mathfrak{d}_i }}^{ p \mathbf{v} } \,
\nu_{ \mathfrak{d}_i } (dx)
\right]^{ \!\nicefrac{1}{ p }}
\big)
,
R 
\big\}\\
&\le 	\max \!
\left\{ 
c_0 \allowbreak
\left(
(\mathfrak{d}_i)^\theta (\mathfrak{d}_i)^{-\theta}
+
(\mathfrak{d}_i)^\theta \sup_{j \in \mathcal{I}}\left[(\mathfrak{d}_j)^{-\theta}\left( \!
\textint_{\R^{ \mathfrak{d}_j }}  
\Norm{x}_{\R^{ \mathfrak{d}_j }}^{ p \mathbf{v} } \,
\nu_{ \mathfrak{d}_j } (dx)
\right)^{ \!\nicefrac{1}{ p }}
\right] \right)
,
R 
\right\}\\
&\le 	\max \!
\left\{ 
c_0 \allowbreak (\mathfrak{d}_i)^\theta 
\left(
1
+
\sup_{j \in \mathcal{I}}\left[(\mathfrak{d}_j)^{-\theta}\left( \!
\textint_{\R^{ \mathfrak{d}_j }}  
\Norm{x}_{\R^{ \mathfrak{d}_j }}^{ p \mathbf{v} } \,
\nu_{ \mathfrak{d}_j } (dx)
\right)^{ \!\nicefrac{1}{ p }}
\right] \right)
,
(\mathfrak{d}_i)^\theta (\mathfrak{d}_i)^{-\theta} R
\right\}\\
&\le (\mathfrak{d}_i)^\theta	\max \!
\left\{ 
c_0 \allowbreak 
\left(
1
+
\sup_{j \in \mathcal{I}}\left[(\mathfrak{d}_j)^{-\theta}\left( \!
\textint_{\R^{ \mathfrak{d}_j }}  
\Norm{x}_{\R^{ \mathfrak{d}_j }}^{ p \mathbf{v} } \,
\nu_{ \mathfrak{d}_j } (dx)
\right)^{ \!\nicefrac{1}{ p }}
\right] \right)
,
R
\right\}.
\end{split}
\end{equation}
Therefore, \eqref{NN_approxBigConstant}, \eqref{NN_approxSmallConstant}, and \eqref{NN_approxSupConstant} ensure that
\begin{equation}\label{NN_approx_ComparingConstants} 
	\begin{split}
			\mathcal{C}&
	=
	\big[ 
	\mathfrak{c}
	\max \{1, \tfrac{R}{r} \} \allowbreak
	\max \!
	\big\{ 
	c_0 \allowbreak
	\big(
	1
	+
	\left[ \!
	\textint_{\R^{ \mathfrak{d}_i }}  
	\Norm{x}_{\R^{ \mathfrak{d}_i }}^{ p \mathbf{v} } \,
	\nu_{ \mathfrak{d}_i } (dx)
	\right]^{ \!\nicefrac{1}{ p }}
	\big)
	,
	R 
	\big\}
	\big]^{5 + \mathbf{z}}\\
	&\le (\mathfrak{d}_i)^{\theta(5+\mathbf{z})}
	\bigg[ 
	\mathfrak{c}
	\max \{1, \tfrac{R}{r} \} \allowbreak
\\&\qquad\max \!
\left\{ 
c_0 \allowbreak 
\left(
1
+
\sup_{j \in \mathcal{I}}\left[(\mathfrak{d}_j)^{-\theta}\left( \!
\textint_{\R^{ \mathfrak{d}_j }}  
\Norm{x}_{\R^{ \mathfrak{d}_j }}^{ p \mathbf{v} } \,
\nu_{ \mathfrak{d}_j } (dx)
\right)^{ \!\nicefrac{1}{ p }}
\right] \right)
,
R
\right\}
	\bigg]^{5 + \mathbf{z}}\\
	&\le (\mathfrak{d}_i)^{\theta(5+\mathbf{z})}
\bigg[ 
\mathfrak{c}
\max \{1, \tfrac{R}{r} \}\max \!
\bigg\{ 
2(p-1)^{1/2}
\exp \!
\big(
3\mathbf{v} (1+ L^2 T (\sqrt{T} + \mathbf{v} p )^2 ) 
\big) \allowbreak 
\\&\qquad\left(
1
+
\sup_{j \in \mathcal{I}}\left[(\mathfrak{d}_j)^{-\theta}\left( \!
\textint_{\R^{ \mathfrak{d}_j }}  
\Norm{x}_{\R^{ \mathfrak{d}_j }}^{ p \mathbf{v} } \,
\nu_{ \mathfrak{d}_j } (dx)
\right)^{ \!\nicefrac{1}{ p }}
\right] \right)
,
R
\bigg\}
\bigg]^{5 + \mathbf{z}}\\	
&= (\mathfrak{d}_i)^{\theta(5+\mathbf{z})} \mathfrak{C}.
	\end{split}
\end{equation}
Moreover, observe that  \eqref{NN_approx_I:ass1} implies that $\varphi_i$ is an at most polynomially growing function. Combining this,
\eqref{NN_approx_I:mu}, and \eqref{NN_approx_I:sigma} with
item~\eqref{viscosity_affine_existence:item1} in Corollary~\ref{viscosity_affine_existence}
(with
$ d = \mathfrak{d}_i $,
$ m = \mathfrak{d}_i $,
$ T = T $,
$ \varphi = \varphi_i $, 
$ \mu = \mu_i $,
$ \sigma = \sigma_i $
in the notation of Corollary~\ref{viscosity_affine_existence})
establishes that there exists a unique continuous function $u_i\colon [0,T] \times \R^{\mathfrak{d}_i} \to \R$ which satisfies
for all $x \in \R^{\mathfrak{d}_i}$ that 
$u_i(0,x) = \varphi_i(x)$,  
which satisfies that
$
	\inf_{q \in (0,\infty)} 
	\sup_{(t, x) \in [0, T] \times \R^{\mathfrak{d}_i}} 
	\frac{ | u_i(t, x) | }{ 1 + \norm{x}_{\R^{\mathfrak{d}_i}}^q }
<
	\infty
$,
and which satisfies that $u_i|_{(0,T) \times \R^{\mathfrak{d}_i}}$ is a viscosity solution of
\begin{equation}
\begin{split}
	(\tfrac{\partial }{\partial t}u_i)(t,x) 
= 
	\tfrac{1}{2} 
	\operatorname{Trace}\! \big( 
		\sigma_i(x)[\sigma_i(x)]^{\ast}(\operatorname{Hess}_x u_i )(t,x)
	\big) 
	+
	\langle (\nabla_x u_i)(t,x),\mu_i(x)\rangle_{\R^{\mathfrak{d}_i}}
\end{split}
\end{equation}
for $(t,x) \in (0,T) \times \R^{\mathfrak{d}_i}$.
This proves item~\eqref{NN_approx_I:item1}.
Next observe that Corollary~\ref{NN_approx_family2} 
(with
$ d = \mathfrak{d}_i $,
$ T = T $,
$ r = r $,
$ R = R $,
$ v = v $,
$ w = w $,
$ z = z $,
$ \mathbf{z} = \mathbf{z} $,
$ \mathfrak{c} = \mathfrak{c} $,
$ \mathbf{v} = \mathbf{v} $,
$p = p$,
$ \nu = \nu_{\mathfrak{d}_i}$,
$ \varphi = \varphi_i $,
$ \mu = \mu_i $,
$ \sigma = \sigma_i $,
$ (\phi_\delta)_{\delta \in (0,r]} = (\phi_{i, \delta})_{\delta \in (0,r]}$
in the notation of Corollary~\ref{NN_approx_family2})
demonstrates that there exists $(\psi_{i,\varepsilon})_{\varepsilon \in (0,R]} \subseteq \mathcal{N}$ such that
for all $\varepsilon \in (0,R]$ it holds that
\begin{equation}
	\mathcal{P}(\psi_{i,\varepsilon}) 
\leq
	  \mathcal{C}\,(\mathfrak{d}_i)^{z + w \mathbf{z} + 4v} \, \varepsilon^{-4-\mathbf{z}},
\end{equation}
\begin{equation}
	\mathscr{P}(\psi_{i,\varepsilon}) 
\leq
	\mathcal{C}\,(\mathfrak{d}_i)^{z + w \mathbf{z} + 2v} \, \varepsilon^{-2-\mathbf{z}} , 
\qquad
	\mathcal{R}(\psi_{i,\varepsilon}) \in\mathcal{M}_{\mathfrak{d}_i},
\end{equation}
\begin{equation}
\andq
	\left[
		\int_{\R^d}  
		\left|
			u_i(T,x) - ( \mathcal{R}(\psi_{i,\varepsilon}) ) (x)
		\right|^p \,
		\nu_{\mathfrak{d}_i}(dx)
	\right]^{\nicefrac{1}{p}} 
\leq
		\varepsilon.
\end{equation}
Combining this with \eqref{NN_approx_ComparingConstants} 
establishes item~\eqref{NN_approx_I:item2}.
The proof of Proposition~\ref{NN_approx_I} is thus completed.
\end{proof}

\begin{samepage}
\begin{cor}
\label{NN_approx_N}
Assume Setting~\ref{setting_NN},
let $T, r, R \in (0,\infty)$, $ \mathfrak{C}, L, v, w, z, \mathbf{z},\theta \in [0,\infty)$, $\mathfrak{c} \in [1,\infty)$, $\mathbf{v}, p \in [2, \infty)$, 
for every $d \in \N$ 
let $\langle \cdot, \cdot \rangle_{\R^d} \colon \R^d \times \R^d \to \R$ be the $d$-dimensional Euclidean scalar product,
for every $d \in \N$ 
	let $\left\| \cdot \right\|_{\R^d} \colon \R^d \to [0,\infty)$ be the $d$-dimensional Euclidean norm and
	let $\HSNormIndex{\cdot}{d}{d} \colon \R^{d \times d} \to [0,\infty)$ be the Hilbert-Schmidt norm on $\R^{d\times d}$,
let $\nu_d \colon \mathcal{B}(\R^d) \to [0,1]$, $d \in \N$, be probability measures,
assume that 
\begin{multline}
\mathfrak{C} 
= 
\big[ 
\mathfrak{c}
\max \{1, \tfrac{R}{r} \} \allowbreak
\max \!
\big\{ 
2(p-1)^{1/2}
\exp \!
\big(
3\mathbf{v} (1+ L^2 T (\sqrt{T} + \mathbf{v} p )^2 ) 
\big)\\
\big(
1
+ \allowbreak
\sup\nolimits_{d\in\N} \big[d^{-\theta}
\left[ \!
\textint_{\R^{d}}  
\Norm{x}_{\R^{d} }^{ p \mathbf{v} } \,
\nu_{d} (dx)
\right]^{ \!\nicefrac{1}{ p }}\big]
\big) ,R
\big\}
\big]^{5 + \mathbf{z}},
\end{multline}
let 
$\varphi_d \colon \R^{d} \to \R$, $d \in \N$, be continuous functions,   
let 
$\mu_d \colon \R^{d} \to \R^{d}$, $d \in \N$, 
and
$\sigma_d \colon \R^{d} \to \R^{d \times d}$, $d \in \N$, 
be functions which satisfy 
for all $d \in \N$, $x,y \in \R^{d}$, $\lambda \in \R$ that
\begin{equation}
\mu_d(\lambda x+y) + \lambda \mu_d(0) = \lambda\mu_d(x)+\mu_d(y),
\end{equation}
\begin{equation}
\sigma_d(\lambda x+y) + \lambda \sigma_d(0) = \lambda\sigma_d(x)+\sigma_d(y),
\end{equation}
and 
$\norm{{\mu_d(x)}}_{\R^{d}} + \HSNormIndex{{\sigma_d(x)}}{d}{d}
\leq  
L  (1 + \norm{x}_{\R^{d}}),$
and let $(\phi_{d, \delta})_{d \in \N, \, \delta \in (0,r]} \subseteq \mathcal{N}$ satisfy 
for all $d \in \N$, $\delta \in (0,r]$, $x \in \R^{d}$ that
$
	\mathcal{P}(\phi_{d, \delta}) \leq \mathfrak{c} \, d^z \delta^{-\mathbf{z}} 
$,
$
	\mathcal{R}(\phi_{d, \delta}) \in\mathcal{M}_{d}
$,
$	\left| 
		( \mathcal{R}(\phi_{d, \delta}) )  (x)
	\right| 
\leq 
	\mathfrak{c} \, d^v (1+\| x \|_{\R^d}^{\mathbf{v}})
$,
and
\begin{equation}
\label{NN_approx_N:ass1}
\left| 
		\varphi_d(x) - ( \mathcal{R}(\phi_{d, \delta}) )(x)
	\right|
\leq 
	\mathfrak{c} \, d^w \, \delta \, (1+ \| x \|_{\R^d}^{\mathbf{v}}).
\end{equation}
Then  
\begin{enumerate}[(i)]
\item \label{NN_approx_N:item1}
there exist unique continuous functions $u_d\colon [0,T]\allowbreak \times \R^{d} \to \R$, $d \in \N$, which satisfy
for all $d \in \N$, $x \in \R^{d}$ that 
$u_d(0,x) = \varphi_d(x)$,  
which satisfy for all $d \in \N$ that
$
	\inf_{q \in (0,\infty)} \allowbreak
	\sup_{(t, x) \in [0, T] \times \R^d} \allowbreak
	\frac{ | u_d(t, x) | }{ 1 + \norm{x}_{\R^d}^q }
<
	\infty
$,
and which satisfy for all $d \in \N$ that $u_d|_{(0,T) \times \R^{d}}$ is a viscosity solution of
\begin{equation}
\begin{split}
	(\tfrac{\partial }{\partial t}u_d)(t,x) 
&= 
	\tfrac{1}{2} 
	\operatorname{Trace}\! \big( 
		\sigma_d(x)[\sigma_d(x)]^{\ast}(\operatorname{Hess}_x u_d )(t,x)
	\big) 
	\\&+
	\langle (\nabla_x u_d)(t,x),\mu_d(x)\rangle_{\R^d}
\end{split}
\end{equation}
for $(t,x) \in (0,T) \times \R^{d}$
and

\item \label{NN_approx_N:item2}
there exist $(\psi_{d, \varepsilon})_{d \in \N, \,\varepsilon \in (0,R]} \subseteq \mathcal{N}$ such that
for all $d \in \N$, $\varepsilon \in (0,R]$ it holds that
\begin{equation}
		\mathcal{P}(\psi_{d, \varepsilon}) 
	\leq
	\mathfrak{C}  \, d^{(5 + \mathbf{z})\theta+z + w \mathbf{z} + 4v} \, \varepsilon^{-4-\mathbf{z}},
\end{equation}
\begin{equation}
	\mathscr{P}(\psi_{d, \varepsilon}) 
\leq
\mathfrak{C}\, d^{(5 + \mathbf{z})\theta+z + w \mathbf{z} + 2v} \, \varepsilon^{-2-\mathbf{z}},\qquad
 \mathcal{R}(\psi_{d, \varepsilon}) \in \mathcal{M}_d,
\end{equation}
\begin{equation}
\label{NN_approx_N:concl1} 
	\andq\left[
		\int_{\R^d}  
		\left|
			u_d(T,x) - ( \mathcal{R}(\psi_{d, \varepsilon}) ) (x)
		\right|^p \,
		\nu_{d}(dx)
	\right]^{\nicefrac{1}{p}} 
\leq
		\varepsilon.
\end{equation}
\end{enumerate}
\end{cor}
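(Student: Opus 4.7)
The plan is to deduce Corollary~\ref{NN_approx_N} as a direct specialization of Proposition~\ref{NN_approx_I}. Specifically, I would apply Proposition~\ref{NN_approx_I} with the index set $\mathcal{I} = \N$ and the dimension function $\mathfrak{d} = (\mathfrak{d}_d)_{d \in \N} \colon \N \to \N$ given by $\mathfrak{d}_d = d$. Under this choice one has $\operatorname{Im}(\mathfrak{d}) = \N$, so the family of probability measures $(\nu_d)_{d \in \N}$ assumed in Corollary~\ref{NN_approx_N} provides exactly the data $(\nu_d)_{d \in \operatorname{Im}(\mathfrak{d})}$ required by Proposition~\ref{NN_approx_I}, and similarly for the continuous functions $\varphi_d$, the coefficient functions $\mu_d$, $\sigma_d$, and the neural network family $(\phi_{d,\delta})_{d \in \N,\,\delta \in (0,r]}$.

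The constants match up verbatim: the constant $\mathfrak{C}$ defined in Corollary~\ref{NN_approx_N} is identical to the constant $\mathfrak{C}$ of Proposition~\ref{NN_approx_I} under the identification $\mathcal{I} = \N$, $\mathfrak{d}_i = i$, since the supremum in \eqref{NN_approxSupConstant} then becomes $\sup_{d \in \N}\big[d^{-\theta}[\textint_{\R^d} \Norm{x}_{\R^d}^{p\mathbf{v}}\,\nu_d(dx)]^{1/p}\big]$, which is precisely the expression appearing in the definition of $\mathfrak{C}$ in Corollary~\ref{NN_approx_N}. Likewise, the assumed growth, Lipschitz/affinity, polynomial approximation, and parameter count hypotheses on $\mu_d$, $\sigma_d$, $\varphi_d$, and $\phi_{d,\delta}$ transfer one-to-one to the assumptions of Proposition~\ref{NN_approx_I} with $\mathfrak{d}_d = d$.

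With these identifications in place, item~\eqref{NN_approx_I:item1} of Proposition~\ref{NN_approx_I} immediately yields item~\eqref{NN_approx_N:item1} (existence, uniqueness, polynomial growth, and the viscosity solution property of $u_d$), and item~\eqref{NN_approx_I:item2} of Proposition~\ref{NN_approx_I} immediately yields item~\eqref{NN_approx_N:item2} (existence of $(\psi_{d,\varepsilon})_{d \in \N,\,\varepsilon \in (0,R]} \subseteq \mathcal{N}$ with the stated bounds on $\mathcal{P}(\psi_{d,\varepsilon})$, $\mathscr{P}(\psi_{d,\varepsilon})$, measurability of the realization, and the $L^p(\nu_d)$-approximation estimate). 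There is no substantive obstacle here, since both statements have the same exponents $(5+\mathbf{z})\theta + z + w\mathbf{z} + 4v$ and $(5+\mathbf{z})\theta + z + w\mathbf{z} + 2v$ in the dimension; the only task is to verify that the notational translation $i \leftrightarrow d$, $\mathfrak{d}_i \leftrightarrow d$ is consistent across the hypotheses and the conclusions, which it is by inspection.
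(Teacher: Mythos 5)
Your proposal is correct and takes exactly the same route as the paper: the paper's proof also specializes Proposition~\ref{NN_approx_I} by choosing $\mathcal{I}=\N$ and $\mathfrak{d}=\id_\N$, then checks that all hypotheses and the constant $\mathfrak{C}$ carry over verbatim.
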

\end{samepage}

\begin{proof}[Proof of Corollary~\ref{NN_approx_N}]
Observe that Proposition~\ref{NN_approx_I}
(with 
$ \mathcal{I} = \N $,
$ \mathfrak{d} = \id_{\N} $,
$ T = T $,
$ r = r $,
$ R = R $,
$ L = L $,
$ v = v $,
$ w = w $,
$ z = z $,
$ \mathbf{z} = \mathbf{z} $,
$\theta=\theta$,
$ \mathfrak{c} = \mathfrak{c} $,
$ \mathbf{v} = \mathbf{v} $,
$p = p$,
$ (\nu_d)_{d \in \Image( \mathfrak{d})} = (\nu_d)_{d \in \N}$,
$ (\varphi_i)_{i \in \mathcal{I}} = (\varphi_d)_{d \in \N} $,
$ (\mu_i)_{i \in \mathcal{I}} = (\mu_d)_{d \in \N} $,
$ (\sigma_i)_{i \in \mathcal{I}} = (\sigma_d)_{d \in \N} $,
$ (\phi_{i, \delta})_{i \in \mathcal{I}, \, \delta \in (0,r]} = (\phi_{d, \delta})_{d \in \N, \, \delta \in (0,r]}$
in the notation of Proposition~\ref{NN_approx_I}) establishes items~\eqref{NN_approx_N:item1}--\eqref{NN_approx_N:item2}.
The proof of Corollary~\ref{NN_approx_N} is thus completed.
\end{proof}

\subsection[ANNs with continuous activation functions]{Artificial neural networks with continuous activation functions}\label{subsectionContinuousANN}
In this subsection we establish in Theorem~\ref{cont_NN_approx} below the main result of this article. Theorem~\ref{cont_NN_approx} proves, roughly speaking, that fully-connected artificial neural networks overcome the curse of dimensionality in the numerical approximation of Black-Scholes PDEs (see \eqref{cont_NN_approx:concl1}  in item~\eqref{cont_NN_approx:item2} in Theorem~\ref{cont_NN_approx} for details). In Theorem~\ref{cont_NN_approx} the approximation error between the solution of the PDE and the artificial neural network is measured by means of $L^p$-norms with respect to the general probability measures $\nu_d$, $d \in \N$, in Theorem~\ref{cont_NN_approx}. 
 To make Theorem~\ref{cont_NN_approx} easier accessible, we derive a simplified and specialized version of Theorem~\ref{cont_NN_approx} in Corollary~\ref{cont_NN_approxSimple} below. In particular, in Corollary~\ref{cont_NN_approxSimple} below we specialize Theorem~\ref{cont_NN_approx} to the case where the general probability measures $ \nu_d $, $ d\in \N $, are nothing else but the continuous uniform distribution on $[0,1]^d$. Our proof of Corollary~\ref{cont_NN_approxSimple} uses the elementary estimate in Lemma~\ref{lem:uniformDistribution} below. For the sake of completeness we also present in this subsection a detailed proof of Lemma~\ref{lem:uniformDistribution}.
\begin{theorem}
\label{cont_NN_approx}
Let $T, r, R \in (0,\infty)$, $v, w, z, \mathbf{z},\theta\in[0,\infty)$, $\mathfrak{c} \in [1,\infty)$, $\mathbf{v}, p \in [2, \infty)$, 
for every $d \in \N$ let $\langle \cdot, \cdot \rangle_{\R^d} \colon \R^d \times \R^d \to \R$ be the $d$-dimensional Euclidean scalar product,
for every $d\in\N$
	let $\left\| \cdot \right\|_{\R^d} \colon \R^d \to [0,\infty)$ be the $d$-dimensional Euclidean norm and
	let $\HSNormIndex{\cdot}{d}{d} \colon \R^{d \times d} \to [0,\infty)$ be the Hilbert-Schmidt norm on $\R^{d\times d}$,
let $\nu_d \colon \mathcal{B}(\R^d) \to [0,1]$, $d \in \N$, be probability measures,
let 
$\varphi_d \colon \R^{d} \to \R$, $d \in \N$, be continuous functions, 
let 
$\mu_d \colon \R^{d} \to \R^{d}$, $d \in \N$, 
and
$\sigma_d \colon \R^{d} \to \R^{d \times d}$, $d \in \N$ ,
be functions which satisfy 
for all $d \in \N$, $x,y \in \R^{d}$, $\lambda \in \R$ that
\begin{equation}
	\mu_d(\lambda x+y) + \lambda \mu_d(0) = \lambda\mu_d(x)+\mu_d(y),
\end{equation}
\begin{equation}
		\sigma_d(\lambda x+y) + \lambda \sigma_d(0) = \lambda\sigma_d(x)+\sigma_d(y),
\end{equation}
and
	$\norm{\mu_d(x)}_{\R^{d}} + \HSNormIndex{{\sigma_d(x)}}{d}{d}
\leq  
	\mathfrak{c} (1 + \norm{x}_{\R^{d}})$,
let 
\begin{equation}
\begin{split}
	\mathcal{N}
&=
	\cup_{\mathcal{L} \in \{2,3, \ldots \}}
	\cup_{ (l_0,l_1,\ldots, l_\mathcal{L}) \in ((\N^{\mathcal{L}}) \times \{ 1 \} ) }
		\left(
			\times_{k = 1}^\mathcal{L} (\R^{l_k \times l_{k-1}} \times \R^{l_k})
		\right),
\end{split}
\end{equation}
assume that 
$\sup_{d \in \N} \left[ d^{-\theta p}
\textint_{\R^d}  
\Norm{x}_{\R^d}^{ p \mathbf{v} } \,
\nu_{ d } (dx)\right]
< 
\infty
,$
let $\mathbf{A}_d  \in C(\R^d, \R^d)$, $d \in \N$, and $\mathbf{a} \in C(\R, \R)$ be functions which satisfy 
for all $d \in \N$, $x = (x_1,x_2, \ldots, x_d) \allowbreak\in \R^d$ that
\begin{equation}
	\mathbf{A}_d(x)
=
	(\mathbf{a}(x_1), \mathbf{a}(x_2), \ldots, \mathbf{a}(x_d)),
\end{equation} 
let 
$
	\mathcal{P}, \mathscr{P} \colon \mathcal{N} \to \N
$ 
and 
$
	\mathcal{R} \colon 
	\mathcal{N} 
	\to 
	\cup_{d = 1}^\infty C(\R^d, \R)
$ 
be the functions which satisfy
for all $ \mathcal{L} \in \{2, 3, \ldots \}$, $ (l_0,l_1,\ldots, l_\mathcal{L}) \in ((\N^{\mathcal{L}}) \times \{ 1 \}) $, 
%
$
	\Phi 
=
	((W_1, B_1), \ldots,\allowbreak (W_\mathcal{L}, B_\mathcal{L}))
=
	( 
		(W_k^{(i,j)})_{ i \in \{1, 2, \ldots, l_k \},  j \in \{1, 2, \ldots, l_{k-1} \}}, \allowbreak
		(B_k^{(i)})_{i \in \{1, 2, \ldots, l_k \}} 
	)_{k \in \{1, 2, \ldots, \mathcal{L} \} } 
\allowbreak\in  
	( \times_{k = 1}^\mathcal{L} \allowbreak(\R^{l_k \times l_{k-1}} \times \R^{l_k}))
$,
$x_0 \in \R^{l_0}, x_1 \in \R^{l_1}, \ldots, x_{\mathcal{L}-1} \in \R^{l_{\mathcal{L}-1}}$ 
with $\forall \, k \in \N \cap (0,\mathcal{L}) \colon x_k = \mathbf{A}_{l_k}(W_k x_{k-1} + B_k)$
that
\begin{equation}
	\mathcal{R}(\Phi) \in C(\R^{l_0}, \R),
\qquad
	( \mathcal{R}(\Phi) ) (x_0) = W_\mathcal{L} x_{\mathcal{L}-1} + B_\mathcal{L},
\end{equation}
\begin{equation}
	\mathscr{P}(\Phi) 
=
	\sum_{k = 1}^\mathcal{L} 
	\sum_{i = 1}^{l_k}
	\left(
		\mathbbm{1}_{\R \backslash \{ 0 \}} (B_k^{(i)})
		+
		\smallsum\limits_{j = 1}^{l_{k-1}}
				\mathbbm{1}_{\R \backslash \{ 0 \}} (W_k^{(i,j)})
	\right),
\end{equation} 
and
$
	\mathcal{P}(\Phi)
=
	\sum_{k = 1}^\mathcal{L} l_k(l_{k-1} + 1) 
$,
and let $(\phi_{d, \delta})_{d \in \N, \, \delta \in (0,r]} \subseteq \mathcal{N}$ satisfy 
for all $d \in \N$, $\delta \in (0,r]$, $x \in \R^{d}$ that
$
	\mathcal{P}(\phi_{d, \delta}) \leq \mathfrak{c} \, d^z \delta^{-\mathbf{z}} 
$,
$
	\mathcal{R}(\phi_{d, \delta}) \in C(\R^{d}, \R)
$,
$	\left| 
		( \mathcal{R}(\phi_{d, \delta}) )  (x)
	\right| 
\leq 
	\mathfrak{c} \, d^v (1+\| x \|_{\R^d}^{\mathbf{v}})
$,
and
\begin{equation}
\label{cont_NN_approx:ass1}
	\left| 
		\varphi_d(x) - ( \mathcal{R}(\phi_{d, \delta}) )(x)
	\right|
\leq 
	\mathfrak{c} \, d^w \, \delta \, (1+ \| x \|_{\R^d}^{\mathbf{v}}).
\end{equation}
Then  
\begin{enumerate}[(i)]

\item \label{cont_NN_approx:item1}
there exist unique continuous functions $u_d\colon [0,T]\allowbreak \times \R^{d} \to \R$, $d \in \N$, which satisfy
for all $d \in \N$, $x \in \R^{d}$ that 
$u_d(0,x) = \varphi_d(x)$,
which satisfy 
for all $d \in \N$ that
$
	\inf_{q \in (0,\infty)} \allowbreak
	\sup_{(t, x) \in [0, T] \times \R^d} \allowbreak
	\frac{ | u_d(t, x) | }{ 1 + \norm{x}_{\R^d}^q }
\allowbreak<
	\infty
$,
and which satisfy for all $d \in \N$ that $u_d|_{(0,T) \times \R^{d}}$ is a viscosity solution of
\begin{equation}
\begin{split}
	(\tfrac{\partial }{\partial t}u_d)(t,x) 
&= 
	\tfrac{1}{2} 
	\operatorname{Trace}\! \big( 
		\sigma_d(x)[\sigma_d(x)]^{\ast}(\operatorname{Hess}_x u_d )(t,x)
	\big) 
	\\&+
	\langle (\nabla_x u_d)(t,x),\mu_d(x)\rangle_{\R^d}
\end{split}
\end{equation}
for $(t,x) \in (0,T) \times \R^{d}$
and

\item \label{cont_NN_approx:item2}
there exist $\mathfrak{C} \in (0,\infty)$, $(\psi_{d, \varepsilon})_{d \in \N, \,\varepsilon \in (0,R]} \subseteq \mathcal{N}$ such that
for all $d \in \N$, $\varepsilon \in (0,R]$ it holds that
$
	\mathcal{P}(\psi_{d, \varepsilon}) 
\leq
	\mathfrak{C}  \, d^{(5 + \mathbf{z})\theta+z + w \mathbf{z} + 4v} \, \varepsilon^{-4-\mathbf{z}}
$,
$
	\mathscr{P}(\psi_{d, \varepsilon}) 
\leq
	\mathfrak{C} \, d^{(5 + \mathbf{z})\theta+z + w \mathbf{z} + 2v} \, \varepsilon^{-2-\mathbf{z}}
$,
$
	\mathcal{R}(\psi_{d, \varepsilon}) \in C(\R^{d}, \R)
$,
and
\begin{equation}
\label{cont_NN_approx:concl1} 
	\left[
		\int_{\R^d}  
		\left|
			u_d(T,x) - ( \mathcal{R}(\psi_{d, \varepsilon}) ) (x)
		\right|^p \,
		\nu_{d}(dx)
	\right]^{\nicefrac{1}{p}} 
\leq
		\varepsilon.
\end{equation}
\end{enumerate}
\end{theorem}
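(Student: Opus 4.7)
The plan is to derive Theorem~\ref{cont_NN_approx} as an essentially direct consequence of Corollary~\ref{NN_approx_N}. Corollary~\ref{NN_approx_N} already produces neural network approximations satisfying all the required parameter bounds and $L^p$-error estimate, but in the Borel measurable framework of Setting~\ref{setting_NN}, where realizations only lie in $\mathcal{M}_d$. The two additional tasks are (a) to translate the integrability hypothesis on the measures $\nu_d$ given in Theorem~\ref{cont_NN_approx} into the form demanded in Corollary~\ref{NN_approx_N}, and (b) to upgrade the conclusion $\mathcal{R}(\psi_{d,\varepsilon})\in\mathcal{M}_d$ to $\mathcal{R}(\psi_{d,\varepsilon})\in C(\R^d,\R)$ by exploiting continuity of the activation function $\mathbf{a}$.

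First I would observe that the hypothesis $\sup_{d\in\N}\bigl[d^{-\theta p}\textint_{\R^d}\Norm{x}_{\R^d}^{p\mathbf{v}}\,\nu_d(dx)\bigr]<\infty$ in Theorem~\ref{cont_NN_approx} is equivalent, by extracting $p$-th roots, to the condition $\sup_{d\in\N}\bigl[d^{-\theta}\bigl(\textint_{\R^d}\Norm{x}_{\R^d}^{p\mathbf{v}}\,\nu_d(dx)\bigr)^{1/p}\bigr]<\infty$, which is exactly the finiteness that makes the constant in the definition of $\mathfrak{C}$ in Corollary~\ref{NN_approx_N} a finite real number. I would then set this finite value to be the $\mathfrak{C}$ of Theorem~\ref{cont_NN_approx}, and choose $L=\mathfrak{c}$ so that the coefficient bound $\Norm{\mu_d(x)}_{\R^d}+\HSNormIndex{\sigma_d(x)}{d}{d}\le\mathfrak{c}(1+\Norm{x}_{\R^d})$ agrees with the hypothesis of Corollary~\ref{NN_approx_N}. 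The affine-linearity hypotheses on $\mu_d,\sigma_d$, together with the assumptions on the approximating neural networks $(\phi_{d,\delta})$, transfer verbatim.

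Next I would invoke Corollary~\ref{NN_approx_N} directly, which immediately yields item~\eqref{cont_NN_approx:item1} (uniqueness, at most polynomial growth, and the viscosity PDE characterization of $u_d$) and yields a family $(\psi_{d,\varepsilon})_{d\in\N,\varepsilon\in(0,R]}\subseteq\mathcal{N}$ satisfying the parameter bounds $\mathcal{P}(\psi_{d,\varepsilon})\le\mathfrak{C}\,d^{(5+\mathbf{z})\theta+z+w\mathbf{z}+4v}\,\varepsilon^{-4-\mathbf{z}}$, $\mathscr{P}(\psi_{d,\varepsilon})\le\mathfrak{C}\,d^{(5+\mathbf{z})\theta+z+w\mathbf{z}+2v}\,\varepsilon^{-2-\mathbf{z}}$, and the $L^p$-approximation inequality \eqref{cont_NN_approx:concl1}.

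Finally I would verify the continuity upgrade. Since $\mathbf{a}\in C(\R,\R)$, every multivariate activation map $\mathbf{A}_{l_k}$ defined by $\mathbf{A}_{l_k}(x)=(\mathbf{a}(x_1),\ldots,\mathbf{a}(x_{l_k}))$ is continuous, and each affine map $x\mapsto W_kx+B_k$ is continuous. By the recursive definition of $\mathcal{R}(\Phi)$ as alternating compositions of such affine maps and activation maps, for every $\Phi\in\mathcal{N}$ the realization $\mathcal{R}(\Phi)$ is automatically continuous. In particular, the networks $\psi_{d,\varepsilon}$ produced by Corollary~\ref{NN_approx_N} satisfy $\mathcal{R}(\psi_{d,\varepsilon})\in C(\R^d,\R)\subseteq\mathcal{M}_d$, which completes item~\eqref{cont_NN_approx:item2}. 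No genuine obstacle arises here: the argument is essentially a translation from the measurable-activation setting of Section~\ref{approximationSection} to the continuous-activation setting, with the polynomial growth of $\mathcal{P}$ and $\mathscr{P}$ in $d$ and $\varepsilon^{-1}$ being precisely what Corollary~\ref{NN_approx_N} already supplies.
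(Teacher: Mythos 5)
Your proposal is correct and matches the paper's proof essentially step for step: both define $\mathfrak{C}$ by the formula from Corollary~\ref{NN_approx_N} (with $L=\mathfrak{c}$), apply Corollary~\ref{NN_approx_N} to obtain item~(i) and the family $(\psi_{d,\varepsilon})$ with the stated parameter bounds, and then observe that $\mathcal{R}(\psi_{d,\varepsilon})\in C(\R^d,\R)$ because, with $\mathbf{a}$ continuous, every realization is a composition of affine maps and componentwise continuous activations (which is what makes $\Image(\mathcal{R})\subseteq\cup_{d\ge1}C(\R^d,\R)$). The paper states this last point more tersely by appealing directly to the declared codomain of $\mathcal{R}$, but the underlying justification is exactly the one you give.
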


\begin{proof}[Proof of Theorem~\ref{cont_NN_approx}]
Throughout this proof 
let $\mathfrak{C} \in (0,\infty)$ be given by
\begin{equation}
\begin{split}
	\mathfrak{C} 
&= 
	\Big[ 
		\mathfrak{c}
		\max \{1, \tfrac{R}{r} \} \allowbreak
		\max \!
		\Big\{R, 
			2(p-1)^{1/2}
			\exp \!
			\big(
				3\mathbf{v} (1+ \mathfrak{c}^2 T (\sqrt{T} + \mathbf{v} p )^2 ) 
			\big) \\
&\qquad \cdot
			\big(
			1
			+ \allowbreak
			\sup\nolimits_{d\in\N} \big[d^{-\theta}
			\left[ \!
			\textint_{\R^{d}}  
			\Norm{x}_{\R^{d} }^{ p \mathbf{v} } \,
			\nu_{d} (dx)
			\right]^{ \!\nicefrac{1}{ p }}\big]
			\big)
		\Big\}
	\Big]^{5 + \mathbf{z}}
\end{split}
\end{equation} 
and 
for every $d \in \N$ 
let $\mathcal{M}_d$ be the set of all Borel measurable functions from $\R^d$ to $\R$.
Note that Corollary~\ref{NN_approx_N}
(with 
$ T = T $,
$ r = r $,
$ R = R $,
$ L = \mathfrak{c} $,
$ v = v $,
$ w = w $,
$ z = z $,
$ \mathbf{z} = \mathbf{z} $,
$\theta=\theta$,
$ \mathfrak{c} = \mathfrak{c} $,
$ \mathbf{v} = \mathbf{v} $,
$p = p$,
$ (\nu_d)_{d \in \N} = (\nu_d)_{d \in \N}$,
$ (\varphi_d)_{d \in \N} = (\varphi_d)_{d \in \N} $,
$ (\mu_d)_{d \in \N} = (\mu_d)_{d \in \N} $,
$ (\sigma_d)_{d \in \N} = (\sigma_d)_{d \in \N} $,
$ (\phi_{d, \delta})_{d \in \N, \, \delta \in (0,r]} = (\phi_{d, \delta})_{d \in \N, \, \delta \in (0,r]}$
in the notation of Corollary~\ref{NN_approx_N})
demonstrates that
there exist unique continuous functions $u_d\colon [0,T] \times \R^{d} \to \R$, $d \in \N$, 
which satisfy
for all $d \in \N$, $x \in \R^{d}$ that 
$u_d(0,x) = \varphi_d(x)$,
which satisfy 
for all $d \in \N$ that
$
	\inf_{q \in (0,\infty)} 
	\sup_{(t, x) \in [0, T] \times \R^d} 
	\frac{ | u_d(t, x) | }{ 1 + \norm{x}_{\R^d}^q }
<
	\infty
$,
and which satisfy for all $d \in \N$ that $u_d|_{(0,T) \times \R^{d}}$ is a viscosity solution of
\begin{equation}
\label{cont_NN_approx:eq1}
\begin{split}
	(\tfrac{\partial }{\partial t}u_d)(t,x) 
= 
	\tfrac{1}{2} 
	\operatorname{Trace}\! \big( 
		\sigma_d(x)[\sigma_d(x)]^{\ast}(\operatorname{Hess}_x u_d )(t,x)
	\big) 
	+
	\langle (\nabla_x u_d)(t,x),\mu_d(x)\rangle_{\R^d}
\end{split}
\end{equation}
for $(t,x) \in (0,T) \times \R^{d}$
and
that
there exist $(\psi_{d, \varepsilon})_{d \in \N, \,\varepsilon \in (0,R]} \subseteq \mathcal{N}$ such that
for all $d \in \N$, $\varepsilon \in (0,R]$ it holds that
$
	\mathcal{P}(\psi_{d, \varepsilon}) 
\leq
	\mathfrak{C} \, d^{(5 + \mathbf{z})\theta+z + w \mathbf{z} + 4v} \, \varepsilon^{-4-\mathbf{z}}
$,
$
	\mathscr{P}(\psi_{d, \varepsilon}) 
\leq
	\mathfrak{C} \, d^{(5 + \mathbf{z})\theta+z + w \mathbf{z} + 2v} \, \varepsilon^{-2-\mathbf{z}}
$,
$
	\mathcal{R}(\psi_{d, \varepsilon}) \in \mathcal{M}_d
$,
and
\begin{equation}
\label{cont_NN_approx:eq2}
	\left[
		\int_{\R^d}  
		\left|
			u_d(T,x) - ( \mathcal{R}(\psi_{d, \varepsilon}) ) (x)
		\right|^p \,
		\nu_{d}(dx)
	\right]^{\nicefrac{1}{p}} 
\leq
		\varepsilon.
\end{equation}
The fact that
$
	\Image(\mathcal{R}) \subseteq \cup_{d = 1}^\infty C(\R^d, \R)
$
hence demonstrates that 
for all $d \in \N$, $\varepsilon \in (0,R]$ it holds that 
$
	\mathcal{R}(\psi_{d, \varepsilon}) \in C(\R^d, \R) \subseteq  \mathcal{M}_d
$.
Combining this with \eqref{cont_NN_approx:eq1} and \eqref{cont_NN_approx:eq2} establishes items~\eqref{cont_NN_approx:item1}--\eqref{cont_NN_approx:item2}.
The proof of Theorem~\ref{cont_NN_approx} is thus completed.
\end{proof}

\begin{lemma}\label{lem:uniformDistribution}
	Let $d\in \N$, $p\in [2,\infty)$, $u\in\R$, $v\in(u,\infty)$, and let $\norm{\cdot} \colon \R^d \to [0,\infty)$ be the $d$-dimensional Euclidean norm. 
	Then 	
	\begin{equation}
		\frac{1}{(v-u)^d}\int_{[u,v]^d} \norm{x}^p\, dx
		\le d^{\nicefrac{p}{2}} \max\big\{\vert u\vert^p, \vert v\vert^p\big\}.
	\end{equation} 
\end{lemma}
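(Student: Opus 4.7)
The plan is to derive the inequality from a pointwise bound on $\norm{x}$ for $x \in [u,v]^d$, after which the integral estimate is immediate.

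First I would observe that for every $x=(x_1,x_2,\ldots,x_d) \in [u,v]^d$ and every $i \in \{1,2,\ldots,d\}$ it holds that $x_i \in [u,v]$, so that
\begin{equation}
|x_i| \le \max\{|u|,|v|\} \qandq x_i^2 \le \max\{|u|^2,|v|^2\} = \max\{|u|,|v|\}^2.
\end{equation}
Summing these inequalities over $i \in \{1,2,\ldots,d\}$ and using the definition of the Euclidean norm yields that for all $x \in [u,v]^d$ it holds that
\begin{equation}
\norm{x}^2 = \smallsum_{i=1}^d x_i^2 \le d \, \max\{|u|,|v|\}^2.
\end{equation}

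Next I would raise this bound to the power $p/2$ (using that $p/2 \ge 1$ and the right-hand side is nonnegative) to conclude that for all $x \in [u,v]^d$ it holds that
\begin{equation}
\norm{x}^p = \bigl(\norm{x}^2\bigr)^{p/2} \le \bigl(d\,\max\{|u|,|v|\}^2\bigr)^{p/2} = d^{p/2} \max\{|u|^p,|v|^p\}.
\end{equation}

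Finally, I would integrate this pointwise bound over the cube $[u,v]^d$, whose Lebesgue measure equals $(v-u)^d$, to obtain that
\begin{equation}
\int_{[u,v]^d} \norm{x}^p \, dx \le (v-u)^d \, d^{p/2} \max\{|u|^p,|v|^p\},
\end{equation}
and dividing both sides by $(v-u)^d > 0$ yields the claim. There is no real obstacle here; the lemma is just a uniform coordinatewise bound followed by integration.
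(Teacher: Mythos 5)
Your proof is correct, but it takes a genuinely different and in fact more elementary route than the paper's. You observe that each coordinate of a point in $[u,v]^d$ is bounded in absolute value by $\max\{|u|,|v|\}$, deduce the pointwise bound $\norm{x}^p \le d^{p/2}\max\{|u|^p,|v|^p\}$ uniformly over the cube, and then integrate the constant. The paper instead proceeds via H\"older's inequality, writing $\norm{x}^2 \le \big(\sum_i |x_i|^p\big)^{2/p} d^{1-2/p}$, and then uses Fubini to compute $\int_{[u,v]^d} \sum_i |x_i|^p \, dx = d\big(\int_u^v |t|^p\,dt\big)(v-u)^{d-1}$ before bounding the one-dimensional integral by $(v-u)\sup_{t\in[u,v]}|t|^p$. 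Both yield the same constant $d^{p/2}\max\{|u|^p,|v|^p\}$, and neither exploits any cancellation beyond the worst-case coordinate, so the results are identical; your version avoids both H\"older and Fubini and is arguably preferable in its simplicity, while the paper's version keeps the intermediate $\int_u^v|t|^p\,dt$ visible and could, if desired, give a slightly sharper bound by not passing to the supremum at the last step.
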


\begin{proof}[Proof of Lemma~\ref{lem:uniformDistribution}]
	Observe that the H\"older inequality implies that for all $x=(x_1,x_2,\allowbreak\dots, x_d)\in\R^d$ it holds that 
	\begin{equation}\label{uniformDistribution:normEst}
	\begin{split}
			\norm{x}^2&=\sum_{i=1}^d \vert x_i\vert^2
	\le \left(\sum_{i=1}^d \vert x_i\vert^p\right)^{\!\nicefrac{2}{p}} \left(\sum_{i=1}^d 1\right)^{\!1-\nicefrac{2}{p}}
	\\&=\left(\sum_{i=1}^d \vert x_i\vert^p\right)^{\!\nicefrac{2}{p}} d^{1-\nicefrac{2}{p}}.
	\end{split}
	\end{equation}
	Next note that the Fubini theorem ensures that
	\begin{equation}\label{uniformDistribution:intEst}
	\begin{split}
	&\int_{[u,v]^d} \left(\sum_{i=1}^d \vert x_i\vert^p\right) d(x_1,x_2,\dots,x_d)
	= \sum_{i=1}^d \int_{[u,v]^d} \vert x_i\vert^p\, d(x_1,x_2,\dots,x_d)
	\\&= \sum_{i=1}^d \left(\int_u^v \vert x_i\vert^p\, dx_i \right)\left(\int_u^v 1 \,dt\right)^{\!d-1}
	=  d \left(\int_u^v \vert t\vert^p\, dt\right) (v-u)^{d-1}
	\\&\le d (v-u)^{d} \sup_{t\in[u,v]}\big[ \vert t\vert^p\big]
	=d (v-u)^{d} \max\big\{\vert u\vert^p, \vert v\vert^p\big\}.
	\end{split}
	\end{equation}
	Combining this with \eqref{uniformDistribution:normEst} demonstrates that
	\begin{equation}
	\begin{split}
	&\frac{1}{(v-u)^d}\int_{[u,v]^d} \norm{x}^p\, dx
	\\&\le \frac{1}{(v-u)^d}\, d^{\nicefrac{p}{2}-1}\int_{[u,v]^d} \left(\sum_{i=1}^d \vert x_i\vert^p\right) d(x_1,x_2,\dots,x_d)
	\\&\le d^{\nicefrac{p}{2}} \max\big\{\vert u\vert^p, \vert v\vert^p\big\}.
	\end{split}
	\end{equation}
	The proof of Lemma~\ref{lem:uniformDistribution} is thus completed.
\end{proof}

\begin{cor}
	\label{cont_NN_approxSimple}
	Let $T, r, \mathfrak{c},p \in (0,\infty)$,  
	for every $d\in\N$
	let $\left\| \cdot \right\|_{\R^d} \colon \R^d \to [0,\infty)$ be the $d$-dimensional Euclidean norm and
	let $\HSNormIndex{\cdot}{d}{d} \colon \R^{d \times d} \to [0,\infty)$ be the Hilbert-Schmidt norm on $\R^{d\times d}$,
	let 
	$\varphi_d \colon \R^{d} \to \R$, $d \in \N$, be continuous functions, 
	let 
	$\mu_d \colon \R^{d} \to \R^{d}$, $d \in \N$, 
	and
	$\sigma_d \colon \R^{d} \to \R^{d \times d}$, $d \in \N$,
	be functions which satisfy 
	for all $d \in \N$, $x,y \in \R^{d}$, $\lambda \in \R$ that
	\begin{equation}
	\mu_d(\lambda x+y) + \lambda \mu_d(0) = \lambda\mu_d(x)+\mu_d(y),
	\end{equation}
	\begin{equation}
	\sigma_d(\lambda x+y) + \lambda \sigma_d(0) = \lambda\sigma_d(x)+\sigma_d(y),
	\end{equation}
	and
	$\norm{\mu_d(x)}_{\R^{d}} + \HSNormIndex{{\sigma_d(x)}}{d}{d}
	\leq  
	\mathfrak{c} (1 + \norm{x}_{\R^{d}})$,
	let 
	\begin{equation}
	\begin{split}
	\mathcal{N}
	&=
	\cup_{\mathcal{L} \in \{2,3, \ldots \}}
	\cup_{ (l_0,l_1,\ldots, l_\mathcal{L}) \in ((\N^{\mathcal{L}}) \times \{ 1 \} ) }
	\left(
	\times_{k = 1}^\mathcal{L} (\R^{l_k \times l_{k-1}} \times \R^{l_k})
	\right),
	\end{split}
	\end{equation}
	let $\mathbf{A}_d  \in C(\R^d, \R^d)$, $d \in \N$, and $\mathbf{a} \in C(\R, \R)$ be functions which satisfy 
	for all $d \in \N$, $x = (x_1,x_2, \ldots, x_d) \in \R^d$ that
	\begin{equation}
	\mathbf{A}_d(x)
	=
	(\mathbf{a}(x_1), \mathbf{a}(x_2), \ldots, \mathbf{a}(x_d)),
	\end{equation} 
	let 
	$
\mathcal{P}\colon \mathcal{N} \to \N
$ 
	and 
	$
	\mathcal{R} \colon 
	\mathcal{N} 
	\to 
	\cup_{d = 1}^\infty C(\R^d, \R)
	$ 
	be the functions which satisfy
	for all $ \mathcal{L} \in \{2, 3, \ldots \}$, $ (l_0,l_1,\ldots, l_\mathcal{L}) \in ((\N^{\mathcal{L}}) \times \{ 1 \}) $, 
	%
	$
	\Phi 
	=
	((W_1, B_1), \ldots,\allowbreak (W_\mathcal{L}, B_\mathcal{L}))
	=
	( 
	(W_k^{(i,j)})_{ i \in \{1, 2, \ldots, l_k \},  j \in \{1, 2, \ldots, l_{k-1} \}}, \allowbreak
	(B_k^{(i)})_{i \in \{1, 2, \ldots, l_k \}} 
	)_{k \in \{1, 2, \ldots, \mathcal{L} \} } 
	\allowbreak\in  
	( \times_{k = 1}^\mathcal{L} \allowbreak(\R^{l_k \times l_{k-1}} \times \R^{l_k}))
	$,
	$x_0 \in \R^{l_0}, x_1 \in \R^{l_1}, \ldots, x_{\mathcal{L}-1} \in \R^{l_{\mathcal{L}-1}}$ 
	with $\forall \, k \in \N \cap (0,\mathcal{L}) \colon x_k = \mathbf{A}_{l_k}(W_k x_{k-1} + B_k)$
	that
	\begin{equation}
	\mathcal{R}(\Phi) \in C(\R^{l_0}, \R),
	\qquad
	( \mathcal{R}(\Phi) ) (x_0) = W_\mathcal{L} x_{\mathcal{L}-1} + B_\mathcal{L},
	\end{equation}
	and
	$
	\mathcal{P}(\Phi)
	=
	\sum_{k = 1}^\mathcal{L} l_k(l_{k-1} + 1) 
	$,
	and let $(\phi_{d, \delta})_{d \in \N, \, \delta \in (0,r]} \subseteq \mathcal{N}$ satisfy 
	for all $d \in \N$, $\delta \in (0,r]$, $x \in \R^{d}$ that
	$
	\mathcal{P}(\phi_{d, \delta}) \leq \mathfrak{c} \, d^{\mathfrak{c}} \delta^{-{\mathfrak{c}}} 
	$,
	$
	\mathcal{R}(\phi_{d, \delta}) \in C(\R^{d}, \R)
	$,
	$	\left| 
	( \mathcal{R}(\phi_{d, \delta}) )  (x)
	\right| 
	\leq 
	\mathfrak{c} \, d^{\mathfrak{c}} (1+\| x \|_{\R^d}^{\mathfrak{c}})
	$,
	and
	\begin{equation}
	\label{cont_NN_approxSimple:ass1}
	\left| 
	\varphi_d(x) - ( \mathcal{R}(\phi_{d, \delta}) )(x)
	\right|
	\leq 
	\mathfrak{c} \, d^{\mathfrak{c}} \, \delta \, (1+ \| x \|_{\R^d}^{\mathfrak{c}}).
	\end{equation}
	Then  
	\begin{enumerate}[(i)]
		
		\item \label{cont_NN_approxSimple:item1}
		there exist unique continuous functions $u_d\colon [0,T]\allowbreak \times \R^{d} \to \R$, $d \in \N$, which satisfy
		for all $d \in \N$, $x \in \R^{d}$ that 
		$u_d(0,x) = \varphi_d(x)$,
		which satisfy 
		for all $d \in \N$ that
		$
		\inf_{q \in (0,\infty)} \allowbreak
		\sup_{(t, x) \in [0, T] \times \R^d} \allowbreak
		\frac{ | u_d(t, x) | }{ 1 + \norm{x}_{\R^d}^q }
		\allowbreak<
		\infty
		$,
		and which satisfy for all $d \in \N$ that $u_d|_{(0,T) \times \R^{d}}$ is a viscosity solution of
		\begin{equation}
		\begin{split}
		(\tfrac{\partial }{\partial t}u_d)(t,x) 
		&= 
		\operatorname{Trace}\! \big( 
		\sigma_d(x)[\sigma_d(x)]^{\ast}(\operatorname{Hess}_x u_d )(t,x)
		\big) 
		\\&+
		(\tfrac{\partial }{\partial x}u_d)(t,x)\, \mu_d(x)
		\end{split}
		\end{equation}
		for $(t,x) \in (0,T) \times \R^{d}$
		and
		
		\item \label{cont_NN_approxSimple:item2}
		there exist $\mathfrak{C} \in (0,\infty)$, $(\psi_{d, \varepsilon})_{d \in \N, \,\varepsilon \in (0,1]} \subseteq \mathcal{N}$ such that
		for all $d \in \N$, $\varepsilon \in (0,1]$ it holds that
		$
		\mathcal{P}(\psi_{d, \varepsilon}) 
		\leq
		\mathfrak{C}  \, d^{\mathfrak{C}} \, \varepsilon^{-\mathfrak{C}}
		$,
		$
		\mathcal{R}(\psi_{d, \varepsilon}) \in C(\R^{d}, \R)
		$,
		and
		\begin{equation}
		\label{cont_NN_approxSimple:concl1} 
		\left[
		\int_{[0,1]^d}  
		\left|
		u_d(T,x) - ( \mathcal{R}(\psi_{d, \varepsilon}) ) (x)
		\right|^p \,
		dx
		\right]^{\nicefrac{1}{p}} 
		\leq
		\varepsilon.
		\end{equation}
	\end{enumerate}
\end{cor}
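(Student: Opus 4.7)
The strategy is to deduce Corollary \ref{cont_NN_approxSimple} as a specialization of Theorem \ref{cont_NN_approx}, after (i) rescaling the diffusion coefficient to account for the missing factor of $\tfrac12$ in the PDE, (ii) enlarging the exponents so the required ranges $\mathbf{v}, p \in [2,\infty)$ are met, and (iii) verifying the moment integrability hypothesis on $\nu_d$ by instantiating $\nu_d$ as Lebesgue measure on $[0,1]^d$ and invoking Lemma \ref{lem:uniformDistribution}.

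First, to reconcile the PDE in Corollary \ref{cont_NN_approxSimple} (which has no $\tfrac12$) with that of Theorem \ref{cont_NN_approx} (which has $\tfrac12$), I apply Theorem \ref{cont_NN_approx} with the rescaled coefficient $\tilde{\sigma}_d = \sqrt{2}\,\sigma_d$ in place of $\sigma_d$. This preserves affine linearity and only multiplies the linear-growth constant by $\sqrt{2}$; moreover $\tfrac12 \operatorname{Trace}(\tilde{\sigma}_d \tilde{\sigma}_d^{\ast} \operatorname{Hess}_x u_d) = \operatorname{Trace}(\sigma_d \sigma_d^{\ast} \operatorname{Hess}_x u_d)$, so the viscosity solutions produced by Theorem \ref{cont_NN_approx} for the rescaled problem solve exactly the PDE of Corollary \ref{cont_NN_approxSimple}. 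Existence, uniqueness, and polynomial growth of the $u_d$ in item \eqref{cont_NN_approxSimple:item1} thus follow immediately from item \eqref{cont_NN_approx:item1} of Theorem \ref{cont_NN_approx}. Next I set $\tilde{\mathbf{v}} := \max\{\mathfrak{c}, 2\}$ and $\tilde{p} := \max\{p, 2\}$, noting that $1 + \|x\|^{\mathfrak{c}} \leq 2(1 + \|x\|^{\tilde{\mathbf{v}}})$ for every $x \in \R^d$, so the growth, boundedness, and approximation hypotheses imposed on $\mathcal{R}(\phi_{d,\delta})$ in Corollary \ref{cont_NN_approxSimple} remain valid (with the prefactor $\mathfrak{c}$ doubled) when $\mathfrak{c}$ is replaced by $\tilde{\mathbf{v}}$ in the exponent on $\|x\|$. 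I choose $v = w = z = \mathbf{z} := \mathfrak{c}$, $R := 1$, and $\theta := \tilde{\mathbf{v}}/2$.

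To verify the moment hypothesis
\[
\sup_{d \in \N} \left[ d^{-\theta \tilde{p}} \textint_{\R^d} \Norm{x}_{\R^d}^{\tilde{p}\tilde{\mathbf{v}}} \nu_d(dx)\right] < \infty,
\]
I take $\nu_d$ to be Lebesgue measure restricted to $[0,1]^d$ (which is a probability measure). Lemma \ref{lem:uniformDistribution}, applied with parameters $u = 0$, $v = 1$, and with the lemma's exponent equal to $\tilde{p}\tilde{\mathbf{v}}$, yields $\int_{[0,1]^d} \|x\|^{\tilde{p}\tilde{\mathbf{v}}} dx \leq d^{\tilde{p}\tilde{\mathbf{v}}/2}$. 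The choice $\theta = \tilde{\mathbf{v}}/2$ then renders the supremum at most $1$. With these parameter choices Theorem \ref{cont_NN_approx} supplies a constant $\mathfrak{C}_0 \in (0,\infty)$ and ANNs $(\psi_{d,\varepsilon})_{d \in \N,\, \varepsilon \in (0,1]}$ with $\mathcal{P}(\psi_{d,\varepsilon}) \leq \mathfrak{C}_0\, d^{\,(5+\mathfrak{c})(\tilde{\mathbf{v}}/2) + \mathfrak{c} + \mathfrak{c}^2 + 4\mathfrak{c}}\, \varepsilon^{-4-\mathfrak{c}}$, $\mathcal{R}(\psi_{d,\varepsilon}) \in C(\R^d, \R)$, and $L^{\tilde{p}}([0,1]^d)$-error at most $\varepsilon$.

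Finally, all polynomial exponents in $d$ and in $\varepsilon^{-1}$ depend only on $\mathfrak{c}$, so they can be absorbed into a single constant $\mathfrak{C}$ to give $\mathcal{P}(\psi_{d,\varepsilon}) \leq \mathfrak{C}\, d^{\mathfrak{C}}\, \varepsilon^{-\mathfrak{C}}$. For the error bound, the case $p \geq 2$ is immediate; for $p \in (0,2)$, since Lebesgue measure on $[0,1]^d$ is a probability measure, Jensen's inequality yields $\|\cdot\|_{L^p([0,1]^d)} \leq \|\cdot\|_{L^{\tilde{p}}([0,1]^d)} \leq \varepsilon$. The plan involves no new analytic ingredient beyond Theorem \ref{cont_NN_approx} and Lemma \ref{lem:uniformDistribution}; the only obstacle is bookkeeping — ensuring that the rescaling $\sqrt{2}\sigma_d$, the substitutions $\tilde{\mathbf{v}} = \max\{\mathfrak{c},2\}$ and $\tilde{p} = \max\{p,2\}$, and the specific choice $\theta = \tilde{\mathbf{v}}/2$ interact so that every hypothesis of Theorem \ref{cont_NN_approx} is fulfilled without introducing any $d$-dependent constants.
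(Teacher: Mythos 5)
Your proposal is correct and takes essentially the same route as the paper: rescale $\sigma_d \mapsto \sqrt{2}\,\sigma_d$ to absorb the missing $\tfrac12$, replace $\mathfrak{c}$ and $p$ by $\max\{\mathfrak{c},2\}$ and $\max\{p,2\}$ in the exponents, take $\nu_d$ to be the uniform distribution on $[0,1]^d$, verify the moment hypothesis via Lemma~\ref{lem:uniformDistribution} with $\theta = \max\{\mathfrak{c},2\}/2$, invoke Theorem~\ref{cont_NN_approx}, and finish with Jensen/H\"older to drop back to $L^p$ when $p<2$. The only bookkeeping detail you elide is that Theorem~\ref{cont_NN_approx} requires its $\mathfrak{c}$-parameter to lie in $[1,\infty)$, so the doubled prefactor you propose should be taken as $\max\{2\mathfrak{c},1\}$ (the paper uses $\max\{\sqrt{2}\,\mathfrak{c},1\}$, which also works but needs a slightly less obvious inequality); this does not affect the final conclusion.
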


\begin{proof}[Proof of Corollary~\ref{cont_NN_approxSimple}]
	Throughout this proof let $m \colon (0,\infty) \to [2,\infty)$ be the function which satisfies for all $z \in (0,\infty)$ that 
	$
	m(z) = \max \{2, z \}
	$ and for every $d\in\N$ let $\nu_d \colon \mathcal{B}(\R^d)\allowbreak \to [0,\infty]$ be the $d$-dimensional Lebesgue measure.
Note that Lemma~\ref{lem:uniformDistribution} (with $d=d$, $p=m(\mathfrak{c})m(p)$, $u=0$, $v=1$, $\norm{\cdot}=\norm{\cdot}_{\R^d}$  in the notation of Lemma~\ref{lem:uniformDistribution}) implies for all $d\in\N$ that
\begin{equation}
\int_{[0,1]^d} \Norm{x}_{\R^d}^{(m(\mathfrak{c})\,m(p))}\, dx
\le d^{\nicefrac{1}{2}(m(p)\,m(\mathfrak{c}))}.
\end{equation} 	
This ensures that
	\begin{equation}
	\begin{split}
	&\sup_{d \in \N} \left[ d^{-\nicefrac{1}{2}(m(p)\,m(\mathfrak{c}))}
	\int_{\R^d}  
	\Norm{x}_{\R^d}^{(m(\mathfrak{c})\,m(p))} \,
	\nu_d|_{[0,1]^d} (dx)\right]
	\\&=\sup_{d \in \N} \left[ d^{-\nicefrac{1}{2}(m(p)\,m(\mathfrak{c}))}
	\int_{[0,1]^d} \Norm{x}_{\R^d}^{(m(\mathfrak{c})\,m(p))}\, dx\right]
	\le 1< 
	\infty
	.
	\end{split}
	\end{equation}	
Theorem~\ref{cont_NN_approx} 
	(with 
	$ T = T $,
	$ r = r $,
	$ R = 1 $,
	$ v = \mathfrak{c}	$, 
	$ w = \mathfrak{c} $,
	$ z = \mathfrak{c} $,
	$ \mathbf{z} = \mathfrak{c} $,
$\theta=m(\mathfrak{c})/{2}$,
	$ \mathfrak{c} = \max\{\sqrt{2}\,\mathfrak{c},1\} $,
	$ \mathbf{v} =m(\mathfrak{c})$,
	$p = m(p)$,
	$ (\nu_d)_{d \in \N} = (\nu_d|_{[0,1]^d})_{d \in \N}$,
	$ (\varphi_d)_{d \in \N} = (\varphi_d)_{d \in \N} $,
	$ (\mu_d)_{d \in \N} = (\mu_d)_{d \in \N} $,
	$ (\sigma_d)_{d \in \N} = (\sqrt{2}\,\sigma_d)_{d \in \N} $,
	$ (\phi_{d, \delta})_{d \in \N, \, \delta \in (0,r]} = (\phi_{d, \delta})_{d \in \N, \, \delta \in (0,r]}$
	in the notation of Theorem~\ref{cont_NN_approx}) 
	  hence ensures that 
	\begin{enumerate}[(A)]	
	\item \label{cont_NN_approxSimpleProof:item1}
	there exist unique continuous functions $u_d\colon [0,T]\allowbreak \times \R^{d} \to \R$, $d \in \N$, which satisfy
	for all $d \in \N$, $x \in \R^{d}$ that 
	$u_d(0,x) = \varphi_d(x)$,
	which satisfy 
	for all $d \in \N$ that
	$
	\inf_{q \in (0,\infty)} \allowbreak
	\sup_{(t, x) \in [0, T] \times \R^d} \allowbreak
	\frac{ | u_d(t, x) | }{ 1 + \norm{x}_{\R^d}^q }
	\allowbreak<
	\infty
	$,
	and which satisfy for all $d \in \N$ that $u_d|_{(0,T) \times \R^{d}}$ is a viscosity solution of
	\begin{equation}
	\begin{split}
	(\tfrac{\partial }{\partial t}u_d)(t,x) 
	&= 
	\operatorname{Trace}\! \big( 
	\sigma_d(x)[\sigma_d(x)]^{\ast}(\operatorname{Hess}_x u_d )(t,x)
	\big) 
	\\&+
	(\tfrac{\partial }{\partial x}u_d)(t,x)\, \mu_d(x)
	\end{split}
	\end{equation}
	for $(t,x) \in (0,T) \times \R^{d}$
	and	
	\item \label{cont_NN_approxSimpleProof:item2}
there exist $C \in (0,\infty)$, $(\psi_{d, \varepsilon})_{d \in \N, \,\varepsilon \in (0,1]} \subseteq \mathcal{N}$ such that
for all $d \in \N$, $\varepsilon \in (0,1]$ it holds that
$
\mathcal{P}(\psi_{d, \varepsilon}) 
\leq
C  \, d^{\nicefrac{1}{2}(5 + \mathfrak{c})m(\mathfrak{c})+\mathfrak{c} + \mathfrak{c}^2  + 4\mathfrak{c}} \, \varepsilon^{-4-\mathfrak{c}}
$,
$
\mathcal{R}(\psi_{d, \varepsilon}) \in C(\R^{d}, \R)
$,
and
\begin{equation}
\label{cont_NN_approx:concl1Proof} 
\left[
\int_{\R^d}  
\left|
u_d(T,x) - ( \mathcal{R}(\psi_{d, \varepsilon}) ) (x)
\right|^{m(p)} \,
\nu_{d}|_{[0,1]^d}(dx)
\right]^{\nicefrac{1}{m(p)}} 
\leq
\varepsilon.
\end{equation}
\end{enumerate}	 
This implies item \eqref{cont_NN_approxSimple:item1}. Moreover, note that
\eqref{cont_NN_approxSimpleProof:item2} and the H\"older inequality demonstrate that for all 
$
\mathfrak{C}\in \left[\max\!\left\{C,4+\mathfrak{c},\nicefrac{1}{2}(5 + \mathfrak{c})m({\mathfrak{c}})+\mathfrak{c} + \mathfrak{c}^2  + 4\mathfrak{c} \right\},\infty\right)$, $d \in \N$, $\varepsilon \in (0,1]$
it holds that 
\begin{equation}\label{cont_NN_approxSimple:numberOfParameters}
	\mathcal{P}(\psi_{d, \varepsilon}) 
	\leq
	C  \, d^{\nicefrac{1}{2}(5 + \mathfrak{c})m(\mathfrak{c})+\mathfrak{c} + \mathfrak{c}^2  + 4\mathfrak{c}} \, \varepsilon^{-4-\mathfrak{c}}
	\le \mathfrak{C}  \, d^{\mathfrak{C}} \, \varepsilon^{-(4+\mathfrak{c})}
		\le \mathfrak{C}  \, d^{\mathfrak{C}} \, \varepsilon^{-\mathfrak{C}}
\end{equation}
and 
\begin{equation}
\begin{split}
&\left[
\int_{[0,1]^d}  
\left|
u_d(T,x) - ( \mathcal{R}(\psi_{d, \varepsilon}) ) (x)
\right|^p \,
dx
\right]^{\nicefrac{1}{p}} 
\\&\le\left[
\int_{[0,1]^d}  
\left|
u_d(T,x) - ( \mathcal{R}(\psi_{d, \varepsilon}) ) (x)
\right|^{m(p)} \,
dx
\right]^{\nicefrac{1}{m(p)}} 
\\&= \left[
\int_{\R^d}  
\left|
u_d(T,x) - ( \mathcal{R}(\psi_{d, \varepsilon}) ) (x)
\right|^{m(p)} \,
\nu_{d}|_{[0,1]^d}(dx)
\right]^{\nicefrac{1}{m(p)}} 
\leq
\varepsilon.
\end{split}
\end{equation}
This establishes item~\eqref{cont_NN_approxSimple:item2}.
	The proof of Corollary~\ref{cont_NN_approxSimple} is thus completed.
\end{proof}
\section[ANN approximations for Black-Scholes PDEs]{Artificial neural network approximations for Black-Scholes partial differential equations}\label{blackScholesSubsection}

\subsection{Elementary properties of the Black-Scholes model}

In this subsection we establish in Lemma~\ref{BS_properties} below a few elementary properties of the coefficient functions in the Black-Scholes model. For the sake of completeness we also provide in this subsection a detailed proof of Lemma~\ref{BS_properties}.

\begin{setting}
\label{BS_setting}
Let $p \in [2,\infty)$, $T \in (0,\infty)$, $\theta\in [0,\infty)$,
$(\alpha_{d, i})_{d \in \N, i \in \{1, 2, \ldots, d \}}$, $(\beta_{d, i})_{d \in \N, i \in \{1, 2, \ldots, d \}} \subseteq \R$ satisfy that
$
\sup_{d \in \N, i \in \{1, 2, \ldots, d \}} 
		(| \alpha_{d, i} |
		+
		| \beta_{d, i} |)
<
	\infty
$,
for every $d \in \N$ let $\left\| \cdot \right\|_{\R^d} \colon \R^d \to [0,\infty)$ be the $d$-dimensional Euclidean norm, 
for every $d \in \N$ 
let $\langle \cdot, \cdot \rangle_{\R^d} \colon \R^d \times \R^d \to \R$ be the $d$-dimensional Euclidean scalar product, 
for every $d \in \N$ let $\HSNormIndex{\cdot}{d}{d}\colon \allowbreak \R^{d \times d} \to [0,\infty)$ be the Hilbert-Schmidt norm on $\R^{d\times d}$,
let $e_{d, i} \in \R^d$, $d \in \N$, $ i \in \{1, 2, \ldots, d \}$,   satisfy
for all $d \in \N$ that
$
	e_{d,1} = (1, 0, \ldots, 0)$,
	$e_{d,2}= (0, 1,0, \ldots, 0)$,
	$\ldots,$
	$e_{d,d} = (0, \ldots, 0, 1)$,
let 
$\mathbf{B}_d= (\mathbf{B}_d^{(i,j)})_{i,j \in \{1, 2, \ldots, d \}} \in \R^{d \times d}$, $d \in \N$,	
satisfy for all $d\in\N$, $ i \in \{1, 2, \ldots, d \}$  that 
$\langle e_{d,i} , \mathbf{B}_d^{} \mathbf{B}_d^* e_{d,i} \rangle_{\R^d}=1$,
let $\mu_d \colon \R^d \to \R^d$, $d \in \N$, and $\sigma_d \colon \R^d \to \R^{d \times d}$, $d \in \N$,  be the functions which satisfy 
for all $d \in \N$, $x = (x_1, x_2, \ldots, x_d) \in \R^d$ that 
\begin{equation}\label{BS_settingMuSigma}
	\mu_d(x) = (\alpha_{d, 1} x_1, \ldots, \alpha_{d, d} x_d)\qandqShort \sigma_d(x) = \diag(\beta_{d, 1} x_1, \ldots, \beta_{d, d} x_d) \mathbf{B}_d,
\end{equation}
let $\nu_d \colon \mathcal{B}(\R^d) \to [0,1]$, $d \in \N$, be probability measures which satisfy for all $q \in (0,\infty)$ that
\begin{equation}
		\sup_{d \in \N} \left[d^{-\theta q}
	\textint_{\R^d}  
	\Norm{x}_{\R^d}^{ q } \,
	\nu_{ d } (dx)\right]
	< 
	\infty,
\end{equation}
let 
\begin{equation}
\begin{split}
	\mathcal{N}
&=
	\cup_{\mathcal{L} \in \{2,3, \ldots \}}
	\cup_{ (l_0,l_1,\ldots, l_\mathcal{L}) \in ((\N^{\mathcal{L}}) \times \{ 1 \} ) }
		\left(
			\times_{k = 1}^\mathcal{L} (\R^{l_k \times l_{k-1}} \times \R^{l_k})
		\right),
\end{split}
\end{equation}
let $\mathbf{A}_d  \in C(\R^d, \R^d)$, $d \in \N$, be the functions which satisfy 
for all $d \in \N$, $x = (x_1,x_2, \ldots, x_d) \in \R^d$ that
\begin{equation}
\label{BS_setting:eq2}
	\mathbf{A}_d(x)
=
	(\max\{x_1, 0\}, \max\{x_2, 0\}, \ldots, \max\{x_d, 0\}),
\end{equation} 
and 
let 
$
	\mathscr{P},\mathcal{P}  \colon \mathcal{N} \to \N
$ 
and 
$
	\mathcal{R} \colon 
	\mathcal{N} 
	\to 
	\cup_{d = 1}^\infty C(\R^d, \R)
$ 
be the functions which satisfy
for all $ \mathcal{L} \in \{2, 3, \ldots \}$, $ (l_0,l_1,\ldots, l_\mathcal{L}) \in ((\N^{\mathcal{L}}) \times \{ 1 \}) $, 
$
	\Phi 
=\allowbreak
	((W_1, B_1), \ldots,\allowbreak (W_\mathcal{L}, B_\mathcal{L}))
=\allowbreak
	( 
		(W_k^{(i,j)}\allowbreak)_{ i \in \{1, 2, \ldots, l_k \},   j \in \{1, 2, \ldots, l_{k-1} \}}, \allowbreak
		(B_k^{(i)})_{i \in \{1, 2, \ldots, l_k \}} 
	)_{k \in \{1, 2, \ldots, \mathcal{L} \} } \allowbreak
\in  
	( \times_{k = 1}^\mathcal{L}\allowbreak (\R^{l_k \times l_{k-1}} \times \R^{l_k}))
$,
$x_0 \in \R^{l_0}, x_1 \in \R^{l_1}, \ldots, x_{\mathcal{L}-1} \in \R^{l_{\mathcal{L}-1}}$ 
with 
$
	\forall \, k \in \N \cap (0,\mathcal{L}) \colon x_k = \mathbf{A}_{l_k}(W_k x_{k-1} + B_k)
$
that 
\begin{equation}
\label{BS_setting:eq3}
	\mathcal{R}(\Phi) \in C(\R^{l_0}, \R),
\qquad
	( \mathcal{R}(\Phi) ) (x_0) = W_\mathcal{L} x_{\mathcal{L}-1} + B_\mathcal{L},
\end{equation}
\begin{equation}
	\mathscr{P}(\Phi) 
=
	\sum_{k = 1}^\mathcal{L} 
	\sum_{i = 1}^{l_k}
	\left(
		\mathbbm{1}_{\R \backslash \{ 0 \}} (B_k^{(i)})
		+
		\smallsum\limits_{j = 1}^{l_{k-1}}
				\mathbbm{1}_{\R \backslash \{ 0 \}} (W_k^{(i,j)})
	\right),
\end{equation} 
and
$
	\mathcal{P}(\Phi)
=
	\sum_{k = 1}^\mathcal{L} l_k(l_{k-1} + 1)
$.
\end{setting}

\begin{lemma}
\label{BS_properties}
Assume Setting~\ref{BS_setting}.
Then
\begin{enumerate}[(i)]
\item \label{BS_properties:item1}
it holds for all $d \in \N$, $x,y \in \R^{d}$, $\lambda \in \R$ that
\begin{equation}
	\mu_d(\lambda x+y) + \lambda \mu_d(0) = \lambda\mu_d(x)+\mu_d(y),
\end{equation}

\item \label{BS_properties:item2}
it holds for all $d \in \N$, $x,y \in \R^{d}$, $\lambda \in \R$ that
\begin{equation}
	\sigma_d(\lambda x+y) + \lambda \sigma_d(0) = \lambda\sigma_d(x)+\sigma_d(y),
\end{equation}
and
\item \label{BS_properties:item3}
for all $d \in \N$, $x \in \R^{d}$ it holds that
\begin{equation}
\begin{split}
	&\norm{\mu_d(x)}_{\R^{d}} + \HSNormIndex{{\sigma_d(x)}}{d}{d}
	\leq  
	2\left[	
	\sup_{d \in \N, i \in \{1, 2, \ldots, d \}} 
	(| \alpha_{d, i} |
	+ 
	| \beta_{d, i} |)
	\right]
	\norm{x}_{\R^{d}} \\
&\leq  
	2\left[	
		\sup_{d \in \N, i \in \{1, 2, \ldots, d \}} 
		(| \alpha_{d, i} |
		+ 
		| \beta_{d, i} |)
	\right]
	(1 + \norm{x}_{\R^{d}}) 
< 
	\infty.
\end{split}
\end{equation}
\end{enumerate}
\end{lemma}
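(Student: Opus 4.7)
The plan is to handle items~\eqref{BS_properties:item1}--\eqref{BS_properties:item2} by direct verification and item~\eqref{BS_properties:item3} by an explicit computation of the Hilbert--Schmidt norm that exploits the normalization hypothesis on $\mathbf{B}_d$.

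For items~\eqref{BS_properties:item1}--\eqref{BS_properties:item2}, observe that the formula \eqref{BS_settingMuSigma} shows that for every fixed $d \in \N$ the functions $\R^d \ni x \mapsto \mu_d(x) \in \R^d$ and $\R^d \ni x \mapsto \sigma_d(x) \in \R^{d \times d}$ are linear. Indeed, $\mu_d$ is the action of the diagonal matrix $\diag(\alpha_{d,1},\ldots,\alpha_{d,d})$ on $x$, and $\sigma_d$ factors as the composition of the linear map $x \mapsto \diag(\beta_{d,1}x_1,\ldots,\beta_{d,d}x_d)$ followed by right-multiplication by the constant matrix $\mathbf{B}_d$. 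Hence $\mu_d(0) = 0$ and $\sigma_d(0) = 0$, so the affine-linearity identity in items~\eqref{BS_properties:item1}--\eqref{BS_properties:item2} reduces to the standard linearity relation $\mu_d(\lambda x + y) = \lambda \mu_d(x) + \mu_d(y)$ (and similarly for $\sigma_d$), which is immediate.

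For item~\eqref{BS_properties:item3}, I would first estimate $\|\mu_d(x)\|_{\R^d}$. Writing $x = (x_1, x_2, \ldots, x_d)$, the definition of $\mu_d$ yields
\begin{equation}
\Norm{\mu_d(x)}_{\R^d}^2
= \smallsum_{i=1}^d \alpha_{d,i}^2 x_i^2
\leq \left[ \sup\nolimits_{d \in \N, i \in \{1,\ldots,d\}} |\alpha_{d,i}| \right]^2 \Norm{x}_{\R^d}^2.
\end{equation}
The main point is the analogous estimate for $\HSNormIndex{{\sigma_d(x)}}{d}{d}$. Setting $D_x = \diag(\beta_{d,1}x_1,\ldots,\beta_{d,d}x_d)$, so that $\sigma_d(x) = D_x \mathbf{B}_d$, I would use the identity $\HSNormIndex{{M}}{d}{d}^2 = \operatorname{Trace}(M M^*) = \smallsum_{i=1}^d \langle e_{d,i}, MM^* e_{d,i}\rangle_{\R^d}$ to compute
\begin{equation}
\HSNormIndex{{\sigma_d(x)}}{d}{d}^2
= \smallsum_{i=1}^d \scp{e_{d,i}}{D_x \mathbf{B}_d \mathbf{B}_d^* D_x^* e_{d,i}}{\R^d}
= \smallsum_{i=1}^d (\beta_{d,i} x_i)^2 \scp{e_{d,i}}{\mathbf{B}_d \mathbf{B}_d^* e_{d,i}}{\R^d},
\end{equation}
where in the last step I use that $D_x$ is diagonal and hence $D_x^* e_{d,i} = \beta_{d,i} x_i \, e_{d,i}$. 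The normalization hypothesis $\scp{e_{d,i}}{\mathbf{B}_d \mathbf{B}_d^* e_{d,i}}{\R^d} = 1$ then collapses the sum to $\smallsum_{i=1}^d \beta_{d,i}^2 x_i^2 \leq [\sup |\beta_{d,i}|]^2 \Norm{x}_{\R^d}^2$.

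Combining the two bounds, adding, and using $a + b \leq 2\max\{a,b\} \leq 2(a+b)$ type manipulations gives
\begin{equation}
\Norm{\mu_d(x)}_{\R^d} + \HSNormIndex{{\sigma_d(x)}}{d}{d}
\leq 2 \left[ \sup\nolimits_{d \in \N, i \in \{1,\ldots,d\}} \big( |\alpha_{d,i}| + |\beta_{d,i}| \big) \right] \Norm{x}_{\R^d};
\end{equation}
the second inequality in item~\eqref{BS_properties:item3} then follows from $\Norm{x}_{\R^d} \leq 1 + \Norm{x}_{\R^d}$, and the final finiteness is the standing hypothesis on $(\alpha_{d,i})$ and $(\beta_{d,i})$. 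No step is a substantive obstacle; the only thing worth writing out carefully is the trace computation for the Hilbert--Schmidt norm, since it is the sole place where the normalization property of $\mathbf{B}_d$ enters.
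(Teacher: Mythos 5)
Your proof is correct and follows essentially the same route as the paper's: items~(i)--(ii) are disposed of by the observation that $\mu_d$ and $\sigma_d$ are linear (so $\mu_d(0)=0$, $\sigma_d(0)=0$), and item~(iii) hinges on the identity $\smallsum_{j=1}^d |\mathbf{B}_d^{(i,j)}|^2 = \langle e_{d,i}, \mathbf{B}_d\mathbf{B}_d^* e_{d,i}\rangle_{\R^d} = 1$, which you extract via $\HSNormIndex{{M}}{d}{d}^2 = \operatorname{Trace}(MM^*)$ while the paper expands the double sum over entries directly. These are the same computation in two notations, and the final combination $\norm{\mu_d(x)}_{\R^d} + \HSNormIndex{{\sigma_d(x)}}{d}{d} \le 2L\norm{x}_{\R^d}$ with $L = \sup_{d,i}(|\alpha_{d,i}|+|\beta_{d,i}|)$ follows, as you indicate, by bounding each summand separately by $L\norm{x}_{\R^d}$.
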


\begin{proof}[Proof of Lemma~\ref{BS_properties}]
Throughout this proof let $L \in (0,\infty)$ be given by
\begin{equation}
\label{BS_properties:setting1}
	L 
= 	
	\sup_{d \in \N} \sup_{i \in \{1, 2, \ldots, d \}} 
		(| \alpha_{d, i} |
		+ 
		| \beta_{d, i} |).
\end{equation}
First, note that the fact that 
for all $d \in \N$, $x \in \R^{d}$ it holds that
$
	\mu_d(x) = \diag(\alpha_{d,1}, \ldots, \alpha_{d,d}) x
$
and Lemma~\ref{affine_property} prove item~\eqref{BS_properties:item1}.
Moreover, observe that \eqref{BS_settingMuSigma} implies that 
for all $d \in \N$, $x,y \in \R^{d}$, $\lambda \in \R$ it holds that $\sigma_d(0)=0$ and 
\begin{equation}
\sigma_d(\lambda x+y) = \lambda\sigma_d(x)+\sigma_d(y).
\end{equation}
This establishes item~\eqref{BS_properties:item2}.
In addition, note that 
for all $d \in \N$, $x = (x_1, \ldots, x_d) \in \R^{d}$ it holds that
\begin{equation}
\label{BS_properties:eq1}
\begin{split}
	\norm{\mu_d(x)}_{\R^{d}}
&=
	\norm{(\alpha_{d, 1} x_1, \ldots, \alpha_{d, d} x_d)}_{\R^{d}}
=
	\left[ 
		\smallsum_{i = 1}^d  |  \alpha_{d, i} x_i |^2
	\right]^{\nicefrac{1}{2}} \\
&\leq
	\left[ 
		\big( \! \max \{ |  \alpha_{d, 1}  |, \ldots,  |  \alpha_{d, d}  | \} \big)^2
		\smallsum_{i = 1}^d  | x_i |^2
	\right]^{\nicefrac{1}{2}} \\
&=
	\max \{ |  \alpha_{d, 1}  |, \ldots,  |  \alpha_{d, d}  | \}
	\norm{x}_{\R^{d}} 
\leq
	L \norm{x}_{\R^{d}} \\
&\leq
	 L (1 + \norm{x}_{\R^{d}})
< 
	\infty.
\end{split}
\end{equation}
Moreover, observe that the fact that 
for all $d \in \N$, $x = (x_1, \ldots, x_d) \in \R^{d}$ it holds that
$
	\sigma_d(x)
=
	(\beta_{d, i} x_i \mathbf{B}_{d}^{(i, j)})_{i,j \in \{1, 2, \ldots, d \}}
\in 
	\R^{d \times d}
$
assures that 
for all $d \in \N$, $x = (x_1, \ldots, x_d) \in \R^{d}$ 
it holds that
\begin{equation}
\label{BS_properties:eq2}
\begin{split}
	\HSNormIndex{{\sigma_d(x)}}{d}{d}^2
&=
	\smallsum\limits_{i,j = 1}^d  \big|  \beta_{d, i} x_i \mathbf{B}_{d}^{(i, j)}  \big|^2  \\
&=
	\smallsum\limits_{i = 1}^d  
	\left(
		 | x_i  |^2  |\beta_{d, i} |^2
		\smallsum_{j = 1}^d \big| \mathbf{B}_{d}^{(i, j)}  \big|^2  
	\right) \\
&\leq
	\left[
		\max\limits_{i \in \{1, 2, \ldots, d \} }
		\left(
			 |\beta_{d, i} |^2\smallsum_{j = 1}^d \big| \mathbf{B}_{d}^{(i, j)}  \big|^2  
		\right) 
	\right]
	\smallsum\limits_{i = 1}^d  
		 | x_i  |^2 \\
&=	 
	\left[ 
		\max\limits_{i \in \{1, 2, \ldots, d \} }
		\left(
			 | \beta_{d, i} |^2\smallsum_{j = 1}^d \big| \mathbf{B}_{d}^{(i, j)}  \big|^2  
		\right)
	\right]
	\norm{x}_{\R^d}^2.
\end{split}
\end{equation}
The fact that 
for all $d \in \N$, $i \in \{1, 2, \ldots, d \}$ it holds that
\begin{equation}
	\smallsum_{j = 1}^d \big| \mathbf{\mathbf{B}}_{d}^{(i, j)}  \big|^2 
=
	\langle \mathbf{B}_d^* e_{d,i} , \mathbf{B}_d^* e_{d,i} \rangle_{\R^d}
=
	\langle e_{d,i} , \mathbf{B}_d^{} \mathbf{B}_d^* e_{d,i} \rangle_{\R^d}
=
	1
\end{equation}
hence demonstrates that
for all $d \in \N$, $x = (x_1, \ldots, x_d) \in \R^{d}$ 
it holds that
\begin{equation}
\label{BS_properties:eq3}
\begin{split}
\HSNormIndex{{\sigma_d(x)}}{d}{d}
&\leq	
	\left[ 
		\max\limits_{i \in \{1, 2, \ldots, d \} }
			 | \beta_{d, i} |^2
	\right]^{ \nicefrac{1}{2}}
	\norm{x}_{\R^d} \\
&=	
	\left[ 
		\max\limits_{i \in \{1, 2, \ldots, d \} }
			 | \beta_{d, i} | 
	\right]
	\norm{x}_{\R^d} \\
&\leq
	L \norm{x}_{\R^{d}}
\leq
	 L (1 + \norm{x}_{\R^{d}})
< 
	\infty.
\end{split}
\end{equation}
Combining this and \eqref{BS_properties:eq1} assures that 
for all $d \in \N$, $x \in \R^{d}$ it holds that
\begin{equation}
\begin{split}
	&\norm{\mu_d(x)}_{\R^{d}} + \HSNormIndex{{\sigma_d(x)}}{d}{d}
	\leq  
	2L \norm{x}_{\R^{d}}
\leq  
	2L
	(1 + \norm{x}_{\R^{d}}) 
< 
	\infty.
\end{split}
\end{equation}
This establishes item~\eqref{BS_properties:item3}.
The proof of Lemma~\ref{BS_properties} is thus completed.
\end{proof}


\subsection{Transformations of viscosity solutions}

In this subsection we establish in Proposition~\ref{lemBlackScholesTimeChange}, Corollary~\ref{BS_endvalueOneDirection}, and Corollary~\ref{BS_endvalue} a few elementary and essentially well-known transformation results for viscosity solutions of certain second-order PDEs.


\begin{prop}\label{lemBlackScholesTimeChange}
	Let $d\in\N$, $a,\lambda\in\R$, $b\in (a,\infty)$, let $f\colon (a,b)\times \R^d\times \R\times \R^d\times \R^{d\times d}\to \R$ be a function which satisfies for all $t\in (a,b)$, $x\in\R^d$, $\alpha\in\R$, $\eta\in\R^d$, $A,B\in \{C\in \R^{d\times d}\colon C^*=C\}$ with $A\le B$ that
	 \begin{equation}
	 	f(t,x,\alpha,\eta,A)\le f(t,x,\alpha,\eta,B),
	 \end{equation}
	let 
	$u\colon(a,b) \times \R^{d} \to \R$ 
	be a continuous function 
	which satisfies that $u$ is a viscosity solution of
	\begin{equation}\label{lemBlackScholesTimeChange:Equation}
	\begin{split}
	&\lambda (\tfrac{\partial }{\partial t}u)(t,x) 
=
f\big(t,x,u(t,x),(\nabla_x u )(t,x),(\operatorname{Hess}_x u )(t,x)
	\big) 
	\end{split}
	\end{equation}
	for $(t,x) \in (a,b) \times \R^{d}$,
	let $R\colon[a,b] \to [a,b]$ be the function which satisfies for all  $t\in[a,b]$ that $R(t)=a+b-t$,
	let $U\colon(a,b) \times \R^{d} \to \R$ be the function which satisfies for all  $t\in(a,b)$, $x\in\R^d$ that $U(t,x)=u(R(t),x)$,
	and	let $F\colon (a,b)\times \R^d\times \R\times \R^d\times \R^{d\times d}\to \R$ be the function which satisfies for all $t\in(a,b)$, $x\in\R^d$, $\alpha\in \R$, $\eta\in\R^d$, $A\in \R^{d\times d}$ that
	\begin{equation}\label{lemBlackScholesTimeChange:DefF}
	F(t,x,\alpha,\eta,A)=f(R(t),x,\alpha,\eta,A).
	\end{equation} 
	Then
	$U\colon(a,b) \times \R^{d} \to \R$ is a continuous function  
	which satisfies that $U$ is a viscosity solution of
	\begin{equation}
	\begin{split}
	&-\lambda (\tfrac{\partial }{\partial t}U)(t,x) 
	=
	F\big(t,x,U(t,x),(\nabla_x U )(t,x),(\operatorname{Hess}_x U )(t,x)
	\big)
	\end{split}
	\end{equation}
	for $(t,x) \in (a,b) \times \R^{d}$.
\end{prop}

\begin{proof}[Proof of Proposition~\ref{lemBlackScholesTimeChange}]
	Throughout this proof let $(s,y)\in (a,b)\times \R^d$, let
\begin{equation}
	\Psi=(\Psi(t,x))_{(t,x)\in (a,b)\times \R^d}, \Phi=(\Phi(t,x))_{(t,x)\in (a,b)\times \R^d}\colon (a,b)\times \R^d\to \R
\end{equation}
	 be twice continuously differentiable functions which satisfy that
	  $\Phi\ge U$, $\Phi(s,y)= U(s,y)$, $\Psi\le U$, and $ \Psi(s,y)= U(s,y)$, and
	let 
	\begin{equation}
	\psi=(\psi(t,x))_{(t,x)\in (a,b)\times \R^d}, \varphi=(\varphi(t,x))_{(t,x)\in (a,b)\times \R^d}\colon (a,b)\times \R^d\to \R
	\end{equation}	
	be the functions which satisfy for all $(t,x)\in (a,b)\times \R^d$ that
	\begin{equation}\label{lemBlackScholesTimeChange:propertiesPsiPhi}
		\psi(t,x)=\Psi(R(t),x)\qandq \varphi(t,x)=\Phi(R(t),x).
	\end{equation}
	Observe that $R\colon [a,b]\to[a,b]$ is a bijective function which satisfies that $R|_{(a,b)}\colon (a,b)\to(a,b)$ is twice continuously differentiable and which satisfies for all $t\in [a,b],$ $r\in (a,b)$ that
	\begin{equation}\label{lemBlackScholesTimeChange:propertiesR}
		R(R(t))=t\qandq R'(r)=-1.
	\end{equation}
	Combining this and \eqref{lemBlackScholesTimeChange:propertiesPsiPhi} ensures for all $(t,x)\in (a,b)\times \R^d$ that
	\begin{equation}\label{lemBlackScholesTimeChange:propertiesPsiPhiInverse}
	\Psi(t,x)=\psi(R(t),x)\qandq \Phi(t,x)=\varphi(R(t),x).
	\end{equation}
	Therefore, we obtain that \eqref{lemBlackScholesTimeChange:propertiesPsiPhi}, \eqref{lemBlackScholesTimeChange:propertiesR}, and the hypothesis that for all $(t,x)\in (a,b)\times \R^d$ it holds that  $U(t,x)=u(R(t),x)$  imply that for all $(t,x)\in (a,b)\times \R^d$ it holds that $\psi\in C^2((a,b)\times \R^d,\R)$, 
		\begin{equation}\label{lemBlackScholesTimeChange:psiTestsuOne}
	\psi(t,x)=\Psi(R(t),x)\le U(R(t),x)=u(R(R(t)),x)=u(t,x),
	\end{equation}	
	and
	\begin{equation}\label{lemBlackScholesTimeChange:psiTestsuTwo}
		\psi(R(s),y)=\Psi(s,y)=U(s,y)=u(R(s),y).
	\end{equation}	
	Moreover, observe that \eqref{lemBlackScholesTimeChange:propertiesPsiPhi}, \eqref{lemBlackScholesTimeChange:propertiesR}, \eqref{lemBlackScholesTimeChange:propertiesPsiPhiInverse}, and the hypothesis that for all $(t,x)\in (a,b)\times \R^d$ it holds that  $U(t,x)=u(R(t),x)$ demonstrate that for all $(t,x)\in (a,b)\times \R^d$ it holds that $\varphi\in C^2((a,b)\times \R^d,\R)$, 
	\begin{equation}\label{lemBlackScholesTimeChange:varphiTestsuOne}
	\varphi(t,x)=\Phi(R(t),x)\ge U(R(t),x)=u(R(R(t)),x)=u(t,x),
	\end{equation}
	and
	\begin{equation}\label{lemBlackScholesTimeChange:varphiTestsuTwo}
		\varphi(R(s),y)=\Phi(s,y)=U(s,y)=u(R(s),y).
	\end{equation}
	Combining this, \eqref{lemBlackScholesTimeChange:psiTestsuOne}, and \eqref{lemBlackScholesTimeChange:psiTestsuTwo} 
	with \eqref{lemBlackScholesTimeChange:Equation}
	implies that 
	\begin{equation}\label{lemBlackScholesTimeChange:uSubsol}
	\begin{split}
	&\lambda(\tfrac{\partial }{\partial t}\varphi)(R(s),y) 
\le
	f\big(R(s),y,\varphi(R(s),y),(\nabla_x \varphi )(R(s),y),(\operatorname{Hess}_x \varphi )(R(s),y)
	\big)
	\end{split}
	\end{equation}
	and 	
	\begin{equation}\label{lemBlackScholesTimeChange:uSupersol}
	\begin{split}
	&\lambda(\tfrac{\partial }{\partial t}\psi)(R(s),y) 
	\ge
	f\big(R(s),y,\psi(R(s),y),(\nabla_x \psi )(R(s),y),(\operatorname{Hess}_x \psi )(R(s),y)
	\big).
	\end{split}
	\end{equation}
	This, 
	\eqref{lemBlackScholesTimeChange:DefF},  \eqref{lemBlackScholesTimeChange:propertiesR}, and \eqref{lemBlackScholesTimeChange:propertiesPsiPhiInverse} ensure that 
	\begin{equation}\label{lemBlackScholesTimeChange:USubsol}
	\begin{split}
	&-\lambda(\tfrac{\partial }{\partial t}\Phi)(s,y)
	=\lambda (\tfrac{\partial }{\partial t}\varphi)(R(s),y) 
	\\&\le
	f\big(R(s),y,\varphi(R(s),y),(\nabla_x \varphi )(R(s),y),(\operatorname{Hess}_x \varphi )(R(s),y)
	\big)
	\\&=	f\big(R(s),y,\Phi(s,y),(\nabla_x \Phi )(s,y),(\operatorname{Hess}_x \Phi )(s,y)
	\big)
	\\&=F\big(s,y,\Phi(s,y),(\nabla_x \Phi )(s,y),(\operatorname{Hess}_x \Phi )(s,y)
\big).
	\end{split}
	\end{equation}
	Moreover, observe that \eqref{lemBlackScholesTimeChange:DefF}, \eqref{lemBlackScholesTimeChange:propertiesR}, \eqref{lemBlackScholesTimeChange:propertiesPsiPhiInverse}, and \eqref{lemBlackScholesTimeChange:uSupersol}
	demonstrate that
		\begin{equation}\label{lemBlackScholesTimeChange:USupersol}
	\begin{split}
	&-\lambda(\tfrac{\partial }{\partial t}\Psi)(s,y)
	=\lambda(\tfrac{\partial }{\partial t}\psi)(R(s),y) 
	\\&\ge
	f\big(R(s),y,\psi(R(s),y),(\nabla_x \psi )(R(s),y),(\operatorname{Hess}_x \psi )(R(s),y)
	\big)
	\\&=	f\big(R(s),y,\Psi(s,y),(\nabla_x \Psi )(s,y),(\operatorname{Hess}_x \Psi )(s,y)
	\big)
	\\&= F\big(s,y,\Psi(s,y),(\nabla_x \Psi )(s,y),(\operatorname{Hess}_x \Psi )(s,y)
	\big).
	\end{split}
	\end{equation}
	Next note that the hypothesis that $u\colon (a,b)\times \R^d\to \R$ is a continuous function and the hypothesis that for all  $t\in(a,b)$, $x\in\R^d$ it holds that $U(t,x)=u(R(t),x)$ demonstrate that   $U\colon (a,b)\times \R^d\to \R$ is a continuous function.
	Combining this with \eqref{lemBlackScholesTimeChange:USubsol} and \eqref{lemBlackScholesTimeChange:USupersol} assures that $U\colon (a,b)\times \R^d\to \R$ is a continuous function which satisfies that $U$ is a viscosity subsolution and a viscosity supersolution of 
	\begin{equation}
	\begin{split}
	&-\lambda(\tfrac{\partial }{\partial t}U)(t,x) 
	=
	F\big(t,x,U(t,x),(\nabla_x U )(t,x),(\operatorname{Hess}_x U )(t,x)
	\big)
	\end{split}
	\end{equation}
	for $(t,x) \in (a,b) \times \R^{d}$. This proves that $U\colon (a,b)\times \R^d\to \R$ is a continuous function which satisfies that $U$ is a viscosity solution of 
		\begin{equation}
	\begin{split}
	&-\lambda(\tfrac{\partial }{\partial t}U)(t,x) 
	=
	F\big(t,x,U(t,x),(\nabla_x U )(t,x),(\operatorname{Hess}_x U )(t,x)
	\big)
	\end{split}
	\end{equation}
	for $(t,x) \in (a,b) \times \R^{d}$.
	The proof of Proposition~\ref{lemBlackScholesTimeChange} is thus completed.
\end{proof}

\begin{cor}\label{BS_endvalueOneDirection}
	Let $d\in\N$, $a,\mathfrak{a},\mathfrak{b},\lambda\in\R$, $b\in \R \backslash \{a\}$, let $\nu \colon \mathcal{B}(\R^d) \to [0,\infty]$ be a measure, let $\varphi\colon \R^d\to\R$ be a continuous function, let $\Phi\colon \R^d\to\R$ be a $\mathcal{B}(\R^d)\backslash\mathcal{B}(\R)$-measurable function, let $f\colon \R^d\times \R\times \R^d\times \R^{d\times d}\to \R$ be a function which satisfies for all $x\in\R^d$, $\alpha\in\R$, $\eta\in\R^d$, $A,B\in \{C\in \R^{d\times d}\colon C^*=C\}$ with $A\le B$ that
\begin{equation}
f(x,\alpha,\eta,A)\le f(x,\alpha,\eta,B),
\end{equation}
assume that $\mathfrak{a}=\min\{a,b\}$ and $\mathfrak{b}=\max\{a,b\}$,
and assume that
there exists a unique continuous function $u \colon [\mathfrak{a},\mathfrak{b}]\allowbreak \times \R^{d} \to \R$ which satisfies
for all $x \in \R^{d}$ that 
$u(b,x) = \varphi(x)$,
which satisfies that
\begin{equation}\label{BS_endvalue:eqGrowthU}
	\inf_{q \in (0,\infty)} 
	\sup_{(t, x) \in [\mathfrak{a}, \mathfrak{b}] \times \R^d} 
	\frac{ | u(t, x) | }{ 1 + \norm{x}_{\R^d}^q }
	<
	\infty,
\end{equation}
and
which satisfies that $u|_{(\mathfrak{a}, \mathfrak{b}) \times \R^{d}}$ is a viscosity solution of
	\begin{equation}\label{BS_endvalue:eqViscosityU}
\begin{split}
&\lambda (\tfrac{\partial }{\partial t}u)(t,x) 
=
f\big(x,u(t,x),(\nabla_x u )(t,x),(\operatorname{Hess}_x u )(t,x)
\big) 
\end{split}
\end{equation}
for $(t,x) \in (\mathfrak{a},\mathfrak{b}) \times \R^{d}$
and it holds that
\begin{equation}\label{BS_endvalue:eqApproxU}
\left[
\int_{\R^d}  
\left|
u(a,x) - \Phi(x)
\right|^p \,
\nu(dx)
\right]^{\nicefrac{1}{p}} 
\leq
\varepsilon.
\end{equation}
Then there exists a unique continuous function $v\colon [\mathfrak{a},\mathfrak{b}] \allowbreak \times \R^{d} \to \R$ which satisfies
for all $x \in \R^{d}$ that 
$v(a,x) = \varphi(x)$,
which satisfies that
\begin{equation}
	\inf_{q \in (0,\infty)} 
	\sup_{(t, x) \in [\mathfrak{a}, \mathfrak{b}] \times \R^d} 
	\frac{ | v(t, x) | }{ 1 + \norm{x}_{\R^d}^q }
	<
	\infty,
\end{equation}
which satisfies that $v|_{(\mathfrak{a}, \mathfrak{b}) \times \R^{d}}$ is a viscosity solution of
	\begin{equation}\label{BS_endvalue:eqViscosityV}
\begin{split}
&-\lambda (\tfrac{\partial }{\partial t}v)(t,x) 
=
f\big(x,v(t,x),(\nabla_x v )(t,x),(\operatorname{Hess}_x v )(t,x)
\big) 
\end{split}
\end{equation}
for $(t,x) \in (\mathfrak{a},\mathfrak{b}) \times \R^{d}$
and it holds that
\begin{equation}\label{BS_endvalue:eqApproxV}
\left[
\int_{\R^d}  
\left|
v(b,x) - \Phi(x)
\right|^p \,
\nu(dx)
\right]^{\nicefrac{1}{p}} 
\leq
\varepsilon.
\end{equation}
\end{cor}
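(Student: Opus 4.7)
\medskip

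\noindent\textbf{Proof plan for Corollary~\ref{BS_endvalueOneDirection}.}
The plan is to reduce Corollary~\ref{BS_endvalueOneDirection} to the time-reversal result Proposition~\ref{lemBlackScholesTimeChange} by considering the reflection of the time interval $[\mathfrak{a},\mathfrak{b}]$ around its midpoint. More precisely, I would let $R\colon[\mathfrak{a},\mathfrak{b}]\to[\mathfrak{a},\mathfrak{b}]$ be the function which satisfies for all $t\in[\mathfrak{a},\mathfrak{b}]$ that $R(t)=\mathfrak{a}+\mathfrak{b}-t$ and let $v\colon [\mathfrak{a},\mathfrak{b}]\times\R^d\to\R$ be the function which satisfies for all $(t,x)\in[\mathfrak{a},\mathfrak{b}]\times\R^d$ that $v(t,x)=u(R(t),x)$. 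The first step is to check, by distinguishing the cases $a=\mathfrak{a}$ and $a=\mathfrak{b}$, that $R(a)=b$ and $R(b)=a$, which together with $u(b,\cdot)=\varphi$ immediately gives $v(a,x)=u(R(a),x)=u(b,x)=\varphi(x)$ and $v(b,x)=u(R(b),x)=u(a,x)$ for all $x\in\R^d$. The latter identity, combined with \eqref{BS_endvalue:eqApproxU}, yields the desired approximation bound \eqref{BS_endvalue:eqApproxV}. The continuity and the polynomial growth condition on $v$ follow from the corresponding properties \eqref{BS_endvalue:eqGrowthU} of $u$ and the continuity of $R$.

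Next, I would verify the viscosity PDE \eqref{BS_endvalue:eqViscosityV}. Since $f$ does not depend on $t$, I would introduce the auxiliary function $\tilde{f}\colon (\mathfrak{a},\mathfrak{b})\times \R^d\times \R\times \R^d\times \R^{d\times d}\to\R$ given by $\tilde{f}(t,x,\alpha,\eta,A)=f(x,\alpha,\eta,A)$ and observe that for all $t\in(\mathfrak{a},\mathfrak{b})$, $x\in\R^d$, $\alpha\in\R$, $\eta\in\R^d$, $A\in\R^{d\times d}$ it holds that $\tilde{f}(R(t),x,\alpha,\eta,A)=f(x,\alpha,\eta,A)=\tilde{f}(t,x,\alpha,\eta,A)$. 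Applying Proposition~\ref{lemBlackScholesTimeChange} (with $a=\mathfrak{a}$, $b=\mathfrak{b}$, $\lambda=\lambda$, $f=\tilde{f}$, $u=u|_{(\mathfrak{a},\mathfrak{b})\times\R^d}$, $R=R|_{[\mathfrak{a},\mathfrak{b}]}$, $U=v|_{(\mathfrak{a},\mathfrak{b})\times\R^d}$, $F=\tilde{f}$ in the notation of Proposition~\ref{lemBlackScholesTimeChange}) then shows that $v|_{(\mathfrak{a},\mathfrak{b})\times\R^d}$ is a viscosity solution of \eqref{BS_endvalue:eqViscosityV}, which is exactly what is required.

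For uniqueness, I would argue as follows: suppose that $\mathfrak{v}\colon [\mathfrak{a},\mathfrak{b}]\times\R^d\to\R$ is any continuous function satisfying $\mathfrak{v}(a,\cdot)=\varphi$, the polynomial growth condition, and \eqref{BS_endvalue:eqViscosityV}. Then the function $\mathfrak{u}\colon [\mathfrak{a},\mathfrak{b}]\times\R^d\to\R$ defined by $\mathfrak{u}(t,x)=\mathfrak{v}(R(t),x)$ is continuous, satisfies $\mathfrak{u}(b,\cdot)=\mathfrak{v}(a,\cdot)=\varphi$, satisfies the same polynomial growth bound, and, by another application of Proposition~\ref{lemBlackScholesTimeChange} (now applied in reverse, using that $R\circ R=\id$), is a viscosity solution of \eqref{BS_endvalue:eqViscosityU}. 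The hypothesised uniqueness of $u$ thus forces $\mathfrak{u}=u$ on $[\mathfrak{a},\mathfrak{b}]\times\R^d$, which in turn gives $\mathfrak{v}(t,x)=\mathfrak{u}(R(t),x)=u(R(t),x)=v(t,x)$ for all $(t,x)\in[\mathfrak{a},\mathfrak{b}]\times\R^d$, establishing uniqueness.

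The main obstacle I expect is purely bookkeeping rather than conceptual: one must treat the two cases $a<b$ and $a>b$ simultaneously via $\mathfrak{a}=\min\{a,b\}$ and $\mathfrak{b}=\max\{a,b\}$, and carefully verify that the reflection $R$ interchanges $a$ and $b$ in both cases so that the initial/terminal-value conditions swap correctly; beyond that, the invocation of Proposition~\ref{lemBlackScholesTimeChange} with the $t$-independent nonlinearity $f$ is immediate because $F=\tilde{f}$ in the notation of Proposition~\ref{lemBlackScholesTimeChange}.
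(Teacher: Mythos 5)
Your proposal is correct and coincides with the paper's own proof: both define $v(t,x)=u(\mathfrak{a}+\mathfrak{b}-t,x)$, invoke Proposition~\ref{lemBlackScholesTimeChange} (with the time-independent nonlinearity viewed as a function of $(t,x,\alpha,\eta,A)$) to transfer the viscosity property, read off the swapped boundary condition and the approximation bound from $R(a)=b$ and $R(b)=a$, and establish uniqueness by pulling any competitor back through the same reflection and appealing to the uniqueness of $u$. The only simplification worth noting is that the case distinction $a=\mathfrak{a}$ versus $a=\mathfrak{b}$ is unnecessary, since $\mathfrak{a}+\mathfrak{b}=a+b$ holds in either case and directly yields $R(a)=b$ and $R(b)=a$.
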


\begin{proof}[Proof of Corollary~\ref{BS_endvalueOneDirection}]
	Throughout this proof let $v\colon [\mathfrak{a},\mathfrak{b}] \allowbreak \times \R^{d} \to \R$ be the function which satisfies
	for all $t\in[\mathfrak{a},\mathfrak{b}]$, $x \in \R^{d}$ that $v(t,x)=u(\mathfrak{a}+\mathfrak{b}-t,x)$.
	Note that for all $t\in[\mathfrak{a},\mathfrak{b}]$, $x \in \R^{d}$ it holds that 
	\begin{equation}\label{BS_endvalue:Transformation}
		v(t,x)=u(\mathfrak{a}+\mathfrak{b}-t,x)=u(a+b-t,x).
	\end{equation}
	This and \eqref{BS_endvalue:eqApproxU} ensure that 
	\begin{equation}\label{BS_endvalue:eqApproxVProof}
	\left[
	\int_{\R^d}  
	\left|
	v(b,x) - \Phi(x)
	\right|^p \,
	\nu(dx)
	\right]^{\nicefrac{1}{p}} 
	\leq
	\varepsilon.
	\end{equation} 
	Next note that \eqref{BS_endvalue:eqViscosityU} and Proposition~\ref{lemBlackScholesTimeChange} (with 
	$d=d$, $a=\mathfrak{a}$, $\lambda=\lambda$, $b=\mathfrak{b}$,  
	$f(t,x,\alpha,\eta,A)=f(x,\alpha,\eta,A)$,
	$u(t,x)=u(t,x)$, $U(t,x)=v(t,x)$
	for $t\in (\mathfrak{a},\mathfrak{b})$, $x\in\R^d$, $\alpha\in\R$, $\eta\in \R^d$, $A\in \R^{d\times d}$
	in the notation of Proposition~\ref{lemBlackScholesTimeChange})  demonstrate that $v|_{(\mathfrak{a}, \mathfrak{b}) \times \R^{d}}$ is a viscosity solution of  
	\begin{equation}\label{BS_endvalue:ViscosityVProof}
\begin{split}
&-\lambda(\tfrac{\partial }{\partial t}v)(t,x) 
=
f\big(x,v(t,x),(\nabla_x v )(t,x),(\operatorname{Hess}_x v )(t,x)
\big) 
\end{split}
\end{equation}
	for $(t,x) \in (\mathfrak{a},\mathfrak{b}) \times \R^{d}$.
	Furthermore, observe that \eqref{BS_endvalue:eqGrowthU}, \eqref{BS_endvalue:Transformation}, and the hypothesis that for all $x\in\R^d$ it holds that $u(b,x) = \varphi(x)$ 
	imply that for all $x\in\R^d$ it holds that $v(a,x) = \varphi(x)$ and
	\begin{equation}\label{BS_endvalue:GrowthV}
	\begin{split}
		\inf_{q \in (0,\infty)} 
	\sup_{(t, x) \in [\mathfrak{a},\mathfrak{b}] \times \R^d} 
	\frac{ | v(t, x) | }{ 1 + \norm{x}_{\R^d}^q }
	&=	\inf_{q \in (0,\infty)} 
	\sup_{(t, x) \in [\mathfrak{a},\mathfrak{b}] \times \R^d} 
	\frac{ | u(\mathfrak{a}+\mathfrak{b}-t, x) | }{ 1 + \norm{x}_{\R^d}^q }
	\\&=	\inf_{q \in (0,\infty)} 
	\sup_{(t, x) \in [\mathfrak{a},\mathfrak{b}] \times \R^d} 
	\frac{ | u(t, x) | }{ 1 + \norm{x}_{\R^d}^q }
	<
	\infty.
	\end{split}
	\end{equation}
	Next let $w\colon [\mathfrak{a},\mathfrak{b}] \allowbreak \times \R^{d} \to \R$ be a continuous function which satisfies
	for all $x \in \R^{d}$ that 
	$w(a,x) = \varphi(x)$,
	which satisfies that
	$
	\inf_{q \in (0,\infty)} 
	\sup_{(t, x) \in [\mathfrak{a},\mathfrak{b}] \times \R^d} 
	\frac{ | w(t, x) | }{ 1 + \norm{x}_{\R^d}^q }
	<
	\infty
	$,  
	which satisfies that $w|_{(\mathfrak{a}, \mathfrak{b}) \times \R^{d}}$ is a viscosity solution of
	\begin{equation}
\begin{split}
&-\lambda(\tfrac{\partial }{\partial t}w)(t,x) 
=
f\big(x,w(t,x),(\nabla_x w )(t,x),(\operatorname{Hess}_x w )(t,x)
\big) 
\end{split}
\end{equation}
	for $(t,x) \in (\mathfrak{a},\mathfrak{b}) \times \R^{d}$,
	and which satisfies that 
	\begin{equation}\label{BS_endvalue:eqApproxW}
	\left[
	\int_{\R^d}  
	\left|
	w(b,x) - \Phi(x)
	\right|^p \,
	\nu(dx)
	\right]^{\nicefrac{1}{p}} 
	\leq
	\varepsilon,
	\end{equation}
	and let $z\colon [\mathfrak{a},\mathfrak{b}] \allowbreak \times \R^{d} \to \R$ be the function which satisfies
	for all $t\in [\mathfrak{a},\mathfrak{b}]$, $x \in \R^{d}$ that 
	\begin{equation}
		z(t,x) = w(\mathfrak{a}+\mathfrak{b}-t,x)=w(a+b-t,x).
	\end{equation}
	Observe that $z$ is a continuous function which satisfies that for all $x \in \R^{d}$ it holds that $z(b,x) = \varphi(x)$, which satisfies that
	$
	\inf_{q \in (0,\infty)} 
	\sup_{(t, x) \in [\mathfrak{a},\mathfrak{b}] \times \R^d} 
	\frac{ | z(t, x) | }{ 1 + \norm{x}_{\R^d}^q }
	\allowbreak<
	\infty
	$,   which satisfies that 
	$z|_{(\mathfrak{a}, \mathfrak{b}) \times \R^{d}}$ is a viscosity solution of
	\begin{equation}
\begin{split}
&\lambda (\tfrac{\partial }{\partial t}z)(t,x) 
=
f\big(x,z(t,x),(\nabla_x z )(t,x),(\operatorname{Hess}_x z )(t,x)
\big) 
\end{split}
\end{equation}
	for $(t,x) \in (\mathfrak{a},\mathfrak{b}) \times \R^{d}$
%
	(cf. Proposition~\ref{lemBlackScholesTimeChange} (with 
	$d=d$, $a=\mathfrak{a}$, $\lambda=-\lambda$, $b=\mathfrak{b}$,  
	$f(t,x,\alpha,\eta,A)=f(x,\alpha,\eta,A)$,
	$u(t,x)=w(t,x)$, $U(t,x)=z(t,x)$
	for $t\in (\mathfrak{a},\mathfrak{b})$, $x\in\R^d$, $\alpha\in\R$, $\eta\in \R^d$, $A\in \R^{d\times d}$
	in the notation of Proposition~\ref{lemBlackScholesTimeChange})), and which satisfies that
	\begin{equation}\label{BS_endvalue:eqApproxZ}
	\left[
	\int_{\R^d}  
	\left|
	z(a,x) - \Phi(x)
	\right|^p \,
	\nu(dx)
	\right]^{\nicefrac{1}{p}} 
	\leq
	\varepsilon.
		\end{equation}
	Hence, we obtain that for all $t\in [\mathfrak{a},\mathfrak{b}]$, $x\in\R^d$ it holds that $z(t,x)=u(t,x)$. The fact that 
	for all $t\in [\mathfrak{a},\mathfrak{b}]$, $x \in \R^{d}$ it holds that  $v(t,x)=u(\mathfrak{a}+\mathfrak{b}-t,x)$ and
	$w(t,x) = z(\mathfrak{a}+\mathfrak{b}-t,x)$ therefore demonstrates that for all $t\in [\mathfrak{a},\mathfrak{b}]$, $x \in \R^{d}$ it holds that 
	\begin{equation}
		w(t,x)=z(\mathfrak{a}+\mathfrak{b}-t,x)=u(\mathfrak{a}+\mathfrak{b}-t,x)=v(t,x).
	\end{equation}
	Combining this,  the fact that for all $x\in\R^d$ it holds that $v(a,x) = \varphi(x)$, \eqref{BS_endvalue:eqApproxVProof}, \eqref{BS_endvalue:ViscosityVProof}, and \eqref{BS_endvalue:GrowthV}  completes the proof of Corollary~\ref{BS_endvalueOneDirection}.
\end{proof}

\begin{cor}
\label{BS_endvalue}
Assume Setting~\ref{BS_setting}, let $d \in \N$, $\varepsilon, T \in (0,\infty)$, $\varphi \in  C(\R^d, \R)$, and $\psi \in \mathcal{N}$.
Then the following two statements are equivalent:

\begin{enumerate}[(i)]
\item \label{BS_endvalue:item1}
There exists a unique continuous function $u\colon [0,T]\allowbreak \times \R^{d} \to \R$ which satisfies
for all $x \in \R^{d}$ that 
$u(T,x) = \varphi(x)$,
which satisfies that
$
	\inf_{q \in (0,\infty)} \allowbreak
	\sup_{(t, x) \in [0, T] \times \R^d} 
	\frac{ | u(t, x) | }{ 1 + \norm{x}_{\R^d}^q }
<
	\infty
$, 
which satisfies that $u|_{(0,T) \times \R^{d}}$ is a viscosity solution of
\begin{equation}
\begin{split}
	&(\tfrac{\partial }{\partial t}u)(t,x) 
	+
	\big\langle (   \nabla_x u )(t,x), \mu_d(x)\big\rangle_{\R^d} \\
	&+ 
	\tfrac{1}{2} 
	\operatorname{Trace}\! \big( 
		\sigma_d(x)[\sigma_d(x)]^{\ast}(\operatorname{Hess}_x u )(t,x)
	\big) 
=
	0
\end{split}
\end{equation}
for $(t,x) \in (0,T) \times \R^{d}$,
and which satisfies that
\begin{equation}
	\left[
		\int_{\R^d}  
		\left|
			u(0,x) - ( \mathcal{R}(\psi) ) (x)
		\right|^p \,
		\nu_{d}(dx)
	\right]^{\nicefrac{1}{p}} 
\leq
		\varepsilon.
\end{equation}

\item \label{BS_endvalue:item2}
There exists a unique continuous function $v\colon [0,T] \allowbreak \times \R^{d} \to \R$ which satisfies
for all $x \in \R^{d}$ that 
$v(0,x) = \varphi(x)$,
which satisfies that
$
	\inf_{q \in (0,\infty)} \allowbreak
	\sup_{(t, x) \in [0, T] \times \R^d} 
	\frac{ | v(t, x) | }{ 1 + \norm{x}_{\R^d}^q }
<
	\infty
$, 
which satisfies that $v|_{(0,T) \times \R^{d}}$ is a viscosity solution of
\begin{equation}
\begin{split}
	(\tfrac{\partial }{\partial t}v)(t,x) 
&= 
	\tfrac{1}{2} 
	\operatorname{Trace}\! \big( 
		\sigma_d(x)[\sigma_d(x)]^{\ast}(\operatorname{Hess}_x v )(t,x)
	\big) \\
	&+
	\big\langle (  \nabla_x v  )  (t,x) ,\mu_d(x)\big\rangle_{\R^d}
\end{split}
\end{equation}
for $(t,x) \in (0,T) \times \R^{d}$,
and which satisfies that
\begin{equation}
	\left[
		\int_{\R^d}  
		\left|
			v(T,x) - ( \mathcal{R}(\psi) ) (x)
		\right|^p \,
		\nu_{d}(dx)
	\right]^{\nicefrac{1}{p}} 
\leq
		\varepsilon.
\end{equation}
\end{enumerate}
\end{cor}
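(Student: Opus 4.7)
The plan is to reduce both implications to the time-reversal statement in Corollary~\ref{BS_endvalueOneDirection}. The key observation is that both PDEs in Corollary~\ref{BS_endvalue} can be written in the common abstract form $\lambda (\partial_t w)(t,x) = f(x, w(t,x), (\nabla_x w)(t,x), (\operatorname{Hess}_x w)(t,x))$ with the same time-independent spatial nonlinearity
\begin{equation*}
f(x, \alpha, \eta, A) = \langle \eta, \mu_d(x) \rangle_{\R^d} + \tfrac{1}{2} \operatorname{Trace}\!\big(\sigma_d(x)[\sigma_d(x)]^{\ast} A\big),
\end{equation*}
where the PDE in item~\eqref{BS_endvalue:item1} corresponds to $\lambda = -1$ (after moving the $\partial_t u$ to the opposite side) and the PDE in item~\eqref{BS_endvalue:item2} corresponds to $\lambda = 1$.

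First I would verify the monotonicity hypothesis on $f$ required by Corollary~\ref{BS_endvalueOneDirection}: for symmetric $A, B \in \R^{d \times d}$ with $A \leq B$, the matrix $B - A$ is positive semi-definite, and since $\sigma_d(x)[\sigma_d(x)]^{\ast}$ is also positive semi-definite, a standard trace-inequality argument (writing $B-A = C^* C$ and using cyclicity of the trace) gives $\operatorname{Trace}(\sigma_d [\sigma_d]^* (B - A)) \geq 0$, hence $f(x, \alpha, \eta, A) \leq f(x, \alpha, \eta, B)$. This verification is routine and does not depend on the structure of $\mu_d$, $\sigma_d$ beyond the positive semi-definiteness of $\sigma_d \sigma_d^*$.

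For the implication \eqref{BS_endvalue:item1}$\Rightarrow$\eqref{BS_endvalue:item2}, I would apply Corollary~\ref{BS_endvalueOneDirection} with $a = 0$, $b = T$, $\lambda = -1$, $\varphi = \varphi$, $\Phi = \mathcal{R}(\psi)$, $\nu = \nu_d$, and $f$ as above; the hypotheses of Corollary~\ref{BS_endvalueOneDirection} match precisely the assumptions in \eqref{BS_endvalue:item1}, and its conclusion yields a function $v$ satisfying exactly the conditions in \eqref{BS_endvalue:item2}. For the reverse implication \eqref{BS_endvalue:item2}$\Rightarrow$\eqref{BS_endvalue:item1}, I would apply Corollary~\ref{BS_endvalueOneDirection} with the roles of $a$ and $b$ swapped, namely $a = T$, $b = 0$ (so that $\mathfrak{a} = 0$ and $\mathfrak{b} = T$), $\lambda = 1$, starting now from $v$ in place of the $u$ of Corollary~\ref{BS_endvalueOneDirection}; the terminal condition $v(b,x) = v(0,x) = \varphi(x)$ matches, and the conclusion produces $u$ with $u(a,x) = u(T,x) = \varphi(x)$ solving $-\partial_t u = f$, i.e., the PDE in \eqref{BS_endvalue:item1}, with the desired $L^p$-approximation of $\mathcal{R}(\psi)$ at time $b = 0$.

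There is no genuine mathematical obstacle here: Corollary~\ref{BS_endvalueOneDirection} has been set up to do exactly this job, and the only care needed is the bookkeeping identification of $(a, b, \lambda)$ in the two directions. The uniqueness clauses in items~\eqref{BS_endvalue:item1} and \eqref{BS_endvalue:item2} are inherited directly from the uniqueness assertion in Corollary~\ref{BS_endvalueOneDirection}, since the time-reversal $t \mapsto T - t$ is a bijection preserving all of the continuity, growth, viscosity, and integral-approximation properties involved.
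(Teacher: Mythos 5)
Your proof is correct and takes the same route as the paper: both implications follow by applying Corollary~\ref{BS_endvalueOneDirection} with $f(x,\alpha,\eta,A) = \langle \eta, \mu_d(x)\rangle_{\R^d} + \tfrac{1}{2}\operatorname{Trace}(\sigma_d(x)[\sigma_d(x)]^*A)$, taking $(a,b,\lambda) = (0,T,-1)$ for item~\eqref{BS_endvalue:item1} $\Rightarrow$ item~\eqref{BS_endvalue:item2} and $(a,b,\lambda) = (T,0,1)$ for the converse. Your explicit check of the degenerate-ellipticity (monotonicity-in-Hessian) hypothesis on $f$ is a small extra step the paper leaves implicit.
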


\begin{proof}[Proof of Corollary~\ref{BS_endvalue}]
Observe that Corollary~\ref{BS_endvalueOneDirection} (with 
$d=d$, $a=0$, $\lambda=-1$, $b=T$, $\nu=\nu_d$, $\varphi(x)=\varphi(x)$, $\Phi(x)= ( \mathcal{R}(\psi) ) (x)$,
$f(x,\alpha,\eta,A)=\langle \eta, \mu_d(x)\rangle_{\R^d}+ 
	\tfrac{1}{2} \operatorname{Trace}\! \big( 
	\sigma_d(x)[\sigma_d(x)]^{\ast}A
	\big)$,
$u(t,x)=u(t,x)$
for $t\in [0,T]$, $x\in\R^d$, $\alpha\in\R$, $\eta\in \R^d$, $A\in \R^{d\times d}$
in the notation of Corollary~\ref{BS_endvalue})
proves that item~\eqref{BS_endvalue:item1} implies item~\eqref{BS_endvalue:item2}.
Next note that  Corollary~\ref{BS_endvalueOneDirection} (with 
$d=d$, $a=T$, $\lambda=1$, $b=0$, $\nu=\nu_d$, $\varphi(x)=\varphi(x)$, $\Phi(x)= ( \mathcal{R}(\psi) ) (x)$,
$f(x,\alpha,\eta,A)=\langle \eta, \mu_d(x)\rangle_{\R^d}+ 
\tfrac{1}{2} \operatorname{Trace}\! \big( 
\sigma_d(x)[\sigma_d(x)]^{\ast}A
\big)$,
$u(t,x)=v(t,x)$
for $t\in [0,T]$, $x\in\R^d$, $\alpha\in\R$, $\eta\in \R^d$, $A\in \R^{d\times d}$
in the notation of Corollary~\ref{BS_endvalue})
proves that item~\eqref{BS_endvalue:item2} implies item~\eqref{BS_endvalue:item1}.
The proof of Corollary~\ref{BS_endvalue} is thus completed.
\end{proof}

\subsection[ANN approximations for basket call options]{Artificial neural network approximations for basket call options}\label{SectionCall}

In this subsection we establish in Proposition~\ref{Basket_call} below that ANN approximations overcome the curse of dimensionality in the numerical approximations of the Black-Scholes model in the case of basket call options. Our proof of Proposition~\ref{Basket_call} employs the elementary ANN representation result for the payoff functions associated to basket call options in Lemma~\ref{Basket_call_NN} below. For the sake of completeness we also provide in this subsection a detailed proof of Lemma~\ref{Basket_call_NN}.

\begin{lemma}
\label{Basket_call_NN}
Assume Setting~\ref{BS_setting} and
let $(c_{d, i})_{d \in \N, i \in \{1, 2, \ldots, d \}}, (K_d)_{d \in \N} \subseteq \R$.
Then 
there exists $(\phi_d)_{d \in \N} \subseteq \mathcal{N}$ such that 
for all $d \in \N$, $x = (x_1,x_2, \ldots, x_d) \in \R^d$ it holds that
$
	\mathcal{P}(\phi_d) 
\leq 
	4 d
$, 
$
	\mathcal{R}(\phi_d) \in C(\R^d, \R)
$,
and
\begin{equation}
	(\mathcal{R}(\phi_d))(x)
=
	\max \{  c_{d, 1} x_1 +  c_{d, 2} x_2 + \ldots +  c_{d, d} x_d - K_d, 0 \}.
\end{equation}
\end{lemma}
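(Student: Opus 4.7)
The plan is to construct each $\phi_d$ explicitly as a shallow fully-connected ANN with $\mathcal{L}=2$ (one hidden layer) and architecture $(l_0,l_1,l_2)=(d,1,1)$, so that the lone hidden neuron directly implements the ReLU of an affine combination with the desired coefficients and strike. Specifically, for every $d\in\N$, I would set
\begin{equation}
W_1^{(d)}:=(c_{d,1},c_{d,2},\ldots,c_{d,d})\in\R^{1\times d},\qquad B_1^{(d)}:=-K_d\in\R^{1},
\end{equation}
\begin{equation}
W_2^{(d)}:=1\in\R^{1\times 1},\qquad B_2^{(d)}:=0\in\R^{1},
\end{equation}
and define $\phi_d:=((W_1^{(d)},B_1^{(d)}),(W_2^{(d)},B_2^{(d)}))\in \R^{1\times d}\times \R^{1}\times\R^{1\times 1}\times\R^{1}\subseteq\mathcal{N}$.

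Next I would verify the realization formula. By \eqref{BS_setting:eq2}, the activation $\mathbf{A}_1\colon\R\to\R$ is nothing but the ReLU function $t\mapsto\max\{t,0\}$, and in particular it is continuous, so that $\mathcal{R}(\phi_d)\in C(\R^d,\R)$. Unwinding the definition in \eqref{BS_setting:eq3} with $x_0=x=(x_1,\ldots,x_d)\in\R^d$ and $x_1=\mathbf{A}_1(W_1^{(d)}x_0+B_1^{(d)})\in\R$ gives
\begin{equation}
\begin{split}
(\mathcal{R}(\phi_d))(x)
&=W_2^{(d)}x_1+B_2^{(d)}
=\mathbf{A}_1\!\big(W_1^{(d)}x+B_1^{(d)}\big)\\
&=\max\!\big\{c_{d,1}x_1+c_{d,2}x_2+\cdots+c_{d,d}x_d-K_d,\,0\big\},
\end{split}
\end{equation}
which is the required identity.

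Finally, the parameter count is immediate from the definition of $\mathcal{P}$: namely,
\begin{equation}
\mathcal{P}(\phi_d)=l_1(l_0+1)+l_2(l_1+1)=1\cdot(d+1)+1\cdot(1+1)=d+3\leq 4d
\end{equation}
for every $d\in\N$ (using $d\geq 1$). Because every step is a direct unraveling of Setting~\ref{BS_setting}, there is no genuine obstacle here; the only thing to be careful about is the bookkeeping that $\phi_d$ really lives in the correct component of the union defining $\mathcal{N}$ and that the dimension of the input and output layers match the requirement that $(l_0,\ldots,l_\mathcal{L})\in(\N^\mathcal{L})\times\{1\}$, both of which are satisfied by the choice $(d,1,1)$.
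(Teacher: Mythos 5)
Your proof is correct and follows essentially the same approach as the paper: you construct the identical shallow network with architecture $(d,1,1)$ and weights $W_1=(c_{d,1},\ldots,c_{d,d})$, $B_1=-K_d$, $W_2=1$, $B_2=0$, then read off the realization and the parameter count $d+3\leq 4d$. The only difference is cosmetic (your notation $\R^{1\times d}$ for the first weight matrix is actually more consistent with Setting~\ref{BS_setting} than the paper's $\R^{d\times 1}$).
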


\begin{proof}[Proof of Lemma~\ref{Basket_call_NN}]
Throughout this proof let $(\phi_d)_{d \in \N} \subseteq \mathcal{N}$ satisfy for all $d \in \N$ that
\begin{equation}
	\phi_d
=
	\big(
	((c_{d,1}, c_{d,2}, \ldots, c_{d,d}), -K_d)
	,
	(1, 0)
	\big)
\in 
	(\R^{d \times 1} \times \R) \times (\R \times \R)
\end{equation}
(i.e., $\phi_d$ corresponds to a fully connected feedforward artificial neural network with 3 layers with dimensions $(d, 1, 1)$).
Note that \eqref{BS_setting:eq2} and \eqref{BS_setting:eq3} ensure that
for all $d \in \N$, $x = (x_1, x_2, \ldots, x_d) \in \R^d$ it holds that 
$\mathcal{R}(\phi_d) \in C(\R^d, \R)$ and
\begin{equation}
\label{Basket_call_NN:eq1}
\begin{split}
	(\mathcal{R}(\phi_d))(x)
&=
	1 \cdot
	\max \left\{ 
		\begin{pmatrix}
			c_{d,1} &c_{d,2} &\cdots &c_{d,d}
		\end{pmatrix}
		x
		+
		(-K_d)
		,
		0
		\right\}
	 + 0 \\
&=
	\max\{ c_{d, 1} x_1 +  c_{d, 2} x_2 + \ldots +  c_{d, d} x_d - K_d, 0 \}.
\end{split}
\end{equation}
Moreover, observe that for all $d \in \N$ it holds that
\begin{equation}
	\mathcal{P}(\phi_d) 
= 
	1 (d+1) + 1 (1 + 1)
=
	d + 3
\leq 
	4d.
\end{equation}
This and \eqref{Basket_call_NN:eq1} complete the proof of Lemma~\ref{Basket_call_NN}.
\end{proof}

\begin{prop}
\label{Basket_call}
Assume Setting~\ref{BS_setting},
let $(c_{d, i})_{d \in \N,i \in \{1, 2, \ldots, d \}} \subseteq [0,1]$, $(K_d)_{d \in \N} \allowbreak\in (0,\infty),$ and assume  
for all $d \in \N$ that $\sum_{i = 1}^d c_{d, i} = 1$.
Then  
\begin{enumerate}[(i)]
\item \label{Basket_call:item1}
there exist unique continuous functions $u_d\colon [0,T]\allowbreak \times \R^{d} \to \R$, $d \in \N$, which satisfy
for all $d \in \N$, $x = (x_1, x_2, \ldots, x_d) \in \R^{d}$ that 
$u_d(T,x) = \max \{  c_{d, 1} x_1 +  c_{d, 2} x_2 + \ldots +  c_{d, d} x_d - K_d, 0 \}$,
which satisfy 
for all $d \in \N$ that
$
	\inf_{q \in (0,\infty)} 
	\sup_{(t, x) \in [0, T] \times \R^d} 
	\frac{ | u_d(t, x) | }{ 1 + \norm{x}_{\R^d}^q }
<
	\infty
$,
and which satisfy for all $d \in \N$ that $u_d|_{(0,T) \times \R^{d}}$ is a viscosity solution of
\begin{equation}
\begin{split}
	&(\tfrac{\partial }{\partial t}u_d)(t,x) 
	+
	\big\langle(\nabla_x u_d)(t,x), \mu_d(x) \big\rangle_{\R^d}\\
	&+ 
	\tfrac{1}{2} 
	\operatorname{Trace}\! \big( 
		\sigma_d(x)[\sigma_d(x)]^{\ast}(\operatorname{Hess}_x u_d )(t,x)
	\big) 
=
	0
\end{split}
\end{equation}
for $(t,x) \in (0,T) \times \R^{d}$
and

\item \label{Basket_call:item2}
there exist $\mathfrak{C} \in (0,\infty)$, $(\psi_{d, \varepsilon})_{d \in \N, \,\varepsilon \in (0,1]} \subseteq \mathcal{N}$ such that
for all $d \in \N$, $\varepsilon \in (0,1]$ it holds that
$
	\mathcal{P}(\psi_{d, \varepsilon}) 
\leq
	\mathfrak{C} \, d^{5\theta+1} \,  \varepsilon^{-4} 
$,
$
	\mathscr{P}(\psi_{d, \varepsilon}) 
\leq
	\mathfrak{C} \, d^{5\theta+1} \, \varepsilon^{-2}
$,
$
	\mathcal{R}(\psi_{d, \varepsilon}) \in C(\R^{d}, \R)
$,
and
\begin{equation}
\label{Basket_call:concl1} 
	\left[
		\int_{\R^d}  
		\left|
			u_d(0,x) - ( \mathcal{R}(\psi_{d, \varepsilon}) ) (x)
		\right|^p \,
		\nu_{d}(dx)
	\right]^{\nicefrac{1}{p}} 
\leq
		\varepsilon.
\end{equation}
\end{enumerate}
\end{prop}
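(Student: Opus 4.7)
The plan is to combine Lemma~\ref{Basket_call_NN}, which represents the basket call payoff exactly as an ANN, with the main approximation result Theorem~\ref{cont_NN_approx} applied in the forward-in-time formulation, and then convert to the backward-in-time formulation needed here via Corollary~\ref{BS_endvalue}. The key observation that makes the dependence on $\delta$ trivial is that the payoff is exactly realized by a small ANN, so a single neural network suffices at every approximation level.

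First I would apply Lemma~\ref{Basket_call_NN} (with $c_{d,i} = c_{d,i}$, $K_d = K_d$) to obtain ANNs $(\phi_d)_{d\in\N} \subseteq \mathcal{N}$ with $\mathcal{P}(\phi_d) \leq 4d$ and $(\mathcal{R}(\phi_d))(x) = \max\{c_{d,1}x_1 + \ldots + c_{d,d}x_d - K_d, 0\}$ for every $x = (x_1,\ldots,x_d) \in \R^d$. I would then set $\phi_{d,\delta} := \phi_d$ for all $d \in \N$, $\delta \in (0,1]$, so that $\mathcal{R}(\phi_{d,\delta}) = \varphi_d$ and the approximation error is identically zero.

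Second I would check the hypotheses of Theorem~\ref{cont_NN_approx}. Using $\sum_{i=1}^d c_{d,i} = 1$ with $c_{d,i} \in [0,1]$, the Cauchy--Schwarz inequality gives $|\sum_i c_{d,i} x_i| \leq (\sum_i c_{d,i}^2)^{1/2}\|x\|_{\R^d} \leq \|x\|_{\R^d}$, so there is $\mathfrak{c}_0 \in [1,\infty)$ (depending on $\sup_d K_d$) with $|(\mathcal{R}(\phi_{d,\delta}))(x)| \leq \mathfrak{c}_0\,d^{0}(1+\|x\|_{\R^d}^{2})$ and $|\varphi_d(x) - (\mathcal{R}(\phi_{d,\delta}))(x)| = 0 \leq \mathfrak{c}_0\,d^{0}\,\delta\,(1+\|x\|_{\R^d}^{2})$, while $\mathcal{P}(\phi_{d,\delta}) \leq 4d = 4 d^{1} \delta^{0}$. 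Lemma~\ref{BS_properties} confirms the affinity and linear growth of $\mu_d,\sigma_d$ uniformly in $d$, and the moment hypothesis on $(\nu_d)_{d\in\N}$ is part of Setting~\ref{BS_setting}. Thus Theorem~\ref{cont_NN_approx} applies with $\mathbf{v} = 2$, $r = R = 1$, $v = w = 0$, $z = 1$, $\mathbf{z} = 0$, yielding a constant $\mathfrak{C} \in (0,\infty)$ and ANNs $(\tilde\psi_{d,\varepsilon})_{d\in\N,\,\varepsilon\in(0,1]} \subseteq \mathcal{N}$ with $\mathcal{P}(\tilde\psi_{d,\varepsilon}) \leq \mathfrak{C} d^{5\theta+1} \varepsilon^{-4}$, $\mathscr{P}(\tilde\psi_{d,\varepsilon}) \leq \mathfrak{C} d^{5\theta+1} \varepsilon^{-2}$, and continuous viscosity solutions $\tilde u_d \colon [0,T] \times \R^d \to \R$ of the \emph{forward} Kolmogorov PDE with $\tilde u_d(0,\cdot) = \varphi_d$ and
\begin{equation}
\Big[\textint_{\R^d} |\tilde u_d(T,x) - (\mathcal{R}(\tilde\psi_{d,\varepsilon}))(x)|^{p}\,\nu_d(dx)\Big]^{1/p} \leq \varepsilon.
\end{equation}

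Finally I would invoke Corollary~\ref{BS_endvalue} (with $\varphi = \varphi_d$, $\psi = \tilde\psi_{d,\varepsilon}$) to pass from the forward-in-time viscosity solution $\tilde u_d$ to the backward-in-time viscosity solution $u_d$ required by the Black--Scholes PDE in item~\eqref{Basket_call:item1}, defined by $u_d(t,x) = \tilde u_d(T-t,x)$; this gives $u_d(T,\cdot) = \varphi_d$, preserves the polynomial-growth condition, and transfers the $L^p$ approximation at $t=T$ for $\tilde u_d$ to an $L^p$ approximation at $t=0$ for $u_d$, so setting $\psi_{d,\varepsilon} := \tilde\psi_{d,\varepsilon}$ concludes item~\eqref{Basket_call:item2}. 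Uniqueness in item~\eqref{Basket_call:item1} is inherited from Corollary~\ref{viscosity_affine_existence} via Corollary~\ref{BS_endvalue}. The only real subtlety is verifying that the constant $\mathfrak{c}_0$ in the growth bound on $\varphi_d$ can be taken independent of $d$ (i.e.\ absorbing $K_d$), which is where one uses that $(K_d)_{d\in\N}$ is bounded; otherwise a polynomial dependence of $K_d$ on $d$ would inflate the exponent of $d$ in the final bound beyond $5\theta + 1$.
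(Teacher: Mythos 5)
Your overall structure — Lemma~\ref{Basket_call_NN} to represent the payoff exactly, then Theorem~\ref{cont_NN_approx} with $\mathbf{v}=2$, $r=R=1$, $v=w=0$, $z=1$, $\mathbf{z}=0$ for the forward-in-time PDE, then Corollary~\ref{BS_endvalue} to pass to the backward-in-time formulation — matches the paper's proof exactly. But your treatment of the growth bound on $\varphi_d$ contains a genuine gap. You claim there exists $\mathfrak{c}_0 \in [1,\infty)$ \emph{depending on} $\sup_{d\in\N} K_d$ with $|(\mathcal{R}(\phi_{d,\delta}))(x)| \leq \mathfrak{c}_0\,d^0(1+\|x\|_{\R^d}^2)$, and you assert in your final paragraph that ``one uses that $(K_d)_{d\in\N}$ is bounded.'' The hypotheses of the proposition, however, only require $(K_d)_{d\in\N} \subseteq (0,\infty)$ and do not assume $\sup_{d\in\N} K_d < \infty$. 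If $(K_d)$ were unbounded, your constant $\mathfrak{c}_0$ would be infinite and the argument would collapse, so your proof as written does not establish the proposition under its stated hypotheses.

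The correct observation — and the one the paper uses — is that $K_d$ drops out of the upper bound entirely because $K_d > 0$: for any $a \in \R$ and $K > 0$ one has
\begin{equation}
\max\{a - K, 0\} \leq \max\{a, 0\} \leq |a|,
\end{equation}
so with $a = c_{d,1}x_1 + \ldots + c_{d,d}x_d$,
\begin{equation}
|\varphi_d(x)| \leq |c_{d,1}x_1 + \ldots + c_{d,d}x_d| \leq \Big[\smallsum_{i=1}^d c_{d,i}\Big]\max_{i}|x_i| = \max_{i}|x_i| \leq \|x\|_{\R^d} \leq d^0(1+\|x\|_{\R^d}^2),
\end{equation}
with no trace of $K_d$ and no constant needing to absorb it. (This is genuinely specific to the call payoff; in Proposition~\ref{Basket_put} the strike \emph{does} enter the bound, which is why that proposition assumes a single strike $K$ independent of $d$.) Once you replace your $\mathfrak{c}_0$ argument with this pointwise bound, the remainder of your proof is correct and coincides with the paper's.
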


\begin{proof}[Proof of Proposition~\ref{Basket_call}]
Throughout this proof 
let $ \varphi_d \colon \R^d \to \R$, $d \in \N$, satisfy 
for all $d \in \N$, $x = (x_1, x_2, \ldots, x_d) \in \R^{d}$ that 
\begin{equation}
	\varphi_d(x) = \max \{  c_{d, 1} x_1 +  c_{d, 2} x_2 + \ldots +  c_{d, d} x_d - K_d, 0 \}
\end{equation}
and
let $(\chi_d)_{d \in \N}, (\phi_{d, \delta})_{d \in \N, \delta \in (0,1]} \subseteq \mathcal{N}$ satisfy 
for all $d \in \N$, $x \in \R^{d}$, $\delta \in (0,1]$ that
$
	\mathcal{P}(\chi_d) 
\leq 
	4 d
$, 
$
	\mathcal{R}(\chi_d) \in C(\R^d, \R)
$,
$
	(\mathcal{R}(\chi_d))(x)
=
	\varphi_d(x)
$
(cf.\ Lemma~\ref{Basket_call_NN}),
and 
$
	\phi_{d, \delta} = \chi_d
$.
Note that 
for all $d \in \N$, $\delta \in (0,1]$ it holds that
\begin{equation}
\label{Basket_call:eq1}
\mathcal{R}(\phi_{d, \delta}) = \mathcal{R}(\chi_{d})= \varphi_d\in C(\R^d, \R).
\end{equation}
This implies that
for all $d \in \N$, $x \in \R^{d}$, $\delta \in (0,1]$ it holds that
\begin{equation}\label{Basket_call:eqA}
\begin{split}
\left| 
\varphi_d(x) - ( \mathcal{R}(\phi_{d, \delta}) )(x)
\right| 
=
\left| 
\varphi_d(x) - \varphi_d(x)
\right| 
=
0
\leq 
d^0\delta^0 (1 + \norm{x}_{\R^d}^2).
\end{split}
\end{equation}
Moreover, observe that \eqref{Basket_call:eq1} and the hypothesis that for all $d \in \N$, $i \in \{1, 2, \ldots, d \}$  it holds that $c_{d, i}\ge 0$ and
$
	\sum_{i = 1}^d c_{d, i} = 1
$
 assure that
for all $d \in \N$, $x = (x_1, x_2, \ldots, x_d) \in \R^{d}$, $\delta \in (0,1]$ it holds that
\begin{equation}
\label{Basket_call:eq2}
\begin{split}
	| (\mathcal{R}(\phi_{d, \delta}))(x) |
&=
	| (\mathcal{R}(\chi_{d}))(x) |
=
	| \varphi_d (x) | \\
&=
	\max \{  c_{d, 1} x_1 +  c_{d, 2} x_2 + \ldots +  c_{d, d} x_d - K_d, 0 \} \\
&\leq
	 c_{d, 1} | x_1 | +  c_{d, 2} | x_2 | + \ldots +  c_{d, d} | x_d | \\
&\leq	
	\left[ \smallsum_{i = 1}^d c_{d, i} \right]\max \{ | x_1 |,   | x_2 | , \ldots ,  | x_d | \} \\
&\leq
	\norm{x}_{\R^d}
\leq
	d^0 (1 + \norm{x}_{\R^d}^2).
\end{split}
\end{equation}
In addition, observe that 
for all $d \in \N$, $\delta \in (0,1]$ it holds that
\begin{equation}
\label{Basket_call:eq3}
	\mathcal{P}(\phi_{d, \delta}) 
=
	\mathcal{P}(\chi_d) 
\leq 
	4 d
=
	4 d^1 \delta^{-0}.
\end{equation}
Combining this, \eqref{Basket_call:eq1}, \eqref{Basket_call:eqA}, \eqref{Basket_call:eq2}, 
the hypothesis that for all $q \in (0,\infty)$ it holds that
\begin{equation}
		\sup_{d \in \N}\left[ d^{-\theta q}
	\textint_{\R^d}  
	\Norm{x}_{\R^d}^{ q } \,
	\nu_{ d } (dx)\right]
	< 
	\infty,
\end{equation}
and Lemma~\ref{BS_properties} 
with 
Theorem~\ref{cont_NN_approx} 
(with 
$ T = T $,
$ r = 1 $,
$ R = 1 $,
$ v = 0 $,
$ w = 0 $,
$ z = 1 $,
$ \mathbf{z} = 0 $,
$\theta=\theta$,
$ \mathfrak{c} = 
	\max \{ 
		4 ,
		2 
		\left[
			\sup_{d \in \N, i \in \{1, 2, \ldots, d \}} 
				(| \alpha_{d, i} | + | \beta_{d, i} |) 
		\right]
	\}$,
$ \mathbf{v} = 2 $,
$p = p$,
$ \nu_d = \nu_d$,
$ \varphi_d = \varphi_d $,
$ \mu_d = \mu_d $,
$ \sigma_d = \sigma_d $,
$a(x)=\max \{x, 0 \}$,
$ \phi_{d, \delta} = \phi_{d, \delta}$
for $d\in\N$, $x\in \R$, $\delta\in (0,1]$ in the notation of Theorem~\ref{cont_NN_approx})
demonstrates that 
there exist unique continuous functions $v_d\colon [0,T] \times \R^{d} \to \R$, $d \in \N$, which satisfy
for all $d \in \N$, $x \in \R^{d}$ that 
$v_d(0,x) = \varphi_d(x)$,
which satisfy 
for all $d \in \N$ that
$
	\inf_{q \in (0,\infty)} 
	\sup_{(t, x) \in [0, T] \times \R^d} 
	\frac{ | v_d(t, x) | }{ 1 + \norm{x}_{\R^d}^q }
<
	\infty
$,
and which satisfy for all $d \in \N$ that $v_d|_{(0,T) \times \R^{d}}$ is a viscosity solution of
\begin{equation}
\label{Basket_call:eq4}
\begin{split}
	(\tfrac{\partial }{\partial t}v_d)(t,x) 
&= 
	\tfrac{1}{2} 
	\operatorname{Trace}\! \big( 
		\sigma_d(x)[\sigma_d(x)]^{\ast}(\operatorname{Hess}_x v_d )(t,x)
	\big) 
	\\&+
	\big\langle(\nabla_x v_d)(t,x), \mu_d(x) \big\rangle_{\R^d}
\end{split}
\end{equation}
for $(t,x) \in (0,T) \times \R^{d}$
and that
there exist $\mathfrak{C} \in (0,\infty)$, $(\psi_{d, \varepsilon})_{d \in \N, \,\varepsilon \in (0,1]} \subseteq \mathcal{N}$ such that
for all $d \in \N$, $\varepsilon \in (0,1]$ it holds that
$
	\mathcal{P}(\psi_{d, \varepsilon}) 
\leq
	\mathfrak{C}  \, d^{5\theta+1} \, \varepsilon^{-4}
$,
$
	\mathscr{P}(\psi_{d, \varepsilon}) 
\leq
	\mathfrak{C} \, d^{5\theta+1} \, \varepsilon^{-2}
$,
$
	\mathcal{R}(\psi_{d, \varepsilon}) \in C(\R^{d}, \R)
$,
and
\begin{equation}
\label{Basket_call:eq5}
	\left[
		\int_{\R^d}  
		\left|
			v_d(T,x) - ( \mathcal{R}(\psi_{d, \varepsilon}) ) (x)
		\right|^p \,
		\nu_{d}(dx)
	\right]^{\nicefrac{1}{p}} 
\leq
		\varepsilon.
\end{equation}
Corollary~\ref{BS_endvalue} hence assures that there exist unique continuous functions 
$
	u_d\colon [0,T] \times \R^{d} \to \R
$, $d \in \N$,
which satisfy that 
for all $d \in \N$, $x = (x_1, x_2, \ldots,\allowbreak x_d) \allowbreak\in \R^{d}$ it holds that 
\begin{equation}
\begin{split}
	u_d(T,x)
=  
	\varphi_d(x) 
= 
	\max \{  c_{d, 1} x_1 +  c_{d, 2} x_2 + \ldots +  c_{d, d} x_d - K_d, 0 \},
\end{split}
\end{equation}
which satisfy 
for all $d \in \N$ that
$
	\inf_{q \in (0,\infty)} 
	\sup_{(t, x) \in [0, T] \times \R^d} 
	\frac{ | u_d(t, x) | }{ 1 + \norm{x}_{\R^d}^q }
<
	\infty
$,
and which satisfy 
for all $d \in \N$ that $u_d|_{(0,T) \times \R^{d}}$ is a viscosity solution of
\begin{equation}
\label{Basket_call:eq6}
\begin{split}
	&(\tfrac{\partial }{\partial t}u_d)(t,x) 
	+
	\big\langle(\nabla_x u_d)(t,x), \mu_d(x) \big\rangle_{\R^d}
\\
	&+ 
	\tfrac{1}{2} 
	\operatorname{Trace}\! \big( 
		\sigma_d(x)[\sigma_d(x)]^{\ast}(\operatorname{Hess}_x u_d )(t,x)
	\big) 
=
	0
\end{split}
\end{equation}
for $(t,x) \in (0,T) \times \R^{d}$
and that it holds
for all $d \in \N$, $\varepsilon \in (0,1]$ that
\begin{equation}
\label{Basket_call:eq7}
\begin{split}
	&\left[
		\int_{\R^d}  
		\left|
			u_d(0,x) - ( \mathcal{R}(\psi_{d, \varepsilon}) ) (x)
		\right|^p \,
		\nu_{d}(dx)
	\right]^{\nicefrac{1}{p}}
\leq
	\varepsilon.
\end{split}
\end{equation}
Combining this with the fact that
for all $d \in \N$, $\varepsilon \in (0,1]$ it holds that
$
	\mathcal{P}(\psi_{d, \varepsilon}) 
\leq
	\mathfrak{C}  \, d^{5\theta+1} \, \varepsilon^{-4}
$,
$
	\mathscr{P}(\psi_{d, \varepsilon}) 
\leq
	\mathfrak{C} \, d^{5\theta+1} \, \varepsilon^{-2}
$,
and
$
	\mathcal{R}(\psi_{d, \varepsilon}) \in C(\R^{d}, \R)
$
establishes items~\eqref{Basket_call:item1}--\eqref{Basket_call:item2}.
The proof of Proposition~\ref{Basket_call} is thus completed.
\end{proof}

\subsection[ANN approximations for basket put  options]{Artificial neural network approximations for basket put  options}\label{SectionPut}

In this subsection we establish in Proposition~\ref{Basket_put} below that ANN approximations overcome the curse of dimensionality in the numerical approximation of the Black-Scholes model in the case of basket put options. Our proof of Proposition~\ref{Basket_put} employs the elementary ANN representation result for the payoff functions associated to basket put options in Lemma~\ref{Basket_put_NN} below. For the sake of completeness we also provide in this subsection a detailed proof of Lemma~\ref{Basket_put_NN}.

\begin{lemma}
\label{Basket_put_NN}
Assume Setting~\ref{BS_setting} and
let $(c_{d, i})_{d \in \N, i \in \{1, 2, \ldots, d \}} \subseteq \R$, $K \in  \R$.
Then 
there exists $(\phi_d)_{d \in \N} \subseteq \mathcal{N}$ such that 
for all $d \in \N$, $x = (x_1,x_2, \ldots, x_d) \in \R^d$ it holds that
$
	\mathcal{P}(\phi_d) 
\leq 
	4 d
$, 
$
	\mathcal{R}(\phi_d) \in C(\R^d, \R)
$,
and
\begin{equation}
	(\mathcal{R}(\phi_d))(x)
=
	 \max \{ K - (c_{d, 1} x_1 +  c_{d, 2} x_2 + \ldots +  c_{d, d} x_d),  0 \}.
\end{equation}
\end{lemma}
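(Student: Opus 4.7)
The plan is to mimic the three-layer construction used in the proof of Lemma~\ref{Basket_call_NN}, adjusting the signs in the first affine map so that the ReLU activation $\mathbf{A}_1$ from Setting~\ref{BS_setting} produces the basket put payoff rather than the basket call payoff. Concretely, for every $d\in\N$ I would define $\phi_d\in\mathcal{N}$ by
\begin{equation}
\phi_d=\big(((-c_{d,1},-c_{d,2},\ldots,-c_{d,d}),K)\,,\,(1,0)\big)\in(\R^{1\times d}\times\R)\times(\R^{1\times 1}\times\R),
\end{equation}
so that $\phi_d$ corresponds to a fully connected feedforward ANN with three layers of dimensions $(d,1,1)$, matching the topology used for the call case.

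Next I would verify the realization. By \eqref{BS_setting:eq2}--\eqref{BS_setting:eq3} and the fact that $\mathbf{A}_1(y)=\max\{y,0\}$ for every $y\in\R$, for all $d\in\N$ and $x=(x_1,x_2,\ldots,x_d)\in\R^d$ one obtains
\begin{equation}
(\mathcal{R}(\phi_d))(x)=1\cdot\mathbf{A}_1\big((-c_{d,1},\ldots,-c_{d,d})x+K\big)+0=\max\{K-\smallsum_{i=1}^d c_{d,i}x_i,0\},
\end{equation}
which is exactly the required basket put payoff, and which in particular shows $\mathcal{R}(\phi_d)\in C(\R^d,\R)$.

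For the parameter count, I would apply the defining formula of $\mathcal{P}$ to the dimensions $(l_0,l_1,l_2)=(d,1,1)$, yielding
\begin{equation}
\mathcal{P}(\phi_d)=l_1(l_0+1)+l_2(l_1+1)=1\cdot(d+1)+1\cdot(1+1)=d+3\le 4d,
\end{equation}
which is the claimed bound. I do not anticipate any real obstacle here: the lemma is the obvious sign-flipped analogue of Lemma~\ref{Basket_call_NN}, and since the ReLU realization of the hidden neuron produces the positive part of an arbitrary affine function of $x$, the only substantive step is to read off the correct coefficients $(-c_{d,i})_{i}$ and bias $K$ in the first affine layer.
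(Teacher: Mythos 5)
Your proof is correct and produces exactly the same network as the paper's: the paper simply invokes Lemma~\ref{Basket_call_NN} with $c_{d,i}$ replaced by $-c_{d,i}$ and $K_d$ replaced by $-K$, which yields precisely the $\phi_d=\big(((-c_{d,1},\ldots,-c_{d,d}),K),(1,0)\big)$ you wrote down explicitly. Re-deriving the realization and parameter count rather than citing the call lemma is a minor stylistic difference; the substance is identical.
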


\begin{proof}[Proof of Lemma~\ref{Basket_put_NN}]
Note that Lemma~\ref{Basket_call_NN}
(with $c_{d, i}=-c_{d, i}$, $K_d=-K$
 for $d\in\N$, $i \in \{1, 2, \ldots, d \}$ 
in the notation of Lemma~\ref{Basket_call_NN})
demonstrates that there exists $(\phi_d)_{d \in \N} \subseteq \mathcal{N}$ such that 
for all $d \in \N$, $x = (x_1, x_2, \ldots, x_d) \in \R^d$ it holds that 
$
	\mathcal{P}(\phi_d) 
\leq 
	4d
$,
$
	\mathcal{R}(\phi_d) \in C(\R^d, \R)
$,
and
\begin{equation}
\label{Basket_put_NN:eq1}
\begin{split}
	(\mathcal{R}(\phi_d))(x)
&=
	\max\{ -c_{d, 1} x_1 -  c_{d, 2} x_2 - \ldots -  c_{d, d} x_d + K, 0 \} \\
&=
	\max \{ K - (c_{d, 1} x_1 +  c_{d, 2} x_2 + \ldots +  c_{d, d} x_d),  0 \}.
\end{split}
\end{equation}
The proof of Lemma~\ref{Basket_put_NN} is thus completed.
\end{proof}

\begin{prop}
\label{Basket_put}
Assume Setting~\ref{BS_setting} and
let $(c_{d, i})_{d \in \N, i \in \{1, 2, \ldots, d \}} \subseteq [0,1]$, $K \in (0,\infty)$ satisfy 
for all $d \in \N$ that $\sum_{i = 1}^d c_{d, i} = 1$.
Then  
\begin{enumerate}[(i)]
\item \label{Basket_put:item1}
there exist unique continuous functions $u_d\colon [0,T] \allowbreak\times \R^{d} \to \R$, $d \in \N$, which satisfy
for all $d \in \N$, $x = (x_1, x_2, \ldots, x_d) \in \R^{d}$ that 
$
	u_d(T,x) = \max \{ K - (c_{d, 1} x_1 +  c_{d, 2} x_2 + \ldots +  c_{d, d} x_d),  0 \}
$,
which satisfy 
for all $d \in \N$ that
$
	\inf_{q \in (0,\infty)} 
	\sup_{(t, x) \in [0, T] \times \R^d} 
	\frac{ | u_d(t, x) | }{ 1 + \norm{x}_{\R^d}^q }
<
	\infty
$,
and which satisfy for all $d \in \N$ that $u_d|_{(0,T) \times \R^{d}}$ is a viscosity solution of
\begin{equation}
\begin{split}
	&(\tfrac{\partial }{\partial t}u_d)(t,x) 
	+
	\big\langle(\nabla_x u_d)(t,x), \mu_d(x) \big\rangle_{\R^d}
 \\
	&+ 
	\tfrac{1}{2} 
	\operatorname{Trace}\! \big( 
		\sigma_d(x)[\sigma_d(x)]^{\ast}(\operatorname{Hess}_x u_d )(t,x)
	\big) 
=
	0
\end{split}
\end{equation}
for $(t,x) \in (0,T) \times \R^{d}$
and

\item \label{Basket_put:item2}
there exist $\mathfrak{C} \in (0,\infty)$, $(\psi_{d, \varepsilon})_{d \in \N, \,\varepsilon \in (0,1]} \subseteq \mathcal{N}$ such that
for all $d \in \N$, $\varepsilon \in (0,1]$ it holds that
$
	\mathcal{P}(\psi_{d, \varepsilon}) 
\leq
	\mathfrak{C}  \, d^{5\theta+1} \, \varepsilon^{-4}
$,
$
	\mathscr{P}(\psi_{d, \varepsilon}) 
\leq
	\mathfrak{C} \, d^{5\theta+1} \, \varepsilon^{-2}
$,
$
	\mathcal{R}(\psi_{d, \varepsilon}) \in C(\R^{d}, \R)
$,
and
\begin{equation}
\label{Basket_put:concl1} 
	\left[
		\int_{\R^d}  
		\left|
			u_d(0,x) - ( \mathcal{R}(\psi_{d, \varepsilon}) ) (x)
		\right|^p \,
		\nu_{d}(dx)
	\right]^{\nicefrac{1}{p}} 
\leq
		\varepsilon.
\end{equation}
\end{enumerate}
\end{prop}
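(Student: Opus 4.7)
The plan is to mimic closely the proof of Proposition~\ref{Basket_call} with the obvious modifications for the put payoff. First I would introduce the put payoff functions $\varphi_d\colon \R^d \to \R$, $d \in \N$, given for every $d \in \N$ and every $x = (x_1, x_2, \ldots, x_d) \in \R^d$ by
\begin{equation}
\varphi_d(x) = \max\{K - (c_{d,1} x_1 + c_{d,2} x_2 + \ldots + c_{d,d} x_d), 0\},
\end{equation}
and invoke Lemma~\ref{Basket_put_NN} to obtain ANNs $(\chi_d)_{d \in \N} \subseteq \mathcal{N}$ with $\mathcal{P}(\chi_d) \leq 4d$, $\mathcal{R}(\chi_d) \in C(\R^d, \R)$, and $\mathcal{R}(\chi_d) = \varphi_d$ for all $d \in \N$. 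I would then set $\phi_{d, \delta} = \chi_d$ for all $d \in \N$ and $\delta \in (0, 1]$, which trivially ensures $|\varphi_d(x) - (\mathcal{R}(\phi_{d, \delta}))(x)| = 0$ for every such $d$, $x$, $\delta$.

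The only step that differs substantively from the call case is the verification of the polynomial growth bound. Here, using the hypothesis $\sum_{i=1}^d c_{d,i} = 1$ together with $c_{d,i} \in [0,1]$, I would estimate for all $d \in \N$, $x = (x_1, \ldots, x_d) \in \R^d$, $\delta \in (0,1]$ that
\begin{equation}
\begin{split}
|(\mathcal{R}(\phi_{d,\delta}))(x)|
&= \max\{K - (c_{d,1} x_1 + \ldots + c_{d,d} x_d), 0\} \\
&\leq K + c_{d,1} |x_1| + \ldots + c_{d,d} |x_d| \\
&\leq K + \norm{x}_{\R^d}
\leq (K+1)(1 + \norm{x}_{\R^d}^2).
\end{split}
\end{equation}
This forces the constant $\mathfrak{c}$ in the subsequent application of Theorem~\ref{cont_NN_approx} to also dominate $K+1$, but otherwise the structural parameters $(r, R, v, w, z, \mathbf{z}, \mathbf{v}, p)$ remain identical to those used in the proof of Proposition~\ref{Basket_call}.

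With these pieces in place, I would apply Theorem~\ref{cont_NN_approx} with the choices $T = T$, $r = R = 1$, $v = w = \mathbf{z} = 0$, $z = 1$, $\mathbf{v} = 2$, and $\mathfrak{c} = \max\{4, K+1, 2\sup_{d,i}(|\alpha_{d,i}| + |\beta_{d,i}|)\}$, using Lemma~\ref{BS_properties} to verify the affine linearity and linear growth of $\mu_d$, $\sigma_d$, and using the standing moment hypothesis on $(\nu_d)_{d \in \N}$ in Setting~\ref{BS_setting}. This produces unique continuous functions $v_d \colon [0,T] \times \R^d \to \R$ satisfying $v_d(0, x) = \varphi_d(x)$, the polynomial growth condition, and the forward Kolmogorov viscosity PDE, together with ANNs $(\psi_{d, \varepsilon})_{d \in \N, \varepsilon \in (0,1]} \subseteq \mathcal{N}$ with $\mathcal{P}(\psi_{d, \varepsilon}) \leq \mathfrak{C}\, d^{5\theta+1}\varepsilon^{-4}$, $\mathscr{P}(\psi_{d, \varepsilon}) \leq \mathfrak{C}\, d^{5\theta+1}\varepsilon^{-2}$, $\mathcal{R}(\psi_{d, \varepsilon}) \in C(\R^d, \R)$, and $[\int_{\R^d} |v_d(T, x) - (\mathcal{R}(\psi_{d, \varepsilon}))(x)|^p \nu_d(dx)]^{1/p} \leq \varepsilon$.

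Finally, I would invoke Corollary~\ref{BS_endvalue} to pass from the forward problem for $v_d$ to the terminal value problem for $u_d$, which swaps $u_d(0, x) = \varphi_d(x)$ to $u_d(T, x) = \varphi_d(x)$, flips the sign in the PDE to the Black-Scholes form stated in item~\eqref{Basket_put:item1}, and replaces the $L^p$-approximation statement at time $T$ by the corresponding one at time $0$. I do not anticipate any genuine obstacle here: the only non-routine verification is the growth bound for $\varphi_d$, and the rest is a direct transcription of the call-option argument with Lemma~\ref{Basket_put_NN} in place of Lemma~\ref{Basket_call_NN}.
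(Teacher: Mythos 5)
Your proposal matches the paper's proof essentially line for line: same choice of $\varphi_d$ and $\phi_{d,\delta}=\chi_d$ via Lemma~\ref{Basket_put_NN}, same growth bound $|(\mathcal{R}(\phi_{d,\delta}))(x)|\le (K+1)(1+\Norm{x}_{\R^d}^2)$ forcing $\mathfrak{c}\geq K+1$, same parameter choices $r=R=1$, $v=w=\mathbf{z}=0$, $z=1$, $\mathbf{v}=2$ in Theorem~\ref{cont_NN_approx}, and the same final application of Corollary~\ref{BS_endvalue}. No issues.
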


\begin{proof}[Proof of Proposition~\ref{Basket_put}]
Throughout this proof 
let $ \varphi_d \colon \R^d \to \R$, $d \in \N$, satisfy 
for all $d \in \N$, $x = (x_1, x_2, \ldots, x_d) \in \R^{d}$ that 
\begin{equation}
		\varphi_d(x) = \max \{ K - (c_{d, 1} x_1 +  c_{d, 2} x_2 + \ldots +  c_{d, d} x_d),  0 \}
\end{equation}
and
let $(\chi_d)_{d \in \N}, (\phi_{d, \delta})_{d \in \N, \delta \in (0,1]} \subseteq \mathcal{N}$ satisfy 
for all $d \in \N$, $x \in \R^{d}$, $\delta \in (0,1]$ that
$
	\mathcal{P}(\chi_d) 
\leq 
	4 d
$, 
$
	\mathcal{R}(\chi_d) \in C(\R^d, \R)
$,
$
	(\mathcal{R}(\chi_d))(x)
=
	\varphi_d(x)
$
(cf.\ Lemma~\ref{Basket_put_NN}),
and 
$
	\phi_{d, \delta} = \chi_d
$.
Note that 
for all $d \in \N$, $\delta \in (0,1]$ it holds that
\begin{equation}
\label{Basket_put:eq1}
\mathcal{R}(\phi_{d, \delta}) = \mathcal{R}(\chi_{d})=\varphi_d \in C(\R^d, \R).
\end{equation}
This and the hypothesis that for all $d \in \N$, $i \in \{1, 2, \ldots, d \}$ it holds that $c_{d,i}\in [0,1]$ and
$
	\sum_{i = 1}^d c_{d, i} = 1
$
assure that
for all $d \in \N$, $x = (x_1, x_2, \ldots, x_d) \in \R^{d}$, $\delta \in (0,1]$ it holds that
\begin{equation}
\label{Basket_put:eq2}
\begin{split}
	| (\mathcal{R}(\phi_{d, \delta}))(x) |
&=
	| (\mathcal{R}(\chi_{d}))(x) |
=
	| \varphi_d (x) | \\
&=
	\max \{ K - (c_{d, 1} x_1 +  c_{d, 2} x_2 + \ldots +  c_{d, d} x_d),  0 \} \\
&\leq
	 K + c_{d, 1} | x_1 | +  c_{d, 2} | x_2 | + \ldots +  c_{d, d} | x_d | \\
&\leq	
	K + \left[ \smallsum_{i = 1}^d c_{d, i} \right]\max \{ | x_1 |,   | x_2 | , \ldots ,  | x_d | \} \\
&\leq
	K + \norm{x}_{\R^d}
\leq
	K + 1 + \norm{x}_{\R^d}^2 \\
&\leq
	(K+1) \, d^0 (1 + \norm{x}_{\R^d}^2).
\end{split}
\end{equation}
In addition, observe that 
for all $d \in \N$, $\delta \in (0,1]$ it holds that
\begin{equation}
\label{Basket_put:eq3}
	\mathcal{P}(\phi_{d, \delta}) 
=
	\mathcal{P}(\chi_d) 
\leq 
	4 d
=
	4 d^1 \delta^{-0}.
\end{equation}
Furthermore, note that \eqref{Basket_put:eq1} ensures that 
for all $d \in \N$, $x \in \R^{d}$, $\delta \in (0,1]$ it holds that
\begin{equation}
\begin{split}
	\left| 
		\varphi_d(x) - ( \mathcal{R}(\phi_{d, \delta}) )(x)
	\right| 
&=
	\left| 
		\varphi_d(x) - ( \mathcal{R}(\chi_{d}) )(x)
	\right| \\
&=
	\left| 
		\varphi_d(x) - \varphi_d(x)
	\right| 
=
	0
\leq 
	 d^0 \delta^0(1 + \norm{x}_{\R^d}^2).
\end{split}
\end{equation}
Combining this, \eqref{Basket_put:eq1}--\eqref{Basket_put:eq3}, 
the fact that $(\varphi_d)_{d \in \N}$ are continuous functions, 
the hypothesis that for all $q \in (0,\infty)$ it holds that
\begin{equation}
\sup_{d \in \N}\left[ d^{-\theta q}
\textint_{\R^d}  
\Norm{x}_{\R^d}^{ q } \,
\nu_{ d } (dx)\right]
< 
\infty,
\end{equation}
and Lemma~\ref{BS_properties} 
with 
Theorem~\ref{cont_NN_approx} 
(with 
$ T = T $,
$ r = 1 $,
$ R = 1 $,
$ v = 0 $,
$ w = 0 $,
$ z = 1 $,
$\theta=\theta$,
$ \mathbf{z} = 0 $,
$ \mathfrak{c} = 
\max \{ 
4 , K + 1,
2 
\left[
\sup_{d \in \N, i \in \{1, 2, \ldots, d \}} 
(| \alpha_{d, i} | + | \beta_{d, i} |) 
\right]
\}$,
$ \mathbf{v} = 2 $,
$p = p$,
$ \nu_d = \nu_d$,
$ \varphi_d = \varphi_d $,
$ \mu_d = \mu_d $,
$ \sigma_d = \sigma_d $,
$a(x)=\max \{x, 0 \}$,
$ \phi_{d, \delta} = \phi_{d, \delta}$
for $d\in\N$, $x\in \R$, $\delta\in (0,1]$
in the notation of Theorem~\ref{cont_NN_approx})
demonstrate that 
there exist unique continuous functions $v_d\colon [0,T]\allowbreak \times \R^{d} \to \R$, $d \in \N$, which satisfy
for all $d \in \N$, $x \in \R^{d}$ that 
$v_d(0,x) = \varphi_d(x)$,
which satisfy 
for all $d \in \N$ that
$
	\inf_{q \in (0,\infty)} 
	\sup_{(t, x) \in [0, T] \times \R^d} 
	\frac{ | v_d(t, x) | }{ 1 + \norm{x}_{\R^d}^q }
<
	\infty
$,
and which satisfy for all $d \in \N$ that $v_d|_{(0,T) \times \R^{d}}$ is a viscosity solution of
\begin{equation}
\label{Basket_put:eq4}
\begin{split}
	(\tfrac{\partial }{\partial t}v_d)(t,x) 
&= 
	\tfrac{1}{2} 
	\operatorname{Trace}\! \big( 
		\sigma_d(x)[\sigma_d(x)]^{\ast}(\operatorname{Hess}_x v_d )(t,x)
	\big) 
	\\&+
	\big\langle(\nabla_x v_d)(t,x), \mu_d(x) \big\rangle_{\R^d}
\end{split}
\end{equation}
for $(t,x) \in (0,T) \times \R^{d}$
and that
there exist $\mathfrak{C} \in (0,\infty)$, $(\psi_{d, \varepsilon})_{d \in \N, \,\varepsilon \in (0,1]} \subseteq \mathcal{N}$ such that
for all $d \in \N$, $\varepsilon \in (0,1]$ it holds that
$
	\mathcal{P}(\psi_{d, \varepsilon}) 
\leq
	 \mathfrak{C} \, d^{5\theta+1} \, \varepsilon^{-4}
$,
$
	\mathscr{P}(\psi_{d, \varepsilon}) 
\leq
	\mathfrak{C} \, d^{5\theta+1} \, \varepsilon^{-2} 
$,
$
	\mathcal{R}(\psi_{d, \varepsilon}) \in C(\R^{d}, \R)
$,
and
\begin{equation}
\label{Basket_put:eq5}
	\left[
		\int_{\R^d}  
		\left|
			v_d(T,x) - ( \mathcal{R}(\psi_{d, \varepsilon}) ) (x)
		\right|^p \,
		\nu_{d}(dx)
	\right]^{\nicefrac{1}{p}} 
\leq
		\varepsilon.
\end{equation}
Corollary~\ref{BS_endvalue} hence assures that there exist unique continuous functions 
$
	u_d\colon [0,T] \times \R^{d} \to \R
$, $d \in \N$,
which satisfy that 
for all $d \in \N$, $x = (x_1, x_2, \ldots,\allowbreak x_d) \allowbreak\in \R^{d}$ it holds that 
\begin{equation}
\begin{split}
	u_d(T,x)
=  
	\varphi_d(x) 
= 
	\max \{ K - (c_{d, 1} x_1 +  c_{d, 2} x_2 + \ldots +  c_{d, d} x_d),  0 \},
\end{split}
\end{equation}
which satisfy 
for all $d \in \N$ that
$
	\inf_{q \in (0,\infty)} 
	\sup_{(t, x) \in [0, T] \times \R^d} 
	\frac{ | u_d(t, x) | }{ 1 + \norm{x}_{\R^d}^q }
<
	\infty
$,
and which satisfy 
for all $d \in \N$ that $u_d|_{(0,T) \times \R^{d}}$ is a viscosity solution of
\begin{equation}
\label{Basket_put:eq6}
\begin{split}
	&(\tfrac{\partial }{\partial t}u_d)(t,x) 
	+
	\big\langle(\nabla_x u_d)(t,x), \mu_d(x) \big\rangle_{\R^d}
\\
	&+ 
	\tfrac{1}{2} 
	\operatorname{Trace}\! \big( 
		\sigma_d(x)[\sigma_d(x)]^{\ast}(\operatorname{Hess}_x u_d )(t,x)
	\big) 
=
	0
\end{split}
\end{equation}
for $(t,x) \in (0,T) \times \R^{d}$
and that it holds
for all $d \in \N$, $\varepsilon \in (0,1]$ that
\begin{equation}
\label{Basket_put:eq7}
\begin{split}
	&\left[
		\int_{\R^d}  
		\left|
			u_d(0,x) - ( \mathcal{R}(\psi_{d, \varepsilon}) ) (x)
		\right|^p \,
		\nu_{d}(dx)
	\right]^{\nicefrac{1}{p}}
\leq
	\varepsilon.
\end{split}
\end{equation}
Combining this with the fact that
for all $d \in \N$, $\varepsilon \in (0,1]$ it holds that
$
	\mathcal{P}(\psi_{d, \varepsilon}) 
\leq
	\mathfrak{C} \, d^{5\theta+1} \, \varepsilon^{-4}  
$,
$
	\mathscr{P}(\psi_{d, \varepsilon}) 
\leq
	\mathfrak{C} \, d^{5\theta+1} \, \varepsilon^{-2}
$,
and
$
	\mathcal{R}(\psi_{d, \varepsilon}) \in C(\R^{d}, \R)
$
establishes items~\eqref{Basket_put:item1}--\eqref{Basket_put:item2}.
The proof of Proposition~\ref{Basket_put} is thus completed.
\end{proof}

\subsection[ANN approximations for call on max options]{Artificial neural network approximations for call on max options}\label{SectionCallOnMax}

In this subsection we establish in Proposition~\ref{call_on_max} below that ANN approximations overcome the curse of dimensionality in the numerical approximation of the Black-Scholes model in the case of call on max options. Our proof of Proposition~\ref{call_on_max} employs the ANN representation result for the payoff functions associated to call on max options in Lemma~\ref{call_on_max_NN} below. Our proof of Lemma~\ref{call_on_max_NN}, in turn, uses the elementary and essentially well-known facts in Lemmas~\ref{aux_max}--\ref{ComplexityCompution:lem}. For the sake of completeness we also provide in this subsection detailed proofs of Lemmas~\ref{aux_max}--\ref{ComplexityCompution:lem}.

\begin{lemma}
\label{aux_max}
Let $x, y \in \R$ and let 
$
(\cdot)^{+} \colon \R \to [0,\infty)
$ 
be the function which satisfies for all $q \in \R$ that 
$
(q)^{+} = \max \{ q , 0 \}
$.
Then
\begin{enumerate}[(i)]
\item \label{aux_max:item1}
it holds that 
$
	x = (x)^{+} - (-x)^{+}
$
and

\item \label{aux_max:item2}
it holds that 
$
	\max \{ x , y \} = (x - y)^{+} + y
$.
\item \label{aux_max:item3}
it holds that 
$
\min \{ x , y \} = - (x - y)^{+} + x
$.
\end{enumerate}
\end{lemma}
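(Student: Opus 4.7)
The plan is to prove each of the three items by an elementary case distinction on the sign of the relevant quantity, exploiting only the definition $(q)^+ = \max\{q,0\}$. All three statements are elementary, so there is no real obstacle; the work is purely a matter of bookkeeping.

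For item~\eqref{aux_max:item1} I would split into the two cases $x \geq 0$ and $x < 0$. In the first case, $(x)^+ = x$ and $(-x)^+ = \max\{-x,0\} = 0$, so $(x)^+ - (-x)^+ = x - 0 = x$. In the second case, $(x)^+ = 0$ and $(-x)^+ = -x$ (since $-x > 0$), so $(x)^+ - (-x)^+ = 0 - (-x) = x$. In both cases the identity holds.

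For item~\eqref{aux_max:item2} I would again distinguish two cases, depending on the sign of $x - y$. If $x \geq y$, then $x - y \geq 0$, hence $(x-y)^+ = x - y$, and therefore $(x-y)^+ + y = (x-y) + y = x = \max\{x,y\}$. If $x < y$, then $x - y < 0$, hence $(x-y)^+ = 0$, and therefore $(x-y)^+ + y = y = \max\{x,y\}$. For item~\eqref{aux_max:item3} the cleanest route is either to run the analogous case distinction directly or to deduce it from item~\eqref{aux_max:item2} via the identity $\min\{x,y\} = x + y - \max\{x,y\}$, which yields $\min\{x,y\} = x + y - ((x-y)^+ + y) = x - (x-y)^+$. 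Either way the computation is a one-liner. The only thing to be careful about is to write the case distinction unambiguously (using $x \geq y$ vs.\ $x < y$, say, so the cases are disjoint and exhaustive) and to state explicitly how $(x-y)^+$ evaluates in each case before substituting.
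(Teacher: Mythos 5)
Your proof is correct. The paper verifies item~\eqref{aux_max:item1} with an indicator-function case distinction (equivalent to yours) and handles items~\eqref{aux_max:item2} and~\eqref{aux_max:item3} by invoking the algebraic identities $\max\{x-y,0\}+y=\max\{x,y\}$ and $-\max\{-y,-x\}=\min\{x,y\}$ directly rather than splitting into cases, but this is a purely cosmetic difference in presentation; both routes are elementary one-liners and establish the same thing.
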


\begin{proof}[Proof of Lemma~\ref{aux_max}]
Observe that
\begin{equation}
\begin{split}
(x)^{+} - (-x)^{+}&=\max \{ x , 0 \}-\max \{ -x , 0 \}
\\&=[x-0]\mathbbm{1}_{[0,\infty)}(x)+[0-(-x)]\mathbbm{1}_{(-\infty,0)}(x)
=x.
\end{split}
\end{equation}
This establishes item~\eqref{aux_max:item1}. Next note that 
\begin{equation}
	(x - y)^{+} + y=\max \{ x-y , 0 \}+y=\max \{ x , y \}.
\end{equation}
This proves item~\eqref{aux_max:item2}. Moreover, observe that
\begin{equation}
\begin{split}
-(x - y)^{+} + x&=-(\max \{ x-y , 0 \}-x)=-\max \{ -y , -x \}\\&=\min \{ y , x \}.
\end{split}
\end{equation}
This establishes item~\eqref{aux_max:item3}.
The proof of Lemma~\ref{aux_max} is thus completed.
\end{proof}


\begin{lemma}\label{ComplexityCompution:lem}
	Let $(a_n)_{n\in\N}\subseteq [0,\infty)$ be the sequence which satisfies for all $n\in\N$ that
	\begin{equation}
	\begin{split}
	a_n&=(2(n-1) + 1)(n + 1)\\
	&\quad	+
	\left[
	\smallsum_{k = 1}^{n-1} (2 (n-(k+1)) + 1) (2 (n-k) + 1 + 1)
	\right]
	+
	1(1+1).
	\end{split}
	\end{equation}
	Then it holds for all $n\in\N$ that 
	\begin{equation}
	a_n\le 
	6 n^3. 
	\end{equation}
\end{lemma}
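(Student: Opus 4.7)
The plan is to compute $a_n$ in closed form by expanding the sum and using the standard formulas $\sum_{j=1}^{m}j = \tfrac{m(m+1)}{2}$ and $\sum_{j=1}^{m}j^2 = \tfrac{m(m+1)(2m+1)}{6}$, and then to verify the cubic bound $a_n \le 6n^3$ by an elementary polynomial inequality.

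First, I would reindex the sum in the definition of $a_n$. Setting $j = n-k$ in the sum transforms it to
\begin{equation*}
\smallsum_{k=1}^{n-1}(2(n-k-1)+1)(2(n-k)+2) = \smallsum_{j=1}^{n-1}(2j-1)(2j+2) = \smallsum_{j=1}^{n-1}(4j^2+2j-2).
\end{equation*}
Combining this with the remaining two terms $(2(n-1)+1)(n+1)=2n^2+n-1$ and $1(1+1)=2$, I obtain
\begin{equation*}
a_n = 2n^2 + n + 1 + 4\!\smallsum_{j=1}^{n-1}j^2 + 2\!\smallsum_{j=1}^{n-1}j - 2(n-1).
\end{equation*}
Applying the standard formulas for the sums of $j$ and $j^2$ (and noting $(n-1)n(2n-1)=2n^3-3n^2+n$), a direct calculation then yields the closed form $a_n = \tfrac{1}{3}(4n^3 + 3n^2 - 4n + 9)$.

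It then remains to show that $\tfrac{1}{3}(4n^3+3n^2-4n+9)\le 6n^3$, which is equivalent to $0 \le 14n^3 - 3n^2 + 4n - 9$ for all $n\in\N$. For $n=1$ this reads $0 \le 6$, and for $n\ge 2$ one has $14n^3 - 3n^2 \ge 14n^3 - 3n^3 = 11n^3 \ge 11\cdot 8 > 9$, so the inequality is trivially satisfied. (Alternatively one may simply check $n=1$ by hand, giving $a_1 = 1\cdot 2 + 0 + 2 = 4 \le 6$, and argue that for $n\ge 2$ the leading coefficient comparison $4/3 \le 6$ combined with an elementary estimate of the lower-order terms closes the bound.)

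No real obstacle is anticipated: the proof is a routine simplification followed by an elementary polynomial comparison, and the bound $6n^3$ is comfortably loose. The only place where care is needed is the reindexing of the sum and the correct handling of the $n=1$ case, in which the sum is empty.
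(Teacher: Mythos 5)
Your proposal is correct and follows essentially the same route as the paper: expand the sum via the standard formulas for $\sum j$ and $\sum j^2$, arrive at the closed form $a_n = \tfrac{1}{3}(4n^3 + 3n^2 - 4n + 9)$ (which matches the paper's $\tfrac{4}{3}n^3 + n^2 - \tfrac{4}{3}n + 3$), and then compare with $6n^3$. Your reindexing $j = n-k$ is a cleaner way to organize the computation than the paper's direct expansion of $(n-k)^2$, but it is a cosmetic improvement rather than a different argument.
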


\begin{proof}[Proof of Lemma~\ref{ComplexityCompution:lem}]
Observe that for all $n\in\N$ it holds that
	\begin{equation}
	\begin{split}
	a_n
	&=
	(2(n-1) + 1)(n + 1)  \\
	&\quad	
	+
	\left[
	\smallsum_{k = 1}^{n-1} (2 (n-(k+1)) + 1) (2 (n-k) + 1 + 1)
	\right]
	+
	1(1+1) \\
	&=
	(2n - 1)(n + 1) 
	+
	\left[
	\smallsum_{k = 1}^{n-1} (2 (n-k) - 1) (2 (n-k) + 2)
	\right]
	+
	2 \\
	&=
	2n^2 + n - 1 
	+
	\left[
	\smallsum_{k = 1}^{n-1} 4(n-k)^2 + 2 (n-k) - 2
	\right]
	+
	2 \\
	&=
	2n^2 + n + 1 
	+
	\left[
	\smallsum_{k = 1}^{n-1} 4n^2 - 8 n k + 4k^2 + 2 n- 2 k - 2
	\right] \\
	&=
	2n^2 + n + 1 
	+
	4n^2(n-1) - 
	8 n 
	\left[
	\smallsum_{k = 1}^{n-1} k
	\right]
	+ 
	4
	\left[
	\smallsum_{k = 1}^{n-1} k^2
	\right]  \\
	&\quad
	+ 
	2 n (n-1)
	-
	2
	\left[
	\smallsum_{k = 1}^{n-1} k
	\right] 
	- 
	2(n-1).
	\end{split}
	\end{equation}
	The fact that for all $n\in\N$ it holds that 
\begin{equation}
		\smallsum_{k = 1}^{n-1} k
		=
		\frac{(n-1)n}{2}
		\qandq 
			\smallsum_{k = 1}^{n-1} k^2
			=
			\frac{(n-1)n(2n - 1)}{6}
\end{equation}
	therefore assures for all $n\in\N$ that
	\begin{equation}
	\begin{split}
	a_n
	&=
	2n^2 + n + 1 
	+
	4n^2(n-1) 
	- 
	8 n 
	\left[
	\tfrac{(n-1)n}{2}
	\right]
	+ 
	4
	\left[
	\tfrac{(n-1)n(2n - 1)}{6}
	\right]  \\
	&\quad
	+ 
	2 n (n-1)
	-
	2
	\left[
	\tfrac{(n-1)n}{2}
	\right] 
	- 
	2 (n-1) \\
	&=
	2n^2 + n + 1 
	+ 4 n^3
	-  4 n^2 
	- 
	\left[
	4 n^3 - 4 n^2
	\right] \\
	&\quad
	+ 
	\left[
	\tfrac{8}{6} n^3 + \tfrac{4 (-3)}{6} n^2 + \tfrac{4}{6} n
	\right]  
	+ 2 n^2
	-  2 n
	- \left[ n^2 - n \right]
	- 2 n
	+ 2 \\
	&=
	\left(4 - 4 + \tfrac{4}{3} \right)  n^3
	+
	\left(2 - 4 + 4 -2 + 2 - 1\right) n^2 \\
	&\quad
	+
	\left(1 + \tfrac{2}{3} - 2  + 1 - 2\right)  n
	+
	1 + 2 \\
	&\leq
	\tfrac{4}{3}  n^3
	+
	n^2
	+
	3
	\leq
	(\tfrac{4}{3} + 1 + 3) n^3
	\leq 
	6 n^3.
	\end{split}
	\end{equation}
	The proof of Lemma~\ref{ComplexityCompution:lem} is thus completed.
\end{proof}

\begin{lemma}
\label{call_on_max_NN}
Assume Setting~\ref{BS_setting} and
let $(K_d)_{d \in \N}, (c_{d, i})_{d \in \N, i \in \{1, 2, \ldots, d \}} \subseteq \R$. 
Then 
there exists $(\phi_d)_{d \in \N} \subseteq \mathcal{N}$ such that 
for all $d \in \N$, $x = (x_1,x_2, \ldots, x_d) \in \R^d$ it holds that
$
	\mathcal{P}(\phi_d) 
\leq 
	6 d^3
$, 
$
	\mathcal{R}(\phi_d) \in C(\R^d, \R)
$,
and
\begin{equation}
	(\mathcal{R}(\phi_d))(x)
=
	 \max \{ \max\{c_{d, 1} x_1, c_{d, 2} x_2, \ldots,  c_{d, d} x_d \} - K_d ,  0 \}.
\end{equation}
\end{lemma}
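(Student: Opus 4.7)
The plan is to exploit that, in Setting~\ref{BS_setting}, the activation $\mathbf{a}$ is the ReLU $s\mapsto\max\{s,0\}$, and then to build, for each $d\in\N$, a ReLU network $\phi_d\in\mathcal{N}$ whose realization equals the desired payoff $x\mapsto(\max_{i\in\{1,\ldots,d\}} c_{d,i}x_i - K_d)^+$ by iteratively peeling off one coordinate at a time. The two algebraic ingredients are the identities in Lemma~\ref{aux_max}: the pairwise rewriting $\max\{a,b\}=(a-b)^++b$, which advances the running maximum using a single ReLU neuron, and the sign decomposition $y=(y)^+-(-y)^+$, which lets an arbitrary real number survive a subsequent activation by being stored as the coefficient combination of two ReLU outputs.

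Letting $M_k=\max\{c_{d,1}x_1,\ldots,c_{d,k}x_k\}$, the plan is to arrange layer $k\in\{1,\ldots,d-1\}$ so that its post-activation vector consists of the single neuron $(M_k-c_{d,k+1}x_{k+1})^+$ together with the $2(d-k)$ carry-forward neurons $\{(c_{d,j}x_j)^+,(-c_{d,j}x_j)^+:j\in\{k+1,\ldots,d\}\}$. The affine map from layer $k$ to layer $k+1$ then reconstructs $M_{k+1}=(M_k-c_{d,k+1}x_{k+1})^++(c_{d,k+1}x_{k+1})^+-(-c_{d,k+1}x_{k+1})^+$ and forms the pre-activation $M_{k+1}-c_{d,k+2}x_{k+2}$, while the carry-forward neurons for $j\geq k+2$ are simply copied. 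At layer $d$ the single neuron is $(M_d-K_d)^+$, and the terminal affine map at layer $d+1$ reads it out unchanged. Continuity of $\mathcal{R}(\phi_d)$ follows from continuity of the ReLU and \eqref{BS_setting:eq3}; the degenerate case $d=1$ collapses the chain to layer 1 being simply $(c_{1,1}x_1-K_1)^+$ with output layer the identity, and fits the same generic formulas.

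The architecture is thus $(l_0,l_1,\ldots,l_{d+1})=(d,2d-1,2d-3,\ldots,3,1,1)$, i.e.\ $l_0=d$, $l_k=2(d-k)+1$ for $k\in\{1,\ldots,d\}$, $l_{d+1}=1$, and $\mathcal{L}=d+1$. Substituting into $\mathcal{P}(\phi_d)=\sum_{k=1}^{\mathcal{L}}l_k(l_{k-1}+1)$ produces precisely the quantity $a_d$ appearing in Lemma~\ref{ComplexityCompution:lem}: the first term is $l_1(l_0+1)=(2d-1)(d+1)$, the middle terms are $\sum_{k=1}^{d-1}l_{k+1}(l_k+1)=\sum_{k=1}^{d-1}(2(d-k)-1)(2(d-k)+2)$, and the last term is $l_{d+1}(l_d+1)=1\cdot 2$. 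Hence $\mathcal{P}(\phi_d)=a_d\leq 6d^3$ by Lemma~\ref{ComplexityCompution:lem}.

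The main obstacle is purely bookkeeping: writing the sparse weight matrices $W_k$ and bias vectors $B_k$ that implement the prescribed post-activation vectors (each $W_k$ has entries in $\{-1,0,1\}\cup\{c_{d,\ast}\}$, and only $B_d$ is nonzero, with a single entry $-K_d$) and verifying by induction on $k$ that the $k$-th hidden layer output indeed has the claimed form. Once that induction is set up, items~\eqref{aux_max:item1} and~\eqref{aux_max:item2} of Lemma~\ref{aux_max} yield the inductive step automatically, and no further estimates beyond direct substitution into Lemma~\ref{ComplexityCompution:lem} are needed.
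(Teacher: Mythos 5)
Your proposal is correct and follows essentially the same construction as the paper's proof: the same $(d, 2d-1, 2d-3, \ldots, 3, 1, 1)$ ReLU architecture that maintains the running-maximum tracker $(M_k - c_{d,k+1}x_{k+1})^+$ alongside carry-forward pairs $(\pm c_{d,j}x_j)^+$, driven by the two identities of Lemma~\ref{aux_max}, with the parameter count reduced to Lemma~\ref{ComplexityCompution:lem}. The only (cosmetic) divergence is that you explicitly handle the degenerate $d=1$ case, which the paper leaves implicit.
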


\begin{proof}[Proof of Lemma~\ref{call_on_max_NN}] 
Throughout this proof 
let $d \in \N$,
let 
\begin{equation}
\label{call_on_max_NN:setting1}
\begin{split}
	\phi &=  ((W_1, B_1), (W_2, B_2), \ldots, (W_d, B_d), (W_{d+1}, B_{d+1})) \\
&\in 
	(\R^{(2(d-1) + 1) \times d} \times \R^{2(d-1) + 1} ) \\
&\quad	\times 
	(\times_{k = 1}^{d-1} (\R^{(2(d-k) - 1) \times (2(d-k) + 1) } \times \R^{2(d-k) - 1})) \\
&\quad	\times 
	(\R^{1 \times 1} \times \R^{1} )
\end{split}
\end{equation}
(i.e. $\phi$ corresponds to fully connected feedforward artificial neural network with $d+2$ layers with dimensions $(d, 2(d-1) + 1, 2(d-2) + 1, 2(d-3) + 1, \ldots, 3, 1, 1)$)
satisfy for all $k \in \{1, 2, \ldots, {d-2} \}$ that
\begin{equation}
\label{call_on_max_NN:setting2}
	W_1 
= 
	\begin{pmatrix}
		c_{d,1} &-c_{d,2} &0 &\cdots &0 \\
		0 &c_{d,2} &0 &\cdots &0 \\
		0 &-c_{d,2} &0 &\cdots &0 \\
		0 &0 &c_{d,3} &\cdots &0 \\
		0 &0 &-c_{d,3} &\cdots &0 \\
		\vdots &\vdots &\vdots &\ddots &\vdots \\
		0 &0 &0 &\cdots &c_{d, d} \\
		0 &0 &0 &\cdots &-c_{d, d} 
	\end{pmatrix}
\in 
	\R^{(2(d-1) + 1) \times d},
\end{equation}
\begin{equation}
\label{call_on_max_NN:setting3}
\begin{split}
	W_{k+1} 
&= 
	\begin{pmatrix}
		1 &1 &-1 &-1 &1 &0 &0 &\dots &0 &0 \\
		0 &0 &0 &1 &-1 &0 &0 &\dots &0 &0 \\
		0 &0 &0 &-1 &1 &0 &0 &\dots &0 &0 \\
		0 &0 &0 &0 &0 &1 &-1 &\dots &0 &0 \\
		0 &0 &0 &0 &0 &-1 &1 &\dots &0 &0 \\
		\vdots &\vdots &\vdots &\vdots &\vdots &\vdots &\vdots &\ddots &\vdots &\vdots \\
		0 &0 &0 &0 &0 &0 &0 &\dots &1 &-1 \\
		0 &0 &0 &0 &0 &0 &0 &\dots &-1 &1 \\
	\end{pmatrix} \\
&\in 
	\R^{(2(d-k) - 1) \times (2(d-k) + 1) },
\end{split}
\end{equation}
\begin{equation}
\label{call_on_max_NN:setting4}
	W_d
=
	\begin{pmatrix}
		1 &1 &-1	
	\end{pmatrix},
\qquad
	W_{d+1} 
= 
	\begin{pmatrix}
		1
	\end{pmatrix}
\in \R^{1 \times 1},
\qquad
	B_1 = 0,  
\end{equation}
\begin{equation}
\label{call_on_max_NN:setting5}
	B_2 = 0,
\quad\ldots,
	B_{d-1} = 0, 
\qquad
	B_{d} = -K_d,
\qandq 
B_{d+1} = 0,
\end{equation}
let $x = (x_1, x_2, \ldots, x_d) \in \R^d$, 
let $z_0 \in \R^d, z_1 \in \R^{2(d-1) - 1}, z_{2} \in \R^{2(d-2) + 1}, \ldots,\allowbreak z_{d-1} \allowbreak\in \R^{3}, z_d \in \R, z_{d+1} \in \R$ satisfy 
for all $k \in \{1, 2, \ldots, {d-1} \}$ that 
$
	z_0 = x
$,
$
	z_k = \mathbf{A}_{2(d-k) + 1} (W_k z_{k-1} + B_k)
$, 
$
	z_d = \mathbf{A}_{1} (W_d z_{d-1} + B_d)
$, and
\begin{equation}
\label{call_on_max_NN:setting6}
	z_{d+1} =  W_{d+1} z_d + B_{d+1},
\end{equation}
and let 
$
(\cdot)^{+} \colon \R \to [0,\infty)$ be the function which satisfies for all $q \in \R$ that 
$
(q)^{+} = \max \{ q , 0 \}
$.
Note that \eqref{BS_setting:eq3}, \eqref{call_on_max_NN:setting1}, and \eqref{call_on_max_NN:setting6} imply that 
$
	(\mathcal{R}(\phi)) \in C(\R^d, \R)
$
and
\begin{equation}
\label{call_on_max_NN:eq1}
	z_{d+1}
=
	(\mathcal{R}(\phi))(x).
\end{equation}
Next we claim that 
for all $k \in \{1, 2, \ldots, {d-1} \}$ it holds that
\begin{equation}
\label{call_on_max_NN:eq2}
	z_k
=
\begin{pmatrix}
	\big(\max \{c_{d, 1} x_1, c_{d, 2} x_2, \ldots, c_{d, k} x_k \} - c_{d, k+1} x_{k+1} \big)^{+} \\
	(c_{d, k+1} x_{k+1} )^{+} \\
	(-c_{d, k+1} x_{k+1} )^{+} \\
	(c_{d, k+2} x_{k+2} )^{+} \\
	(-c_{d, k+2} x_{k+2} )^{+} \\
	\vdots \\
	(c_{d, d} x_{d} )^{+} \\
	(-c_{d, d} x_{d} )^{+} 
\end{pmatrix}.
\end{equation}
We now prove \eqref{call_on_max_NN:eq2} by induction on $k \in \{1, 2, \ldots, {d-1} \}$.
For the base case $k = 1$ note that \eqref{BS_setting:eq2}, \eqref{call_on_max_NN:setting2}, \eqref{call_on_max_NN:setting4}, and \eqref{call_on_max_NN:setting6} assure that 
\begin{equation}
\begin{split}
	z_1
&=
	\mathbf{A}_{2(d-1) + 1} (W_1 z_0 + B_1) 
=
	\mathbf{A}_{2(d-1) + 1} (W_1 x) \\
&=
	\mathbf{A}_{2(d-1) + 1}
	\begin{pmatrix}
		c_{d, 1} x_1 - c_{d, 2} x_2 \\
		c_{d, 2} x_{2}   \\
		-c_{d, 2} x_{2}  \\
		c_{d, 3} x_{3}  \\
		-c_{d, 3} x_{3} \\
		\vdots \\
		c_{d, d} x_{d}\\
		-c_{d, d} x_{d} 
	\end{pmatrix}
=
	\begin{pmatrix}
		\big(\max \{c_{d, 1} x_1 \} - c_{d, 2} x_{2} \big)^{+} \\
		(c_{d, 2} x_{2} )^{+} \\
		(-c_{d, 2} x_{2} )^{+} \\
		(c_{d, 3} x_{3} )^{+} \\
		(-c_{d, 3} x_{3} )^{+} \\
		\vdots \\
		(c_{d, d} x_{d} )^{+} \\
		(-c_{d, d} x_{d} )^{+} 
	\end{pmatrix}.
\end{split}
\end{equation}
This establishes \eqref{call_on_max_NN:eq2} in the base case $k = 1$.
Next note that item \eqref{aux_max:item2} in Lemma~\ref{aux_max} implies that for all $a,b,c\in\R$ it holds that 
\begin{equation}\label{aux_max:Consequence}
(b-a)^++a-c=\max\{a,b\}-c.
\end{equation} 
For the induction step $ \{1, 2, \ldots,\allowbreak {d-2} \} \ni k \to k+1 \in \{2, 3, \ldots, {d-1} \}$ observe that 
\eqref{BS_setting:eq2}, \eqref{call_on_max_NN:setting3}, \eqref{call_on_max_NN:setting5}, \eqref{call_on_max_NN:setting6},
\eqref{aux_max:Consequence} (with $a=c_{d, k+1} x_{k+1}$, $b=\max \{c_{d, 1} x_1, c_{d, 2} x_2, \ldots, c_{d, k} x_k \}$, $c=c_{d, k+2} x_{k+2}$ in the notation of \eqref{aux_max:Consequence}),
and item \eqref{aux_max:item1} in Lemma~\ref{aux_max} demonstrate that 
for all $k \in \{1, 2, \ldots, {d-2} \}$ with 
\begin{equation}
	z_k
=
\begin{pmatrix}
	\big(\max \{c_{d, 1} x_1, c_{d, 2} x_2, \ldots, c_{d, k} x_k \} - c_{d, k+1} x_{k+1} \big)^{+} \\
	(c_{d, k+1} x_{k+1} )^{+} \\
	(-c_{d, k+1} x_{k+1} )^{+} \\
	(c_{d, k+2} x_{k+2} )^{+} \\
	(-c_{d, k+2} x_{k+2} )^{+} \\
	\vdots \\
	(c_{d, d} x_{d} )^{+} \\
	(-c_{d, d} x_{d} )^{+} 
\end{pmatrix}
\end{equation}
it holds that
\begin{equation}
\begin{split}
	&z_{k+1} 
= 
	\mathbf{A}_{2(d-(k+1)) + 1} (W_{k+1} z_k + B_{k+1})
= 
	\mathbf{A}_{2(d-(k+1)) + 1} (W_{k+1} z_k ) \\
&=
	\mathbf{A}_{2(d-(k+1)) + 1}
	\begin{pmatrix}
		\left[
			\substack{ (\max \{c_{d, 1} x_1, \ldots, c_{d, k} x_k \}  - c_{d, k+1} x_{k+1} )^{+} \\
				+ (c_{d, k+1} x_{k+1} )^{+} - (-c_{d, k+1} x_{k+1} )^{+} - (c_{d, k+2} x_{k+2} )^{+} + (-c_{d, k+2} x_{k+2} )^{+}}
		\right] \\
		(c_{d, k+2} x_{k+2} )^{+} - (-c_{d, k+2} x_{k+2} )^{+} \\
		-(c_{d, k+2} x_{k+2} )^{+} + (-c_{d, k+2} x_{k+2} )^{+} \\
		(c_{d, k+3} x_{k+3} )^{+} - (-c_{d, k+3} x_{k+3} )^{+} \\
		-(c_{d, k+3} x_{k+3} )^{+} + (-c_{d, k+3} x_{k+3} )^{+} \\
		\vdots\\
		(c_{d, d} x_{d} )^{+} - (-c_{d, d} x_{d} )^{+} \\
		-(c_{d, d} x_{d} )^{+} + (-c_{d, d} x_{d} )^{+} 
	\end{pmatrix}\\
&=
	\mathbf{A}_{2(d-(k+1)) + 1}
	\begin{pmatrix}
			\substack{ (\max \{c_{d, 1} x_1, \ldots, c_{d, k} x_k \}  - c_{d, k+1} x_{k+1} )^{+} + c_{d, k+1} x_{k+1} 
				 - c_{d, k+2} x_{k+2} } \\
		c_{d, k+2} x_{k+2}  \\
		-c_{d, k+2} x_{k+2} \\
		c_{d, k+3} x_{k+3} \\
		-c_{d, k+3} x_{k+3} \\
		\vdots\\
		c_{d, d} x_{d} \\
		-c_{d, d} x_{d}  
	\end{pmatrix}\\
&=
	\begin{pmatrix}
		 \big(\max \{c_{d, 1} x_1,  \ldots, c_{d, k} x_k,  c_{d, k+1} x_{k+1} \} - c_{d, k+2} x_{k+2} \big)^{+} \\
		(c_{d, k+2} x_{k+2} )^{+} \\
		(-c_{d, k+2} x_{k+2} )^{+} \\
		(c_{d, k+3} x_{k+3} )^{+} \\
		(-c_{d, k+3} x_{k+3} )^{+} \\
		\vdots \\
		(c_{d, d} x_{d} )^{+} \\
		(-c_{d, d} x_{d} )^{+} 
	\end{pmatrix}.
\end{split}
\end{equation}
Induction thus proves \eqref{call_on_max_NN:eq2}.
Next observe that \eqref{call_on_max_NN:setting4}, \eqref{call_on_max_NN:setting5}, \eqref{call_on_max_NN:setting6}, \eqref{call_on_max_NN:eq2}, and item \eqref{aux_max:item2} in Lemma~\ref{aux_max} imply that 
\begin{equation}
\begin{split}
	z_d 
&= 
	\mathbf{A}_{1} (W_d z_{d-1} + B_d) \\
&=
	\mathbf{A}_{1}
	\left(
		\begin{pmatrix}
			1 &1 &-1 
		\end{pmatrix}
		\begin{pmatrix}
		 	\big(\max \{c_{d, 1} x_1,  \ldots,  c_{d, d-1} x_{d-1} \} - c_{d, d} x_{d} \big)^{+} \\
			(c_{d, d} x_{d} )^{+} \\
			(-c_{d, d} x_{d} )^{+} 
		\end{pmatrix}
		-
		K_d
	\right) \\
&=
	\mathbf{A}_{1}
	\Big(
		 \big(\max \{c_{d, 1} x_1,  \ldots,  c_{d, d-1} x_{d-1} \} - c_{d, d} x_{d} \big)^{+} \\
&\quad		 +
		(c_{d, d} x_{d} )^{+}
		-
		(-c_{d, d} x_{d} )^{+} 
		-
		K_d
	\Big) \\
&=
	\mathbf{A}_{1}
	\Big(
		 \big(\max \{c_{d, 1} x_1,  \ldots,  c_{d, d-1} x_{d-1} \} - c_{d, d} x_{d} \big)^{+} 
		 +
		c_{d, d} x_{d}
		-
		K_d
	\Big) \\
&=
	\mathbf{A}_{1}
	\big(
		 \max \{c_{d, 1} x_1,  \ldots,  c_{d, d-1} x_{d-1}, c_{d, d} x_{d} \} 
		-
		K_d
	\big) \\
&=
	\max \{ \max\{c_{d, 1} x_1, \ldots,  c_{d, d} x_d \} - K_d ,  0 \}.
\end{split}
\end{equation}
Combining this with \eqref{call_on_max_NN:setting4}--\eqref{call_on_max_NN:eq1} establishes that
\begin{equation}
\label{call_on_max_NN:eq3}
\begin{split}
	(\mathcal{R}(\phi))(x)
&=
	z_{d+1} 
=  
	W_{d+1} z_d + B_{d+1} \\
&=
	z_d
=
	\max \{ \max\{c_{d, 1} x_1, \ldots,  c_{d, d} x_d \} - K_d ,  0 \}.
\end{split}
\end{equation}
In addition, observe that Lemma~\ref{ComplexityCompution:lem} implies that 
\begin{equation}
	\begin{split}
		\mathcal{P}(\phi)
	&=
		(2(d-1) + 1)(d + 1)  \\
	&\quad	
		+
		\left[
			\smallsum_{k = 1}^{d-1} (2 (d-(k+1)) + 1) (2 (d-k) + 1 + 1)
		\right]
		+
		1(1+1) \\
	&\leq 
		6 d^3.
	\end{split}
\end{equation}
Combining this, \eqref{call_on_max_NN:eq1}, and \eqref{call_on_max_NN:eq3} completes the proof of Lemma~\ref{call_on_max_NN}.
\end{proof}

\begin{prop}
\label{call_on_max}
Assume Setting~\ref{BS_setting} and let $(K_d)_{d \in \N}, (c_{d, i})_{d \in \N, i \in \{1, 2, \ldots, d \}} \subseteq [0,\infty)$ satisfy that
$
	\sup_{ d \in \N , i \in \{1, 2, \ldots, d \}} c_{d, i}
<
	\infty.
$
Then  
\begin{enumerate}[(i)]
\item \label{call_on_max:item1}
there exist unique continuous functions $u_d\colon [0,T] \allowbreak\times \R^{d} \to \R$, $d \in \N$, which satisfy
for all $d \in \N$, $x = (x_1, x_2, \ldots, x_d) \in \R^{d}$ that 
$
	u_d(T,x) = \max \{ \max\{c_{d, 1} x_1, c_{d, 2} x_2, \ldots,  c_{d, d} x_d \} - K_d ,  0 \}
$,
which satisfy 
for all $d \in \N$ that
$
	\inf_{q \in (0,\infty)} 
	\sup_{(t, x) \in [0, T] \times \R^d} 
	\frac{ | u_d(t, x) | }{ 1 + \norm{x}_{\R^d}^q }
<
	\infty
$,
and which satisfy for all $d \in \N$ that $u_d|_{(0,T) \times \R^{d}}$ is a viscosity solution of
\begin{equation}
\begin{split}
	&(\tfrac{\partial }{\partial t}u_d)(t,x) 
	+
	\big\langle(\nabla_x u_d)(t,x), \mu_d(x) \big\rangle_{\R^d}
\\
	&+ 
	\tfrac{1}{2} 
	\operatorname{Trace}\! \big( 
		\sigma_d(x)[\sigma_d(x)]^{\ast}(\operatorname{Hess}_x u_d )(t,x)
	\big) 
=
	0
\end{split}
\end{equation}
for $(t,x) \in (0,T) \times \R^{d}$
and

\item \label{call_on_max:item2}
there exist $\mathfrak{C} \in (0,\infty)$, $(\psi_{d, \varepsilon})_{d \in \N, \,\varepsilon \in (0,1]} \subseteq \mathcal{N}$ such that
for all $d \in \N$, $\varepsilon \in (0,1]$ it holds that
$
	\mathcal{P}(\psi_{d, \varepsilon}) 
\leq
	\mathfrak{C} \, d^{5\theta+3} \, \varepsilon^{-4}
$,
$
	\mathscr{P}(\psi_{d, \varepsilon}) 
\leq
	\mathfrak{C} \, d^{5\theta+3} \, \varepsilon^{-2}
$,
$
	\mathcal{R}(\psi_{d, \varepsilon}) \in C(\R^{d}, \R)
$,
and
\begin{equation}
\label{call_on_max:concl1} 
	\left[
		\int_{\R^d}  
		\left|
			u_d(0,x) - ( \mathcal{R}(\psi_{d, \varepsilon}) ) (x)
		\right|^p \,
		\nu_{d}(dx)
	\right]^{\nicefrac{1}{p}} 
\leq
		\varepsilon.
\end{equation}
\end{enumerate}
\end{prop}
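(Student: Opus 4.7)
The plan is to mirror closely the proof strategy used for Propositions~\ref{Basket_call} and \ref{Basket_put}, combining the explicit ANN representation of the call on max payoff (Lemma~\ref{call_on_max_NN}), the general ANN approximation result for Kolmogorov-type equations with affine coefficients (Theorem~\ref{cont_NN_approx}), and the time-reversal transformation between the forward and backward PDE formulations (Corollary~\ref{BS_endvalue}). The affine and linear growth hypotheses on $\mu_d$ and $\sigma_d$ needed to invoke Theorem~\ref{cont_NN_approx} are provided by Lemma~\ref{BS_properties}.

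First I would define $\varphi_d\colon\R^d\to\R$ by $\varphi_d(x)=\max\{\max\{c_{d,1}x_1,\ldots,c_{d,d}x_d\}-K_d,0\}$ for $x=(x_1,\ldots,x_d)\in\R^d$ and invoke Lemma~\ref{call_on_max_NN} to obtain $(\chi_d)_{d\in\N}\subseteq\mathcal{N}$ with $\mathcal{R}(\chi_d)=\varphi_d\in C(\R^d,\R)$ and $\mathcal{P}(\chi_d)\leq 6d^3$. I would then set $\phi_{d,\delta}=\chi_d$ for all $d\in\N$, $\delta\in(0,1]$, so that the approximation error $|\varphi_d(x)-(\mathcal{R}(\phi_{d,\delta}))(x)|$ vanishes identically and is trivially bounded by $\mathfrak{c}\,d^0\,\delta\,(1+\|x\|_{\R^d}^2)$. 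For the growth bound on $\mathcal{R}(\phi_{d,\delta})$, the hypotheses $K_d\geq 0$ and $c_{d,i}\geq 0$ imply that $0\leq\varphi_d(x)\leq\max_{i}c_{d,i}|x_i|\leq\big[\sup_{d,i}c_{d,i}\big]\|x\|_{\R^d}\leq\mathfrak{c}\,d^0(1+\|x\|_{\R^d}^2)$ for some sufficiently large $\mathfrak{c}\in[1,\infty)$. Finally, $\mathcal{P}(\phi_{d,\delta})\leq 6d^3=6\,d^3\,\delta^{-0}$.

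Next I would apply Theorem~\ref{cont_NN_approx} with the parameters $v=0$, $w=0$, $z=3$, $\mathbf{z}=0$, $\mathbf{v}=2$, $p=p$, $r=R=1$, $\theta=\theta$ and with $\mathfrak{c}$ chosen as the maximum of $6$, $\sup_{d,i}c_{d,i}$, and $2\big[\sup_{d,i}(|\alpha_{d,i}|+|\beta_{d,i}|)\big]$, together with the affine and linear growth conditions on $\mu_d$ and $\sigma_d$ supplied by items \eqref{BS_properties:item1}--\eqref{BS_properties:item3} of Lemma~\ref{BS_properties}. This yields unique continuous functions $v_d\colon[0,T]\times\R^d\to\R$ with $v_d(0,x)=\varphi_d(x)$, appropriate polynomial growth, and satisfying the forward Kolmogorov PDE in the viscosity sense, as well as ANNs $(\psi_{d,\varepsilon})_{d\in\N,\varepsilon\in(0,1]}\subseteq\mathcal{N}$ fulfilling $\mathcal{P}(\psi_{d,\varepsilon})\leq\mathfrak{C}\,d^{(5+0)\theta+3+0\cdot 0+4\cdot 0}\,\varepsilon^{-4-0}=\mathfrak{C}\,d^{5\theta+3}\,\varepsilon^{-4}$, $\mathscr{P}(\psi_{d,\varepsilon})\leq\mathfrak{C}\,d^{5\theta+3}\,\varepsilon^{-2}$, $\mathcal{R}(\psi_{d,\varepsilon})\in C(\R^d,\R)$, and
\begin{equation*}
\Big[\textint_{\R^d}|v_d(T,x)-(\mathcal{R}(\psi_{d,\varepsilon}))(x)|^p\,\nu_d(dx)\Big]^{\nicefrac{1}{p}}\leq\varepsilon.
\end{equation*}

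The final step is to convert the forward formulation for $v_d$ into the desired backward formulation for $u_d$. Applying Corollary~\ref{BS_endvalue} (which establishes the equivalence of the two formulations under the time-reversal $t\mapsto T-t$) with $\varphi=\varphi_d$ and $\psi=\psi_{d,\varepsilon}$ produces the unique continuous functions $u_d\colon[0,T]\times\R^d\to\R$ with $u_d(T,x)=\varphi_d(x)$, the polynomial growth property, the backward Black-Scholes viscosity PDE, and the $L^p(\nu_d)$ error estimate for $u_d(0,\cdot)-\mathcal{R}(\psi_{d,\varepsilon})$. Combining this with the parameter bounds from the previous step establishes items \eqref{call_on_max:item1}--\eqref{call_on_max:item2}. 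The only genuinely new ingredient compared to the basket call/put cases is the cubic cost $\mathcal{P}(\chi_d)\leq 6d^3$ from Lemma~\ref{call_on_max_NN}, which is why the exponent of $d$ in the final bound jumps from $5\theta+1$ to $5\theta+3$; no serious obstacle is expected, as all heavy lifting is absorbed into Lemma~\ref{call_on_max_NN} and Theorem~\ref{cont_NN_approx}.
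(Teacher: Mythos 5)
Your proposal follows exactly the same route as the paper's proof: define $\varphi_d$ as the call-on-max payoff, use Lemma~\ref{call_on_max_NN} to build exact ANN representations $\chi_d$ with $\mathcal{P}(\chi_d)\leq 6d^3$ and set $\phi_{d,\delta}=\chi_d$, verify the growth and zero-error bounds, invoke Theorem~\ref{cont_NN_approx} with the parameter choices $v=w=0$, $z=3$, $\mathbf{z}=0$, $\mathbf{v}=2$, $r=R=1$, and finally transfer the forward solution $v_d$ to the backward solution $u_d$ via Corollary~\ref{BS_endvalue}. The reasoning and the resulting exponent $5\theta+3$ match the paper's proof in full.
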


\begin{proof}[Proof of Proposition~\ref{call_on_max}]
Throughout this proof 
let $ \varphi_d \colon \R^d \to \R$, $d \in \N$, satisfy 
for all $d \in \N$, $x = (x_1, x_2, \ldots, x_d) \in \R^{d}$ that 
\begin{equation}
		\varphi_d(x) = \max \{ \max\{c_{d, 1} x_1, c_{d, 2} x_2, \ldots,  c_{d, d} x_d \} - K_d ,  0 \},
\end{equation}
let $(\chi_d)_{d \in \N}, (\phi_{d, \delta})_{d \in \N, \delta \in (0,1]} \subseteq \mathcal{N}$ satisfy 
for all $d \in \N$, $x \in \R^{d}$, $\delta \in (0,1]$ that
$
	\mathcal{P}(\chi_d) 
\leq 
	6 d^3
$, 
$
	\mathcal{R}(\chi_d) \in C(\R^d, \R)
$,
$
	(\mathcal{R}(\chi_d))(x)
=
	\varphi_d(x)
$
(cf.\ Lemma~\ref{call_on_max_NN}),
and 
$
	\phi_{d, \delta} = \chi_d
$,
and let $C \in [0,\infty)$ be given by
$
	C = \sup_{ d \in \N , i \in \{1, 2, \ldots, d \}} c_{d, i}
$.
Note that 
for all $d \in \N$, $\delta \in (0,1]$ it holds that
\begin{equation}
\label{call_on_max:eq1}
\mathcal{R}(\phi_{d, \delta}) = \mathcal{R}(\chi_{d})=\varphi_d \in C(\R^d, \R).
\end{equation}
This and the fact that for all $d \in \N, i \in \{1, 2, \ldots, d \}$ it holds that $c_{d, i}\in [0,\infty)$ ensures that
for all $d \in \N$, $x = (x_1, x_2, \ldots, x_d) \in \R^{d}$, $\delta \in (0,1]$ it holds that
\begin{equation}
\label{call_on_max:eq2}
\begin{split}
	| (\mathcal{R}(\phi_{d, \delta}))(x) |
&=
	| (\mathcal{R}(\chi_{d}))(x) |
=
	| \varphi_d (x) | \\
&=
	\max \{ \max\{c_{d, 1} x_1, c_{d, 2} x_2, \ldots,  c_{d, d} x_d \} - K_d ,  0 \} \\
&\leq
	 \max \{c_{d, 1} | x_1 |,  c_{d, 2} | x_2 |, \ldots,  c_{d, d} | x_d |  \} \\
&\leq	
	C
	\max \{ | x_1 |,   | x_2 | , \ldots ,  | x_d | \} \\
&\leq
	C\norm{x}_{\R^d} 
\leq
	C d^0 (1 + \norm{x}_{\R^d}^2).
\end{split}
\end{equation}
In addition, observe that 
for all $d \in \N$, $\delta \in (0,1]$ it holds that
\begin{equation}
\label{call_on_max:eq3}
	\mathcal{P}(\phi_{d, \delta}) 
=
	\mathcal{P}(\chi_d) 
\leq 
	6 d^3
=
	6 d^3 \delta^{-0}.
\end{equation}
Furthermore, note that \eqref{call_on_max:eq1} ensures that 
for all $d \in \N$, $x \in \R^{d}$, $\delta \in (0,1]$ it holds that
\begin{equation}
\begin{split}
	\left| 
		\varphi_d(x) - ( \mathcal{R}(\phi_{d, \delta}) )(x)
	\right| 
&=
	\left| 
		\varphi_d(x) - ( \mathcal{R}(\chi_{d}) )(x)
	\right| \\
&=
	\left| 
		\varphi_d(x) - \varphi_d(x)
	\right| 
=
	0
\leq 
	d^0 \delta^0 (1 + \norm{x}_{\R^d}^2).
\end{split}
\end{equation}
Combining this, \eqref{call_on_max:eq1}, \eqref{call_on_max:eq2}, \eqref{call_on_max:eq3}, 
the fact that $(\varphi_d)_{d \in \N}$ are continuous functions, 
the hypothesis that for all $q \in (0,\infty)$ it holds that
\begin{equation}
\sup_{d \in \N}\left[ d^{-\theta q}
\textint_{\R^d}  
\Norm{x}_{\R^d}^{ q } \,
\nu_{ d } (dx)\right]
< 
\infty,
\end{equation}
and Lemma~\ref{BS_properties} 
with 
Theorem~\ref{cont_NN_approx} 
(with 
$ T = T $,
$ r = 1 $,
$ R = 1 $,
$ v = 0 $,
$ w = 0 $,
$ z = 3 $,
$ \mathbf{z} = 0 $,
$\theta=\theta$,
$ \mathfrak{c} = 
	\max \{ 
		6 , C,
		2 
		\left[
			\sup_{d \in \N, i \in \{1, 2, \ldots, d \}} 
				(| \alpha_{d, i} | + | \beta_{d, i} |) 
		\right]
	\}$,
$ \mathbf{v} = 2 $,
$p = p$,
$ \nu_d = \nu_d$,
$ \varphi_d = \varphi_d $,
$ \mu_d = \mu_d $,
$ \sigma_d = \sigma_d $,
$a(x)=\max \{x, 0 \}$,
$ \phi_{d, \delta} = \phi_{d, \delta}$
for $d\in\N$, $x\in \R$, $\delta\in (0,1]$
in the notation of Theorem~\ref{cont_NN_approx})
demonstrates that 
there exist unique continuous functions $v_d\colon [0,T] \times \R^{d} \to \R$, $d \in \N$, which satisfy
for all $d \in \N$, $x \in \R^{d}$ that 
$v_d(0,x) = \varphi_d(x)$,
which satisfy 
for all $d \in \N$ that
$
	\inf_{q \in (0,\infty)} 
	\sup_{(t, x) \in [0, T] \times \R^d} 
	\frac{ | v_d(t, x) | }{ 1 + \norm{x}_{\R^d}^q }
<
	\infty
$,
and which satisfy for all $d \in \N$ that $v_d|_{(0,T) \times \R^{d}}$ is a viscosity solution of
\begin{equation}
\label{call_on_max:eq4}
\begin{split}
	(\tfrac{\partial }{\partial t}v_d)(t,x) 
&= 
	\tfrac{1}{2} 
	\operatorname{Trace}\! \big( 
		\sigma_d(x)[\sigma_d(x)]^{\ast}(\operatorname{Hess}_x v_d )(t,x)
	\big) 
	\\&+
	\big\langle(\nabla_x v_d)(t,x), \mu_d(x) \big\rangle_{\R^d}
\end{split}
\end{equation}
for $(t,x) \in (0,T) \times \R^{d}$
and that
there exist $\mathfrak{C} \in (0,\infty)$, $(\psi_{d, \varepsilon})_{d \in \N, \,\varepsilon \in (0,1]} \subseteq \mathcal{N}$ such that
for all $d \in \N$, $\varepsilon \in (0,1]$ it holds that
$
	\mathcal{P}(\psi_{d, \varepsilon}) 
\leq
	\mathfrak{C} \, d^{5\theta+3} \, \varepsilon^{-4}
$,
$
	\mathscr{P}(\psi_{d, \varepsilon}) 
\leq
	\mathfrak{C} \, d^{5\theta+3} \, \varepsilon^{-2}
$,
$
	\mathcal{R}(\psi_{d, \varepsilon}) \in C(\R^{d}, \R)
$,
and 
\begin{equation}
\label{call_on_max:eq5}
	\left[
		\int_{\R^d}  
		\left|
			v_d(T,x) - ( \mathcal{R}(\psi_{d, \varepsilon}) ) (x)
		\right|^p \,
		\nu_{d}(dx)
	\right]^{\nicefrac{1}{p}} 
\leq
		\varepsilon.
\end{equation}
Corollary~\ref{BS_endvalue} hence assures that there exist unique continuous functions 
$
	u_d\colon [0,T] \times \R^{d} \to \R
$, $d \in \N$,
which satisfy that 
for all $d \in \N$, $x = (x_1, x_2, \ldots,\allowbreak x_d)\allowbreak \in \R^{d}$ it holds that 
\begin{equation}
\begin{split}
	u_d(T,x)
&=  
	\varphi_d(x) = 
	\max \{ \max\{c_{d, 1} x_1, c_{d, 2} x_2, \ldots,  c_{d, d} x_d \} - K_d ,  0 \},
\end{split}
\end{equation}
which satisfy 
for all $d \in \N$ that
$
	\inf_{q \in (0,\infty)} 
	\sup_{(t, x) \in [0, T] \times \R^d} 
	\frac{ | u_d(t, x) | }{ 1 + \norm{x}_{\R^d}^q }
<
	\infty
$,
and which satisfy 
for all $d \in \N$ that $u_d|_{(0,T) \times \R^{d}}$ is a viscosity solution of
\begin{equation}
\label{call_on_max:eq6}
\begin{split}
	&(\tfrac{\partial }{\partial t}u_d)(t,x) 
	+
	\big\langle(\nabla_x u_d)(t,x), \mu_d(x) \big\rangle_{\R^d}
	 \\
	&+ 
	\tfrac{1}{2} 
	\operatorname{Trace}\! \big( 
		\sigma_d(x)[\sigma_d(x)]^{\ast}(\operatorname{Hess}_x u_d )(t,x)
	\big) 
=
	0
\end{split}
\end{equation}
for $(t,x) \in (0,T) \times \R^{d}$
and that it holds
for all $d \in \N$, $\varepsilon \in (0,1]$ that
\begin{equation}
\label{call_on_max:eq7}
\begin{split}
	&\left[
		\int_{\R^d}  
		\left|
			u_d(0,x) - ( \mathcal{R}(\psi_{d, \varepsilon}) ) (x)
		\right|^p \,
		\nu_{d}(dx)
	\right]^{\nicefrac{1}{p}}
\leq
	\varepsilon.
\end{split}
\end{equation}
Combining this with the fact that
for all $d \in \N$, $\varepsilon \in (0,1]$ it holds that
$
	\mathcal{P}(\psi_{d, \varepsilon}) 
\leq
	\mathfrak{C} \, d^{5\theta+3} \, \varepsilon^{-4}
$,
$
	\mathscr{P}(\psi_{d, \varepsilon}) 
\leq
	\mathfrak{C} \, d^{5\theta+3} \, \varepsilon^{-2}
$,
and
$
	\mathcal{R}(\psi_{d, \varepsilon}) \in C(\R^{d}, \R)
$
establishes items~\eqref{call_on_max:item1}--\eqref{call_on_max:item2}.
The proof of Proposition~\ref{call_on_max} is thus completed.
\end{proof}

\subsection[ANN approximations for call on min options]{Artificial neural network approximations for call on min options}\label{SectionCallOnMin}
In this subsection we establish in Proposition~\ref{call_on_min} below that ANN approximations overcome the curse of dimensionality in the numerical approximation of the Black-Scholes model in the case of call on min options. Our proof of Proposition~\ref{call_on_min} employs the ANN representation result for the payoff functions associated to call on min options in Lemma~\ref{call_on_min_NN} below.
%
%
%

\begin{lemma}
\label{call_on_min_NN}
Assume Setting~\ref{BS_setting} and
let $(K_d)_{d \in \N}, (c_{d, i})_{d \in \N, i \in \{1, 2, \ldots, d \}} \subseteq \R$.
Then 
there exists $(\phi_d)_{d \in \N} \subseteq \mathcal{N}$ such that 
for all $d \in \N$, $x = (x_1,x_2, \ldots, x_d) \in \R^d$ it holds that
$
	\mathcal{P}(\phi_d) 
\leq 
	6 d^3
$, 
$
	\mathcal{R}(\phi_d) \in C(\R^d, \R)
$,
and
\begin{equation}
	(\mathcal{R}(\phi_d))(x)
=
	 \max \{ \min\{c_{d, 1} x_1, c_{d, 2} x_2, \ldots,  c_{d, d} x_d \} - K_d ,  0 \}.
\end{equation}
\end{lemma}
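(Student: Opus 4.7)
The plan is to mirror the proof of Lemma~\ref{call_on_max_NN} with the max-update identity $\max\{a,b\} = (a-b)^+ + b$ replaced by the min-update identity $\min\{a,b\} = -(a-b)^+ + a$ supplied by item~\eqref{aux_max:item3} of Lemma~\ref{aux_max}. Concretely, we construct a fully connected feedforward ANN $\phi \in \mathcal{N}$ with exactly the same layer shape as the network in Lemma~\ref{call_on_max_NN}, namely with $d+2$ layers of dimensions $(d, 2(d-1)+1, 2(d-2)+1, \ldots, 3, 1, 1)$; the weights and biases are obtained from those in Lemma~\ref{call_on_max_NN} by only a handful of sign flips.

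Writing $m_k := \min\{c_{d,1}x_1, \ldots, c_{d,k}x_k\}$ for $k \in \{1, 2, \ldots, d\}$, the induction hypothesis to be established for $k \in \{1, 2, \ldots, d-1\}$ is
\begin{equation*}
z_k = \big((c_{d,k+1}x_{k+1} - m_k)^+,\, (c_{d,k+1}x_{k+1})^+,\, (-c_{d,k+1}x_{k+1})^+,\, (c_{d,k+2}x_{k+2})^+,\, (-c_{d,k+2}x_{k+2})^+,\, \ldots\big)^{\!\top},
\end{equation*}
which differs from the analogous invariant in Lemma~\ref{call_on_max_NN} only in the sign of the argument of the uppermost ReLU. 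To realise this, three rows of the weight matrices need to be flipped relative to Lemma~\ref{call_on_max_NN}: the first row of $W_1$ becomes $(-c_{d,1}, c_{d,2}, 0, \ldots, 0)$; the first row of each $W_{k+1}$ for $k \in \{1, 2, \ldots, d-2\}$ becomes $(1, -1, 1, 1, -1, 0, \ldots, 0)$; and the penultimate weight is $W_d = (-1, 1, -1)$. All remaining rows of $W_1, W_{k+1}, W_d, W_{d+1}$ and all biases ($B_k = 0$ for $k \neq d$ and $B_d = -K_d$) are unchanged from Lemma~\ref{call_on_max_NN}.

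The induction step relies on the identity
\begin{equation*}
c_{d,k+2}x_{k+2} - m_{k+1} = c_{d,k+2}x_{k+2} - c_{d,k+1}x_{k+1} + (c_{d,k+1}x_{k+1} - m_k)^+,
\end{equation*}
which follows from item~\eqref{aux_max:item3} of Lemma~\ref{aux_max} applied to $\min\{m_k, c_{d,k+1}x_{k+1}\}$. Combined with item~\eqref{aux_max:item1} of Lemma~\ref{aux_max} to rewrite $c_{d,k+1}x_{k+1}$ and $c_{d,k+2}x_{k+2}$ as the differences of their positive and negative parts, this expresses the top pre-activation of $z_{k+1}$ as the linear combination $z_k[1] - z_k[2] + z_k[3] + z_k[4] - z_k[5]$ of entries of $z_k$, matching the sign-flipped first row of $W_{k+1}$; the remaining rows of $z_{k+1}$ are propagated exactly as in Lemma~\ref{call_on_max_NN} since their defining rows of $W_{k+1}$ are unchanged. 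The final layer then evaluates
\begin{equation*}
z_d = \mathbf{A}_1\big(-(c_{d,d}x_d - m_{d-1})^+ + c_{d,d}x_d - K_d\big) = \mathbf{A}_1(m_d - K_d) = \max\{m_d - K_d, 0\},
\end{equation*}
so that $(\mathcal{R}(\phi))(x) = W_{d+1} z_d + B_{d+1} = \max\{\min\{c_{d,1}x_1, \ldots, c_{d,d}x_d\} - K_d, 0\}$, as desired.

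Because the layer dimensions of $\phi$ coincide exactly with those in Lemma~\ref{call_on_max_NN}, the bound $\mathcal{P}(\phi) \leq 6 d^3$ is inherited via the very same application of Lemma~\ref{ComplexityCompution:lem} used there. The only substantive obstacle is bookkeeping: carefully checking that the three sign-flipped rows propagate the modified invariant at each induction step, which is a mechanical but slightly error-prone mimicry of the detailed matrix computation carried out in the proof of Lemma~\ref{call_on_max_NN}.
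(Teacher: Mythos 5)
Your proposal is correct and matches the paper's own proof essentially line by line: the same layer dimensions $(d, 2(d-1)+1, \ldots, 3, 1, 1)$, the same three sign-flipped rows (first row of $W_1$, first row of each $W_{k+1}$, and $W_d$), the same induction invariant on $z_k$, and the same key use of item~\eqref{aux_max:item3} in Lemma~\ref{aux_max} (via the identity $(a-b)^+-a+c=c-\min\{a,b\}$) together with item~\eqref{aux_max:item1} to propagate the top entry, with the parameter bound inherited directly from Lemma~\ref{ComplexityCompution:lem}. No gaps.
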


\begin{proof}[Proof of Lemma~\ref{call_on_min_NN}]
Throughout this proof 
let $d \in \N$,
let 
\begin{equation}
\label{call_on_min_NN:setting1}
\begin{split}
	\phi &=  ((W_1, B_1), (W_2, B_2), \ldots, (W_d, B_d), (W_{d+1}, B_{d+1})) \\
&\in 
	(\R^{(2(d-1) + 1) \times d} \times \R^{2(d-1) + 1} ) \\
&\quad	\times 
	(\times_{k = 1}^{d-1} (\R^{(2(d-k) - 1) \times (2(d-k) + 1) } \times \R^{2(d-k) - 1})) \\
&\quad	\times 
	(\R^{1 \times 1} \times \R^{1} )
\end{split}
\end{equation}
(i.e., $\phi$ corresponds to fully connected feedforward artificial neural network with $d+2$ layers with dimensions $(d, 2(d-1) + 1, 2(d-2) + 1, 2(d-3) + 1, \ldots, 3, 1, 1)$)
satisfy for all $k \in \{1, 2, \ldots, {d-2} \}$ that
\begin{equation}
\label{call_on_min_NN:setting2}
	W_1 
= 
	\begin{pmatrix}
		-c_{d,1} &c_{d,2} &0 &\cdots &0 \\
		0 &c_{d,2} &0 &\cdots &0 \\
		0 &-c_{d,2} &0 &\cdots &0 \\
		0 &0 &c_{d,3} &\cdots &0 \\
		0 &0 &-c_{d,3} &\cdots &0 \\
		\vdots &\vdots &\vdots &\ddots &\vdots \\
		0 &0 &0 &\cdots &c_{d, d} \\
		0 &0 &0 &\cdots &-c_{d, d} 
	\end{pmatrix}
\in 
	\R^{(2(d-1) + 1) \times d},
\end{equation}
\begin{equation}
\label{call_on_min_NN:setting3}
\begin{split}
	W_{k+1} 
&= 
	\begin{pmatrix}
		1 &-1 &1 &1 &-1 &0 &0 &\dots &0 &0 \\
		0 &0 &0 &1 &-1 &0 &0 &\dots &0 &0 \\
		0 &0 &0 &-1 &1 &0 &0 &\dots &0 &0 \\
		0 &0 &0 &0 &0 &1 &-1 &\dots &0 &0 \\
		0 &0 &0 &0 &0 &-1 &1 &\dots &0 &0 \\
		\vdots &\vdots &\vdots &\vdots &\vdots &\vdots &\vdots &\ddots &\vdots &\vdots \\
		0 &0 &0 &0 &0 &0 &0 &\dots &1 &-1 \\
		0 &0 &0 &0 &0 &0 &0 &\dots &-1 &1 \\
	\end{pmatrix} \\
&\in 
	\R^{(2(d-k) - 1) \times (2(d-k) + 1) },
\end{split}
\end{equation}
\begin{equation}
\label{call_on_min_NN:setting4}
	W_d
=
	\begin{pmatrix}
		-1 &1 &-1
	\end{pmatrix},
\qquad
	W_{d+1} 
= 
	\begin{pmatrix}
		1
	\end{pmatrix}
\in \R^{1 \times 1},
\qquad
	B_1 = 0,  
\end{equation}
\begin{equation}
\label{call_on_min_NN:setting5}
	B_2 = 0,
\quad\ldots,
	B_{d-1} = 0, 
\qquad
	B_{d} = -K_d,
\qandq 
B_{d+1} = 0,
\end{equation}
let $x = (x_1, x_2, \ldots, x_d) \in \R^d$, 
let $z_0 \in \R^d, z_1 \in \R^{2(d-1) - 1}, z_{2} \in \R^{2(d-2) + 1}, \ldots,\allowbreak z_{d-1} \in \R^{3}, z_d \in \R, z_{d+1} \in \R$ satisfy 
for all $k \in \{1, 2, \ldots, {d-1} \}$ that 
$
	z_0 = x
$,
$
	z_k = \mathbf{A}_{2(d-k) + 1} (W_k z_{k-1} + B_k)
$, 
$
	z_d = \mathbf{A}_{1} (W_d z_{d-1} + B_d)
$, and
\begin{equation}
\label{call_on_min_NN:setting6}
	z_{d+1} =  W_{d+1} z_d + B_{d+1},
\end{equation}
and let 
$
(\cdot)^{+} \colon \R \to [0,\infty)$ be the function which satisfies for all $q \in \R$ that 
$
(q)^{+} = \max \{ q , 0 \}
$.
Note that \eqref{BS_setting:eq3}, \eqref{call_on_min_NN:setting1}, and \eqref{call_on_min_NN:setting6} imply that 
$
	(\mathcal{R}(\phi)) \in C(\R^d, \R)
$
and
\begin{equation}
\label{call_on_min_NN:eq1}
	z_{d+1}
=
	(\mathcal{R}(\phi))(x).
\end{equation}
Next we claim that 
for all $k \in \{1, 2, \ldots, {d-1} \}$ it holds that
\begin{equation}
\label{call_on_min_NN:eq2}
	z_k
=
\begin{pmatrix}
	\big(c_{d, k+1} x_{k+1} - \min \{c_{d, 1} x_1, c_{d, 2} x_2, \ldots, c_{d, k} x_k \}\big)^{+} \\
	(c_{d, k+1} x_{k+1} )^{+} \\
	(-c_{d, k+1} x_{k+1} )^{+} \\
	(c_{d, k+2} x_{k+2} )^{+} \\
	(-c_{d, k+2} x_{k+2} )^{+} \\
	\vdots \\
	(c_{d, d} x_{d} )^{+} \\
	(-c_{d, d} x_{d} )^{+} 
\end{pmatrix}.
\end{equation}
We now prove \eqref{call_on_min_NN:eq2} by induction on $k \in \{1, 2, \ldots, {d-1} \}$.
For the base case $k = 1$ note that \eqref{BS_setting:eq2}, \eqref{call_on_min_NN:setting2}, \eqref{call_on_min_NN:setting4}, and \eqref{call_on_min_NN:setting6} assure that 
\begin{equation}
\begin{split}
	z_1
&=
	\mathbf{A}_{2(d-1) + 1} (W_1 z_0 + B_1) 
=
	\mathbf{A}_{2(d-1) + 1} (W_1 x) \\
&=
	\mathbf{A}_{2(d-1) + 1}
	\begin{pmatrix}
		-c_{d, 1} x_1 + c_{d, 2} x_2 \\
		c_{d, 2} x_{2}   \\
		-c_{d, 2} x_{2}  \\
		c_{d, 3} x_{3}  \\
		-c_{d, 3} x_{3} \\
		\vdots \\
		c_{d, d} x_{d}\\
		-c_{d, d} x_{d} 
	\end{pmatrix}
=
	\begin{pmatrix}
		\big(c_{d, 2} x_{2} - \min \{c_{d, 1} x_1 \} \big)^{+} \\
		(c_{d, 2} x_{2} )^{+} \\
		(-c_{d, 2} x_{2} )^{+} \\
		(c_{d, 3} x_{3} )^{+} \\
		(-c_{d, 3} x_{3} )^{+} \\
		\vdots \\
		(c_{d, d} x_{d} )^{+} \\
		(-c_{d, d} x_{d} )^{+} 
	\end{pmatrix}.
\end{split}
\end{equation}
This establishes \eqref{call_on_min_NN:eq2} in the base case $k = 1$. 
Next note that item \eqref{aux_max:item3} in Lemma~\ref{aux_max} implies that for all $a,b,c\in\R$ it holds that 
\begin{equation}\label{aux_min:Consequence}
(a-b)^+-a+c=c-\min\{a,b\}.
\end{equation}
For the induction step $ \{1, 2, \ldots,\allowbreak {d-2} \} \ni k \to k+1 \in \{2, 3, \ldots, {d-1} \}$ observe that 
\eqref{BS_setting:eq2}, \eqref{call_on_min_NN:setting3}, \eqref{call_on_min_NN:setting5}, \eqref{call_on_min_NN:setting6},
\eqref{aux_min:Consequence} (with $a=c_{d, k+1} x_{k+1}$, $b=\min \{c_{d, 1} x_1, c_{d, 2} x_2, \ldots, c_{d, k} x_k \}$, $c=c_{d, k+2} x_{k+2}$ in the notation of \eqref{aux_min:Consequence}),
 and item \eqref{aux_max:item1} in Lemma~\ref{aux_max} demonstrate that 
for all $k \in \{1, 2, \ldots, {d-2} \}$ with 
\begin{equation}
	z_k
=
\begin{pmatrix}
	\big(c_{d, k+1} x_{k+1} - \min \{c_{d, 1} x_1, c_{d, 2} x_2, \ldots, c_{d, k} x_k \} \big)^{+} \\
	(c_{d, k+1} x_{k+1} )^{+} \\
	(-c_{d, k+1} x_{k+1} )^{+} \\
	(c_{d, k+2} x_{k+2} )^{+} \\
	(-c_{d, k+2} x_{k+2} )^{+} \\
	\vdots \\
	(c_{d, d} x_{d} )^{+} \\
	(-c_{d, d} x_{d} )^{+} 
\end{pmatrix}
\end{equation}
it holds that
\begin{equation}
\begin{split}
	&z_{k+1} 
= 
	\mathbf{A}_{2(d-(k+1)) + 1} (W_{k+1} z_k + B_{k+1})
= 
	\mathbf{A}_{2(d-(k+1)) + 1} (W_{k+1} z_k ) \\
&=
	\mathbf{A}_{2(d-(k+1)) + 1}
	\begin{pmatrix}
		\left[
			\substack{ (c_{d, k+1} x_{k+1} - \min \{c_{d, 1} x_1, c_{d, 2} x_2, \ldots, c_{d, k} x_k \}  )^{+} \\
				- (c_{d, k+1} x_{k+1} )^{+} + (-c_{d, k+1} x_{k+1} )^{+} + (c_{d, k+2} x_{k+2} )^{+} - (-c_{d, k+2} x_{k+2} )^{+}}
		\right] \\
		(c_{d, k+2} x_{k+2} )^{+} - (-c_{d, k+2} x_{k+2} )^{+} \\
		-(c_{d, k+2} x_{k+2} )^{+} + (-c_{d, k+2} x_{k+2} )^{+} \\
		(c_{d, k+3} x_{k+3} )^{+} - (-c_{d, k+3} x_{k+3} )^{+} \\
		-(c_{d, k+3} x_{k+3} )^{+} + (-c_{d, k+3} x_{k+3} )^{+} \\
		\vdots\\
		(c_{d, d} x_{d} )^{+} - (-c_{d, d} x_{d} )^{+} \\
		-(c_{d, d} x_{d} )^{+} + (-c_{d, d} x_{d} )^{+} 
	\end{pmatrix}\\
&=
	\mathbf{A}_{2(d-(k+1)) + 1}
	\begin{pmatrix}
			\substack{ (c_{d, k+1} x_{k+1} - \min \{c_{d, 1} x_1, c_{d, 2} x_2, \ldots, c_{d, k} x_k \})^{+} - c_{d, k+1} x_{k+1} 
				 + c_{d, k+2} x_{k+2} } \\
		c_{d, k+2} x_{k+2}  \\
		-c_{d, k+2} x_{k+2} \\
		c_{d, k+3} x_{k+3} \\
		-c_{d, k+3} x_{k+3} \\
		\vdots\\
		c_{d, d} x_{d} \\
		-c_{d, d} x_{d}  
	\end{pmatrix}\\
&=
	\begin{pmatrix}
		 \big(c_{d, k+2} x_{k+2} - \min \{c_{d, 1} x_1,  \ldots, c_{d, k} x_k,  c_{d, k+1} x_{k+1} \} \big)^{+} \\
		(c_{d, k+2} x_{k+2} )^{+} \\
		(-c_{d, k+2} x_{k+2} )^{+} \\
		(c_{d, k+3} x_{k+3} )^{+} \\
		(-c_{d, k+3} x_{k+3} )^{+} \\
		\vdots \\
		(c_{d, d} x_{d} )^{+} \\
		(-c_{d, d} x_{d} )^{+} 
	\end{pmatrix}.
\end{split}
\end{equation}
Induction thus proves \eqref{call_on_min_NN:eq2}.
Next observe that \eqref{call_on_min_NN:setting3}, \eqref{call_on_min_NN:setting5}, \eqref{call_on_min_NN:setting6}, \eqref{call_on_min_NN:eq2}, and item \eqref{aux_max:item3} in Lemma~\ref{aux_max} imply that 
\begin{equation}
\begin{split}
	z_d 
&= 
	\mathbf{A}_{1} (W_d z_{d-1} + B_d) \\
&=
	\mathbf{A}_{1}
	\left(
		\begin{pmatrix}
			-1 &1 &-1 
		\end{pmatrix}
		\begin{pmatrix}
		 	\big(c_{d, d} x_{d} - \min \{c_{d, 1} x_1,  \ldots,  c_{d, d-1} x_{d-1} \} \big)^{+} \\
			(c_{d, d} x_{d} )^{+} \\
			(-c_{d, d} x_{d} )^{+} 
		\end{pmatrix}
		-
		K_d
	\right) \\
&=
	\mathbf{A}_{1}
	\Big(
		 -\big(c_{d, d} x_{d} - \min \{c_{d, 1} x_1,  \ldots,  c_{d, d-1} x_{d-1} \} \big)^{+} \\
&\quad		 
		+
		(c_{d, d} x_{d} )^{+}
		-
		(-c_{d, d} x_{d} )^{+} 
		-
		K_d
	\Big) \\
&=
	\mathbf{A}_{1}
	\Big(
		 -\big(c_{d, d} x_{d} - \min \{c_{d, 1} x_1,  \ldots,  c_{d, d-1} x_{d-1} \} \big)^{+}
		 +
		c_{d, d} x_{d}
		-
		K_d
	\Big) \\
&=
	\mathbf{A}_{1}
	\big(
		 \min \{c_{d, 1} x_1,  \ldots,  c_{d, d-1} x_{d-1}, c_{d, d} x_{d} \} 
		-
		K_d
	\big) \\
&=
	\max \{ \min\{c_{d, 1} x_1, \ldots,  c_{d, d} x_d \} - K_d ,  0 \}.
\end{split}
\end{equation}
Combining this with \eqref{call_on_min_NN:setting4}--\eqref{call_on_min_NN:eq1} establishes that
\begin{equation}
\label{call_on_min_NN:eq3}
\begin{split}
	(\mathcal{R}(\phi))(x)
&=
	z_{d+1} 
=  
	W_{d+1} z_d + B_{d+1} \\
&=
	z_d
=
	\max \{ \min\{c_{d, 1} x_1, \ldots,  c_{d, d} x_d \} - K_d ,  0 \}.
\end{split}
\end{equation}In addition, observe that Lemma~\ref{ComplexityCompution:lem} implies that 
\begin{equation}
\begin{split}
\mathcal{P}(\phi)
&=
(2(d-1) + 1)(d + 1)  \\
&\quad	
+
\left[
\smallsum_{k = 1}^{d-1} (2 (d-(k+1)) + 1) (2 (d-k) + 1 + 1)
\right]
+
1(1+1) \\
&\leq 
6 d^3.
\end{split}
\end{equation}
Combining this, \eqref{call_on_min_NN:eq1}, and \eqref{call_on_min_NN:eq3} completes the proof of Lemma~\ref{call_on_min_NN}.
\end{proof}

\begin{prop}
\label{call_on_min}
Assume Setting~\ref{BS_setting} and let $(K_d)_{d \in \N}, (c_{d, i})_{d \in \N, i \in \{1, 2, \ldots, d \}} \subseteq [0,\infty)$ satisfy that
$
	\sup_{ d \in \N , i \in \{1, 2, \ldots, d \}} c_{d, i}
<
	\infty.
$
Then  
\begin{enumerate}[(i)]
\item \label{call_on_min:item1}
there exist unique continuous functions $u_d\colon [0,T]\allowbreak \times \R^{d} \to \R$, $d \in \N$, which satisfy
for all $d \in \N$, $x = (x_1, x_2, \ldots, x_d) \in \R^{d}$ that 
$
	u_d(T,x) = \max \{ \min\{c_{d, 1} x_1, c_{d, 2} x_2, \ldots,  c_{d, d} x_d \} - K_d ,  0 \}
$,
which satisfy 
for all $d \in \N$ that
$
	\inf_{q \in (0,\infty)} 
	\sup_{(t, x) \in [0, T] \times \R^d} 
	\frac{ | u_d(t, x) | }{ 1 + \norm{x}_{\R^d}^q }
<
	\infty
$,
and which satisfy for all $d \in \N$ that $u_d|_{(0,T) \times \R^{d}}$ is a viscosity solution of
\begin{equation}
\begin{split}
	&(\tfrac{\partial }{\partial t}u_d)(t,x) 
	+
	\big\langle(\nabla_x u_d)(t,x), \mu_d(x) \big\rangle_{\R^d}
\\
	&+ 
	\tfrac{1}{2} 
	\operatorname{Trace}\! \big( 
		\sigma_d(x)[\sigma_d(x)]^{\ast}(\operatorname{Hess}_x u_d )(t,x)
	\big) 
=
	0
\end{split}
\end{equation}
for $(t,x) \in (0,T) \times \R^{d}$
and

\item \label{call_on_min:item2}
there exist $\mathfrak{C} \in (0,\infty)$, $(\psi_{d, \varepsilon})_{d \in \N, \,\varepsilon \in (0,1]} \subseteq \mathcal{N}$ such that
for all $d \in \N$, $\varepsilon \in (0,1]$ it holds that
$
	\mathcal{P}(\psi_{d, \varepsilon}) 
\leq
	\mathfrak{C} \, d^{5\theta+3} \, \varepsilon^{-4}
$,
$
	\mathscr{P}(\psi_{d, \varepsilon}) 
\leq
	\mathfrak{C} \, d^{5\theta+3} \, \varepsilon^{-2}
$,
$
	\mathcal{R}(\psi_{d, \varepsilon}) \in C(\R^{d}, \R)
$,
and
\begin{equation}
\label{call_on_min:concl1} 
	\left[
		\int_{\R^d}  
		\left|
			u_d(0,x) - ( \mathcal{R}(\psi_{d, \varepsilon}) ) (x)
		\right|^p \,
		\nu_{d}(dx)
	\right]^{\nicefrac{1}{p}} 
\leq
		\varepsilon.
\end{equation}
\end{enumerate}
\end{prop}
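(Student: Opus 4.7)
The plan is to replicate the argument used to prove Proposition~\ref{call_on_max} almost verbatim, with Lemma~\ref{call_on_min_NN} replacing Lemma~\ref{call_on_max_NN}. First I would introduce the payoff functions $\varphi_d\colon \R^d\to\R$, $d\in\N$, given by
\begin{equation}
\varphi_d(x)=\max\{\min\{c_{d,1}x_1,c_{d,2}x_2,\ldots,c_{d,d}x_d\}-K_d,0\}
\end{equation}
for $x=(x_1,\ldots,x_d)\in\R^d$. Lemma~\ref{call_on_min_NN} supplies a family $(\chi_d)_{d\in\N}\subseteq\mathcal{N}$ with $\mathcal{P}(\chi_d)\le 6d^3$, $\mathcal{R}(\chi_d)\in C(\R^d,\R)$, and $(\mathcal{R}(\chi_d))(x)=\varphi_d(x)$. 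I would then put $\phi_{d,\delta}=\chi_d$ for every $\delta\in(0,1]$, so that $\mathcal{R}(\phi_{d,\delta})=\varphi_d$ exactly and the approximation error in \eqref{cont_NN_approx:ass1} vanishes, i.e. $|\varphi_d(x)-(\mathcal{R}(\phi_{d,\delta}))(x)|=0\le d^0\,\delta^0(1+\|x\|_{\R^d}^2)$.

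Next I would verify the polynomial-growth bound required by Theorem~\ref{cont_NN_approx}. Using $K_d\ge 0$ and $c_{d,i}\ge 0$, whenever $\varphi_d(x)>0$ every $c_{d,i}x_i$ is strictly positive, so that
\begin{equation}
|\varphi_d(x)|=\min\{c_{d,1}x_1,\ldots,c_{d,d}x_d\}-K_d
\le c_{d,1}|x_1|\le C\,\|x\|_{\R^d}\le C(1+\|x\|_{\R^d}^2),
\end{equation}
where $C=\sup_{d\in\N,i\in\{1,\ldots,d\}}c_{d,i}<\infty$. Moreover $\mathcal{P}(\phi_{d,\delta})\le 6d^3=6\,d^3\delta^{-0}$. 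Lemma~\ref{BS_properties} yields the required affine-linearity and linear-growth conditions on $(\mu_d)_{d\in\N}$ and $(\sigma_d)_{d\in\N}$, and the hypothesis $\sup_{d\in\N}[d^{-\theta q}\textint_{\R^d}\|x\|_{\R^d}^q\,\nu_d(dx)]<\infty$ for all $q\in(0,\infty)$ from Setting~\ref{BS_setting} supplies the moment assumption on the $\nu_d$.

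With these verifications in hand I would apply Theorem~\ref{cont_NN_approx} with $T=T$, $r=R=1$, $v=w=0$, $z=3$, $\mathbf{z}=0$, $\mathbf{v}=2$, $p=p$, and $\mathfrak{c}=\max\{6,C,2[\sup_{d,i}(|\alpha_{d,i}|+|\beta_{d,i}|)]\}$. This produces continuous functions $v_d\colon[0,T]\times\R^d\to\R$ with $v_d(0,x)=\varphi_d(x)$ that solve the forward-in-time Kolmogorov PDE, together with networks $(\psi_{d,\varepsilon})_{d\in\N,\varepsilon\in(0,1]}\subseteq\mathcal{N}$ satisfying $\mathcal{P}(\psi_{d,\varepsilon})\le\mathfrak{C}\,d^{5\theta+3}\,\varepsilon^{-4}$, $\mathscr{P}(\psi_{d,\varepsilon})\le\mathfrak{C}\,d^{5\theta+3}\,\varepsilon^{-2}$, $\mathcal{R}(\psi_{d,\varepsilon})\in C(\R^d,\R)$, and
\begin{equation}
\left[\int_{\R^d}|v_d(T,x)-(\mathcal{R}(\psi_{d,\varepsilon}))(x)|^p\,\nu_d(dx)\right]^{\nicefrac{1}{p}}\le\varepsilon.
\end{equation}

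Finally I would apply Corollary~\ref{BS_endvalue} (with $\varphi=\varphi_d$ and $\psi=\psi_{d,\varepsilon}$) to pass from the forward-in-time solutions $v_d$ to the unique continuous backward-in-time solutions $u_d\colon[0,T]\times\R^d\to\R$ of the Black--Scholes PDE with terminal condition $u_d(T,x)=\varphi_d(x)$ and the same polynomial-growth control, and to transfer the $L^p$-error bound on $v_d(T,\cdot)$ into the desired $L^p$-error bound on $u_d(0,\cdot)$. This yields items~\eqref{call_on_min:item1}--\eqref{call_on_min:item2} simultaneously. I do not foresee a genuine obstacle: the argument is entirely parallel to the call-on-max case, and the only content-level check is the linear-growth estimate for $\varphi_d$, which exploits the crucial sign constraint $K_d\ge 0$ to reduce the min to an upper bound by a single coordinate $c_{d,1}x_1$.
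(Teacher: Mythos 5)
Your proposal is correct and follows essentially the same route as the paper: invoke Lemma~\ref{call_on_min_NN} for an exact ReLU representation of the payoff, feed the constant family $\phi_{d,\delta}=\chi_d$ into Theorem~\ref{cont_NN_approx} with the same parameter choices ($r=R=1$, $v=w=\mathbf{z}=0$, $z=3$, $\mathbf{v}=2$), and then pass from the forward to the backward formulation via Corollary~\ref{BS_endvalue}. The only cosmetic difference is in the growth bound for $\varphi_d$: the paper bounds $\max\{\min_i c_{d,i}x_i-K_d,0\}\le\max_i c_{d,i}|x_i|$ directly via $\min_i c_{d,i}x_i - K_d \le \min_i c_{d,i}x_i \le c_{d,1}x_1 \le c_{d,1}|x_1|$, whereas you make a (harmless but unnecessary) case split on $\varphi_d(x)>0$; both are valid.
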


\begin{proof}[Proof of Proposition~\ref{call_on_min}]
Throughout this proof 
let $ \varphi_d \colon \R^d \to \R$, $d \in \N$, satisfy 
for all $d \in \N$, $x = (x_1, x_2, \ldots, x_d) \in \R^{d}$ that 
\begin{equation}
		\varphi_d(x) = \max \{ \min\{c_{d, 1} x_1, c_{d, 2} x_2, \ldots,  c_{d, d} x_d \} - K_d ,  0 \},
\end{equation}
let $(\chi_d)_{d \in \N}, (\phi_{d, \delta})_{d \in \N, \delta \in (0,1]} \subseteq \mathcal{N}$ satisfy 
for all $d \in \N$, $x \in \R^{d}$, $\delta \in (0,1]$ that
$
	\mathcal{P}(\chi_d) 
\leq 
	6 d^3
$, 
$
	\mathcal{R}(\chi_d) \in C(\R^d, \R)
$,
$
	(\mathcal{R}(\chi_d))(x)
=
	\varphi_d(x)
$
(cf.\ Lemma~\ref{call_on_min_NN}),
and 
$
	\phi_{d, \delta} = \chi_d
$,
and let $C \in [0,\infty)$ be given by
$
	C = \sup_{ d \in \N , i \in \{1, 2, \ldots, d \}} c_{d, i}
$.
Note that 
for all $d \in \N$, $\delta \in (0,1]$ it holds that
\begin{equation}
\label{call_on_min:eq1}
\mathcal{R}(\phi_{d, \delta}) = \mathcal{R}(\chi_{d})=\varphi_d \in C(\R^d, \R).
\end{equation}
This ensures that
for all $d \in \N$, $x = (x_1, x_2, \ldots, x_d) \in \R^{d}$, $\delta \in (0,1]$ it holds that
\begin{equation}
\label{call_on_min:eq2}
\begin{split}
	| (\mathcal{R}(\phi_{d, \delta}))(x) |
&=
	| (\mathcal{R}(\chi_{d}))(x) |
=
	| \varphi_d (x) | \\
&=
	\max \{ \min\{c_{d, 1} x_1, c_{d, 2} x_2, \ldots,  c_{d, d} x_d \} - K_d ,  0 \} \\
&\leq
	 \max \{c_{d, 1} | x_1 |,  c_{d, 2} | x_2 |, \ldots,  c_{d, d} | x_d |  \} \\
&\leq	
	C
	\max \{ | x_1 |,   | x_2 | , \ldots ,  | x_d | \} \\
&\leq
	C\norm{x}_{\R^d} 
\leq
	C d^0 (1 + \norm{x}_{\R^d}^2).
\end{split}
\end{equation}
In addition, observe that 
for all $d \in \N$, $\delta \in (0,1]$ it holds that
\begin{equation}
\label{call_on_min:eq3}
	\mathcal{P}(\phi_{d, \delta}) 
=
	\mathcal{P}(\chi_d) 
\leq 
	6 d^3
=
	6 d^3 \delta^{-0}.
\end{equation}
Furthermore, note that \eqref{call_on_min:eq1} ensures that 
for all $d \in \N$, $x \in \R^{d}$, $\delta \in (0,1]$ it holds that
\begin{equation}
\begin{split}
	\left| 
		\varphi_d(x) - ( \mathcal{R}(\phi_{d, \delta}) )(x)
	\right| 
&=
	\left| 
		\varphi_d(x) - ( \mathcal{R}(\chi_{d}) )(x)
	\right| \\
&=
	\left| 
		\varphi_d(x) - \varphi_d(x)
	\right| 
=
	0
\leq 
	d^0 \delta^0 (1 + \norm{x}_{\R^d}^2).
\end{split}
\end{equation}
Combining this, \eqref{call_on_min:eq1}, \eqref{call_on_min:eq2}, \eqref{call_on_min:eq3}, 
the fact that $(\varphi_d)_{d \in \N}$ are continuous functions, 
the hypothesis that for all $q \in (0,\infty)$ it holds that
\begin{equation}
\sup_{d \in \N}\left[ d^{-\theta q}
\textint_{\R^d}  
\Norm{x}_{\R^d}^{ q } \,
\nu_{ d } (dx)\right]
< 
\infty,
\end{equation}
and Lemma~\ref{BS_properties} 
with 
Theorem~\ref{cont_NN_approx} 
(with 
$ T = T $,
$ r = 1 $,
$ R = 1 $,
$ v = 0 $,
$ w = 0 $,
$ z = 3 $,
$ \mathbf{z} = 0 $,
$\theta=\theta$,
$ \mathfrak{c} = 
	\max \{ 
		6 , C,
		2 
		\left[
			\sup_{d \in \N, i \in \{1, 2, \ldots, d \}} 
				(| \alpha_{d, i} | + | \beta_{d, i} | )
		\right]
	\}$,
$ \mathbf{v} = 2 $,
$p = p$,
$ \nu_d = \nu_d$,
$ \varphi_d = \varphi_d $,
$ \mu_d = \mu_d $,
$ \sigma_d = \sigma_d $,
$a(x)=\max \{x, 0 \}$,
$ \phi_{d, \delta} = \phi_{d, \delta}$
for $d\in\N$, $x\in \R$, $\delta\in (0,1]$
in the notation of Theorem~\ref{cont_NN_approx})
demonstrates that 
there exist unique continuous functions $v_d\colon [0,T] \times \R^{d} \to \R$, $d \in \N$, which satisfy
for all $d \in \N$, $x \in \R^{d}$ that 
$v_d(0,x) = \varphi_d(x)$,
which satisfy 
for all $d \in \N$ that
$
	\inf_{q \in (0,\infty)} 
	\sup_{(t, x) \in [0, T] \times \R^d} 
	\frac{ | v_d(t, x) | }{ 1 + \norm{x}_{\R^d}^q }
<
	\infty
$,
and which satisfy for all $d \in \N$ that $v_d|_{(0,T) \times \R^{d}}$ is a viscosity solution of
\begin{equation}
\label{call_on_min:eq4}
\begin{split}
	(\tfrac{\partial }{\partial t}v_d)(t,x) 
&= 
	\tfrac{1}{2} 
	\operatorname{Trace}\! \big( 
		\sigma_d(x)[\sigma_d(x)]^{\ast}(\operatorname{Hess}_x v_d )(t,x)
	\big)\\& 
	+
	\big\langle(\nabla_x v_d)(t,x), \mu_d(x) \big\rangle_{\R^d}
\end{split}
\end{equation}
for $(t,x) \in (0,T) \times \R^{d}$
and that
there exist $\mathfrak{C} \in (0,\infty)$, $(\psi_{d, \varepsilon})_{d \in \N, \,\varepsilon \in (0,1]} \subseteq \mathcal{N}$ such that
for all $d \in \N$, $\varepsilon \in (0,1]$ it holds that
$
	\mathcal{P}(\psi_{d, \varepsilon}) 
\leq
	\mathfrak{C} \, d^{5\theta+3} \, \varepsilon^{-4}
$,
$
	\mathscr{P}(\psi_{d, \varepsilon}) 
\leq
	\mathfrak{C} \, d^{5\theta+3} \, \varepsilon^{-2}
$,
$
	\mathcal{R}(\psi_{d, \varepsilon}) \in C(\R^{d}, \R)
$,
and
\begin{equation}
\label{call_on_min:eq5}
	\left[
		\int_{\R^d}  
		\left|
			v_d(T,x) - ( \mathcal{R}(\psi_{d, \varepsilon}) ) (x)
		\right|^p \,
		\nu_{d}(dx)
	\right]^{\nicefrac{1}{p}} 
\leq
		\varepsilon.
\end{equation}
Corollary~\ref{BS_endvalue} hence assures that there exist unique continuous functions 
$
	u_d\colon [0,T] \times \R^{d} \to \R
$, $d \in \N$,
which satisfy that 
for all $d \in \N$, $x = (x_1, x_2, \ldots,\allowbreak x_d) \allowbreak\in \R^{d}$ it holds that 
\begin{equation}
\begin{split}
	u_d(T,x)
&=  
	\varphi_d(x)= 
	\max \{ \min\{c_{d, 1} x_1, c_{d, 2} x_2, \ldots,  c_{d, d} x_d \} - K_d ,  0 \},
\end{split}
\end{equation}
which satisfy 
for all $d \in \N$ that
$
	\inf_{q \in (0,\infty)} 
	\sup_{(t, x) \in [0, T] \times \R^d} 
	\frac{ | u_d(t, x) | }{ 1 + \norm{x}_{\R^d}^q }
<
	\infty
$,
and which satisfy 
for all $d \in \N$ that $u_d|_{(0,T) \times \R^{d}}$ is a viscosity solution of
\begin{equation}
\label{call_on_min:eq6}
\begin{split}
	&(\tfrac{\partial }{\partial t}u_d)(t,x) 
	+
	\big\langle(\nabla_x u_d)(t,x), \mu_d(x) \big\rangle_{\R^d}
	 \\
	&+ 
	\tfrac{1}{2} 
	\operatorname{Trace}\! \big( 
		\sigma_d(x)[\sigma_d(x)]^{\ast}(\operatorname{Hess}_x u_d )(t,x)
	\big) 
=
	0
\end{split}
\end{equation}
for $(t,x) \in (0,T) \times \R^{d}$
and that it holds
for all $d \in \N$, $\varepsilon \in (0,1]$ that
\begin{equation}
\label{call_on_min:eq7}
\begin{split}
	&\left[
		\int_{\R^d}  
		\left|
			u_d(0,x) - ( \mathcal{R}(\psi_{d, \varepsilon}) ) (x)
		\right|^p \,
		\nu_{d}(dx)
	\right]^{\nicefrac{1}{p}}
\leq
	\varepsilon.
\end{split}
\end{equation}
Combining this with the fact that
for all $d \in \N$, $\varepsilon \in (0,1]$ it holds that
$
	\mathcal{P}(\psi_{d, \varepsilon}) 
\leq
	\mathfrak{C} \, d^{5\theta+3} \, \varepsilon^{-4}
$,
$
	\mathscr{P}(\psi_{d, \varepsilon}) 
\leq
	\mathfrak{C} \, d^{5\theta+3} \, \varepsilon^{-2}
$,
and
$
	\mathcal{R}(\psi_{d, \varepsilon}) \in C(\R^{d}, \R)
$
establishes items~\eqref{call_on_min:item1}--\eqref{call_on_min:item2}.
The proof of Proposition~\ref{call_on_min} is thus completed.
\end{proof}
\subsubsection*{Acknowledgements}
This project has been partially supported through the SNSF-Research project 200020{\_}175699 ``Higher order numerical approximation methods for stochastic partial differential equations''.
The third author gratefully acknowledges the Cluster of Excellence EXC 2044-390685587, Mathematics M\"unster: Dynamics-Geometry-Structure funded by the Deutsche Forschungsgemeinschaft (DFG, German Research Foundation).
Finally, we gratefully acknowledge financial support by the Deutsche
Forschungsgemeinschaft (DFG, German Research Foundation) through CRC 1173 and
we gratefully acknowledge a Research Travel Grant by the Karlsruhe House of Young Scientists (KHYS) supporting the stay of the second author at ETH Zurich.

\bibliographystyle{acm}
\bibliography{Lp_Neural_Net_approximation_bibfile}
\end{document}